\newcolumntype{P}[1]{>{\centering\arraybackslash}p{#1}}
\theoremstyle{plain}
\newtheorem{thm}{Theorem}[section]
\newtheorem{cor}[thm]{Corollary}
\newtheorem{lem}[thm]{Lemma}
\newtheorem{prop}[thm]{Proposition}
\def\@rst #1 #2other{#1}
\newcommand\MR[1]{\relax\ifhmode\unskip\spacefactor3000 \space\fi
  \MRhref{\expandafter\@rst #1 other}{#1}}
\newcommand{\MRhref}[2]{\href{http://www.ams.org/mathscinet-getitem?mr=#1}{MR#2}}
\theoremstyle{definition}
\newtheorem{defn}[thm]{Definition}
\newtheorem{remark}[thm]{Remark}
\newtheorem*{ack}{Acknowledgements}
\numberwithin{equation}{section}
\newcommand{\dsb}{\begin{adjustwidth}{2.5em}{0pt}
\begin{footnotesize}}
\newcommand{\dse}{\end{footnotesize}
\end{adjustwidth}}
\newcommand{\ssb}{\begin{adjustwidth}{2.5em}{0pt}}
\newcommand{\sse}{\end{adjustwidth}}
\newcommand{\aryb}{\begin{eqnarray*}}
\newcommand{\arye}{\end{eqnarray*}}
\def\alb#1\ale{\begin{align*}#1\end{align*}}
\def\allb#1\alle{\begin{align}#1\end{align}}
\newcommand{\eqb}{\begin{equation}}
\newcommand{\eqe}{\end{equation}}
\newcommand{\eqbn}{\begin{equation*}}
\newcommand{\eqen}{\end{equation*}}
\newcommand{\BB}{\mathbbm}
\newcommand{\ol}{\overline}
\newcommand{\op}{\operatorname}
\newcommand{\frk}{\mathfrak}
\newcommand{\eqD}{\overset{d}{=}}
\newcommand{\ep}{\varepsilon}
\newcommand{\rta}{\rightarrow}
\newcommand{\wt}{\widetilde}
\newcommand{\wh}{\widehat} 
\newcommand{\mcl}{\mathcal}
\newcommand{\bdy}{\partial}
\newcommand{\perm}{{\boldsymbol{\pi}}}
\let\originalleft\left
\let\originalright\right
\renewcommand{\left}{\mathopen{}\mathclose\bgroup\originalleft}
\renewcommand{\right}{\aftergroup\egroup\originalright}
\title{Reconstructing SLE-decorated Liouville quantum gravity surfaces from random permutons}
\date{    }
\author{
\begin{tabular}{c} Jacopo Borga\\[-3pt]\small MIT \end{tabular}
\begin{tabular}{c} Ewain Gwynne\\[-3pt]\small University of Chicago \end{tabular} 
}
\begin{document}

\maketitle

\begin{abstract}
	Permutons constructed from a Liouville quantum gravity surface and a pair of space-filling Schramm-Loewner evolutions (SLEs) have been shown -- or are conjectured -- to describe the scaling limit of various natural models of random constrained permutations. We prove that, in two distinct and natural settings,  these permutons uniquely determine, modulo rotation, scaling, translation and reflection, both the Liouville quantum gravity surface and the pair of space-filling SLEs used in their construction. In other words, the Liouville quantum gravity surface and the pair of space-filling SLEs can be deterministically reconstructed from the permuton.
	
	Our results cover the cases of the \emph{skew Brownian permutons}, the universal limits of pattern-avoiding permutations, and the \emph{meandric permuton}, which is the conjectural permuton limit of permutations obtained from uniform meanders.
	
	In the course of the proof, we give a detailed description of how the support of the permuton relates to the multiple points of the two space-filling SLEs.
	
\end{abstract}

\tableofcontents

\section{Introduction}

Permutons are probability measures on the unit square with uniform marginals which have been used to describe the scaling limit of many models of random permutations (see, for instance, \cite{bbfgp-universal,bbfs-tree-sep,borga-strong-baxter} and references therein). 

Liouville quantum gravity surfaces decorated by (multiple) space-filling SLEs are the canonical random surfaces endowed with (multiple) space-filling paths used to describe the scaling limit of many different models of decorated planar maps, that is, planar maps endowed with certain statistical-mechanics/combinatorial models (see, for instance, \cite{legall-brownian-geometry,ghs-mating-survey,bp-lqg-notes} and references therein).

The interplay between permutons and SLE-decorated Liouville quantum gravity surfaces has emerged in a series of recent works \cite{borga-skew-permuton, bhsy-baxter-permuton,bgs-meander}. Let us briefly recall this connection.

\medskip

Fix $\gamma \in (0,2)$. Let $h$ be a random generalized function on $\BB C$ corresponding to a singly marked unit area $\gamma$-Liouville quantum sphere $(\BB C, h, \infty)$, and let $\mu_h$ be its associated $\gamma$-LQG area measure. We will review the definitions of these objects in \cref{sec-intro-lqg}, but for now the reader can just think of $\mu_h$ as a random, non-atomic, Borel probability measure on $\BB C$ which assigns positive mass to every open subset of $\BB C$.

Independently from $h$, let $(\eta_1,\eta_2)$ be a random pair consisting of a whole-plane space-filling SLE$_{\kappa_1}$ curve and a whole-plane space-filling SLE$_{\kappa_2}$ curve, each going from $\infty$ to $\infty$. We will review the definition of these random curves in Section \ref{sect:SLES} and \ref{sec:SLE_as_flow}, but for now the reader can just think of $(\eta_1,\eta_2)$ as a pair of random non-self-crossing space-filling curves in $\BB C$ which each visit almost every point of $\BB C$ exactly once.

We parametrize each of $\eta_1$ and $\eta_2$ by $\mu_h$-mass, i.e.,
\begin{equation*}
	\mu_h(\eta_1([0,t])) = \mu_h(\eta_2([0,t]) = t\quad\text{for each}\quad t\in[0,1].
\end{equation*} 
We emphasize that the coupling of our two SLE$_\kappa$ curves (viewed modulo time parametrization) is arbitrary. We will be interested in cases where the two curves determine each other (in particular, the setting of the \emph{skew Brownian permutons} in~\cite{borga-skew-permuton}) as well as the case where the two curves are independent (in particular, this setting include the \emph{meandric permuton} in \cite{bgs-meander}). Note that the pair $(\eta_1,\eta_2)$, viewed as curves modulo time parametrization, is required to be independent from $h$. 

Let $\psi : [0,1]\rta[0,1]$ be a Lebesgue measurable function such that 
\eqb \label{eqn-psi-property}
\eta_1(t) = \eta_2(\psi(t)),\quad\forall t \in [0,1] .
\eqe 
(Note that the function $\psi$ is not unique due to the multiple points of $\eta_2$.)
We define the random permuton associated with $(\eta_1,\eta_2)$ to be the random probability measure $\perm$ on $[0,1]^2$ defined by\footnote{This definition does not depend on the choice of $\psi$; see \cref{lem-permuton-defined}.}
\eqb \label{eqn-permuton-def}
\perm(A) = \perm_{\eta_1,\eta_2}(A) = \op{Leb}\left\{ t\in [0,1] : (t,\psi(t)) \in A \right\} ,\quad \text{$\forall$ Borel sets $A\subset \BB C$}. 
\eqe

\newpage

There are two special cases of the construction in \eqref{eqn-permuton-def} which are particularly interesting:

\begin{itemize}
	\item The \emph{skew Brownian permutons}  $\mu_{\rho,q}$ indexed by $(\rho,q) \in (-1,1] \times (0,1)$, which are known to be the universal limits of pattern-avoiding permutations \cite{bbfgp-universal,bbfs-tree-sep,bm-baxter-permutation,borga-strong-baxter}. The original construction of the skew Brownian permutons does not involve SLEs and LQG, and is given in terms of a $q$-dependent system of stochastic  differential equations driven by a two-dimensional Brownian excursion in the first quadrant and of correlation $\rho$; see \cite{borga-skew-permuton} for further details. But, in~\cite[Theorem 1.17]{borga-skew-permuton}, the first author of the present paper showed that for each choice of parameters\footnote{Note that here we excluded the case $\rho=1$, i.e.\ the case of the \emph{Brownian separable permuton}.} $(\rho,q) \in (-1,1) \times (0,1)$ for the skew Brownian permutons, there exists a $q$-dependent coupling of two whole-plane space-filling SLE$_{\kappa = 16/\gamma^2}$ curves with $\rho = -\cos(\pi \gamma^2/4)$ such that the permuton~\eqref{eqn-permuton-def} coincides with the skew Brownian permuton. The specific coupling considered for the two space-filling SLE$_{\kappa = 16/\gamma^2}$ curves $\eta_1$ and $\eta_2$ is the \emph{imaginary geometry} coupling of Miller and Sheffield~\cite{ig4}, where $\eta_1$ and $\eta_2$ are the space-filling counterflow lines\footnote{As explained in \cref{sec:SLE_as_flow}, every whole-plane space-filling SLE$_{\kappa}$ can be realized as a space-filling counterflow line of a whole-plane GFF.} of angle $0$ and $\theta(q)\in(-\pi/2,\pi/2)$ of the same whole-plane Gaussian free field (GFF). The relation between $\theta$ and $q$ is not known explicitly. We will review this coupling in \cref{sec:SLE_as_flow}.

	\item The \emph{meandric permuton} is the permuton $\perm$ in~\eqref{eqn-permuton-def} in the case when $\gamma = \sqrt{\frac13 \left( 17 - \sqrt{145} \right)}$, $\kappa_1 = \kappa_2 = 8$, and $\eta_1$ and $\eta_2$ are independent, viewed as curves modulo time parametrization. This is the conjectural permuton limit of uniform meandric permutations \cite{bgs-meander}, i.e., the permutations in bijection with meanders \cite{lacroix-meander-survey}.
\end{itemize}

The permutons introduced in  \eqref{eqn-permuton-def} are defined in terms of a $\gamma$-LQG measure (coming from a singly marked unit area $\gamma$-Liouville quantum sphere) and a pair of space-filling whole-plane SLE curves. A natural question is how much ``information'' about the $\gamma$-Liouville quantum sphere and the pair of SLEs is contained in $\perm$.

The main goal of this paper is to show that in many cases (including the two cases above), the permuton $\perm$ contains \emph{all} the  ``information'' about the $\gamma$-Liouville quantum sphere and the pair of SLEs. That is, one can deterministically recover from $\perm$ both the $\gamma$-Liouville quantum sphere and the pair of SLEs. In the next section, we make this statement precise.

\subsection{Main result: permutons determine SLEs and LQG}

The \textbf{(closed) support} of $\perm$ is the set
\eqb \label{eqn-permuton-supp}
\op{supp} \perm :=  \left( \text{intersection of all closed sets $K\subset [0,1]^2$ with $\perm(K) = 1$} \right) .
\eqe
Recall that $(\BB C, h, \infty)$ denotes the singly marked unit area $\gamma$-Liouville quantum sphere determining the measure $\mu_h$; see \cref{def-sphere} for a precise definition of  $(\BB C, h, \infty)$. 

In the two cases when $\eta_1$ and $\eta_2$ are independent modulo time parametrization -- such as for the meandric permuton -- or $\eta_1$ and $\eta_2$ are the space-filling counterflow lines of the same whole-plane GFF -- for instance, this is the case for the skew Brownian permuton -- it is possible to recover the curve-decorated quantum surface $(\BB C,h,\infty,\eta_1,\eta_2)$, viewed modulo complex conjugation, from just the permuton $\perm$; see \cref{def-curve-decorated} for a precise definition of curve-decorated quantum surfaces.

\begin{thm} \label{thm-permuton-determ-lqg-sles}
	Let $\gamma \in (0,2)$. 
	Let $(\BB C , h , \infty)$ be a singly marked unit area $\gamma$-Liouville quantum sphere.
	Let $(\eta_1,\eta_2)$ be a pair of whole-plane space-filling SLEs from $\infty$ to $\infty$, sampled independently from $h$ and then parametrized by $\gamma$-LQG mass with respect to $h$.  
	Let $\perm$ be the permuton associated with $(\eta_1,\eta_2)$ as in~\eqref{eqn-permuton-def}.  
	Assume that either
	\begin{enumerate}
		\item \label{item-indep} $\eta_1$ and $\eta_2$ are independent (viewed modulo time parametrization) and $\eta_1$ has parameter $\kappa_1 > 4$ and $\eta_2$ has parameter $\kappa_2 > 4$; or 
		\item \label{item-ig-gff} $\eta_1$ and $\eta_2$ are the space-filling counterflow lines of the same whole-plane GFF with angles 0 and $\theta-\pi/2$ for some $\theta\in(-\pi/2,\pi/2)$ and both have parameter $\kappa >4$ (when $\gamma^2  = 16/\kappa$, the permuton $\perm$ is a skew Brownian permuton). 
	\end{enumerate}
	Almost surely, the closed support of $\perm$ determines the curve-decorated quantum sphere $(\BB C , h ,  \infty , \eta_1 , \eta_2 )$ up to complex conjugation.  
\end{thm}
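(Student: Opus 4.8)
The plan is to reconstruct the curve-decorated quantum sphere in stages: first the combinatorial/topological data of how the two space-filling curves jointly fill $\mathbb{C}$, then the quantum surface itself with its area measure, and finally the conformal structure that ties them together. The key conceptual point is that a point $(t,\psi(t))\in[0,1]^2$ in the support of $\perm$ records that $\eta_1$ at time $t$ and $\eta_2$ at time $\psi(t)$ are at the same point of $\mathbb{C}$; thus the support of $\perm$ is, roughly, the graph of the change-of-parametrization between the two curves, and its combinatorial structure encodes the ``mating'' of the two space-filling SLEs. I would first establish the announced detailed description of $\op{supp}\perm$ in terms of the multiple points of $\eta_1$ and $\eta_2$: a generic point of $\mathbb{C}$ is visited once by each curve and contributes a single point of the support, while the (countably many) double points and the branch/merge structure of the two space-filling SLEs produce the ``cells'' and ``branch points'' of the support. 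This is the part I expect to carry the most technical weight, since it requires careful use of the imaginary-geometry description of multiple points of space-filling SLE and, in case (\ref{item-indep}), an argument that two \emph{independent} space-filling SLEs, sampled against the same LQG area measure, have almost surely disjoint sets of double points, so that the support is a union of countably many arcs meeting in a tree-like fashion.

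Next, from this combinatorial skeleton I would reconstruct the topological space $\mathbb{C}$ together with the pair of curves up to homeomorphism. The idea is that the support of $\perm$, together with the two coordinate projections, is a ``parametrized'' object from which one builds a quotient: declare $t\sim t'$ (as parameters of $\eta_1$) whenever they correspond to the same point of $\mathbb{C}$, which one can read off from the support because $\eta_1(t)=\eta_1(t')$ forces $(t,\psi(t))$ and $(t',\psi(t'))$ to lie in the same ``vertical fiber structure'' of the support; similarly for $\eta_2$. The resulting quotient of $[0,1]$ (or of the support) is homeomorphic to the sphere, and the images of the two projections give $\eta_1$ and $\eta_2$ as curves on it; the ambiguity is exactly an orientation choice, which is the source of the ``up to complex conjugation'' in the statement. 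Here I would lean on the known fact that the peanosphere/mating-of-trees type data determines the curve-decorated surface topologically, adapted to our two-curve setting.

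Having the topological curve-decorated surface, it remains to upgrade to the \emph{quantum} surface, i.e.\ to recover $h$ (equivalently $\mu_h$) and the conformal structure. The Lebesgue measure on the parameter interval $[0,1]$ pushes forward, under the reconstruction, to the $\gamma$-LQG area measure $\mu_h$ on the reconstructed sphere, since by construction each curve is parametrized by $\mu_h$-mass; so $\mu_h$ is recovered as a measured space. The conformal structure is then pinned down by the characterization of $\gamma$-LQG as the \emph{unique} measure on the topological sphere (up to the stated symmetries) that is locally absolutely continuous with respect to $\gamma$-LQG and is consistent with the curves being whole-plane space-filling SLEs of the given parameters — concretely, one invokes that on the quantum surface the pair $(\eta_1,\eta_2)$ is determined modulo parametrization by $h$ (via imaginary geometry in case (\ref{item-ig-gff}), and trivially in case (\ref{item-indep}) once the independent coupling is fixed), together with a conformal-removability argument showing that the reconstructed homeomorphism from the abstract topological sphere to $\mathbb{C}$ is actually conformal. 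The main obstacle, as noted, is the first step: proving that the support of $\perm$ really does have the clean combinatorial description needed, and in particular handling the double points of the two SLEs — showing that they interact with the support of $\perm$ in exactly the controlled way required, and that in the independent case the two curves' singular sets do not conspire to lose information.
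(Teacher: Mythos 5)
Your first stage is in the right spirit of what the paper actually does: the paper's \cref{thm-permuton-multi-points} shows that $\op{supp}\perm$ determines the intersection sets $\mcl T\mcl M(\eta_i)=\{(s,t):\eta_i(s)=\eta_i(t)\}$, and the technical burden is indeed a fine analysis of how multiple points of the two SLEs interact with the support. One quibble: your claim that in the independent case the two curves have ``almost surely disjoint sets of double points'' is not the right statement (the double-point sets have positive Hausdorff dimension and can meet); what the paper actually uses is that the \emph{triple} points of each curve form a countable set, hence a.s.\ not hit more than once by the other curve, together with first- and last-hitting-time arguments covering double points.

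The serious gap is in your final step, upgrading from a measured topological sphere to a conformal structure. You write that ``the conformal structure is then pinned down by the characterization of $\gamma$-LQG as the unique measure on the topological sphere\ldots\ that is locally absolutely continuous with respect to $\gamma$-LQG''---but this is circular: without a conformal structure there is no $\gamma$-LQG measure to compare against. Your fallback appeal to ``conformal removability'' is the mechanism from the mating-of-trees theorem, and that mechanism crucially uses the peanosphere identification which only works in the matched regime $\kappa=16/\gamma^2$; the entire point of the theorem here is that it also covers mismatched parameters (including the meandric case where conjecturally $\kappa_1=\kappa_2=8$ but $\gamma^2\ne 2$), so the mating-of-trees route is not available. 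The paper instead first reduces to \cref{thm-sle-msrble} (one curve's intersection set $\mcl T\mcl M$ determines the curve-decorated surface up to conjugation, and then $\op{supp}\perm$ recovers the order in which $\eta_2$ visits generic points), and then proves \cref{thm-sle-msrble} by a genuinely different mechanism: from $\mcl T\mcl M$ one reconstructs the adjacency graph $\mcl G^n$ of SLE cells $\eta([x-1/n,x])$, defines a Tutte embedding $\Phi^n$ of $\mcl G^n$ via discrete harmonic measure, and shows $\Phi^n$ converges to (a conformal or anticonformal image of) the a priori embedding using the convergence of random walk on $\mcl G^n$ to Brownian motion, a result from~\cite{gms-tutte} that does not require $\kappa=16/\gamma^2$. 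That random-walk input is precisely what supplies the conformal structure, and it is missing from your proposal. A secondary subtlety you also do not address is that $\mcl T\mcl M$ determines $\mcl G^n$ only as an abstract graph, not as a planar map, which is exactly why extra work is needed to identify the cyclic boundary ordering up to an overall orientation, producing the ``up to complex conjugation'' ambiguity.
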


An equivalent rephrasing of the above conclusion is that there exists a measurable function $F$ such that 
\begin{equation}\label{eq:function-det}
	(\BB C , h ,  \infty , \eta_1 , \eta_2 )_{\mathrm{cc}}= F(\op{supp} \perm),
\end{equation}
where $(\BB C , h ,  \infty , \eta_1 , \eta_2 )_{\mathrm{cc}}$ denotes the equivalence class of 5-tuples $(\BB C , h ,  \infty , \eta_1 , \eta_2 )$ under the equivalence relation whereby\footnote{The specific form of this equivalence relation is standard in LQG theory and is justified by e.g.\ \cite[Theorem 2.8]{bp-lqg-notes}, where is shown that if $f$ is a conformal map, then $\mu_h \circ f^{-1}=\mu_{h\circ f^{-1}+Q\log((f^{-1})')}$.}
\eqb\label{eq:cc}
(\BB C ,h , \infty, \eta_1 , \eta_2 ) \sim_{\mathrm{cc}} \left(\BB C ,h \circ f + Q\log|f'| , \infty, f^{-1} \circ \eta_1 , f^{-1} \circ \eta_2 \right), \quad\text{with}\quad Q = \frac{2}{\gamma} + \frac{\gamma}{2},
\eqe 
whenever\footnote{The notation $\sim_{\mathrm{cc}}$ has been chosen to highlight that we are also considering complex conjugations in the definition of our equivalence classes $(\BB C , h ,  \infty , \eta_1 , \eta_2 )_{\mathrm{cc}}$. As explained in \cref{def-curve-decorated} below, the standard definition of the equivalence classes $(\BB C , h ,  \infty , \eta_1 , \eta_2 )$ only involves complex affine transformation (without complex conjugations).} $f : \BB C\rta \BB C$ is a complex affine transformation \emph{or} a complex affine transformation composed with complex conjugation; equivalently, whenever $f : \BB C\rta \BB C$ is a composition of rotations, scalings, translations, or reflections in $\BB C$.

The permuton $\perm$ cannot determine the curve-decorated quantum surface $(\BB C , h ,  \infty , \eta_1 , \eta_2 )$ -- see \cref{def-curve-decorated} below -- because replacing $(\BB C , h ,  \infty , \eta_1, \eta_2 )$ by $(\BB C , h(\ol{\cdot}) , \infty,  \ol \eta_1, \ol \eta_2)$ does not change $\perm$.

The function $F$ in \cref{eq:function-det} is completely explicit and will be detailed at the end of the paper in \cref{sect:explicit-function}.

\begin{remark}
	As a consequence of \cref{thm-permuton-determ-lqg-sles}, the support of the permuton $\perm$ in \cref{thm-permuton-determ-lqg-sles} determines the permuton $\perm$ itself. Indeed, according to \cref{thm-permuton-determ-lqg-sles}, the support of $\perm$ determines the curve-decorated quantum sphere $(\mathbb{C}, h, \infty, \eta_1, \eta_2)$ up to complex conjugation. This, of course, suffices to determine the permuton $\perm$ via its construction.
\end{remark}

We now discuss three consequences of our result. The first is the following result, whose simple proof can be found at the end of \cref{sect:explicit-function}.
	
\begin{cor}\label{cor:mutual-sing1}
	Let $\perm$ be as in the statement of  \cref{thm-permuton-determ-lqg-sles}.
	Then the laws of the permutons $\perm$ obtained for different values of $(\gamma,\kappa_1,\kappa_2)$ in Assumption~\ref{item-indep}, or for different values of $(\gamma,\kappa, \theta)$ in Assumption~\ref{item-ig-gff}, are mutually singular.
	
	In particular, the laws of the skew Brownian permutons  $\mu_{\rho,q}$  are mutually singular for all $(\rho,q) \in (-1,1) \times (0,1)$.
\end{cor}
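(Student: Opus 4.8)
The plan is to deduce \cref{cor:mutual-sing1} from \cref{thm-permuton-determ-lqg-sles} together with the standard fact that the parameters $\gamma,\kappa_1,\kappa_2$ (resp.\ $\gamma,\kappa,\theta$) can be read off, deterministically and almost surely, from a single sample of the curve-decorated quantum sphere $(\BB C,h,\infty,\eta_1,\eta_2)$. Recall from \cref{eq:function-det} that there is a \emph{single} explicit measurable function $F$, valid for all the parameter values covered by the theorem, with $(\BB C,h,\infty,\eta_1,\eta_2)_{\mathrm{cc}}=F(\op{supp}\perm)$ almost surely. Writing $G:=F\circ\op{supp}$, the law of $(\BB C,h,\infty,\eta_1,\eta_2)_{\mathrm{cc}}$ is the pushforward under $G$ of the law of $\perm$. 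Since mutual singularity is preserved under pushing forward by a common measurable map (if $T_*\mu\perp T_*\nu$ then $\mu\perp\nu$), it suffices to show that the laws of $(\BB C,h,\infty,\eta_1,\eta_2)_{\mathrm{cc}}$ are mutually singular for distinct parameter values; and for this it is enough to construct a measurable map $\Pi$ from equivalence classes of curve-decorated quantum spheres to the parameter space which returns the true parameters almost surely, since then the parameter laws push forward to distinct Dirac masses.

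I would build $\Pi$ componentwise. The value of $\gamma$ is recovered from $(\BB C,h,\infty)$ alone: the $\gamma$-LQG area measure $\mu_h$ is almost surely exact-dimensional, with a Hausdorff dimension (namely $2-\gamma^2/2$) that is a strictly decreasing function of $\gamma$ and that is invariant under the affine maps and complex conjugation appearing in \eqref{eq:cc}, since these act on $\mu_h$ by a bi-Lipschitz pushforward. To recover $\kappa_i$, I would use that, for a whole-plane space-filling $\mathrm{SLE}_{\kappa_i}$ parametrized by $\mu_h$-mass, the outer boundary $\bdy\,\eta_i([0,1/2])$ is described by imaginary geometry as an $\mathrm{SLE}_{16/\kappa_i}$-type curve, and hence has Hausdorff dimension equal to a strictly decreasing function of $\kappa_i$ (namely $1+2/\kappa_i$); the set $\bdy\,\eta_i([0,1/2])$ and its dimension are well-defined functions of the equivalence class because affine maps preserve $\mu_h$-mass and commute with the curves. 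Under Assumption~\ref{item-indep} this is applied to $\eta_1$ and $\eta_2$ separately, which together with $\gamma$ completes $\Pi$; under Assumption~\ref{item-ig-gff} it recovers $\kappa=\kappa_1=\kappa_2$.

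The remaining point — recovering the imaginary-geometry angle $\theta$ under Assumption~\ref{item-ig-gff} from the \emph{joint} law of $(\eta_1,\eta_2)$, whose marginals are identical — is the one I expect to require the most care, and it relies on genuine imaginary-geometry input. One route is to reconstruct the underlying whole-plane GFF $\widehat h$ from $\eta_1$ alone: the left and right outer boundaries of $\eta_1$, traced over all times, are flow lines of $\widehat h$ of two distinct angles started from a dense set of points, which determine $\widehat h$; one then recovers $\theta$ as the angle for which $\eta_2$ is the corresponding counterflow line of $\widehat h$, noting that only the relative angle of $\eta_2$ with respect to $\eta_1$ matters, so the ``rotation'' ambiguity of $\widehat h$ is harmless. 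An alternative, perhaps more robust, route is to take as observable the Hausdorff dimension of a suitably defined intersection set of the outer boundaries of $\eta_1$ and $\eta_2$ (whose relevant angle gap is $\pi/2-\theta$), which by the imaginary-geometry computations of Miller--Sheffield is a strictly monotone function of $\theta$ for fixed $\kappa$. Either way, verifying measurability, the almost-sure identification, and (for the second route) the monotonicity in $\theta$ is where the actual work lies; everything else is bookkeeping on top of \cref{thm-permuton-determ-lqg-sles}. Finally, the statement about the skew Brownian permutons $\mu_{\rho,q}$ follows because $\rho=-\cos(\pi\gamma^2/4)$ is a bijection of $\gamma\in(0,2)$ onto $(-1,1)$ and, by \cite{borga-skew-permuton}, $q\mapsto\theta(q)$ is a bijection of $(0,1)$ onto $(-\pi/2,\pi/2)$, so distinct pairs $(\rho,q)$ correspond to distinct triples $(\gamma,\kappa,\theta)$ with $\kappa=16/\gamma^2$.
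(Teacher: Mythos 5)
Your proposal is essentially correct and follows the same overall strategy as the paper: push the permuton's law through the (parameter-independent) reconstruction of $(\mu_h,\eta_1,\eta_2)$ modulo affine maps and conjugation from \cref{thm-permuton-determ-lqg-sles}, then observe that the parameters are deterministic, equivalence-invariant functionals of the resulting surface, so distinct parameter triples yield mutually singular laws. Your observables for $\gamma$ (Hausdorff dimension of the LQG measure / its support, $2-\gamma^2/2$) and for $\kappa_i$ (Hausdorff dimension $1+2/\kappa_i$ of the boundary of a cell such as $\eta_i([0,1/2])$) are exactly the ones the paper uses. One small imprecision: the paper's $F$ is \emph{not} a single $\gamma$-independent map, since its final step (recovering $h$ from $\mu_h$) uses $\gamma$; what is parameter-independent — and what you actually use — is the map $\op{supp}\perm \mapsto (\mu_h,\eta_1,\eta_2)_{\mathrm{cc}}$, so the argument is fine but the phrasing should be adjusted.

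For recovering $\theta$ under Assumption~\ref{item-ig-gff}, the paper splits into two cases depending on whether the flow lines $\beta_1^L$ and $\beta_2^L$ started at a $\mu_h$-sampled point intersect beyond their starting point: when they do, it uses the intersection dimension (Miller--Wu), and when they do not, it instead distinguishes angles by which auxiliary flow lines hit $\beta_1^L$ but not $\beta_2^L$ (and neither). Your first route (reconstruct $\widehat h$ from $\eta_1$ via \cite[Theorem 1.16]{ig4} and read off the relative angle of $\eta_2$) avoids this case split entirely and is arguably cleaner. Your second route, however, is \emph{not} more robust: for $\kappa\geq 12$ and $\theta$ in the range of~\eqref{eqn-angle-condition} the two boundary flow lines a.s.\ do not intersect at all, so the intersection dimension is constant in $\theta$ on that range and cannot separate angles; this is precisely what forces the paper's case split. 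Since you offer the second route only as an alternative and your first route works, this is not a fatal gap, but the claimed strict monotonicity should be flagged as false without the case distinction.
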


%

\begin{remark}
	In the setting of the skew Brownian permuton $\mu_{\rho,q}$ with $(\rho,q) \in (-1,1) \times (0,1)$, the support of the permuton $\mu_{\rho,q}$ determines the two-dimensional Brownian excursion $(X_t,Y_t)$ in the first quadrant and of correlation $\rho$ used in its original construction. 
	Indeed, according to \cref{thm-permuton-determ-lqg-sles},  the support of $\mu_{\rho,q}$ determines the curve-decorated quantum sphere $(\mathbb{C}, h, \infty, \eta_1, \eta_2)$ up to complex conjugation, and this is enough to (explicitly) determine $(X_t,Y_t)$ thanks to \cite[Theorem 1.1]{sphere-constructions} and \cite[Theorem 1.17]{borga-skew-permuton}.
\end{remark}

We now explain another important consequence of our \cref{thm-permuton-determ-lqg-sles}. The mating-of-trees theorem~\cite{wedges}, due to Duplantier, Miller, and Sheffield, establishes that when $\gamma^2=16/\kappa$, a $\gamma$-Liouville quantum gravity sphere $(\mathbb{C}, h, \infty)$ decorated by an independent space-filling SLE curve $\eta$ of parameter $\kappa$ are both determined by a natural pair of correlated Brownian excursions, constructed from $(\mathbb{C}, h, \infty, \eta)$. This remarkable result has been extensively used to establish connections between discrete models of curve-decorated random planar maps and SLE-decorated Liouville quantum gravity surfaces in the so-called \emph{peanosphere sense}. Proving convergence in the peanosphere sense for these models involves verifying that a certain 2D random walk, which encodes the random planar map under study, converges to the 2D Brownian motion with the correct correlation between its two coordinates, see for instance~\cite{shef-burger, lsw-schnyder-wood, gkmw-burger, kmsw-bipolar, bhs-site-perc}.

This notion of convergence has two important features. First, it allows one to extract properties of the discrete models from the continuum and vice versa, as demonstrated, for instance, in \cite{ghs-map-dist}. Second, it constitutes the first step towards establishing stronger types of convergence \cite{gms-tutte, hs-cardy-embedding}. Notably, the mating-of-trees/peanosphere theory applies exclusively in the regime where $\gamma^2 = 16/\kappa$.

Our \cref{thm-permuton-determ-lqg-sles} admits a similar interpretation. Several models of curve-decorated random planar maps are bijectively encoded by corresponding models of random constrained permutations (e.g., bipolar orientations and Baxter permutations \cite{fps-counting-bipolar}, and meanders and meandric permutations \cite{lacroix-meander-survey}). Thus, permuton convergence for such permutation models can be reinterpreted as a new notion of convergence for the corresponding models of curve-decorated random planar maps towards SLE-decorated Liouville quantum gravity spheres. Crucially, this new notion of convergence is not restricted to the case $\gamma^2 = 16/\kappa$. For instance, it includes the case of random meanders, where, conjecturally, $\gamma^2 \neq 16/\kappa$.

In a similar vein to the mating-of-trees/peanosphere theory, we anticipate that this new notion of convergence will facilitate the transfer of results between the continuum and the discrete, and will serve as a foundational step towards establishing stronger results in the future.

\medskip

We present the third consequence of \cref{thm-permuton-determ-lqg-sles}. We start by recalling the following re-rooting invariance property of the meandric permuton.

\begin{thm}[{\cite[Theorem 1.22]{bgs-meander}}]\label{thm-permuton-re-root}
	Suppose that we are in the setting of  \cref{thm-permuton-determ-lqg-sles} (Assumption~\ref{item-indep}) with $\gamma\in (0,2)$ arbitrary, $\kappa_1 = \kappa_2 =8$. 
	For $t\in [0,1]$, let $\psi(t)$ be the a.s.\ unique
	$s\in [0,1]$ for which $(t,s)$ is in the closed support of $\perm$. Let $\perm_t$ be the pushforward of the measure $\perm$ under the mapping $[0,1]^2 \rta [0,1]^2$ defined by
	\eqb \label{eq:rerooting}
	(u, v) \mapsto \left( u-t   - \lfloor u - t   \rfloor , v - \psi(t)   -  \lfloor v - \psi(t) \rfloor \right) . 
	\eqe
	For each fixed $t\in [0,1]$, we have $\perm_t \eqD \perm$. 
\end{thm}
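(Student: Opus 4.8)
The plan is to recognize $\perm_t$ as the permuton of a \emph{re-rooted} curve-decorated quantum surface and to prove that this re-rooted surface has the same law as the original one. By its construction \eqref{eqn-permuton-def}, $\perm$ depends on $h$ only through $\mu_h$ and on $\eta_1,\eta_2$, and it is invariant under the coordinate changes \eqref{eq:cc} (compare \cref{lem-permuton-defined} and the remark following \cref{thm-permuton-determ-lqg-sles}); hence $\perm$ is a measurable function of the curve-decorated quantum surface $(\mathbb C, h, \infty, \eta_1, \eta_2)$. Setting $z := \eta_1(t) = \eta_2(\psi(t))$ and $\tilde\eta_1 := \eta_1(\cdot + t \bmod 1)$, $\tilde\eta_2 := \eta_2(\cdot + \psi(t) \bmod 1)$ --- whole-plane space-filling SLE$_8$ curves that both start and end at $z$ --- one unwinds \eqref{eq:rerooting} to see that $\perm_t$ is the permuton associated via \eqref{eqn-permuton-def} with $(\mathbb C, h, z, \tilde\eta_1, \tilde\eta_2)$. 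So the theorem reduces to the distributional identity
\begin{equation*}
(\mathbb C, h, z, \tilde\eta_1, \tilde\eta_2) \;\eqD\; (\mathbb C, h, \infty, \eta_1, \eta_2)
\end{equation*}
of curve-decorated quantum surfaces.

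The engine will be a \textbf{single-curve re-rooting invariance}: for a unit-area $\gamma$-quantum sphere $(\mathbb C, h, \infty)$ and an independent whole-plane space-filling SLE$_\kappa$ ($\kappa > 4$) parametrized by $\mu_h$-mass, and a $\mu_h$-typical marked point $w$ (independent of $\eta$ given $h$), re-rooting the time parametrization of $\eta$ at its a.s.\ unique visit time of $w$ leaves the law of $(\mathbb C, h, w, \eta)$ unchanged. I will also use two elementary facts: (i) conditionally on $h$, the pair $(\eta, \eta(U))$ with $U$ uniform on $[0,1]$ has the same law as $(\eta, x)$ with $x$ an independent $\mu_h$-sample, since both have conditional law $\mu_h$ given $(h,\eta)$; and (ii) under the standard convention for the unit-area quantum sphere (see \cref{def-sphere} and \cite{sphere-constructions}), the marked point $\infty$ is itself $\mu_h$-typical, so $(\mathbb C, h, \infty, \eta_1, \eta_2) \eqD (\mathbb C, h, x, \eta_1, \eta_2)$ with $x \sim \mu_h$ independent of $(\eta_1,\eta_2)$ given $h$.

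With these in hand I would argue in two steps, exploiting that $\eta_1$ and $\eta_2$ are independent modulo parametrization and $t$ is independent of $(\eta_1,\eta_2)$, so $\eta_2 \perp (\eta_1,t)$ given $h$. \emph{Step 1: re-root $\eta_1$.} Condition on $(h,\eta_2)$; then $(h,\eta_1)$ is a quantum sphere with an independent whole-plane space-filling SLE$_8$ and $t$ is an independent uniform time, so the single-curve invariance together with fact (i) (applied jointly with the conditioned $\eta_2$) and then fact (ii) give
\begin{equation*}
(\mathbb C, h, z, \tilde\eta_1, \eta_2) \;\eqD\; (\mathbb C, h, x, \eta_1, \eta_2) \;\eqD\; (\mathbb C, h, \infty, \eta_1, \eta_2).
\end{equation*}
\emph{Step 2: re-root $\eta_2$.} Transporting the first identity, in the intermediate surface $(\mathbb C, h, z, \tilde\eta_1, \eta_2)$ the marked point $z$ is $\mu_h$-typical and, conditionally on $(h, z, \tilde\eta_1)$, the curve $\eta_2$ is a fresh whole-plane space-filling SLE$_8$ independent of $z$ given $h$. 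Since $\psi(t)$ is exactly the visit time of $z$ by $\eta_2$, the curve $\tilde\eta_2 = \eta_2(\cdot + \psi(t))$ is the corresponding re-rooting, so the single-curve invariance applied to $\eta_2$ (jointly with the conditioned $\tilde\eta_1$) gives
\begin{equation*}
(\mathbb C, h, z, \tilde\eta_1, \tilde\eta_2) \;\eqD\; (\mathbb C, h, z, \tilde\eta_1, \eta_2).
\end{equation*}
Chaining the last two displays yields the reduction's identity, and therefore $\perm_t \eqD \perm$.

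The hard part will be proving the single-curve re-rooting invariance in the regime $\gamma^2 \neq 16/\kappa$ --- which is precisely the regime of the meandric permuton, where $\kappa = 8$ and $\gamma = \sqrt{(17-\sqrt{145})/3}$. When $\gamma^2 = 16/\kappa$ this invariance can be read off from the peanosphere/mating-of-trees encoding, whose pair of Brownian excursions is manifestly re-rooting invariant; for general $\gamma$ there is no such encoding, and one must argue directly --- e.g.\ by combining the re-rooting invariance of the unit-area $\gamma$-quantum sphere at a $\mu_h$-typical point (a property of the surface alone) with the domain-Markov and translation/scale invariance of whole-plane space-filling SLE$_\kappa$, and verifying compatibility with the conformal weldings and coordinate changes under which quantum surfaces are defined. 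A secondary, bookkeeping, issue is to fix once and for all the equivalence relation on (multiply-)marked curve-decorated quantum surfaces under which all of the identities above are to be read, and to check that adding and forgetting $\mu_h$-typical marked points behaves as claimed.
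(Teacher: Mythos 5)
The theorem you are trying to prove is cited from \cite{bgs-meander} and is not proved in this paper, so there is no in-paper proof to compare against; I can only assess your proposal on its own merits.

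Your reduction is correct: $\perm_t$ is the permuton associated with $(\BB C, h, z, \tilde\eta_1, \tilde\eta_2)$, and the theorem reduces to the identity $(\BB C, h, z, \tilde\eta_1, \tilde\eta_2) \eqD (\BB C, h, \infty, \eta_1, \eta_2)$ of curve-decorated quantum surfaces. Your two-step plan (re-root $\eta_1$, then $\eta_2$) is natural, and Step 2 does go through once Step 1 is granted, since conditionally on $(h,\eta_1,z)$ the curve $\eta_2$ remains an independent space-filling SLE$_8$.

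There is, however, a substantive gap in Step 1. You write that ``$t$ is an independent uniform time,'' but the theorem is stated for each \emph{fixed} $t \in [0,1]$ --- a strictly stronger statement. Your fact (i) concerns $\eta(U)$ for \emph{uniform} $U$, and your stated single-curve invariance is re-rooting at the visit time of a $\mu_h$-typical point $w$; the visit time of such a $w$ is uniform on $[0,1]$, so the input you supply only gives the invariance when the shift is by a uniformly random amount. That is precisely the version this paper uses in its proof of \cref{cor:meandric-perm} (where $T$ genuinely is uniform), but it does not give the fixed-$t$ claim. The assertion that $(h,\eta_1(t))$ is distributed, conditionally on $h$, as $(h,X)$ with $X \sim \mu_h$ independent of $h$, for a \emph{fixed} $t$, is itself the nontrivial content of the re-rooting invariance; it is not a consequence of mass parametrization, nor does it follow from the uniform-$T$ statement by averaging or continuity (``the $t$-average of the laws of $\perm_t$ equals the law of $\perm$'' plus $\perm_0 = \perm$ does not imply each $\perm_t \eqD \perm$). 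So the entire weight of the proof still rests on a fixed-time single-curve re-rooting invariance in the non-peanosphere regime $\gamma^2 \ne 16/\kappa$, which you correctly flag as the hard step but neither supply nor reduce to a statement your tools can reach. To repair the argument you would need either to prove the fixed-$t$ invariance directly (e.g.\ via conformal welding / mating-of-trees-type arguments valid for general $(\gamma,\kappa)$, as is done in \cite{bgs-meander}), or to establish that conditionally on $h$ the law of $\eta_1(t)$ is $\mu_h$ for each fixed $t$, and make that the stated engine rather than the $\mu_h$-typical-point version.
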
 

As a consequence of \cref{thm-permuton-determ-lqg-sles} and \cite[Proposition 5.2]{bgs-meander}, the statement of Theorem~\ref{thm-permuton-re-root} is not true when either $\kappa_1$ or $\kappa_2$ is not equal to 8.

\begin{cor}\label{cor:meandric-perm}
	Suppose that we are in one of the two settings in \cref{thm-permuton-determ-lqg-sles}. 
	Let $T\in [0,1]$ be a uniform random variable independent from $\perm$.
	Assume that, in the notation of Theorem~\ref{thm-permuton-re-root}, we have $\perm_T \eqD \perm$.
	Then $\kappa_1=\kappa_2=8$.  
\end{cor}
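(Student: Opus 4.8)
The plan is to transfer the hypothesis $\perm_T \eqD \perm$ from the level of permutons to the level of curve-decorated quantum surfaces, using \cref{thm-permuton-determ-lqg-sles}, and then to invoke \cite[Proposition 5.2]{bgs-meander} to pin down $\kappa_1$ and $\kappa_2$.

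First I would analyze the effect of the re-rooting map \cref{eq:rerooting} on the permuton. Writing $z = \eta_1(t) = \eta_2(\psi(t))$ for the common point of the two curves at time $t$, a direct computation (the same one underlying \cref{thm-permuton-re-root} in \cite{bgs-meander}) shows that $\perm_t$ is exactly the permuton associated, in the sense of \cref{eqn-permuton-def}, to the pair of curves $(\eta_1^{(t)},\eta_2^{(t)})$ obtained by cyclically re-parametrizing $\eta_1$ and $\eta_2$ so that they start and end at $z$ (note these are still parametrized by $\mu_h$-mass). Applying a Möbius automorphism $\phi$ of $\BB C \cup \{\infty\}$ with $\phi(z)=\infty$ and using the LQG coordinate-change formula behind \cref{eq:cc}, one sees that $\perm_t$ is also the permuton associated to the curves $\phi\circ\eta_1^{(t)}, \phi\circ\eta_2^{(t)}$ — now running from $\infty$ to $\infty$ — computed with respect to the field $h^{(t)} := h\circ\phi^{-1} + Q\log|(\phi^{-1})'|$. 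In other words, $\perm_t$ is the permuton of the curve-decorated quantum surface $(\BB C,h,\infty,\eta_1,\eta_2)$ ``re-rooted at $z$''.

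Now take $T$ uniform and independent of $(h,\eta_1,\eta_2)$. Since $\eta_1$ is parametrized by $\mu_h$-mass and $\mu_h$ is a probability measure independent of the pair of curves viewed modulo time parametrization, the point $z = \eta_1(T)$ is a sample from $\mu_h$. By the re-rooting invariance of the unit-area $\gamma$-quantum sphere at a $\mu_h$-typical point, together with the re-rooting/conformal covariance of whole-plane space-filling SLE (and, under Assumption~\ref{item-ig-gff}, of the imaginary-geometry coupling, the angles $0$ and $\theta-\pi/2$ being preserved under re-rooting), the re-rooted object $(\BB C, h^{(T)}, \infty, \phi\circ\eta_1^{(T)}, \phi\circ\eta_2^{(T)})$ is again a curve-decorated quantum sphere of precisely the form covered by \cref{thm-permuton-determ-lqg-sles}. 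Hence $\perm_T$ is again a permuton of that form, and \cref{thm-permuton-determ-lqg-sles} identifies $\op{supp}\perm_T$ with the curve-decorated quantum sphere re-rooted at $z$, modulo complex conjugation. Combining this with the hypothesis $\perm_T \eqD \perm$ and with \cref{thm-permuton-determ-lqg-sles} applied to $\perm$ itself, we conclude that $(\BB C,h,\infty,\eta_1,\eta_2)$ has the same law, modulo complex conjugation, as its re-rooting at a $\mu_h$-typical point. By \cite[Proposition 5.2]{bgs-meander}, this invariance fails unless $\kappa_1 = \kappa_2 = 8$ (under Assumption~\ref{item-ig-gff}, unless $\kappa = 8$), which gives the claim.

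The main obstacle is the third step: showing that re-rooting $\perm$ really corresponds to re-rooting the decorated quantum sphere at a quantum-typical point within the class of objects to which \cref{thm-permuton-determ-lqg-sles} applies. This requires (i) the re-rooting invariance of the unit-area quantum sphere and the re-rooting theory of whole-plane space-filling SLEs — and, under Assumption~\ref{item-ig-gff}, of the imaginary-geometry coupling — so that $\perm_T$ genuinely falls under \cref{thm-permuton-determ-lqg-sles}, and (ii) tracking the joint law of the re-rooted decorated surface, since the map $\phi$ sending $z$ to $\infty$ depends on $h$ through $z$, so the field and curve parts are no longer manifestly independent. Much of this is already carried out in \cite{bgs-meander} when $\kappa_1=\kappa_2=8$ (in the proof of \cref{thm-permuton-re-root}); here the same analysis is run in reverse, and the only genuinely new point relative to the remark following \cref{thm-permuton-re-root} is that $T$ is random — which is in fact an advantage, since it makes $z = \eta_1(T)$ an honest $\mu_h$-typical point, exactly the situation addressed by \cite[Proposition 5.2]{bgs-meander}.
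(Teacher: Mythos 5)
Your overall strategy is the right one --- re-root $\perm$, translate the re-rooting invariance to the level of the curve-decorated quantum sphere, then invoke \cite[Proposition 5.2]{bgs-meander} --- but the middle step has a genuine gap, and it sits exactly where you flagged the ``main obstacle.''

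You want to apply \cref{thm-permuton-determ-lqg-sles} to $\perm_T$, and for that you argue that the re-rooted object $(\BB C, h^{(T)}, \infty, \phi\circ\eta_1^{(T)}, \phi\circ\eta_2^{(T)})$ is ``again a curve-decorated quantum sphere of precisely the form covered by \cref{thm-permuton-determ-lqg-sles},'' invoking re-rooting/conformal covariance of whole-plane space-filling SLE. That covariance is precisely what fails when $\kappa_i\neq 8$: by the same \cite[Proposition 5.2]{bgs-meander} you cite at the end, for a $\mu_h$-typical $z$ the map $\phi_z\circ\eta_i^z$ has the law of a whole-plane space-filling SLE$_{\kappa_i}$ from $\infty$ to $\infty$ if and only if $\kappa_i=8$. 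So the step that would let you apply \cref{thm-permuton-determ-lqg-sles} to $\perm_T$ already presupposes $\kappa_i=8$, making the argument circular. (The re-rooting invariance of the unit-area quantum sphere that you also invoke is fine and holds for all $\gamma$; it is only the SLE side that breaks.)

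The paper circumvents this by never treating \cref{thm-permuton-determ-lqg-sles} as a black box on $\perm_T$. Instead it uses the explicit, deterministic reconstruction map $F$ of \cref{eq:function-det}: since $F$ is a concrete pointwise procedure (recover $\mcl T\mcl M$, build $\mcl G^n$, Tutte-embed, take a limit) that commutes with the cyclic shift of the permuton support induced by re-rooting, one has $F(\op{supp}\widetilde\perm)=(\BB C, h, Z, \eta_1^Z, \eta_2^Z)_{\mathrm{cc}}$ without needing $\widetilde\perm$ to arise from an SLE-decorated sphere of the hypothesized form. Combining $F(\op{supp}\perm)\eqD F(\op{supp}\widetilde\perm)$ with the known identity $F(\op{supp}\perm)=(\BB C, h, \infty, \eta_1, \eta_2)_{\mathrm{cc}}$, and then carefully handling the complex-conjugation ambiguity (the paper introduces independent random sign flips $\xi,\widetilde\xi$ and works with the triply marked sphere to pin down the M\"obius normalization), one reaches $\phi_Z\circ\eta_i^Z\eqD\eta_i$ for $\mu_h$-typical $Z$, at which point \cite[Proposition 5.2]{bgs-meander} gives $\kappa_1=\kappa_2=8$. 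Your proposal can be repaired along exactly these lines; the fix is to replace ``apply Theorem 1.1 to $\perm_T$'' with ``apply the explicit $F$ to $\op{supp}\perm_T$ and observe what it produces.''
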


The proof of \cref{cor:meandric-perm} can be found at the end of \cref{sect:explicit-function}. We recall from \cite[Section 1.5.3]{bgs-meander} that it is very natural to expect that the permuton limit of uniform meandric permutations is re-rooting invariant, since this property is satisfied at the discrete level. 
Therefore, if the permuton limit of uniform meandric permutations is one of the permutons in \cref{thm-permuton-determ-lqg-sles} (Assumption~\ref{item-indep}) -- as conjectured in \cite[Conjecture 1.8]{bgs-meander} -- then we should have that $\kappa_1=\kappa_2=8$. We do not yet have a special property which singles out the LQG parameter $\gamma =  \sqrt{\frac13 \left( 17 - \sqrt{145} \right)}$.

\medskip

The proof of \cref{thm-permuton-determ-lqg-sles} will follow from a fine analysis of the properties of the support of the permuton $\perm$. In particular, we will need to carefully describe the interplay between the support of the permuton $\perm$ and the multiple points of the two SLEs $\eta_1$ and $\eta_2$, as explained in the next section.

\subsection{Proof strategy and intermediate results: interplay between the support of the permuton and  multiple points of SLEs}

For a parametrized curve $\eta: [0,1] \rta \BB C$, we define the \textbf{intersection set} of $\eta$ by 
\eqb \label{eqn-intersect-set}
\mcl T \mcl M = \mcl T \mcl M(\eta) := \left\{ (s,t) \in [0,1]^2 : \eta(s) =\eta(t) \right\} .
\eqe
We use the letter $\mcl T \mcl M$, because $\mcl T \mcl M(\eta)$ describes the set of pairs of hitting-times ($\mcl T$) of multiple points ($\mcl M$) of $\eta$.
Since $\eta$ is continuous, the set $\mcl T \mcl M$ is closed.

One fundamental step in the proof of \cref{thm-permuton-determ-lqg-sles} -- see below for further explanations -- is the following statement for a single SLE. This result is of independent interest and proved in \cref{sec-multi-det-lqg-andsles}.

\begin{thm} \label{thm-sle-msrble}
	Let $\gamma\in (0,2)$ and $\kappa >4$. Let $(\BB C ,h, \infty)$ be a singly marked unit area $\gamma$-Liouville quantum sphere and let $\eta$ be a whole-plane space-filling SLE$_\kappa$ from $\infty$ to $\infty$ sampled independently from $h$ and then parametrized by $\gamma$-LQG mass with respect to $h$. 
	
	Almost surely, the set $\mcl T \mcl M (\eta)\subset [0,1]^2$ defined in~\eqref{eqn-intersect-set} determines the curve-decorated quantum surface $(\BB C , h ,  \infty , \eta )$ up to complex conjugation.
\end{thm}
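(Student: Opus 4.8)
The plan is to reconstruct the curve-decorated quantum surface $(\BB C, h, \infty, \eta)$ from the intersection set $\mcl T\mcl M(\eta) \subset [0,1]^2$ in two stages: first recover the topological/measure-theoretic data of the space-filling SLE and the LQG sphere as an abstract object, and then upgrade this to an identification modulo complex conjugation. The key conceptual point is that a whole-plane space-filling SLE$_\kappa$ from $\infty$ to $\infty$, for $\kappa > 4$, can be reconstructed from its collection of multiple points together with the ``interface structure'' they encode. Concretely, I would proceed as follows. For each pair $(s,t) \in \mcl T\mcl M$ with $s < t$, the point $x = \eta(s) = \eta(t)$ is a multiple point of $\eta$, and the arc $\eta([s,t])$ is a topological ``bubble'' — a subset of $\BB C$ bounded by the SLE trace — with $\mu_h$-mass exactly $t - s$ (since $\eta$ is parametrized by quantum mass). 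The nesting structure of these bubbles, i.e.\ the partial order on subintervals $[s,t]$ given by containment, is directly readable from $\mcl T\mcl M$: $[s',t'] \subset [s,t]$ as intervals iff the corresponding bubbles are nested. This tree-like (more precisely, for $\kappa \in (4,8)$ branching, and for $\kappa \geq 8$ degenerate) structure, decorated by the masses $t - s$, should determine the ``quantum natural'' structure of $(\BB C, h, \infty, \eta)$.

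The main technical work is to turn this combinatorial/metric skeleton into an actual reconstruction of the quantum surface. First I would use $\mcl T\mcl M$ to reconstruct the curve $\eta$ as a continuous map $[0,1] \to \wh{\BB C}$, up to the ambiguity of post-composition by an orientation-preserving or orientation-reversing homeomorphism of $\BB C$ fixing $\infty$: the idea is that two times $s,t$ are ``close in $\BB C$'' precisely when there exist pairs in $\mcl T\mcl M$ witnessing small bubbles separating/connecting them, so the quotient topology on $[0,1]/\!\sim$, where $s \sim t$ iff $(s,t) \in \mcl T\mcl M$, recovers $\wh{\BB C}$ as a topological space together with the continuous surjection $\eta$. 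At this stage the LQG measure $\mu_h$ is recovered as the pushforward of Lebesgue measure on $[0,1]$ under $\eta$, so we obtain the data $(\BB C, \mu_h, \infty, \eta)$ modulo homeomorphism. The crucial remaining step — and this is where I expect the real difficulty to lie — is to pin down the conformal structure: we need to know that the pair $(\mu_h, \eta)$, as an abstract measure-decorated topological sphere with a marked curve, determines the conformal embedding into $\BB C$ up to Möbius transformations fixing $\infty$ (i.e.\ affine maps) and complex conjugation. This is not a soft statement; it should follow from the fact that the $\gamma$-LQG measure together with a space-filling SLE determines the conformal structure — morally a consequence of mating-of-trees-type results or of the measurability of the conformal structure from $\mu_h$ alone (as $h$ is a measurable function of $\mu_h$ by \cite{bss-equivalence}-style results), combined with the fact that $\eta$ modulo parametrization is determined by the quantum surface plus the coupling.

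More precisely, I would argue as follows for the conformal upgrade. From $(\BB C, \mu_h, \infty, \eta)$ recovered modulo homeomorphism, the measure $\mu_h$ is a $\gamma$-LQG area measure; by the measurability of the field from its area measure together with the fact that the LQG sphere is determined by $\mu_h$ up to the conformal coordinate change \eqref{eq:cc}, we recover $(\BB C, h, \infty)$ up to affine transformation and complex conjugation, and since $\eta$ was recovered as a map into this sphere (as a topological object, hence transported correctly under the conformal coordinate change), we recover $(\BB C, h, \infty, \eta)$ up to complex conjugation. The complex conjugation ambiguity is genuinely present because $\mcl T\mcl M(\eta)$ is invariant under replacing $\eta$ by its complex conjugate (reflection reverses orientation of all bubbles simultaneously but preserves the interval-nesting data), matching exactly the conclusion of the theorem.

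The step I expect to be the main obstacle is showing that the topological data of the nested-bubble structure actually suffices to reconstruct $\eta$ as a continuous curve — i.e.\ that the equivalence relation $s \sim t \iff (s,t) \in \mcl T\mcl M$ has quotient $[0,1]/\!\sim$ homeomorphic to $\wh{\BB C}$ with quotient map $\eta$, with no loss of information. For $\kappa \geq 8$ the curve is simple-ish in the sense that multiple points are only double points, and the quotient is cleaner; for $\kappa \in (4,8)$ there are points of higher multiplicity and the bubble structure is genuinely branching, so one must be careful that $\mcl T\mcl M$, which only records pairs of times, still captures the full local structure near multiple points. I would handle this using the imaginary-geometry description of space-filling SLE — multiple points of $\eta$ correspond to intersection points of the left and right boundary-touching flow lines — and the known dimension/regularity estimates for these, to verify that the ``closeness in $\BB C$'' relation is correctly encoded. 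I would also need to rule out pathological identifications: that $(s,t) \in \mcl T\mcl M$ with $s$ and $t$ far apart does not spuriously collapse a positive-measure set, which follows because $\mu_h$ is non-atomic and the set of times mapping to a fixed multiple point is finite.
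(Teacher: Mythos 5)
Your high-level plan --- first recover the measure-decorated topological sphere $(\widehat{\BB C},\mu_h,\eta)$ abstractly from $\mcl T\mcl M$, then ``upgrade'' to a conformal identification modulo affine maps and conjugation --- correctly identifies the two logical stages, and you even flag the conformal upgrade as the place where the real difficulty lies. But the argument you offer for that step does not work, and this is the gap.

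The issue is that every result you invoke presupposes that the conformal structure is already given. The theorem of \cite{bss-lqg-gff} says that $h$ is a measurable function of $\mu_h$ \emph{as a measure on $\BB C$ with its fixed complex structure}; it does not say that an abstract measure space which happens to be a pushforward of some LQG measure under some homeomorphism determines the conformal embedding. Likewise, appealing to ``mating-of-trees-type results'' is unavailable here because the theorem is stated for arbitrary $\gamma\in(0,2)$ and $\kappa>4$ with $\eta$ independent of $h$; mating of trees applies only when $\gamma^2=16/\kappa$, and the paper explicitly emphasizes that one of the main points of these results is that they work outside that regime (e.g.\ for the meandric permuton). So after step one you have $\mu_h$ and $\eta$ only as abstract objects on a topological sphere, and there is no soft reason why this should pin down the conformal class: an abstract non-atomic measure on a topological $S^2$ admits uncountably many mutually non-affine conformal structures compatible with it.

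The paper supplies exactly the missing mechanism: it discretizes the curve into SLE cells, shows (\cref{lem-graph-permuton}) that $\mcl T\mcl M$ determines the adjacency graph $\mcl G^n$ of those cells \emph{as a graph}, and then uses the invariance principle for random walk on $\mcl G^n$ under the a priori embedding (\cref{prop-rw-conv}, drawing on \cite{gms-tutte,gms-random-walk}) to show that a Tutte embedding $\Phi^n$ of $\mcl G^n$ converges to a conformal (or anticonformal, hence the $\sim_{\mathrm{cc}}$ ambiguity) map (\cref{prop-tutte-conv}). That is the content that reconstructs the conformal structure from purely combinatorial data, and nothing in your proposal replaces it. A secondary point, which your quotient-topology route would also eventually have to confront, is that $\mcl T\mcl M$ does not distinguish ``left'' from ``right'' boundary arcs of $\eta$-segments; the paper spends \cref{sect:rec-geom} (cut times, \cref{lem-bdy-determined}, \cref{lem-bdy-arc-determined}) extracting just enough of the planar-map structure from $\mcl T\mcl M$ to set up the Tutte embedding despite this. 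Without some version of the random-walk-to-Brownian-motion step, your step two is unsupported.
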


The set $\mcl T \mcl M(\eta)$ does not determine the curve-decorated quantum surface $(\BB C , h ,  \infty , \eta )$ since replacing $(\BB C , h ,  \infty , \eta )$ by $(\BB C , h(\ol{\cdot}) , \infty,  \ol \eta)$ does not change $\mcl T \mcl M(\eta)$. 

\medskip

To prove Theorem~\ref{thm-sle-msrble}, we will first consider the graph  
\[\mcl G^n = (\mcl V\mcl G^n, \mcl E\mcl G^n),\]
where each vertex $x \in \mcl V\mcl G^n = (0,1] \cap \frac{1}{n} \BB Z$ corresponds to the SLE cell $\eta ([x-1/n,x])$. Two vertices $x, y \in \mcl V\mcl G^n$ are joined by an edge if and only if $\eta ([x-1/n,x])$ and $\eta ([y-1/n,y])$ intersect along a non-trivial boundary arc. We point out that when $\gamma^2 = 16/\kappa$ and $\mcl G^n$ is viewed as a planar map rather than just a graph, $\mcl G^n$ is the so-called \emph{mated-CRT map}~\cite{ghs-dist-exponent,gms-tutte}.

Next, we will show that $\mcl T \mcl M$ determines\footnote{We are only able to show that $\mcl T \mcl M$ determines $\mcl G^n$ as a \emph{graph}, not as a planar map (this is related to the fact that the curve-decorated LQG surface is determined by $\mcl T \mcl M$ only up to complex conjugation).} $\mcl G^n$. We will then define an embedding function $\Phi^n : \mcl V\mcl G^n \to \BB C$ in terms of observables related to the random walk on $\mcl G^n$ (in particular, we will consider a version of the Tutte embedding of $\mcl G^n$ as in~\cite{gms-tutte}).

Since the random walk on $\mcl G^n$, under the a priori embedding $x \mapsto \eta (x)$, converges to Brownian motion modulo time parametrization, as established\footnote{The result in \cite{gms-tutte} is only stated for the mated-CRT map case, but the proof does not require that $\gamma^2 = 16/\kappa$.} in~\cite{gms-tutte} (see also Proposition~\ref{prop-rw-conv} below), we will deduce that $\Phi^n$ is in some sense close to the a priori embedding $x \mapsto \eta (x)$ when $n$ is large. Since $\mcl G^n$ is determined by $\mcl T \mcl M$, this will lead to a proof of Theorem~\ref{thm-sle-msrble}.

\medskip

In \cref{sect:deduction}, we will easily deduce \cref{thm-permuton-determ-lqg-sles}  combining the next result, proved in \cref{sect:supp_perm}, with \cref{thm-sle-msrble}.


\begin{thm} \label{thm-permuton-multi-points}
	Let $\gamma \in (0,2)$. 
	Let $(\BB C , h , \infty)$ be a singly marked unit area $\gamma$-Liouville quantum sphere.
	Let $(\eta_1,\eta_2)$ be a pair of whole-plane space-filling SLEs from $\infty$ to $\infty$, sampled independently from $h$ and then parametrized by $\gamma$-LQG mass with respect to $h$.  
	Let $\perm$ be the permuton associated with $(\eta_1,\eta_2)$ as in~\eqref{eqn-permuton-def}.  
	Assume that either
	\begin{enumerate}
		\item \label{item-indep2} $\eta_1$ and $\eta_2$ are independent (viewed modulo time parametrization) and $\eta_1$ has parameter $\kappa_1 > 4$ and $\eta_2$ has parameter $\kappa_2 > 4$; or 
		\item \label{item-ig-gff2} $\eta_1$ and $\eta_2$ are the space-filling counterflow lines of the same whole-plane GFF with angles 0 and $\theta-\pi/2$ for some $\theta\in(-\pi/2,\pi/2)$ and both have parameter $\kappa >4$ (when $\gamma^2  = 16/\kappa$, the permuton $\perm$ is a skew Brownian permuton).   
	\end{enumerate}
	Almost surely, the closed support of $\perm$ determines the sets $\mcl T \mcl M_1 = \mcl T \mcl M(\eta_1)$ and $\mcl T \mcl M_2 = \mcl T \mcl M(\eta_2)$ defined in~\eqref{eqn-intersect-set}.
\end{thm}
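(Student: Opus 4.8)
The plan is to pin down the closed support of $\perm$ explicitly as
\[
\op{supp}\perm = \mcl Z := \left\{ (t,s)\in[0,1]^2 : \eta_1(t)=\eta_2(s)\right\}\qquad\text{a.s.},
\]
and then read $\mcl T\mcl M_1$ and $\mcl T\mcl M_2$ off from this. Granting the displayed identity, $\mcl Z$ is closed (since $\eta_1,\eta_2$ are continuous), and for $t_1,t_2\in[0,1]$ one has $(t_1,t_2)\in\mcl T\mcl M_1$ exactly when $\eta_1(t_1)=\eta_1(t_2)=:z$ for some $z\in\BB C$; as $\eta_1,\eta_2$ are space-filling (hence surjective onto $\BB C$) there is then $s$ with $\eta_2(s)=z$, so $(t_1,s),(t_2,s)\in\mcl Z=\op{supp}\perm$, and conversely $(t_1,s),(t_2,s)\in\op{supp}\perm$ forces $\eta_1(t_1)=\eta_2(s)=\eta_1(t_2)$. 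Hence $\mcl T\mcl M_1=\{(t_1,t_2): \exists\, s\in[0,1]\text{ with }(t_1,s),(t_2,s)\in\op{supp}\perm\}$, and symmetrically for $\mcl T\mcl M_2$; these are closed (images of closed subsets of $[0,1]^3$ under the projection that forgets the compact $s$-coordinate) and are explicit, measurable functions of $\op{supp}\perm$. So the whole statement reduces to the displayed description of $\op{supp}\perm$.

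The inclusion $\op{supp}\perm\subseteq\mcl Z$ I expect to be soft and to hold surely once $\eta_1,\eta_2$ are continuous: if $\eta_1(t)\neq\eta_2(s)$, pick disjoint balls $B_1\ni\eta_1(t)$, $B_2\ni\eta_2(s)$ and $\ep>0$ so small that $\eta_1$ maps $[t-\ep,t+\ep]\cap[0,1]$ into $B_1$ and $\eta_2$ maps $[s-\ep,s+\ep]\cap[0,1]$ into $B_2$; then no $u$ can satisfy both $u\in(t-\ep,t+\ep)$ and $\psi(u)\in(s-\ep,s+\ep)$, since that would put $\eta_1(u)=\eta_2(\psi(u))$ in $B_1\cap B_2=\emptyset$, so by \eqref{eqn-permuton-def} the box $(t-\ep,t+\ep)\times(s-\ep,s+\ep)$ has zero $\perm$-mass and $(t,s)\notin\op{supp}\perm$.

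The reverse inclusion $\mcl Z\subseteq\op{supp}\perm$ is the substantive part. Since $\eta_1,\eta_2$ are parametrized by $\mu_h$-mass, each pushes Lebesgue measure on $[0,1]$ to $\mu_h$, so $\perm=(\phi,\chi)_*\mu_h$ where $\phi,\chi:\BB C\to[0,1]$ are the $\mu_h$-a.e.\ defined inverses of $\eta_1,\eta_2$; in particular $(\phi(z),\chi(z))\in\op{supp}\perm$ for $\mu_h$-a.e.\ $z$, and such $z$ are dense in $\BB C$ because $\mu_h$ charges every open set. Given $(t_0,s_0)\in\mcl Z$ with $z_0:=\eta_1(t_0)=\eta_2(s_0)$, I would approximate $z_0$ by $\mu_h$-typical points $z_n\to z_0$; by compactness of $[0,1]^2$ and continuity of $\eta_1,\eta_2$, every subsequential limit of $(\phi(z_n),\chi(z_n))$ lies in $\eta_1^{-1}(z_0)\times\eta_2^{-1}(z_0)$, and it lies in $\op{supp}\perm$ by closedness. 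When $z_0$ is a simple point of both $\eta_1$ and $\eta_2$ these preimages are singletons, so $(\phi(z_n),\chi(z_n))\to(t_0,s_0)$ and we are done. The difficulty --- and the only place where Assumptions~\ref{item-indep2} and~\ref{item-ig-gff2} are used --- is when $z_0$ is a multiple point of $\eta_1$ and/or $\eta_2$: then $\phi$ (resp.\ $\chi$) is discontinuous at $z_0$, and to recover the \emph{particular} pair $(t_0,s_0)$ I would need $\mu_h$-typical $z_n\to z_0$ with $\phi(z_n)\to t_0$ and $\chi(z_n)\to s_0$ simultaneously. Equivalently --- using that $\perm\big((t_0-\ep,t_0+\ep)\times(s_0-\ep,s_0+\ep)\big)=\mu_h\big(\eta_1((t_0-\ep,t_0+\ep))\cap\eta_2((s_0-\ep,s_0+\ep))\big)$ and that the boundary of a space-filling-SLE cell has zero $\mu_h$-mass --- I must show that for every $\ep>0$ the two SLE cells $\eta_1([t_0-\ep,t_0+\ep])$ and $\eta_2([s_0-\ep,s_0+\ep])$, which share the point $z_0$, have overlapping interiors. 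I expect this cell-overlap statement at common multiple points to be the main obstacle.

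To establish cell-overlap I would pass to the local picture at $z_0$: shrinking $\ep$, the relevant pieces of $\eta_1$ and $\eta_2$ near $z_0$ are bounded by the finitely many SLE-type boundary arcs through $z_0$, and overlapping interiors is implied by a transverse crossing of the $\eta_1$-arc and the $\eta_2$-arc at $z_0$. Under Assumption~\ref{item-indep2}, conditionally on $\eta_1$ the $\eta_2$-side arc is an independent SLE-type curve, and one shows (using standard properties of $\op{SLE}_{\kappa_2}$ and its flow-line decomposition) that such a curve a.s.\ crosses, rather than bounces off, the $\eta_1$-side configuration wherever the two meet. Under Assumption~\ref{item-ig-gff2}, the $\eta_1$- and $\eta_2$-side arcs are flow and counterflow lines of a common whole-plane GFF at angles determined explicitly by $0$ and $\theta-\pi/2$, and the crossing is read off from the imaginary-geometry interaction rules of Miller and Sheffield~\cite{ig4}; this case needs the most care, since flow lines of a single field can interact rigidly, and one must check that the relevant angle differences never land in the degenerate range that would leave the two cells on opposite sides of a common boundary arc. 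Since the crossing/overlap must hold simultaneously for all $(t_0,s_0)\in\mcl Z$ and all $\ep>0$, I would prove it as a single almost sure event --- of the form ``a.s., $\eta_2$ crosses every boundary arc of every $\eta_1$-cell that it meets'' --- and combine it with continuity of $\eta_1,\eta_2$ and closedness of $\op{supp}\perm$ and $\mcl Z$. As advertised in the introduction, carrying out this local analysis also yields the promised fine description of how $\op{supp}\perm$ sits relative to the multiple points of $\eta_1$ and $\eta_2$, and feeds directly into the proof of \cref{thm-sle-msrble} via the cell graph $\mcl G^n$ (see also \cite{gms-tutte}).
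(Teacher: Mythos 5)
Your overall plan hinges on the claimed a.s.\ identity $\op{supp}\perm = \mcl Z := \{(t,s) : \eta_1(t)=\eta_2(s)\}$, and this identity is actually \emph{false} in most of the parameter range covered by the theorem. The paper makes this failure explicit: in the coupled (GFF) case, Proposition~\ref{prop-counterflow-dichotomy}(\ref{item-counterflow-good2}) and Proposition~\ref{prop-counterflow-dichotomy2} show that whenever $\kappa\in(4,8)$, or $\kappa\ge 8$ but the angle condition~\eqref{eqn-angle-condition} fails, there are a.s.\ uncountably many points of $\mcl Z$ (indeed already of $\ol{\{(t,\psi_-(t))\}}$) that are \emph{not} in $\op{supp}\perm$. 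Even in the independent case the equality is only proved when at least one of $\kappa_1,\kappa_2$ is $\geq 8$ (Proposition~\ref{prop-permuton-full}); for both $\kappa_i\in(4,8)$ the paper proves only $\op{supp}\perm=\ol{\{(t,\psi_-(t))\}}$ (Proposition~\ref{prop-permuton-support}), which is enough via Lemma~\ref{lem-closure-to-intersect} but is weaker than $\mcl Z$. You acknowledge vaguely that in the coupled case ``one must check that the relevant angle differences never land in the degenerate range,'' but they \emph{do} land there: this is the content of Lemma~\ref{lem-angle-condition} and Lemma~\ref{lem-counterflow-bad}, and it forces the two cells to lie on opposite sides of a shared boundary arc, killing the overlap you are relying on. So the ``substantive part'' of your argument, the inclusion $\mcl Z\subseteq\op{supp}\perm$, is not merely harder than you expect --- it is false as stated.

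Consequently your clean recipe $\mcl T\mcl M_1=\{(t_1,t_2):\exists\,s,(t_1,s),(t_2,s)\in\op{supp}\perm\}$ does not follow from what can actually be proved. The paper's actual route is more delicate: it shows that even though $\op{supp}\perm$ can miss some points of $\mcl Z$, it still a.s.\ determines $\mcl T\mcl M_1$, by (a) the observation that a merge point of $\eta_1$ cannot be hit more than once by $\eta_2$ (Lemma~\ref{lem:triple_are_simple}), so the problematic points are only the simultaneous $m$-tuple points of both curves; (b) a case analysis at simultaneous double points showing at least three of the four corners $(t^1_i,t^2_j)$ lie in $\op{supp}\perm$, which is still enough to extract the two $\eta_1$-hitting times via a common $q$ (Case 2 of the proof of Theorem~\ref{thm-permuton-multi-points}); and (c) for $\kappa\in(4,8)$, a chaining argument through a finite sequence of support points $(t^1_{i_\ell},t^2_{j_\ell})$ alternating constant first/second coordinate (Case 3, built on Proposition~\ref{prop-counterflow-dichotomy2} and the local-diagram analysis of Sections~\ref{sect:m-tuples}--\ref{sect:eight-flow}). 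In other words, the ``existence of a common $q$'' test must be replaced by a transitive-closure-type test, and establishing that this transitive closure suffices requires the full flow-line-interaction analysis. To repair your proof you would need to drop the claim $\op{supp}\perm=\mcl Z$, replace it by the precise characterization of which $(t^1_i,t^2_j)$ survive (Propositions~\ref{prop-permuton-support}, \ref{prop-counterflow-dichotomy}, \ref{prop-counterflow-dichotomy2}), and then argue, as the paper does, that the surviving points still determine the equivalence relation $\eta_1(s)=\eta_1(t)$ by chaining.
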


We now explain the strategy outlined in \cref{sect:supp_perm} for proving the above theorem and highlight several partial results that we believe are of independent interest. We start with three important, and simple to prove, facts which will determine what we need to show to establish \cref{thm-permuton-multi-points}.

\medskip

\noindent\underline{\emph{Fact 1}:} As shown in \cref{lem-closure-to-intersect}, 
the intersection set $\mcl T \mcl M_1 = \mcl T \mcl M(\eta_1)$ from~\eqref{eqn-intersect-set} is a.s.\ determined by the set 
\[
\ol{\{ (t,\psi_-(t)) : t\in [0,1]\}}, \quad\text{ with }\quad \psi_-(t) := \inf\left\{ s \in  [0,1]: \eta_2(s) = \eta_1(t) \right\},
\]
that is, $\psi_-(t)$ is the first time that the curve $\eta_2$ hits the point $\eta_1(t)$.

\medskip

\noindent\underline{\emph{Fact 2}:} As shown in the second assertion of \cref{lem-permuton-defined}, for any choice of the function $\psi$ from~\eqref{eqn-psi-property},  
\begin{equation}\label{eq:wewfihbw9euobfwu0}
	\op{supp} \perm \subset \ol{\{(t,\psi(t)) : t\in [0,1]\} }\subset \{(t,s) \in [0,1] : \eta_1(t) = \eta_2(s) \}  .
\end{equation}
But, in general, both inclusions in~\eqref{eq:wewfihbw9euobfwu0} can potentially be strict. For example, if $\eta_1 = \eta_2$ then $\op{supp} \perm$ is the diagonal in $[0,1]^2$. The third set in~\eqref{eq:wewfihbw9euobfwu0} includes off-diagonal points, which arise from pairs of distinct times $(t,s)$ such that $\eta_1(t) = \eta_1(s)$ or $\eta_2(t) = \eta_2(s)$. The middle set in~\eqref{eq:wewfihbw9euobfwu0} can include off-diagonal points or not, depending on the choice of $\psi$.

\medskip

\noindent\underline{\emph{Fact 3}:} As shown in the third assertion in  \cref{lem-permuton-defined}, if $(t,s) \in [0,1]^2$  are such that $\eta_1(t) = \eta_2(s)$ and $\eta_2(s)$ is hit only once by $\eta_2$, then $(t,s) $ belongs to the support of $\perm$; and the same is true with the roles of $\eta_1$ and $\eta_2$ interchanged.

This tells us that multiple points of $\eta_1$ and $\eta_2$ are the \emph{only} potential reason why the inclusions in~\eqref{eq:wewfihbw9euobfwu0} can fail to be equalities.

\medskip

From here, the proof of \cref{thm-permuton-multi-points} diverge according to the two cases in Assumptions~\ref{item-indep2}~and~\ref{item-ig-gff2}.

\medskip

\noindent\underline{\emph{Case in Assumption~\ref{item-indep2}, that is, when $\eta_1$ and $\eta_2$ are independent}:} Combining Facts 1 and 2, one can see that  \cref{thm-permuton-multi-points} will immediately follow if we can show that
\eqb \label{eq:weydfvyuiwe}
\op{supp} \perm \supset \ol{\{(t,\psi_-(t)) : t\in [0,1]\} }. 
\eqe
Morover, thanks to Fact 3, to prove the latter equality,  one needs a fine analysis of the behavior of the two SLEs $(\eta_1,\eta_2)$ around their multiple points. 

Since from Assertion 1 in Lemma~\ref{lem-permuton-defined}, we know that for each rectangle $[a,b]\times[c,d] \subset [0,1]^2$, 
\eqbn 
\perm\left([a,b]\times[c,d]\right) = \mu_h\left( \eta_1([a,b]) \cap \eta_2([c,d]) \right)  ,
\eqen
to show that $(t,\psi_-(t))\in \op{supp} \perm$, it is enough to show that a.s.\ whenever $(a,b)\times(c,d)\subset [0,1]^2$ is a rectangle which contains a point of the form $(t,\psi_-(t))$, then $\mu_h(\eta_1([a,b]) \cap \eta_2([c,d])) > 0$. To obtain this, roughly speaking, it is enough to show that when $\eta_2$ hits $\eta_1(t)$, then $\eta_2$ must immediately fill an open subset contained in $\eta_1([a,b])$. This result will be achieved by proving several lemmas in \cref{sec-permuton-support} regarding how space-filling SLEs hit certain points, thereby  obtaining the desired equality in \eqref{eq:weydfvyuiwe} (\cref{prop-permuton-support}).

An interesting consequence (\cref{prop-permuton-full}) is that if at least one among $\kappa_1$ and $\kappa_2$ is greater or equal than $8$ then, almost surely,
\eqbn
	\op{supp} \perm = \ol{\{(t,\psi_-(t)) : t\in [0,1]\} } = \{(t,s) \in [0,1] : \eta_1(t) = \eta_2(s) \}  .
\eqen

\medskip

\noindent\underline{\emph{Case in Assumption~\ref{item-ig-gff2}, that is, when $\eta_1$ and $\eta_2$ are counterflow lines of the same whole-plane GFF}:} The proof in this case is much more involved, the reason being that the equality in \eqref{eq:weydfvyuiwe} is true if and only if  
\begin{equation}\label{eq:ewivbdfb}
	\kappa \geq 12
	\quad\text{and}\quad 
	\theta \in \left[ - \frac{(\kappa-12)\pi}{2(\kappa-4)} ,  \frac{(\kappa-12)\pi}{2(\kappa-4)}  \right],
\end{equation}
as shown in Assertion 1 of \cref{prop-counterflow-dichotomy}.

When the pair $(\kappa,\theta)$ does not satisfy \eqref{eq:ewivbdfb}, we adopt an \emph{ad hoc} approach for different types of multiple points of $\eta_1$ and $\eta_2$.  When $\kappa\geq 8$ the situation is a bit simpler: Indeed, in this regime, the space-filling SLEs $\eta_1$ and $\eta_2$ only have either double points or merge points (some specific triple points described in \cref{sect:SLES}). 
We will show in \cref{lem:triple_are_simple} that a merge point for $\eta_1$ cannot be hit more than once by $\eta_2$. This property of merge points, which we believe to be of independent interest, will help us when $\kappa\geq 8$, but in the case when $\kappa\in(4,8)$ we will need to perform a very fine analysis of multiple points of SLEs, as done in Sections~\ref{sect:m-tuples}~and~\ref{sect:eight-flow}.

We conclude by highlighting one fact which illustrates what are some of the difficulties in establishing \cref{thm-permuton-multi-points} when the pair $(\kappa,\theta)$ does not satisfy \eqref{eq:ewivbdfb}. Moreover, we found the next property quite interesting (and rather surprising at the same time).

\begin{prop}
	In the setting of \cref{thm-permuton-multi-points}, assume that  $\eta_1$ and $\eta_2$ are the space-filling counterflow lines of the same whole-plane GFF with angles 0 and $\theta-\pi/2$ for some $\theta \in (-\pi/2,\pi/2)$.
	Let $z$ be a multiple point both for $\eta_1$ and $\eta_2$; this is possible when 
	\[\kappa\in(4,12)\qquad\text{or}\qquad\kappa\geq 12 \text{ and }\theta \notin \left[ - \frac{(\kappa-12)\pi}{2(\kappa-4)} ,  \frac{(\kappa-12)\pi}{2(\kappa-4)} \right].\]
	Let  $t^1_1,t^1_2,\dots,t^1_{m_1}$ be the $m_1\geq 2$ times when $\eta_1$ hits $z$ and $t^2_1,t^2_2,\dots,t^2_{m_2}$ be the $m_2\geq 2$ times when $\eta_2$ hits $z$. Then
	\begin{itemize}
		\item almost surely, there exists a pair $(i,j)$ such that the point $(t^1_i,t^1_j)$ of the unit square $[0,1]^2$ is \emph{not} included in the support of the permuton $\perm$;
		\item almost surely, the support of the permuton $\perm$   contains enough points among $\{(t^1_i,t^1_j)\}_{i,j}$ so that it is always possible to establish that 
		\begin{equation*}
			\eta_1(t^1_i)=\eta_2(t^2_j),\quad\text{for all}\quad i\leq m_1 \text{ and } j\leq m_2.
		\end{equation*}
	\end{itemize}  
\end{prop}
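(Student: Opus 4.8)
The plan is to reduce both assertions to the combinatorics of the local configuration of $\eta_1$ and $\eta_2$ at $z$, relying on the description of multiple points of space-filling SLE developed in \cref{sect:m-tuples}--\cref{sect:eight-flow}. Write $z=\eta_1(t^1_i)=\eta_2(t^2_j)$ for all $i,j$. By Assertion~1 of \cref{lem-permuton-defined} we have $\perm([a,b]\times[c,d])=\mu_h(\eta_1([a,b])\cap\eta_2([c,d]))$ for every rectangle, so $(t^1_i,t^2_j)\in\operatorname{supp}\perm$ if and only if $\mu_h\bigl(\eta_1([t^1_i-\epsilon,t^1_i+\epsilon])\cap\eta_2([t^2_j-\delta,t^2_j+\delta])\bigr)>0$ for all $\epsilon,\delta>0$. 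In particular, which of the $m_1m_2$ points $(t^1_i,t^2_j)$ belong to $\operatorname{supp}\perm$ is determined entirely by the joint local pictures of $\eta_1$ and $\eta_2$ near $z$ — the finitely many GFF flow lines through $z$, the local cells they cut out, and the order and the side from which each curve enters and leaves $z$ at each of its visits — which is exactly the data controlled by the multiple-point analysis. That analysis produces an explicit combinatorial criterion describing $\{(i,j):(t^1_i,t^2_j)\in\operatorname{supp}\perm\}$; I will phrase the two bullets in terms of the bipartite graph $H$ on $\{t^1_1,\dots,t^1_{m_1}\}\sqcup\{t^2_1,\dots,t^2_{m_2}\}$ with $t^1_i\sim_H t^2_j\iff(t^1_i,t^2_j)\in\operatorname{supp}\perm$: the first bullet asserts that $H$ is not the complete bipartite graph, the second that $H$ is connected.

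For the first bullet, the relevant feature is that $\eta_\iota$ visits $z$ ``one-sidedly'': in a sufficiently small neighbourhood of $z$, the trace $\eta_\iota([t^\iota_i-\epsilon,t^\iota_i+\epsilon])$ of a single visit only fills the local cells adjacent to one particular flow line through $z$, and collapses onto that flow line as $\epsilon\to0$. Since $z$ is hit at least twice by each of $\eta_1$ and $\eta_2$, each curve uses at least two flow lines at $z$; a finite case analysis of the admissible flow-line configurations around $z$ — keeping track of which lines are shared by the two curves, and dealing with the small-multiplicity cases directly — then yields a visit $i$ of $\eta_1$ and a visit $j$ of $\eta_2$ whose traces are eventually contained in local cells that do not overlap near $z$. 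For such $(i,j)$, the intersection $\eta_1([t^1_i-\epsilon,t^1_i+\epsilon])\cap\eta_2([t^2_j-\delta,t^2_j+\delta])$ has zero $\mu_h$-mass for all small $\epsilon,\delta$, so $(t^1_i,t^2_j)\notin\operatorname{supp}\perm$.

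For the second bullet, observe first that each edge of $H$ is self-certifying: if $(t^1_i,t^2_j)\in\operatorname{supp}\perm$ then $\eta_1(t^1_i)=\eta_2(t^2_j)$ by the inclusion $\operatorname{supp}\perm\subset\{(t,s):\eta_1(t)=\eta_2(s)\}$ from Assertion~2 of \cref{lem-permuton-defined}, and this is read off from $\operatorname{supp}\perm$ alone. Hence, once $H$ is known to be connected, one obtains $\eta_1(t^1_i)=\eta_2(t^2_j)$ for \emph{all} $(i,j)$ by transporting these equalities along a path of $H$, which is precisely the assertion. Connectedness of $H$ in turn follows from the local structure at $z$: the cells of $\eta_1$ and the cells of $\eta_2$ are two partitions of a punctured neighbourhood of $z$ admitting a common refinement, namely the partition into the sectors cut out by \emph{all} the flow lines through $z$, so their incidence graph is connected; the lemmas on multiple points then ensure that enough of the pairs $(t^1_i,t^2_j)$ associated with incident cells actually lie in $\operatorname{supp}\perm$ for this connectivity to pass to $H$.

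The main obstacle is the local step itself — deciding, for every admissible configuration of flow lines at $z$, exactly which traces $\eta_1([t^1_i-\epsilon,t^1_i+\epsilon])$ and $\eta_2([t^2_j-\delta,t^2_j+\delta])$ intersect in positive $\mu_h$-mass as $\epsilon,\delta\to0$. For $\kappa\ge8$ this is relatively short, since $z$ can only be a double point or a merge point for each curve and \cref{lem:triple_are_simple} already restricts merge points, so only a bounded list of configurations arises. The genuinely hard regime is $\kappa\in(4,8)$, where $z$ may have arbitrarily high multiplicity for each of $\eta_1$ and $\eta_2$ and the two counterflow lines of different angles $0$ and $\theta-\pi/2$ interact in an intricate way near $z$; this is exactly the fine analysis of \cref{sect:m-tuples}--\cref{sect:eight-flow}. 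Given that input, both bullets reduce to the elementary combinatorics of cyclic flow-line configurations indicated above.
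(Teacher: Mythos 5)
Your proposal correctly identifies the skeleton of what the paper does: the statement is proved in the paper through \cref{prop-counterflow-dichotomy}, \cref{prop-counterflow-dichotomy2}, and especially \cref{lem:points_in_support}, and your bipartite-graph framing (first bullet $=$ $H$ not complete bipartite, second bullet $=$ $H$ connected, with edges read off from $\op{supp}\perm$ and self-certifying by \cref{lem-permuton-defined}) is an accurate and clean conceptual compression of the paper's ``sequence of times $(t^1_{i_\ell},t^2_{j_\ell})$'' formulation. You also correctly identify where the real work lies (the $\kappa\in(4,8)$ local analysis of \cref{sect:m-tuples}--\cref{sect:eight-flow}), and you explicitly defer to it.

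The one place your sketch is genuinely misleading is the connectivity heuristic for the second bullet. You argue that the cells of $\eta_1$ and of $\eta_2$ around $z$ are two partitions of a punctured neighbourhood with a common refinement, hence ``their incidence graph is connected,'' and that this ``passes to $H$.'' The first half is a triviality (any two partitions of a connected set have connected incidence graph), so as stated it supplies no information. The nontrivial content is precisely the translation from cell-incidence to support-membership of pairs of \emph{hitting times}, and this translation is not automatic: the pair $(t^1_i,t^2_j)$ lies in $\op{supp}\perm$ iff the small arcs $\eta_1([t^1_i-\ep,t^1_i+\ep])$ and $\eta_2([t^2_j-\ep,t^2_j+\ep])$ overlap with positive $\mu_h$-mass, and whether they do depends on the precise interleaving of the eight flow lines $\beta^{L/R}_{u/v,1/2}$ in the local diagram of $z$ (recorded via \cref{lem:m-tuple-points}, \cref{lem:how_hits_points}, and \cref{lem:flow_lines_crossing}), not merely on whether the adjacent regions share boundary. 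Indeed \cref{lem:points_in_support} shows that many incident-cell pairs are \emph{excluded} from the support, and the connectivity survives only because the pairs that \emph{are} included organize themselves into the explicit blue path of \cref{fig-points-in-support-even-1}. Your ``then ensure'' phrase acknowledges this deferral, but the common-refinement framing as written would let a reader believe the connectivity is formal, when it is in fact the proposition's main technical content. If you want to keep the bipartite-graph language, the honest statement is: each admissible local diagram of $z$ produces an explicit edge set for $H$ (\cref{lem:points_in_support} and its omitted analogues), and one then \emph{verifies} by inspection of the diagram that this edge set is connected but not complete.
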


We refer the curious reader to Propositions~ \ref{prop-counterflow-dichotomy}~and~\ref{prop-counterflow-dichotomy2} and \cref{fig-spiral-perm} for more precise (but technical) statements of the result above. See also \cref{sect:explicit-function}.

\begin{ack}
	We thank Scott Sheffield for his help in understanding the boundary values of imaginary geometry flow lines, and Morris Ang, Jason Miller and Xin Sun for helpful discussions. J.B.\ was partially supported by NSF grant DMS-2441646. E.G. was partially supported by a Clay research fellowship and by NSF grant DMS-2245832.
\end{ack}

\section{Space-filling whole-plane SLEs and LQG} 
\label{sec-sle}

This section is devoted to review several definitions and preliminary results about space-filling whole-plane SLEs and LQG.
	
\subsection{Basic definitions and facts about space-filling whole-plane SLEs}\label{sect:SLES}

Let $\kappa > 4$. The \textbf{whole-plane space-filling SLE$_\kappa$} from $\infty$ to $\infty$ is a random non-self-crossing space-filling curve $\eta$ in the Riemann sphere $\BB C$ which starts and ends at $\infty$. We view $\eta$ as a curve defined modulo monotone increasing time re-parametrization. We  record some of its qualitative properties which follow from the construction in~\cite{ig4} and will be often used in this paper. 
\begin{enumerate}[$(i)$]
	\item When $\kappa\geq 8$, the curve $\eta$ is a two-sided version of chordal SLE$_\kappa$. It can be obtained from chordal SLE$_\kappa$ by stopping the chordal SLE curve at the first time when it hits some fixed interior point $z$, then ``zooming in'' near $z$. In particular, for any two times $s < t$ the region $\eta([s,t])$ is simply connected.
	\item When $\kappa\in (4,8)$, the curve $\eta$ can be obtained from a two-sided variant of chordal SLE$_\kappa$ by iteratively ``filling in'' the bubbles which it disconnects from its target point. In this case, for typical times $s < t$, the region $\eta([s,t])$ is not simply connected. 
	
	\item For each fixed $z\in\BB C$, the left and right outer boundaries of $\eta$ stopped when it hits $z$ are a pair of coupled whole-plane SLE$_{16/\kappa}(2-16/\kappa)$ curves from $z$ to $\infty$ (whole-plane SLE$_{16/\kappa}(\rho)$ for $\rho > -2$ is a variant of whole-plane SLE introduced in~\cite[Section 2.1]{ig4}). In particular, they are two interior flow lines of a whole-plane GFF whose angles differ by $\pi$. These flow lines are disjoint for $\kappa \geq 8$, a.s.\ intersect each other for $\kappa \in (4, 8)$, and are a.s.\ self-intersecting for $ \kappa \in (4, 6)$; see \cref{fig-flow-line-def}. 
		
	In \cref{sec:SLE_as_flow}, we will more precisely recall the construction of whole-plane space-filling SLE$_\kappa$ from the flow lines of a whole-plane GFF, i.e.\ as a space-filling SLE$_\kappa$ counterflow line of a whole-plane GFF. (Readers who are less familiar with the topic might find it helpful to jump ahead to that section now and return to this part later for a more comprehensive understanding.)
	
	\item For any times $a < b$, the set $\eta([a,b])$ has non-empty interior. Moreover, if $\mu$ is a locally finite, non-atomic measure on $\BB C$ which assigns positive mass to every open set, then $\eta$ is continuous if we parametrize $\eta$ so that 
	\begin{equation}\label{eq:time-parm-SLE}
		\mu(\eta([a,b])) = b-a \quad\text{for each}\quad a  <b.
	\end{equation}
	
	\item The law of $\eta$ is invariant under scaling, translation, rotation, and time reversal, i.e., $a\eta + b$ and $\eta( - \cdot)$ have the same law as $\eta$ modulo time parametrization for each $a \in\BB C \setminus \{0\}$ and each $b\in\BB C$. 
	\item For each fixed $z\in\BB C$, a.s.\ $\eta$ hits $z$ exactly once. The maximum number of times that $\eta$ hits any $z\in\BB C$ is 3 if $\kappa\geq 8$ and is a finite, $\kappa$-dependent number if $\kappa \in (4,8)$~\cite[Theorem 6.3]{ghm-kpz}. Furthermore, it is a.s.\ the case that any point $z$ that $\eta$ hits more than once lies on a flow line or dual flow line started at a different rational point~\cite[Lemma 8.11]{wedges}.
	\item  A \textbf{merge point} of $\eta$, seen as a space-filling SLE$_\kappa$ counterflow line of a whole-plane GFF of angle zero, is a point $z\in \BB C$ such that, for some two distinct points $x, y \in \BB C$ with rational
	coordinates, the $\pi/2$-angle flow lines $\beta^L_x$ and $\beta^L_y$ started at $x$ and $y$ merge at $z$.
	Almost surely, all merge points are triple points.
	When $\kappa\geq 8$, a.s., every triple point is a merge point and the set of merge points of $\eta$ is countable. When $\kappa\in (4,8)$, a.s., the set of merge points of $\eta$ is still countable, but a.s., there are (uncountably many) triple points which are not merge points ~\cite[Lemma 8.12/Eq.\ (8.9)]{wedges}. These latter points are 3-tuple points, introduced in the next item.
	\item  Fix $m\geq 2$. A \textbf{$m$-tuple point} of $\eta$, seen as a space-filling SLE$_\kappa$ counterflow line of a whole-plane GFF of angle zero, is a point $z \in \BB C$ that is not a merge
	point, such that for some rational points $x_1, x_2, \dots , x_{m-1}\in\BB C$ (and a choice of flow line/dual flow line for
	each $i \in \{1, 2, \dots, {m-1}\}$) the ${m-1}$ flow/dual flow lines started from the $x_i$ all reach the point $z$ without
	merging with each other first.   
	Moreover, a.s., every multiple point of $\eta$ is either a merge point or a $m$-tuple point of some $m \geq 2$ in which case it is hit by $\eta$ exactly $m$ times~\cite[Lemma 8.13]{wedges}.
	Almost surely, if $\kappa \in (4,8)$ and $M\in\BB N_{\geq3}$ is such that
	\[ 
	\kappa \in \left[\frac{4M}{M-1} , \frac{4(M-1)}{M-2}\right),
	\] 
	then $\eta$ has, for all $2\leq m\leq M$, uncountably many $m$-tuple points, and no $m$-tuple point of all $m>M$~\cite[Eq.\ (8.9)]{wedges}. 
	For instance, this implies that a.s.\ $\eta$ has only $2$-tuple and $3$-tuple points when $\kappa\in [6,8)$, or $\eta$ has only $2$-tuple, $3$-tuple and $4$-tuple points when $\kappa\in [16/3,6)$. A more detailed description of $m$-tuple points will be given in \cref{lem:m-tuple-points}.
\end{enumerate}

\subsection{Space-filling SLEs from the flow lines of a whole-plane Gaussian free field}\label{sec:SLE_as_flow}

We give here a brief description of the construction of whole-plane space-filling SLE$_\kappa$ curve $\eta$ from the flow lines of a whole-plane Gaussian free field (GFF), following~\cite[Section 1.2.3]{ig4} (see also~\cite[Section 1.4.1]{wedges}). The fundamental idea of the definition is based on the so-called SLE \emph{duality}, which states that when $\kappa > 4$ the outer boundary of an SLE$_\kappa$ curve stopped at a given time should consist of one or more SLE$_{16/\kappa}$-type curves~\cite{zhan-duality1,zhan-duality2,dubedat-duality,ig1,ig4}. Let
\begin{equation*}
	\chi:=\frac{\sqrt\kappa}{2}-\frac{2}{\sqrt\kappa}.
\end{equation*}
Let $\wh h$ be a whole-plane GFF viewed modulo additive multiples of $2\pi\chi$, as in~\cite{ig4}. 
Also, fix an angle $\theta\in(-\pi/2,\pi/2)$. We invite the reader to compare the next explanations with \cref{fig-flow-line-def}.  For $z\in\BB Q^2$, let $\beta_z^L$ and $\beta_z^R$ be the flow lines of $e^{i\wh h/\chi}$ started from $z$, in the sense of~\cite[Theorem 1.1]{ig4}, with angles\footnote{We measure angles in counter-clockwise order, where zero angle corresponds to the north direction.} $\theta$ and $\theta-\pi$, respectively.   Each of these flow lines is a whole-plane SLE$_{16/\kappa}(2-16/\kappa )$ curve from $z$ to $\infty$. Importantly, the GFF $\wh h$ establishes a coupling between these curves. Moreover, these flow lines will be the left and right outer boundaries of the space-filling SLE$_\kappa$ $\eta$ stopped when it hits the point $z$.

\begin{figure}[ht!]
	\begin{center}
		\includegraphics[width=.99\textwidth]{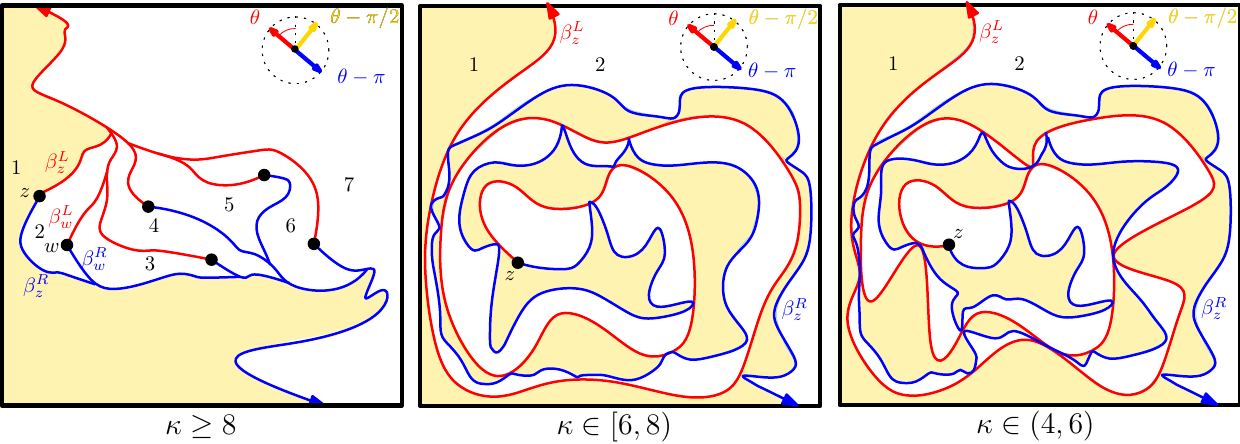}  
		\caption{\label{fig-flow-line-def} 
			\textbf{Left:} The squared box is a portion of the complex plane $\mathbb C$. We fix $\theta$ as shown in the picture. We plot in red the flow lines $\beta_{x}^{L}$  and in blue the flow lines $\beta_{x}^{R}$ for six points $x\in\mathbb C$. For every $x\in\mathbb C$, the flow lines $\beta_{x}^{L}$ and $\beta_{x}^{R}$ are the left and right outer boundaries of the space-filling SLE$_{\kappa}$ counterflow line $\eta$ of angle $\theta-\pi/2$ stopped when it hits $x$. The curve $\eta$ fills in the regions 1 (in yellow), 2, 3, 4, 5, 6 and 7 in this order.  The left figure illustrates the case when $\kappa\geq 8$. \textbf{Middle:} The same illustration as in the left-hand side but when $\kappa\in [6,8)$. In this case we just considered a single point $z\in\mathbb C$ to simplify the illustration. The flow lines $\beta_{z}^{L}$ (in red) and $\beta_{z}^{R}$ (in blue) started from the same point $z$ can hit each other and bounce off (but a.s.\ are non-self-intersecting). The the space-filling SLE$_{\kappa}$ counterflow line $\eta$ of angle $\theta-\pi/2$  first fills in the bubbles of the region 1 (in yellow) and then the bubbles of the region 2 (in white).
				\textbf{Right:} The same illustration as in the middle when $\kappa\in (4,6)$. The flow lines $\beta_{z}^{L}$ (in red) and $\beta_{z}^{R}$ (in blue) started from the same point $z$ can hit each other and bounce off and are also a.s.\ self-intersecting.}
	\end{center}
	\vspace{-3ex}
\end{figure} 

By~\cite[Theorem 1.9]{ig4} (see also \cref{lem:flow_lines_crossing} below), for distinct $z,w\in\BB Q^2$, the flow lines $\beta_z^L$ and $\beta_z^R$ cannot cross $\beta_w^L \cup \beta_w^R$, but they may hit and bounce off if $\kappa \in (4,8)$. Furthermore, a.s.\ the flow lines $\beta_z^L$ and $\beta_w^L$ (resp.\ $\beta_z^R$ and $\beta_w^R$) eventually merge into each other.

We define a total ordering on $\BB Q^2$ by declaring that $z$ comes before $w$ if and only if $w$ lies in a connected component of $\BB C\setminus (\beta_z^L\cup \beta_z^R)$ which lies to the right of $\beta_z^L$ and to the left of $\beta_z^R$. It follows from~\cite[Theorem 1.16]{ig4} (see also~\cite[Footnote 4]{wedges}) that there is a unique space-filling path $\eta$ from $\infty$ to $\infty$ which hits the points of $\BB Q^2$ in the prescribed order and is continuous when it is time-parametrized,e.g., as in \eqref{eq:time-parm-SLE}.  This curve $\eta$ is defined to be the \textbf{space-filling SLE$_\kappa$ counterflow line of $\wh h$ from $\infty$ to $\infty$ of angle\footnote{In this paper we are using the convention that counterflow line of angle $\theta$ have the \emph{same} direction has flow line of angle $\theta$. We point out that in other papers (for instance, \cite{ig1}) the authors might use the opposite convention, that is, counterflow line of angle $\theta$ have the \emph{opposite} direction as flow line of angle $\theta$.} $\theta-\pi/2$}.

The law of the curve $\eta$ does not depend on $\theta$, but the coupling of $\eta$ with $\wh h$ depends on $\theta$. In particular, if $\theta \in [-\pi/2,\pi/2]$ and $\eta_1$ and $\eta_2$ are the space-filling SLE counterflow lines of $\wh h$ with angles 0 and $\theta-\pi/2$, respectively, then $\eta_1$ and $\eta_2$ are coupled together in a non-trivial way. By~\cite[Theorem 1.16]{ig4}, each of $\eta_1$ and $\eta_2$ is a.s.\ given by a measurable function of the other.

The latter is exactly the coupling used in the SLEs and LQG description of the skew Brownian permutons.

\subsection{Liouville quantum gravity} \label{sec-intro-lqg}

In this subsection we recall some key definitions related to the theory of Liouville quantum gravity (LQG). We refer to~\cite{gwynne-ams-survey,sheffield-icm} for brief introduction to LQG and to~\cite{bp-lqg-notes} for a detailed exposition. 
Let $\gamma \in (0,2)$ and let
\begin{equation*}\label{eqn-Q}
	Q := \frac{2}{\gamma} + \frac{\gamma}{2} .
\end{equation*}

\begin{defn} \label{def-lqg-surface}
	A \textbf{$\gamma$-Liouville quantum gravity (LQG) surface with $k\in\BB N_0$ marked points} is an equivalence class of $k+2$-tuples $(U,h,z_1,\dots,z_k)$, where 
	\begin{itemize}
		\item $U\subset\BB C$ is open and $z_1,\dots,z_k \in U\cup \bdy U$, with $\bdy U$ viewed as a collection of prime ends;
		\item $h$ is a generalized function on $U$ (which we will always take to be some variant of the GFF);
		\item $(U,h,z_1,\dots,z_k)$ and $(\wt U, \wt h,\wt z_1,\dots,\wt z_k)$ are declared to be equivalent if there is a conformal map $\phi : \wt U \rta U$ such that
		\eqb \label{eqn-lqg-coord}
		\wt h = h\circ \phi + Q\log|\phi'| \quad \text{and} \quad \phi(\wt z_j)  = z_j ,\quad \forall j =1,\dots k ,
		\eqe
		where $Q$ is as in~\eqref{eqn-Q}.
	\end{itemize}
	If $(U,h,z_1,\dots,z_k)$ is an equivalence class representative, we refer to $h$ as an \textbf{embedding} of the quantum surface into $(U,z_1,\dots,z_k)$. 
\end{defn}

All of the LQG surfaces we consider will be random, and the generalized function $h$ will be a \textbf{GFF plus a continuous function}, meaning that there is a coupling $(h , h^U)$ of $h$ with the whole-plane or zero-boundary GFF $h^U$ on $U$ (as appropriate) such that a.s.\ $h-h^U$ is a function which is continuous on $U$ except possibly at finitely many points.

If $h$ is a whole-plane GFF plus a continuous function, one can define the \textbf{LQG area measure} $\mu_h$ on $U$, which is a limit of regularized versions of $e^{\gamma h} \,d^2 z$, where $d^2z$ denotes Lebesgue measure on $U$~\cite{kahane,shef-kpz,rhodes-vargas-log-kpz}. Almost surely, the measure $\mu_h$ assigns positive mass to every open set and zero mass to every point, but is mutually singular with respect to Lebesgue measure.

In this paper we will be interested in the canonical LQG surface with the topology of the sphere, i.e., the unit area quantum sphere. (There are multiple equivalent ways of defining the unit area quantum sphere; the definition we give here is~\cite[Definition 2.2]{ahs-sphere} with $k=3$ and $\alpha_1=\alpha_2=\alpha_3=\gamma$.)

\begin{defn}\label{def-sphere} 
	We define the \textbf{Liouville field}
	\begin{equation}\label{eqn-sphere-gaussian}
		h_{\op{L}} := h^{\BB C} +  \gamma G_{\BB C}(1 ,\cdot) + \gamma G_{\BB C}(0,\cdot)  - (2Q-\gamma) \log|\cdot|_+,
	\end{equation}
	where
	\begin{equation*}
		  \quad G_{\BB C}(z,w) := \log \frac{|z|_+  |w|_+ }{|z-w| } \quad \text{and} \quad |z|_+ := \max\{|z| , 1\}.
	\end{equation*}
	We let $\mu_{h_{\op{L}}}$ be its associated $\gamma$-LQG measure. 
	
	\begin{itemize} 
		\item The \textbf{triply marked unit area quantum sphere} is the LQG surface $(\BB C , h , \infty, 0,1 )$, where the law $\BB P_h$ of $h$ is obtained from the law  $\BB P_{\tilde h_{\op{L}}}$ of
		\begin{equation*}\label{eqn-sphere-subtract}
			\tilde h_{\op{L}}:=h_{\op{L}} - \frac{1}{\gamma} \log   \mu_{h_{\op{L}}}(\BB C)
		\end{equation*}
		weighted by a $\gamma$-dependent constant times $\left[\mu_{h_{\op{L}}}(\BB C)\right]^{4/\gamma^2 - 2}$, i.e.\ $\frac{d\BB P_h}{d \BB P_{\tilde h_{\op{L}}}}=C(\gamma)\cdot\left[\mu_{h_{\op{L}}}(\BB C)\right]^{4/\gamma^2 - 2}$. 
		\item The \textbf{doubly (resp.\ singly) marked unit area quantum sphere} is the doubly (resp.\ singly) marked quantum surface obtained from the triply marked unit area quantum sphere by forgetting the marked point at 1 (resp.\ the marked points at 1 and 0). 
	\end{itemize}
\end{defn} 

We note that subtracting $\frac{1}{\gamma} \log \mu_{h_{\op{L}}}(\BB C)$ in~\eqref{eqn-sphere-subtract} makes it so that the unit area quantum sphere a.s.\ satisfies $\mu_h(\BB C) = 1$. The triply marked quantum sphere has only one possible embedding in $(\BB C , \infty,0,1)$ (Definition~\ref{def-lqg-surface}). This is because the only conformal automorphism of the Riemann sphere which fixes $\infty,0,1$ is the identity. By contrast, the singly (resp.\ doubly) marked quantum sphere has multiple possible embeddings, obtained by applying~\eqref{eqn-lqg-coord} when $\phi$ is a complex affine transformation (resp.\ a rotation and scaling). Any statements for a singly or doubly marked quantum sphere are assumed to be independent of the choice of embedding, unless otherwise specified. Many of the results in this paper are stated for a singly marked quantum sphere since we only need one marked point (i.e., the starting point of the SLE curves). However,considering the triply marked quantum sphere makes possible to have an exact description of the Liouville field~\ref{def-sphere}.

We conclude this section with an additional definition. In this paper, we will often consider quantum surfaces that are decorated by two space-filling curves:

\begin{defn} \label{def-curve-decorated}
	Let $\gamma\in (0,2)$. A \textbf{$\gamma$-LQG surface with a single marked point, decorated by two curves} is an equivalence class of 5-tuples $(U,h,z, \eta_1,\eta_2)$, where 
	\begin{itemize}
		\item $U\subset\BB C$ is open and $z \in U\cup \bdy U$, with $\bdy U$ viewed as a collection of prime ends;
		\item $h$ is a generalized function on $U$ (which we will always take to be some variant of the GFF);
		\item $\eta_1 : [a_1,b_1] \rta U \cup \bdy U$ and $\eta_2 : [a_2,b_2] \rta U\cup \bdy U$ are curves in $U \cup \bdy U$;
		\item $(U,h,z,\eta_1,\eta_2)$ and $(\wt U, \wt h,\wt z, \wt\eta_1 ,\wt\eta_2)$ are declared to be equivalent if there is a conformal map $\phi : \wt U \rta U$ such that
		\eqbn
		\wt h = h\circ \phi + Q\log|\phi'| ,\quad \phi(\wt z )  = z  ,\quad \phi\circ\wt\eta_1=\eta_1 \quad \text{and} \quad \phi\circ\wt\eta_2=\eta_2
		\eqen
		where $Q$ is as in~\eqref{eqn-Q}.
	\end{itemize} 
\end{defn}

Quite often we will refer to a $\gamma$-LQG surface with a single marked point, decorated by two curves, simply as curve-decorated quantum surface.

Furthermore, a curve-decorated quantum surface $(U , h ,  \infty , \eta_1, \eta_2)_{\mathrm{cc}}$ \textbf{up to complex conjugation} is the same equivalence class above, where the conformal map $\phi : \wt U \rta U$ is replaced by a conformal map composed with a complex conjugation.

\section{Preliminary lemmas about space-filling whole-plane SLEs and flow lines}\label{sect:sle-flow}

We gather here a series of technical lemmas that will be used later in the paper. We invite the reader to skip at a first read the technical proof of \cref{lem:triple_are_simple}, since it is quite complicated and does not use any technique that will be used later in the paper.

\medskip

The first result we need is the following simple lemma explaining how the double points of a whole-plane space-filling SLE$_\kappa$ are hit. See the left-hand side of \cref{fig-hitting-double-points} for an illustration of the next lemma.

\begin{lem}\label{lem:SLE_hitting double points}
	Let $\kappa\geq 8$ and $\eta$ be a whole-plane space-filling SLE$_\kappa$ from $\infty$ to $\infty$. Almost surely, the following assertion is true. Fix a double point $z$ of $\eta$ (i.e., a point which is hit exactly twice) and let $w\in\BB C$ such that $w\neq z$ and $z\in\beta_{w}^L$. Let $\tau_z<\tau_w$ be the first hitting times of $z$ and $w$, respectively. Fix also $\ep>0$ so that $\tau_z+\ep<\tau_w$. Then there exists $\delta>0$ such that
	\[
	B_\delta(z) \cap \eta([0,\tau_w])\subset \eta([\tau_z-\ep,\tau_z+\ep]).
	\]
\end{lem}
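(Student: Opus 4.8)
The plan is to use the structure of space-filling SLE$_\kappa$ for $\kappa \ge 8$ as a two-sided chordal SLE, together with the description of the outer boundary in terms of imaginary-geometry flow lines. Recall that for $\kappa\ge 8$ a double point $z$ is hit exactly twice, say at times $\tau_z < \tau_z'$, and the portion of the plane filled strictly between these two times, $\eta([\tau_z,\tau_z'])$, is a ``bubble'' pinched off at $z$; this bubble is bounded by arcs of $\eta$ near $z$. The key structural input is property $(iii)$/$(vi)$ in \cref{sect:SLES}: any multiply hit point lies on a flow line or dual flow line from a rational point, and (by the construction in \cref{sec:SLE_as_flow}) the left and right outer boundaries of $\eta$ stopped at a point $w$ are the flow lines $\beta_w^L, \beta_w^R$. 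Since $z \in \beta_w^L$ with $w \ne z$ and $w$ is hit only after time $\tau_w > \tau_z + \ep$, the point $z$ is a boundary point of the region $\eta([0,\tau_w])$ — more precisely, $z$ lies on the left outer boundary $\beta_w^L$ of $\eta$ stopped at $w$. First I would identify which of the three ``sheets'' of $\eta$ near $z$ (the two sides of the first visit and the single side of the second visit at time $\tau_z'$) can be part of $\bdy \eta([0,\tau_w])$: since $z$ is a double point, exactly one of the three local arcs at $z$ is on the boundary of $\eta([0,\tau_w])$, and this must be the arc traced near time $\tau_z$ (the second visit $\tau_z'$ occurs when $\eta$ closes up the bubble, and the bubble does not meet $\bdy\eta([0,\tau_w])$ near $z$ as long as $\tau_z' > \tau_w$; one needs to check $\tau_z' > \tau_w$, which follows because the second hitting of $z$ only happens after $\eta$ has swallowed a neighborhood of $z$ on one side, in particular after it has reached $w$ — I would make this precise using the ordering of points and the fact that $w$ is separated from $\infty$ by $\beta_z^L\cup\beta_z^R$ only after $\tau_z'$, or argue directly from simple connectedness of $\eta([s,t])$).

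Next I would argue the containment statement by a compactness/contradiction argument. Suppose no such $\delta$ exists; then there is a sequence $z_n \to z$ with $z_n \in \eta([0,\tau_w]) \setminus \eta([\tau_z-\ep,\tau_z+\ep])$. Each $z_n = \eta(s_n)$ for some $s_n \in [0,\tau_w] \setminus [\tau_z - \ep, \tau_z + \ep]$; passing to a subsequence $s_n \to s_\infty \in [0,\tau_w]$ with $|s_\infty - \tau_z| \ge \ep$, continuity of $\eta$ gives $\eta(s_\infty) = z$. But $z$ is hit exactly twice, at $\tau_z$ and $\tau_z'$, and we have arranged $\tau_z' > \tau_w \ge s_\infty$ while $|s_\infty - \tau_z| \ge \ep > 0$, so $s_\infty$ can be neither $\tau_z$ nor $\tau_z'$, a contradiction. (If the boundary case $s_\infty = \tau_w$ causes trouble one shrinks $\ep$ slightly or uses that $\tau_z + \ep < \tau_w$ strictly.) Thus the containment holds for some $\delta > 0$.

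The main obstacle I anticipate is establishing cleanly that the \emph{second} hitting time $\tau_z'$ of $z$ satisfies $\tau_z' > \tau_w$ — i.e., that $\eta$ does not close the bubble at $z$ before it reaches $w$. Heuristically this is clear from the picture (the bubble at $z$ is pinched off ``locally'' whereas $w$ is a macroscopically separated point on the flow line $\beta_w^L$ passing through $z$, and filling the bubble happens only after $\eta$ has gone around), but turning it into a rigorous statement requires care with the total ordering on $\BB Q^2$ from \cref{sec:SLE_as_flow} and the relationship between hitting times and the flow-line geometry: one should show that $w$ lies in the connected component of $\BB C \setminus (\beta_z^L \cup \beta_z^R)$ on the appropriate side, hence $z$ comes before $w$ in the ordering, and then translate ``$z$ before $w$'' plus ``$z$ is a double point whose bubble is between $\tau_z$ and $\tau_z'$'' into $\tau_z < \tau_w < \tau_z'$. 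Alternatively, one can bypass the flow-line bookkeeping by using property $(i)$ directly: $\eta([\tau_z, \tau_w])$ is simply connected and contains $w$ in its interior (or on its boundary), while the bubble pinched at $z$ would have to be a complementary component, forcing $\tau_z' \notin (\tau_z, \tau_w)$; making this rigorous is where the real work lies, but it is a purely topological argument about the simply connected sets $\eta([s,t])$.
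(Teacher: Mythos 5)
Your proof is essentially correct and follows the same route as the paper: establish that the two hitting times of $z$ straddle $\tau_w$ (the first is $\tau_z < \tau_w$, the second $\tau_z' > \tau_w$), then run a limiting/contradiction argument showing that if the inclusion failed for all $\delta > 0$, a third visit to $z$ in $[\tau_z+\ep,\tau_w]$ would be forced.

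The one place where you over-worry is the step $\tau_z' > \tau_w$, which you flag as "where the real work lies" and for which you sketch several possible strategies involving the ordering on $\BB Q^2$, separation by $\beta_z^L\cup\beta_z^R$, or simple connectedness of $\eta([s,t])$. None of that machinery is needed. By definition $\beta_w^L$ is a subcurve of $\partial\eta([0,\tau_w])$, and since $\eta$ is space-filling one has $\BB C = \eta([0,\tau_w])\cup\eta([\tau_w,1])$, so any point of $\partial\eta([0,\tau_w])$ is a limit of points of $\BB C\setminus\eta([0,\tau_w]) \subset \eta([\tau_w,1])$ and hence lies in the closed set $\eta([\tau_w,1])$. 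Thus $z\in\eta([0,\tau_w])\cap\eta([\tau_w,1])$, giving hitting times $s_1\le\tau_w\le s_2$; since $z\ne w=\eta(\tau_w)$ both inequalities are strict, and since $z$ is a double point these are the only two visits. This is precisely the fact the paper invokes in its opening sentence ("$z$ can be hit by $\eta$ only once before time $\tau_w$; then $\eta$ will hit $z$ a second time after time $\tau_w$"), and it needs only the definition of the left outer boundary and space-fillingness, not the flow-line ordering. Your subsequent compactness argument (the subsequence $s_n\to s_\infty$ with $|s_\infty-\tau_z|\ge\ep$ and $s_\infty\le\tau_w<\tau_z'$ yielding a contradiction) is sound and matches the paper's.
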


\begin{proof}
	Note that since $z$ is a double point of $\eta$, $z$ can be hit by $\eta$ only once before time $\tau_w$ (then $\eta$ will hit $z$ a second time after time $\tau_w$).
	Suppose by way of contradiction that for every $\delta > 0$, $\eta[\tau_z-\ep , \tau_z+\ep]$ does not contain $B_\delta(z) \cap \eta([0,\tau_w])$. See the right-hand side of \cref{fig-hitting-double-points} for an illustration.  Then $z$ must be on the boundary of $\eta[0,\tau_z+\ep]$, so $\eta$ (which is space-filling) will have to hit $z$ again before time $\tau_w$, since we assume that $\tau_z+\ep<\tau_w$. This contradicts our assumption that $z$ is a double point of $\eta$.
\end{proof}

\begin{figure}[ht!]
	\begin{center}
		\includegraphics[width=0.95\textwidth]{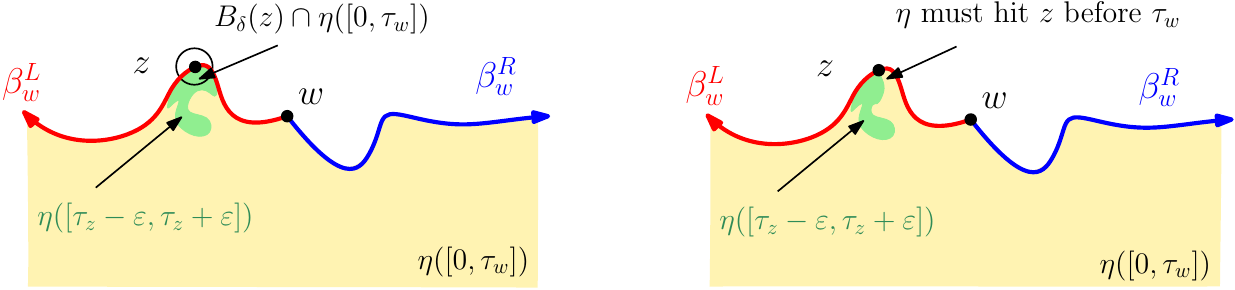}  
		\caption{\label{fig-hitting-double-points} \textbf{Left:} An illustration of the statement of \cref{lem:SLE_hitting double points} which shows that $B_\delta(z) \cap \eta([0,\tau_w])\subset \eta([\tau_z-\ep,\tau_z+\ep])$. \textbf{Right:} If $B_\delta(z) \cap \eta([0,\tau_w])\nsubseteq \eta([\tau_z-\ep,\tau_z+\ep])$ for all $\delta>0$, then $\eta$ must hit $z$ twice before time $\tau_w$, but this is not possible since $z$ is a double point and so it can only be hit once before time $\tau_w$. 
		}
	\end{center}
	\vspace{-3ex}
\end{figure}

We also need to recall how the flow lines of the same whole-plane GFF interact.
Recall from \cref{sec:SLE_as_flow} that $\wh h$ is the whole-plane GFF viewed modulo a global additive multiple of $2\pi\chi$ which is used to construct two whole-plane space-filling SLE$_\kappa$ of angle $0$ and $\theta$, denoted by $\eta_1$ and $\eta_2$, respectively (recall Assumption~\ref{item-ig-gff} in \cref{thm-permuton-determ-lqg-sles}).  
For $i \in \{1,2\}$ and $z\in\BB C$, let $\beta_{z,i}^L$ and $\beta_{z,i}^R$ be the left and right outer boundaries of $\eta_i$ stopped when it hits $z$. 
Equivalently, $\beta_{z,1}^L$ and $\beta_{z,1}^R$ are the flow lines of $\wh h$ of angles $\pi/2$ and $-\pi/2$ starting from $z$; and $\beta_{z,1}^L$ and $\beta_{z,2}^R$ are the flow lines of $\wh h$ of angles $\theta$ and $\theta-\pi$ started from $z$. 

\begin{lem}[{\cite[Theorem 1.7]{ig4}}]\label{lem:flow_lines_crossing}
	Let $u,v\in\BB C$ and $\theta\in [-\pi/2,\pi/2)$. Consider the flow lines $\beta_{u,1}^L$ and $\beta_{v,2}^L$ (of angle $\pi/2$ and $\theta$, respectively). We characterize when these two flow lines cross each other, bounces off each other, or do not intersect each other (\emph{c.f.}, \cref{fig-interaction-flows}):
	\begin{itemize}
		\item If $u=v$ then $\beta_{v,2}^L$ never crosses $\beta_{u,1}^L$, but $\beta_{v,2}^L$ bounces off $\beta_{u,1}^L$ if and only if
		\[\theta\in\left(\frac{(\kappa-12)\pi}{2(\kappa-4)},\frac{\pi}{2}\right).\]
		In particular, if $\theta\in\left[-\frac{\pi}{2},\frac{(\kappa-12)\pi}{2(\kappa-4)}\right]$ then $\beta_{v,2}^L\cap\beta_{u,1}^L=\{u\}$. 
		
		\item If $u \neq v$ and $\theta\in\left(\frac{(\kappa-12)\pi}{2(\kappa-4)},\frac{\pi}{2}\right),$ then it is possible that $\beta_{v,2}^L$ hits  $\beta_{u,1}^L$  on the right-hand side of $\beta_{u,1}^L$. If this is the case, the curve $\beta_{v,2}^L$ bounces off $\beta_{u,1}^L$.
  
		\item If $u \neq v$ and $\theta\in\left[-\frac{\pi}{2},\frac{(\kappa-12)\pi}{2(\kappa-4)}\right]$, then it is not possible  that $\beta_{v,2}^L$ hits  $\beta_{u,1}^L$  on the right-hand side of $\beta_{u,1}^L$.
		
		\item If $u \neq v$ and $\beta_{v,2}^L$ hits  $\beta_{u,1}^L$  on the left-hand side of $\beta_{u,1}^L$, then the curve $\beta_{v,2}^L$ crosses  $\beta_{u,1}^L$ upon intersecting but does not subsequently
		cross back. After crossing, if $\theta\in\left(\frac{(\kappa-12)\pi}{2(\kappa-4)},\frac{\pi}{2}\right)$, it is possible that $\beta_{v,2}^L$ hits $\beta_{u,1}^L$ again on the right-hand side of $\beta_{u,1}^L$. If this is the case, the curve $\beta_{v,2}^L$ bounces off $\beta_{u,1}^L$.
	\end{itemize}
\end{lem}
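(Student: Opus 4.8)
The plan is to read off the statement from \cite[Theorem 1.7]{ig4}, which describes exactly the interaction of two flow lines of the same whole-plane GFF emanating from (possibly equal, possibly distinct) interior points, combined with the general principle — already used in \cite{ig4}, see \cite[Theorem 1.5]{ig1} — that a flow line crosses another flow line at most once and never crosses back. The two points that require attention are: (a) matching conventions so that $\beta_{u,1}^L$ and $\beta_{v,2}^L$ are really the interior flow lines of $\wh h$ of angles $\pi/2$ and $\theta$, which was recorded in the paragraph preceding the statement; and (b) the elementary computation that converts the thresholds of \cite[Theorem 1.7]{ig4} into the bounds on $\theta$ displayed above.

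For (b): the flow lines in play are SLE$_{16/\kappa}$-type with $16/\kappa \in (0,4)$, with imaginary-geometry constants $\chi$ (exactly the $\chi$ of \cref{sec:SLE_as_flow}) and $\lambda := \tfrac{\pi \sqrt\kappa}{4}$. According to \cite[Theorem 1.7]{ig4}, whether a lower-angle interior flow line can touch a higher-angle one — and, if so, from which side and whether it bounces off or crosses — is governed by comparing the angle gap between the two flow lines with the values $0$, $\tfrac{2\lambda}{\chi} - \pi = \tfrac{4\pi}{\kappa - 4}$, and $\pi$. Here $\beta_{u,1}^L$ has the larger angle $\pi/2$ and $\beta_{v,2}^L$ the smaller angle $\theta \in [-\pi/2,\pi/2)$, so the angle gap is $\tfrac\pi2 - \theta \in (0,\pi]$, and the identity
\[
\frac\pi2 - \theta \;<\; \frac{4\pi}{\kappa - 4}
\qquad\Longleftrightarrow\qquad
\theta \;>\; \frac{(\kappa - 12)\pi}{2(\kappa - 4)}
\]
turns \cite[Theorem 1.7]{ig4} into the four displayed bullet points. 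Concretely: for $u = v$ the ``bounces off'' versus ``meets only at $u$'' dichotomy is exactly ``angle gap $< \tfrac{4\pi}{\kappa-4}$'' versus ``angle gap $\geq \tfrac{4\pi}{\kappa-4}$''; for $u \neq v$ the two statements about when $\beta_{v,2}^L$ can or cannot hit $\beta_{u,1}^L$ from the right are the right-side case of \cite[Theorem 1.7]{ig4} under the same threshold; and the last bullet — a left-side hit forces a single crossing with no crossing back, after which (only when $\theta > \tfrac{(\kappa-12)\pi}{2(\kappa-4)}$) a right-side hit, hence a bounce, is still possible — combines the at-most-one-crossing rule with the right-side threshold.

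The main obstacle is bookkeeping rather than mathematics: one must be careful that the ``north $=$ angle $0$, counterclockwise'' convention of \cref{sec:SLE_as_flow} and the labelling of $\beta^L$ versus $\beta^R$ agree with those of \cite{ig4}; that the roles of $\eta_1$ (counterflow-line angle $0$) and $\eta_2$ (counterflow-line angle $\theta - \pi/2$) are assigned so that ``left'' and ``right'' of $\beta_{u,1}^L$ mean what \cite{ig4} means; and that the $z_1 = z_2$ and $z_1 \neq z_2$ parts of \cite[Theorem 1.7]{ig4} are quoted correctly (the thresholds for interior flow lines differ from the boundary-emanating ones of \cite{ig1} by the $\pm\pi$ coming from the conical singularity at the starting point, which is the ``$-\pi$'' in $\tfrac{2\lambda}{\chi} - \pi$). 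As sanity checks on the translation I would verify the endpoints: at $\kappa = 8$ one has $\tfrac{(\kappa-12)\pi}{2(\kappa-4)} = -\pi/2$, so for $u = v$ the only disjoint case is $\theta = -\pi/2$, where $\beta_{u,2}^L = \beta_{u,1}^R$ and the assertion reduces to the left and right boundaries of $\eta_1$ meeting only at $u$ — consistent with property $(iii)$ of \cref{sect:SLES}; and for $\kappa \in (4,8)$ one has $\tfrac{(\kappa-12)\pi}{2(\kappa-4)} < -\pi/2$, so the ``bounces off'' alternative always holds, again consistent with property $(iii)$.
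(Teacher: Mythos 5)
The paper offers no proof of this lemma beyond the bare citation to \cite[Theorem 1.7]{ig4}, so your proposal—which amounts to spelling out the translation between that theorem and the present statement—takes essentially the same route. Your threshold computation is correct: for the $\kappa'=16/\kappa$ flow lines one has $\lambda = \tfrac{\pi\sqrt\kappa}{4}$, $\chi = \tfrac{\kappa-4}{2\sqrt\kappa}$, hence $\tfrac{2\lambda}{\chi} - \pi = \tfrac{4\pi}{\kappa-4}$, and the angle gap $\tfrac\pi2 - \theta < \tfrac{4\pi}{\kappa-4}$ is equivalent to $\theta > \tfrac{(\kappa-12)\pi}{2(\kappa-4)}$; and the $\kappa = 8$ and $\kappa\in(4,8)$ sanity checks against property (iii) of \cref{sect:SLES} both come out right.
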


\begin{figure}[ht!]
	\begin{center}
		\includegraphics[width=0.95\textwidth]{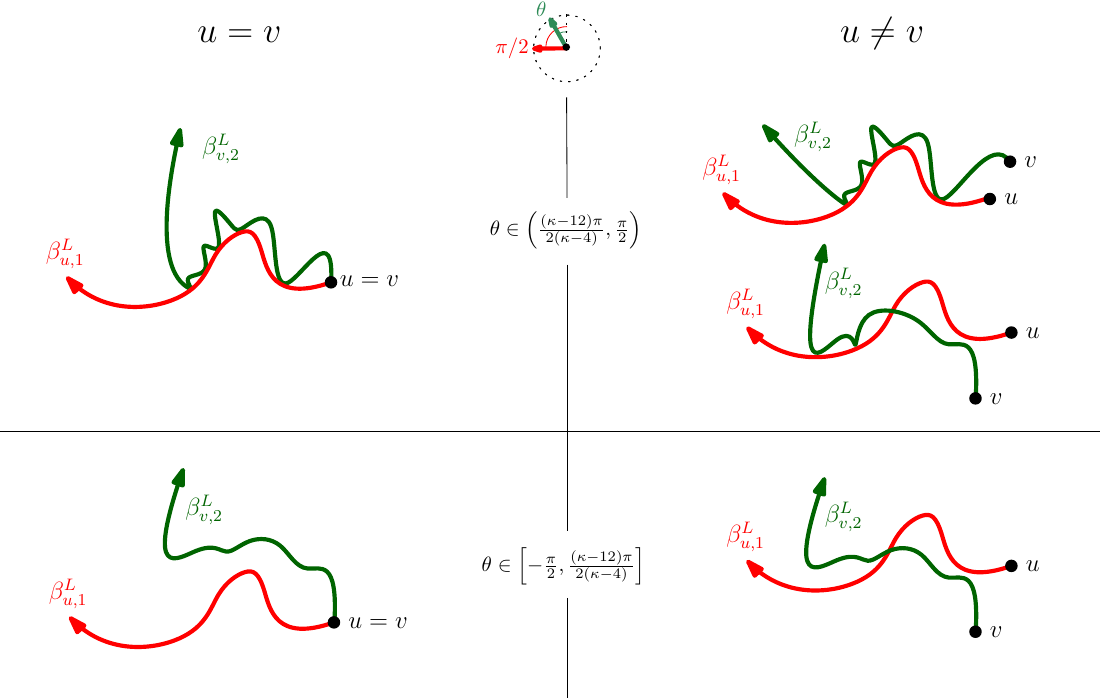}  
		\caption{\label{fig-interaction-flows} A schema for \cref{lem:flow_lines_crossing} to explain when the two flow lines $\beta_{u,1}^L$ and $\beta_{v,2}^L$ cross each other, bounces off each other, or do not intersect each other.
		}
	\end{center}
	\vspace{-3ex}
\end{figure}

The next result describes a useful property of the merge points. 

\begin{lem}\label{lem:triple_are_simple}
	Fix $\kappa>4$ and $\theta\in[-\pi/2,\pi/2)$.
	Let $\wh h$ be a whole-plane GFF, viewed modulo additive multiples of $2\pi\chi$. 
	Let $\eta_1$ and $\eta_2$ be its space-filling SLE$_\kappa$ counterflow lines from $\infty$ to $\infty$ with angles 0 and $\theta-\pi/2$, respectively.
	Then, almost surely, a merge point for $\eta_1$ cannot be hit more than once by $\eta_2$. 
\end{lem}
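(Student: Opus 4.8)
The plan is to fix a merge point $z$ of $\eta_1$ and show that $z$ is hit exactly once by $\eta_2$. By definition, $z$ is a point where two $\pi/2$-angle flow lines $\beta^L_{x,1}$ and $\beta^L_{y,1}$ of $\wh h$ started from distinct rational points $x,y$ merge. Since these two flow lines agree after $z$, the merged flow line, call it $\beta$, is a simple curve passing through $z$, and $z$ lies in the interior of $\beta$ (it is neither a starting point nor an endpoint). The key geometric idea is that $\eta_2$, being the space-filling counterflow line of angle $\theta-\pi/2$, cannot cross $\beta$: by \cref{lem:flow_lines_crossing}, the flow lines defining the outer boundary of $\eta_2$ (which are flow lines of $\wh h$ of angle $\theta$ and $\theta-\pi$) can bounce off the angle-$\pi/2$ flow lines of $\wh h$ but can never cross them from the right side when $\theta \in [-\pi/2,\pi/2)$, and more generally the curve $\eta_2$ respects the ordering induced by $\beta$. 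So $\beta$ locally separates the plane near $z$ into two sides, and $\eta_2$ must fill one side entirely before the other; equivalently, the two "arms" of $\eta_2$ approaching $z$ from the two sides of $\beta$ are visited in a single excursion each.

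Concretely, I would argue as follows. First, recall from item $(vi)$ in \cref{sect:SLES} that any point hit more than once by $\eta_2$ must lie on a flow line or dual flow line of $\wh h$ started from a rational point; combined with \cite[Lemma 8.13]{wedges}, $z$ is then either a merge point of $\eta_2$ or an $m$-tuple point of $\eta_2$. If $z$ were a merge point of $\eta_2$, then $z$ would lie on two merging angle-$\theta$ flow lines of $\wh h$; but $z$ already lies on the angle-$\pi/2$ merged flow line $\beta$, and a generic point of an angle-$\pi/2$ flow line a.s.\ does not also lie on an angle-$\theta$ flow line from a different rational point when $\theta \neq \pi/2$ (this uses the transversality/non-coincidence of flow lines of different angles, as in \cite{ig4}). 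If instead $z$ were an $m$-tuple point of $\eta_2$ for some $m \geq 2$, then there would be $m-1$ distinct flow/dual flow lines of $\wh h$ (of the angles relevant to $\eta_2$) reaching $z$ without merging. I would then derive a contradiction with the fact that $z$ lies on $\beta$: the curve $\beta$ passes straight through $z$ (it is a simple curve with $z$ in its interior), so locally near $z$ it divides a small ball $B_\delta(z)$ into exactly two components, and since $\eta_2$ cannot cross $\beta$, the times at which $\eta_2$ is in $B_\delta(z)$ form at most two intervals modulo the passage through $z$; hence $z$ can be hit by $\eta_2$ at most twice, and a closer look (using that $\eta_2$ is space-filling and must fill each of the two sides of $\beta$ near $z$ contiguously once it enters) shows it is hit exactly once.

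The main obstacle I anticipate is making rigorous the claim that $\eta_2$ "cannot cross $\beta$ near $z$" and that this forces $z$ to be hit only once — the subtlety being that $\eta_2$ is built from angle-$\theta$ and angle-$(\theta-\pi)$ flow lines, which by \cref{lem:flow_lines_crossing} may bounce off $\beta$ (when $\theta > \frac{(\kappa-12)\pi}{2(\kappa-4)}$) and, more seriously, the space-filling curve $\eta_2$ is obtained by filling in bubbles, so one must track carefully how $\eta_2$ enters and exits a neighborhood of a point on $\beta$. To handle this I would use the ordering characterization of $\eta_2$ from \cref{sec:SLE_as_flow}: $\eta_2$ visits rational points in the order determined by the components of $\BB C \setminus (\beta^L_{w,2} \cup \beta^R_{w,2})$, and the non-crossing property from \cref{lem:flow_lines_crossing} implies that $\beta$ is "monotone" with respect to this ordering, so all rational points on one side of $\beta$ near $z$ come before (or all after) those on the other side relative to the single passage of $\eta_2$ through $z$. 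This reduces the statement to a topological fact about the single space-filling curve $\eta_2$ and a simple arc $\beta$ through an interior point, which is exactly the type of argument used in \cref{lem:SLE_hitting double points}. The authors' warning that this proof "is quite complicated" suggests that carefully justifying the bounce-off behavior and the interaction with bubble-filling when $\kappa \in (4,8)$ will require the bulk of the work.
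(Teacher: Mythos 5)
Your strategy has a gap that soft topology cannot close, and the clearest sign is that it would \emph{prove too much}. The local picture you use --- $z$ lies in the interior of an angle-$\pi/2$ flow line $\beta$, and $\eta_2$ ``cannot cross $\beta$'' near $z$ --- holds equally at every double point of $\eta_1$, since each such point also lies in the interior of some $\beta^L_{u,1}$ or $\beta^R_{u,1}$ started from a rational $u$; if your argument worked, it would forbid $\eta_2$ from revisiting a double point of $\eta_1$ as well, contradicting \cref{prop-counterflow-dichotomy}\eqref{item-counterflow-good2}, which treats points that are simultaneously double points of $\eta_1$ and $\eta_2$. Moreover, two of your supporting claims are in fact false. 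The claim that ``a generic point of an angle-$\pi/2$ flow line a.s.\ does not also lie on an angle-$\theta$ flow line from a different rational point'' fails: by \cref{lem:flow_lines_crossing} (see also \cref{lem-angle-condition}), whenever $\theta > \frac{(\kappa-12)\pi}{2(\kappa-4)}$ these two flow lines a.s.\ intersect in an \emph{uncountable} set, so such coincidences are ubiquitous; the issue is whether the \emph{specific} point $x$ is among them. And ``$\eta_2$ cannot cross $\beta$'' is not what \cref{lem:flow_lines_crossing} gives: the flow lines bounding $\eta_2$ (angles $\theta$ and $\theta-\pi$) genuinely \emph{cross} $\beta$ once whenever they hit it from its left-hand side, so $\beta$ does not confine the times $\eta_2$ spends near $z$ to two contiguous intervals. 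The feature that distinguishes a merge point from a double point is not topological but quantitative: conditionally on the three flow lines $\beta_{u,1}^L$, $\beta_{z,1}^L$, $\beta_{w,1}^L$, the merge point $x$ becomes a fixed prime end, and one must show the conditional law of $\beta_{u,2}^L$ a.s.\ misses it.

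The paper's proof is exactly this conditional-law calculation. It maps $\BB C\setminus(\beta_{u,1}^L\cup\beta_{z,1}^L\cup\beta_{w,1}^L)$ to $\BB H$ by a conformal $f$ with $f(u)=0$, reads off the boundary values of $\wh h\circ f^{-1}-\chi\arg((f^{-1})')+\theta\chi$, and thereby identifies the conditional law of $f(\beta_{u,2}^L)$ as an SLE$_{16/\kappa}({\rho}^L,{\rho}^R)$ with explicit force points and weights. The three prime ends $f(x_1),f(x_2),f(x_3)$ corresponding to $x$ then sit at boundary locations where \cref{lem:not-hitting} applies, a non-hitting estimate for SLE$_\kappa(\rho)$ proved by absolute continuity (\cref{lem:two-one-the-left}, \cref{lem:two-one-the-left2}) and an excursion/scaling argument, with an extra case analysis for $\kappa<6$ when the flow lines self-intersect. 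Your flow-line-ordering heuristic is not unreasonable as a picture, but it does not supply this estimate; in particular, nothing in your sketch upgrades ``hit at most twice'' (already questionable) to ``hit exactly once,'' which is the content of the lemma.
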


The rest of the section is devoted to the proof of \cref{lem:triple_are_simple} -- which should be skipped at a first read. The proof builds on the following result, whose proof is postponed to the end of the section.

\begin{lem}\label{lem:not-hitting}
	Fix $\kappa\in(0,4)$ and $({\rho}^L,{\rho}^R)\in\BB{R}^n\times \BB{R}^m$. Let $\eta$ be an SLE$_{\kappa}({\rho}^L,{\rho}^R)$ curve in the upper half-plane $\BB H$ from $0$ to $\infty$ and with force points at $p_n^L<\dots<p_1^L$ on $\BB{R}_{< 0}$ and $p_1^R<\dots<p_m^R$ on $\BB{R}_{> 0}$ such that $p_i^*$ has weight $\rho^*_i$ with $*\in\{L,R\}$. Assume that for some $k\in[m]$, at least one of the following two assumptions hold:
		\begin{enumerate}
			\item  $\sum_{j=1}^{k-1}\rho^R_j>-2$ and $\,\sum_{j=1}^{k}\rho^R_j>\frac{\kappa}{2}-4$; or,
			\item  $\rho^R_k=0$.
	\end{enumerate}
	Then almost surely $\eta$ does not hit $p^R_k$.
\end{lem}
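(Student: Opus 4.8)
The statement is a non-intersection criterion for chordal $\text{SLE}_\kappa(\rho^L,\rho^R)$ with $\kappa \in (0,4)$, asserting that the curve a.s.\ avoids the force point $p_k^R$ under either a sign/magnitude condition on the partial sums of the right-hand weights up to $k$, or the degenerate condition $\rho_k^R = 0$. The natural tool is the theory of hitting probabilities for $\text{SLE}_\kappa(\underline\rho)$ with multiple force points, as developed by Miller--Sheffield in \cite{ig1,ig4}; more precisely, the criterion for whether the flow line hits a given boundary segment in terms of the \emph{conformal weight} (or, equivalently, whether the Bessel-like process driving the evolution can reach $0$ at a force point). The plan is as follows. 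First I would reduce to the case where $k$ is the \emph{last} right force point, i.e.\ $k = m$: the force points $p_{k+1}^R, \dots, p_m^R$ lie strictly to the right of $p_k^R$, and on the event that $\eta$ hits $p_k^R$ before swallowing any of them, the weights $\rho_{k+1}^R, \dots, \rho_m^R$ are irrelevant; conversely, if $\eta$ swallows some $p_j^R$ with $j>k$ before hitting $p_k^R$, then $p_k^R$ has already been disconnected from $\infty$ and in particular is not hit by $\eta$ "from the right" in the relevant sense. So it suffices to treat the situation where $p_k^R$ is the rightmost force point, and the weight to its right effectively is $\sum_{j=1}^{k}\rho_j^R$ with an "intermediate" force point configuration carrying $\sum_{j=1}^{k-1}\rho_j^R$ between $0^+$ and $p_k^R$.

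Next I would invoke the standard classification (see \cite[Lemma 2.3 and the surrounding discussion]{ig1}, or the analogous statements in \cite{wedges}): for $\text{SLE}_\kappa(\underline\rho)$ in $\BB H$ from $0$ to $\infty$, the curve a.s.\ does \emph{not} hit the force point $x$ carrying the accumulated weight $\ul\rho_x := \sum_{i : p_i \le x}\rho_i$ from the left precisely when $\ul\rho_x \geq \kappa/2 - 2$, provided the curve actually reaches a neighborhood of $x$ without the intervening force points having already caused absorption. Under assumption (1), the condition $\sum_{j=1}^{k-1}\rho_j^R > -2$ guarantees that the curve is not "trapped" before reaching $p_k^R$ (the intermediate force point is not swallowed with the segment to its left collapsing onto $0$), while $\sum_{j=1}^{k}\rho_j^R > \kappa/2 - 4$ — which, after accounting for the $\rho_k^R$ that is "seen" only once the curve is adjacent to $p_k^R$, is exactly the borderline needed — puts the accumulated weight at $p_k^R$ above the non-hitting threshold $\kappa/2 - 2$ when combined with the contribution of the force point itself (this is where the "$-4$" vs.\ "$-2$" shift comes from: one counts $\rho_k^R$ twice, or rather the curve approaching $p_k^R$ merges the role of $p_k^R$ into the boundary). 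Under assumption (2), $\rho_k^R = 0$ means $p_k^R$ is not a genuine force point at all — it can be removed from the list without changing the law of $\eta$ — and an ordinary (non-force) boundary point of $\BB H \setminus \{0\}$ is a.s.\ not hit by $\text{SLE}_\kappa(\underline\rho)$ with $\kappa < 4$, since such curves are a.s.\ simple and, away from force points, behave locally like $\text{SLE}_\kappa$ which does not touch $\partial \BB H$.

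I would organize the argument by first disposing of case (2) (a one-line reduction plus the $\kappa<4$ simple-curve fact), then handling case (1) via the Bessel-process description: set up the driving SDE for the evolution of the force points relative to the Loewner tip, identify the relevant process as a squared-Bessel-type process whose dimension at the moment of approaching $p_k^R$ is determined by $\kappa$ and the accumulated weight, and quote the standard dichotomy (the process hits $0$ — equivalently $\eta$ hits $p_k^R$ — iff the dimension is $< 2$, i.e.\ iff the accumulated weight is $< \kappa/2 - 2$). The hypothesis $\sum_{j=1}^{k-1}\rho_j^R > -2$ is precisely what is needed to ensure the curve does not get disconnected from $p_k^R$ earlier (so that "approaching $p_k^R$" is a positive-probability event whose analysis is not vacuous), and then $\sum_{j=1}^{k}\rho_j^R > \kappa/2 - 4$ is exactly the threshold for non-hitting once one correctly books the weight of $p_k^R$ itself into the comparison.

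\textbf{Main obstacle.} The delicate point is the bookkeeping of \emph{which} weights are "active" at the moment the curve approaches $p_k^R$, and justifying the precise numerical threshold $\kappa/2 - 4$ (rather than $\kappa/2 - 2$) — this requires carefully tracking what happens to the intermediate force points $p_1^R, \dots, p_{k-1}^R$ as the curve moves toward $p_k^R$: generically they are swallowed, and their weights accumulate onto the "left side" of the remaining gap, so that the effective weight immediately to the left of $p_k^R$ is $\sum_{j=1}^{k-1}\rho_j^R$, and the non-hitting condition at $p_k^R$ becomes $\rho_k^R + \sum_{j=1}^{k-1}\rho_j^R = \sum_{j=1}^k \rho_j^R \geq \kappa/2 - 2$, i.e.\ strict inequality $> \kappa/2 - 2$... but the lemma states $> \kappa/2 - 4$, so in fact one must be comparing against a shifted quantity. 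Resolving this discrepancy — likely by noting that the curve hitting $p_k^R$ "counts" the force point with an extra factor, or that the relevant comparison is between $\text{SLE}_\kappa(\rho; \rho')$ interaction rules — is where I expect to spend the most care, and I would cross-check the constant against the known special case $n=0$, $m=1$ (ordinary one-sided $\text{SLE}_\kappa(\rho)$, which hits $\BB R_{>0}$ iff $\rho < \kappa/2 - 2$) to make sure the indexing in the lemma is consistent with that baseline before committing to the general argument. The condition $\sum_{j=1}^{k-1}\rho_j^R > -2$ must then be re-examined: it is the standard "curve is not absorbed into the real line at the intermediate force points" condition, so that the reduction to a single effective force point at $p_k^R$ is valid, and I would make sure its role is to license exactly that reduction.
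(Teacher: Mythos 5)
Your proposal identifies the right family of tools (Bessel-type hitting criteria for $\text{SLE}_\kappa(\underline\rho)$, the boundary-interaction classification), but it stops short of the actual argument and, in two places, contains errors that would derail it.

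First, the treatment of assumption (2) is wrong. You assert that once the zero-weight force point $p_k^R$ is removed, the curve ``behaves locally like $\text{SLE}_\kappa$ which does not touch $\partial\BB H$'' since it is simple for $\kappa<4$. But simplicity of the curve does not prevent it from touching the boundary: $\text{SLE}_\kappa(\underline\rho)$ with $\kappa<4$ bounces off $\partial\BB H$ whenever the accumulated weight is below $\kappa/2 - 2$, and the boundary-touching behavior is governed by the accumulated weights of all the force points to the left of $p_k^R$ (which may well be negative). The correct reason a fixed zero-weight point is a.s.\ not hit is a dimension/Fubini argument, and even that turns out to require care: in the paper's Lemma~\ref{lem:two-one-the-left2}, the case $\frac{\kappa}{2}-4 < \rho_z \le -2$ (where the curve dies immediately upon touching $(z,\infty)$) is handled by a scale-invariance argument that produces the conclusion only for almost every choice of the intermediate force point location $z$ — a quantifier that must then be propagated through the rest of the proof.

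Second, you flag the $\kappa/2 - 4$ versus $\kappa/2 - 2$ bookkeeping as your ``main obstacle'' and explicitly leave it unresolved. This is not a bookkeeping subtlety one can wave through; it is precisely the content that makes the lemma non-trivial. The paper's resolution bypasses your proposed reduction to $k=m$ (which is itself questionable: for $\kappa<4$ the curve can touch discrete subsets of $\BB R_{>0}$ and a clean ``the force points to the right are irrelevant on the relevant event'' reduction would need justification). Instead, the paper localizes near $p_k^R$: it realizes $\eta$ as a GFF flow line, picks an annulus $A(r,R)$ around $p_k^R$ containing no other force points, applies the conformal mapping-out function at a stopping time on the inner boundary, and uses GFF absolute continuity (\cite[Proposition 3.4]{ig1}) to compare with a simplified field whose flow line is an $\text{SLE}_\kappa(\overline{\rho}_{k-1}^R,\rho_k^R)$ with exactly two right force points. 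The precise $\kappa/2-4$ threshold for two-force-point non-hitting is then read off directly from \cite[Theorem 1.8]{miller-wu-dim} (Lemma~\ref{lem:two-one-the-left}). Finally, an iterated stopping-time argument over successive visits to the annulus converts the local statement into the global one. The local absolute continuity step — which lets one discard all the other force points and invoke the two-point result — is the missing idea in your plan, and without it the threshold computation you acknowledge being stuck on does not resolve.
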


\begin{proof}[Proof of \cref{lem:triple_are_simple}]
	Fix three distinct points $z,w,u\in\BB C$. Consider the flow lines $\beta_{z,1}^L , \beta_{w,1}^L , \beta_{u,2}^L$. Let $x$ be the point where $\beta_{z,1}^L$ and $\beta_{w,1}^L$ merge. Every merge point of $\eta_1$ is a point $x$ of this type for some rational $z,w\in\BB C$ (or a point of this type with $R$ instead of $L$). Every multiple point of $\eta_2$ lies on the flow line $\beta_{u,2}^L$ or $\beta_{u,2}^R$ for some rational $u\in\BB C$. Therefore, recalling that there are countably many merge points of $\eta_1$, it suffices to show the following claim.
	
	\medskip
	
	\noindent\textbf{Claim 1.} Fix three distinct points $z,w,u\in\BB C$. Let $x$ be the point where $\beta_{z,1}^L$ and $\beta_{w,1}^L$ merge. Then a.s.\ $\beta_{u,2}^L$ does not hit $x$.

	\medskip
	
	To prove the claim, we consider the three flow lines $\beta_{u,1}^L$, $\beta_{z,1}^L$ and $\beta_{w,1}^L$. The boundary of $\beta_{z,1}^L\cup\beta_{w,1}^L$ is naturally divided into six intervals by the points $x$, $z$ and $w$. Let $y$ be the point where $\beta_{u,1}^L$ merges into $\beta_{z,1}^L\cup\beta_{w,1}^L$. We assume that $y$ is on the interval shown on the left-hand side of Figure~\ref{fig-boundary-conditions}. The other five cases can be treated similarly to this one, so we will skip them for the sake of conciseness. 
	
	We introduce the parameters, 
	\begin{equation}\label{eq:parameters}
		\lambda=\frac{\pi\sqrt{\kappa}}{4},\quad \lambda'=\frac{\pi}{\sqrt{\kappa}}\quad\text{and}\quad \chi=\frac{\sqrt{\kappa}}{2}-\frac{2}{\sqrt{\kappa}}.
	\end{equation}
	In Figure~\ref{fig-boundary-conditions} we show in red (resp.\ green) the boundary values of the field $\hat{h}$ along the boundaries of $\beta_{u,1}^L$, $\beta_{z,1}^L$ and $\beta_{w,1}^L$ (resp. $\beta_{u,2}^L$), using the standard convention introduced in \cite[Figures 1.9-10-11]{ig1}. See \cite[Section 3.2]{ig4} and \cite[Lemma 3.11]{ss-dgff} for further explanations on why these are the correct values; we simply mention that the general principle is that conditioning on additional flow lines does not change the boundary data along the previous flow lines we have already conditioned on.
	
	\begin{figure}[ht!]
		\begin{center}
			\includegraphics[width=\textwidth]{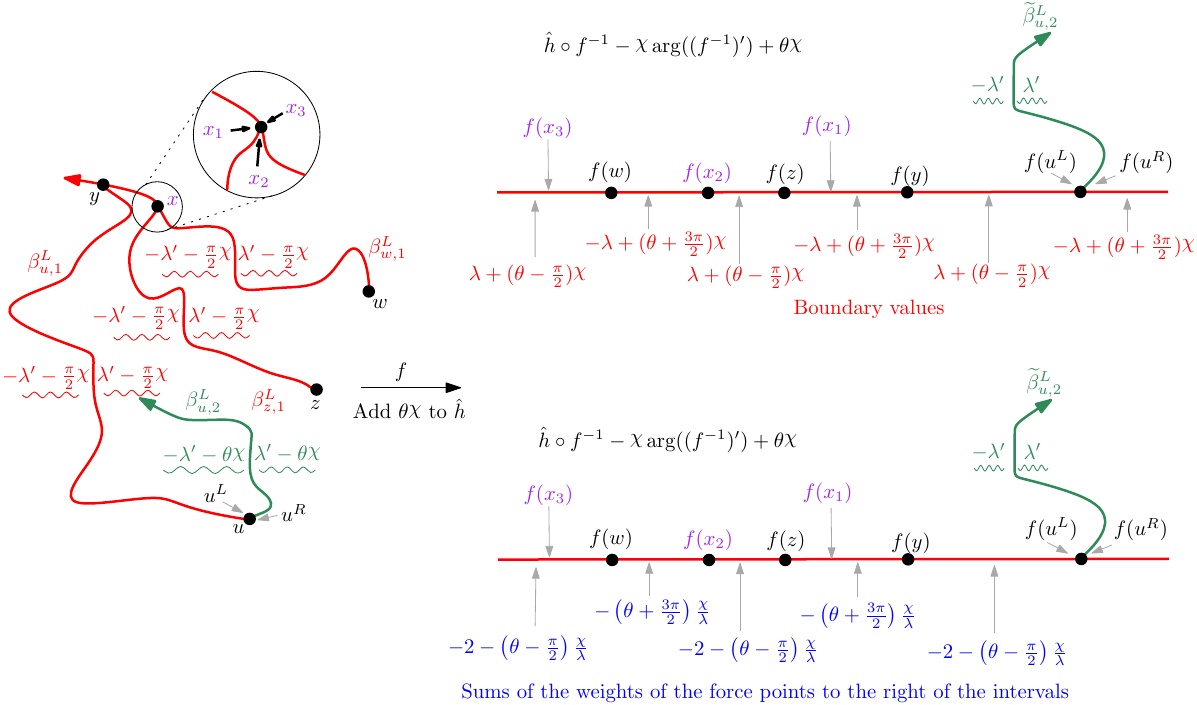}  
			\caption{\label{fig-boundary-conditions} A diagram for the proof of \cref{lem:triple_are_simple} when $\kappa\geq 6$. The goal is to show that the green flow line $\beta^L_{u,2}$ does not hit $x$. Note that $x$ has three corners, that we labeled $x_1$, $x_2$, $x_3$. In the proof we will show that none of these three corners can be hit by $\beta^L_{u,2}$. From left to right, we are considering a conformal map $f$ that sends $\mathbb{C}\setminus\{\beta_{u,1}^L\cup\beta_{z,1}^L\cup\beta_{w,1}^L\}$ to the upper half-plane $\mathbb{H}$, mapping $u$ to $0$ and $\infty$ to $\infty$. On the top-right illustration, we are showing the new boundary values of the imaginary geometry GFF $\hat{h}\circ f^{-1}-\chi\arg((f^{-1})')+\theta\chi$, while on the bottom-right illustration, we are showing the sums of the weights of the force points that determine the boundary values in each interval, as explained in \cite[Theorem 1.1]{ig4}.
			}
		\end{center}
		\vspace{-3ex}
	\end{figure}
	
	We note that $\mathbb{C}\setminus\{\beta_{u,1}^L\cup\beta_{z,1}^L\cup\beta_{w,1}^L\}$ is connected only if $\kappa\geq 6$. Indeed, when $\kappa < 6$, then the flow lines can have self-intersections. In the rest of the proof we assume that $\kappa\geq 6$ so that $\mathbb{C}\setminus\{\beta_{u,1}^L\cup\beta_{z,1}^L\cup\beta_{w,1}^L\}$ is connected, and we will point out the necessary modifications for the case when $\kappa< 6$ at the end of the proof.

	Now let $f$ be a conformal map that sends $\mathbb{C}\setminus\{\beta_{u,1}^L\cup\beta_{z,1}^L\cup\beta_{w,1}^L\}$ to the upper half-plane $\mathbb{H}$, mapping $u$ to $0$ and $\infty$ to $\infty$, as shown in the right-hand side of Figure~\ref{fig-boundary-conditions}. By adding the constant $\theta\chi$ to the field $\hat{h}$ and then applying the function $f$, we get that the boundary values of $\hat{h}\circ f^{-1}-\chi\arg((f^{-1})')+\theta\chi$ are as shown on the right-hand side of Figure~\ref{fig-boundary-conditions}. Let $\widetilde{\beta}_{u,2}^L$ be the image of the flow line $\beta_{u,2}^L$ under the above transformation.
	
	We performed this transformation because in this way we are now in the setting of \cite[Theorem 1.1]{ig4} and so, we can deduce that $\widetilde{\beta}_{u,2}^L$ is an SLE$_{16/\kappa}({\rho}^L,{\rho}^R)$ with 
		\begin{align*}
			{\rho}^L&=\bigg(-2-\left(\theta-\frac{\pi}{2}\right)\frac{\chi}{\lambda}\,\,,\,\,2-2\pi\frac{\chi}{\lambda}\,\,,\,\,-2+2\pi\frac{\chi}{\lambda}\,\,,\,\,2-2\pi\frac{\chi}{\lambda}\,\,,\,\,-2+2\pi\frac{\chi}{\lambda}\bigg),\\
			{\rho}^R&=\bigg(-2+\left(\theta+\frac{3\pi}{2}\right)\frac{\chi}{\lambda}\bigg),
		\end{align*}
		and the left force points at $p^L_1=f(u^L)$, $p^L_2=f(y)$, $p^L_3=f(z)$, $p^L_4=f(x_2)$, $p^L_5=f(w)$ and the right force point at $p^L_1=f(u^R)$.
		
		With this new piece of information, we now want to conclude that a.s.\ $\widetilde{\beta}_{u,2}^L$ does not hit any of the points $f(x_1)$, $f(x_2)$, $f(x_3)$ in Figure~\ref{fig-boundary-conditions}. 
		
		Note that when $\theta\in [-\pi/2,\pi/2)$, then the sum of the weights of the force points at $f(u^L)$, $f(y)$ and $f(z)$ is
		\begin{equation*}
			-2-\left(\theta-\frac{\pi}{2}\right)\frac{\chi}{\lambda}\in\left(-2,\frac{16/\kappa}{2}\right].
		\end{equation*}
		while the sum of the weights of the force points at $f(u^L)$, $f(y)$, $f(z)$ and $f(x_2)$ is
		\begin{equation*}
			-\left(\theta+\frac{3\pi}{2}\right)\frac{\chi}{\lambda}\in\left(16/\kappa-4,\frac{16/\kappa}{2}-2\right].
		\end{equation*}
		Therefore, using \cref{lem:not-hitting}, we can conclude that a.s.\ $\widetilde{\beta}_{u,2}^L$ does not hit $f(x_1)$ and $f(x_3)$ thanks to the second assumption (since there are no force points at $f(x_1)$ and $f(x_3)$) and also that $\widetilde{\beta}_{u,2}^L$ does not hit $f(x_2)$ thanks to the first assumption. This completes the proof of the claim above when $\kappa\geq 6$.
		
		\medskip
		
		When $\kappa < 6$ then the flow lines can have self-intersections, as shown in \cref{fig-boundary-conditions2}.
		In this case, we let $U$ be a connected component of $\BB C \setminus \{\beta_{u,1}^L \cup \beta_{z,1}^L \cup \beta_{w,1}^L\}$ with $x$ on its boundary and let $\tau$ be the first time $\beta_{u,2}^L$ enters $U$; see \cref{fig-boundary-conditions2}. Since $\beta_{u,2}^L$ can cross each of $\beta_{z,1}^L$ and $\beta_{w,1}^L$ at most once and can hit the same point at most finitely many times~\cite[Proposition 3.31]{ig1}, $\beta_{u,2}^L$ can re-enter $U$ at most a finite number of times. Conditional on $\beta_{u,1}^L \cup \beta_{z,1}^L \cup \beta_{w,1}^L$ and $\beta_{u,2}^L|_{[0,\tau]}$, the conditional law of $\beta_{u,2}^L$ between time $\tau$ and its exit time from $U$ is that of a flow line of the field $h|_U$. So, we can read off the boundary data of $U$ and conclude as before that $\beta_{u,2}^L$ does not hit $x$ between time $\tau$ and its exit time from $U$. If $\beta_{u,2}^L$ re-enters $U$ multiple times, one can use the same argument to show that $\beta_{u,2}^L$ does not hit $x$ each of the times it enters $U$.
\end{proof}

\begin{figure}[ht!]
	\begin{center}
		\includegraphics[width=0.9\textwidth]{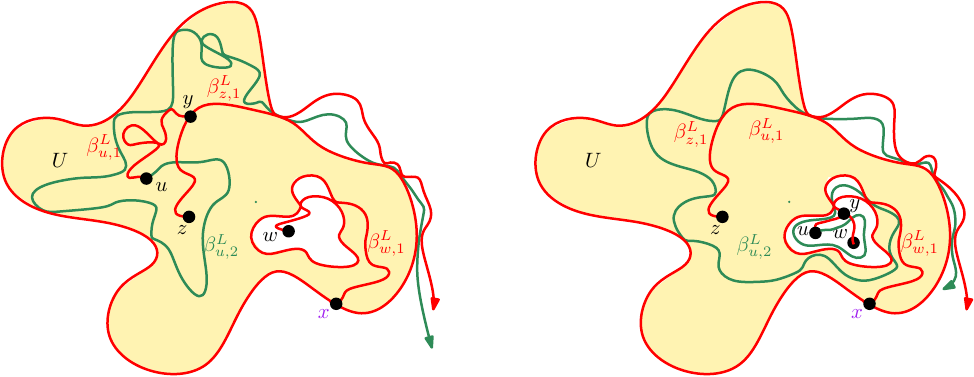}  
		\caption{\label{fig-boundary-conditions2} A diagram for the proof of \cref{lem:triple_are_simple} when $\kappa< 6$. On the left, the case when $u$ is inside $U$. On the right, the case when $u$ is outside $U$.
		}
	\end{center}
	\vspace{-3ex}
\end{figure}

We now turn to the proof of \cref{lem:not-hitting}. We first need two more preliminary lemmas.

\begin{lem}\label{lem:two-one-the-left}
	Fix $\kappa\in(0,4)$, $z\in(0,1)$ and ${\rho}_z,{\rho}_1\in \BB{R}$. Let $\eta$ be an SLE$_{\kappa}({\rho}_z,{\rho}_1)$ curve in the upper half-plane $\BB H$ from $0$ to $\infty$ and with force points at $z$ and $1$ such that $z$ has weight ${\rho}_z$ and $1$ has weight ${\rho}_1$. Assume that
 \begin{equation*}
			\rho_z>-2 \quad \text{and} \quad
			\rho_z+\rho_1>\frac{\kappa}{2}-4.
		\end{equation*}
		Then almost surely $\eta$ does not hit $1$.
\end{lem}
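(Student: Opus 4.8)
The plan is to follow the Loewner evolution of $\eta$ and reduce the claim to a hitting problem for the associated diffusion. Let $(g_t)$ be the Loewner maps of $\eta$, let $W_t$ be the driving function, and set $V_t^z := g_t(z)$ and $V_t^1 := g_t(1)$, so that $0 < V_t^z - W_t \le V_t^1 - W_t$ up to the corresponding swallowing times. A boundary point lies on $\eta$ only if it is first swallowed by the hull, i.e.\ only if $g_t(\cdot)-W_t\to 0$ at some finite time; and since $z<1$ and $\kappa<4$, the hull cannot disconnect $1$ from $\infty$ without also disconnecting $z$ (the trace of the hull on $\mathbb{R}$ is an interval containing $0$). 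On the event that $z$ is never swallowed, $1$ is never swallowed either, so $1\notin\eta$ and there is nothing to prove; from now on I would work on the event that $z$ is swallowed, at a first time $\sigma$.

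Since the sum of the weights of the right force points swallowed by time $\sigma$ equals $\rho_z>-2$, the continuation threshold is not reached at $\sigma$, so $\eta$ can be continued, and $g_\sigma$ maps $\eta|_{[\sigma,\infty)}$ to (a translation and rescaling of) an SLE$_\kappa$ in $\mathbb{H}$ from $0$ to $\infty$ carrying a force point of weight $\rho_z$ at $0^+$ — the image of the rightmost swallowed real point — and a force point of weight $\rho_1$ at some $d>0$, the image of $1$ (the degenerate case in which $1$ is swallowed simultaneously with $z$ can be treated directly, by the same principle as below). Thus it suffices to prove the normalized statement: for $\kappa\in(0,4)$, an SLE$_\kappa(\rho_z;\rho_1)$ in $\mathbb{H}$ from $0$ to $\infty$ with force points at $0^+$ and $d>0$ of weights $\rho_z>-2$ and $\rho_1$, with $\rho_z+\rho_1>\tfrac{\kappa}{2}-4$, almost surely does not hit $d$.

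For the normalized statement I would use the Loewner description again, with $A_t := V_t^{0^+}-W_t$ (a reflecting-Bessel-type process started at $0$) and $B_t := V_t^{d}-W_t$ started at $d$. Because no force point lies strictly between $0^+$ and $d$, the total weight ``seen'' by $d$ as the curve approaches it is $\rho_z+\rho_1$; the key SLE input is that a force point carrying accumulated weight $>\tfrac{\kappa}{2}-4$ is pinched off into a bubble rather than hit head on, the value $\tfrac{\kappa}{2}-4$ being exactly the one at which the relevant Bessel dimension equals $2-\tfrac{4}{\kappa}$. To make this rigorous I would pass to the ratio $R_t := A_t/B_t\in(0,1)$, check by It\^o's formula that $(\log B_t, R_t)$ is a diffusion whose drift as $R_t\to 1$ is controlled by $\rho_z+\rho_1$, and build a nonnegative local martingale $M_t := B_t^{\,\alpha}\,\varphi(R_t)$, with $\alpha$ and $\varphi$ chosen so that $M$ is a genuine local martingale and $M_t\to\infty$ on the event $\{d\in\eta\}$; the hypotheses $\rho_z>-2$ and $\rho_z+\rho_1>\tfrac{\kappa}{2}-4$ are precisely what makes $\varphi$ integrable at the two endpoints of $(0,1)$ and fixes the sign of $\alpha$. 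Optional stopping and Fatou applied to $M$ at the times $\{B_t=\epsilon\}$, letting $\epsilon\downarrow 0$, then force $\mathbb{P}(d\in\eta)=0$.

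The step I expect to be the main obstacle is the analysis of the two-force-point diffusion $(\log B_t, R_t)$ near the degenerate configurations $R_t\to 0$ and $R_t\to 1$: one must show that $R_t$ does not linger at these endpoints (so that the ``accumulated weight $\rho_z+\rho_1$'' heuristic is valid), solve for the exponent $\alpha$ and the ODE satisfied by $\varphi$, and — this is where $\kappa<4$ is genuinely used — separate, at the terminal time, the event that $d$ is hit from the event that $d$ is merely swallowed into a bubble, using that a fixed swallowed boundary point which is not a force point is a.s.\ not on the curve.
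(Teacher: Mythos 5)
The paper proves this lemma in one line: it is an immediate consequence of \cite[Theorem 1.8]{miller-wu-dim}, which already gives the non-hitting criterion for a fixed force point of an SLE$_\kappa(\underline{\rho})$ process with multiple force points on one side. Your proposal instead tries to rederive that criterion from scratch via a Loewner/martingale argument; this is in the same spirit as the proof of the cited theorem, but it is left as a sketch with two genuine gaps.

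First, the reduction step. You work on the event that $z$ is swallowed at time $\sigma$ and then conformally map to a configuration with force points at $0^+$ and $d>0$, remarking that the degenerate case in which $1$ is swallowed simultaneously with $z$ ``can be treated directly, by the same principle as below.'' But that degenerate case splits into (a) $\eta(\sigma)>1$, which is harmless, and (b) $\eta(\sigma)=1$, which is \emph{precisely} the event the lemma is supposed to rule out. The martingale argument you describe is only set up after the reduction, for $t>\sigma$, so it says nothing about case (b). In fact the reduction is unnecessary: one should run the martingale argument in the original coordinates with $A_t := g_t(z)-W_t$ and $B_t := g_t(1)-W_t$, so that the event $\{\eta \text{ hits } 1\}$ is $\{B_t\to 0 \text{ at the swallowing time of } 1\}$ and no case split is needed.

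Second, the martingale analysis itself is not carried out. You flag this yourself as the main obstacle, and it is a real one: you need explicit $\alpha$ and $\varphi$ such that $B_t^\alpha\varphi(R_t)$ is a (local) martingale, you need $\varphi$ to be bounded below near the relevant endpoint of $(0,1)$ so that $B_t\to 0$ forces the martingale to $+\infty$, and you need to rule out $R_t$ oscillating in a way that defeats this on the event $\{1\in\eta\}$. None of this is done, and the hypotheses $\rho_z>-2$, $\rho_z+\rho_1>\kappa/2-4$ are invoked only heuristically. Since the paper's proof simply cites an existing theorem that packages exactly this analysis, the cleanest fix for your argument is to do the same.
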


\begin{proof}
	The lemma statement follows immediately from \cite[Theorem 1.8]{miller-wu-dim}.
\end{proof}

\begin{lem}\label{lem:two-one-the-left2}
	Fix $\kappa\in(0,4)$ and ${\rho}_z,{\rho}_1\in \BB{R}$. Let $\eta$ be an SLE$_{\kappa}({\rho}_z,{\rho}_1)$ curve in the upper half-plane $\BB H$ from $0$ to $\infty$ and with force points at $z\in(0,1)$ and $1$ such that $z$ has weight ${\rho}_z$ and $1$ has weight ${\rho}_1$. Assume that 
		\begin{equation*}
	 			\rho_1=0.
		\end{equation*}
	Then for almost every $z\in(0,1)$,  $\eta$ almost surely does not hit $1$.
\end{lem}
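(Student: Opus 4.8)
The plan is to make the weight-$0$ force point disappear, use scale invariance to reduce to a single fixed curve, and then conclude with Fubini together with the fact that an SLE$_\kappa(\rho)$ curve with $\kappa\in(0,4)$ meets $\BB R$ in a Lebesgue-null set. First I would note that, since $\rho_1=0$, the force point at $1$ contributes the term $\rho_1/(W_t-V^1_t)\,dt\equiv 0$ to the Loewner driving equation of $\eta$ and leaves its continuation threshold unchanged; hence, as a curve from $0$ to $\infty$, $\eta$ has the law of an SLE$_\kappa(\rho_z)$ with the single force point at $z$ (here $\rho_z\in\BB R$ is a fixed constant, despite the notation). By scale invariance of SLE$_\kappa(\rho)$ from $0$ to $\infty$, this curve equals in law $z\,\zeta$, where $\zeta$ is an SLE$_\kappa(\rho_z)$ from $0$ to $\infty$ with the single force point at $1$, and crucially the law of $\zeta$ does not depend on $z$. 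Consequently, for every fixed $z\in(0,1)$,
\[
\BB P[\eta\text{ hits }1]=\BB P[\zeta\text{ hits }1/z].
\]

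Next, since $z\mapsto 1/z$ is a diffeomorphism of $(0,1)$ onto $(1,\infty)$, proving the lemma is equivalent to showing that $\BB P[\zeta\text{ hits }w]=0$ for Lebesgue-a.e.\ $w\in(1,\infty)$. For this I would use that, for $\kappa\in(0,4)$, the curve $\zeta$ a.s.\ intersects $\partial\BB H=\BB R$ in a set of Hausdorff dimension strictly smaller than $1$, hence of zero Lebesgue measure; this is either vacuous (when $\zeta$ does not hit $[1,\infty)$ at all) or follows from the boundary-intersection dimension computations for SLE$_\kappa(\underline\rho)$, e.g.\ from \cite{miller-wu-dim}. Since $(w,\omega)\mapsto\mathbf 1[w\in\zeta(\omega)]$ is jointly measurable, Tonelli gives
\[
\int_1^\infty\BB P[\zeta\text{ hits }w]\,dw=\BB E\!\left[\op{Leb}\!\left(\zeta\cap(1,\infty)\right)\right]=0,
\]
so $\BB P[\zeta\text{ hits }w]=0$ for a.e.\ $w\in(1,\infty)$, which is exactly what we wanted.

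The only non-routine ingredient is the Lebesgue-nullity of $\zeta\cap\BB R$; everything else (the weight-$0$ force point being invisible, scaling, and Fubini) is soft, and I do not expect a serious obstacle. Care is only needed to treat the degenerate ranges of $\rho_z$ uniformly: for $\rho_z\ge\kappa/2-2$ the curve misses $[1,\infty)$ entirely, for $\rho_z\le -2$ it is absorbed at its continuation threshold before ever reaching $1$, and for $\rho_z\in(-2,\kappa/2-2)$ it hits $[1,\infty)$ in a random fractal of dimension $<1$ — in all cases $\op{Leb}(\zeta\cap\BB R)=0$. (Alternatively, in the range $\rho_z>-2$ one could invoke \cref{lem:two-one-the-left} with $\rho_1=0$ in place of the Fubini argument, and combine it with the continuation-threshold remark for $\rho_z\le -2$, which would in fact yield the conclusion for every $z\in(0,1)$.)
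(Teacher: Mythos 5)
Your argument is correct, but it takes a genuinely different route from the paper's. The paper proves the lemma by a case analysis on $\rho_z$: for $\rho_z \ge \kappa/2 - 2$ it invokes \cite[Lemma 15]{dubedat-duality} to say $\eta$ never touches the right boundary; for $\rho_z \le \kappa/2 - 4$ the curve is absorbed at $z$; for $-2 < \rho_z < \kappa/2 - 2$ it applies \cref{lem:two-one-the-left} with $\rho_1 = 0$; and for the remaining window $\kappa/2 - 4 < \rho_z \le -2$ it uses the same scale-invariance reduction you do, but then observes that the curve dies on first touching $(z,\infty)$, so its intersection with $(1,\infty)$ after rescaling is a single random point, which misses any fixed $1/z$ for a.e.\ $z$. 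Your approach replaces all of this with one unified argument: after rescaling, you appeal to the general fact that $\op{Leb}(\zeta\cap\BB R)=0$ a.s.\ (via the boundary-intersection Hausdorff dimension from \cite{miller-wu-dim} when $\rho_z \in (-2,\kappa/2-2)$, and trivially otherwise) and then Tonelli. This buys you a shorter and more robust proof at the cost of relying on the full strength of the dimension result, whereas the paper's case analysis only needs it (implicitly, through \cref{lem:two-one-the-left}) in the non-degenerate window and gives a stronger ``every $z$'' conclusion when $\rho_z > -2$. One small inaccuracy in your side remarks: you say that for $\rho_z \le -2$ the curve ``is absorbed at its continuation threshold before ever reaching $1$,'' but for $\rho_z \in (\kappa/2-4,-2]$ the curve can hit $(z,\infty)$, and a priori it could hit $1$; that this happens with probability zero for a.e.\ $z$ is exactly what your Tonelli computation (or the paper's single-point observation) establishes, so the remark should not be stated as an unconditional fact. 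This does not affect the correctness of your main argument, which handles that case through the measure-zero statement rather than through the claim as worded.
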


\begin{proof}
		Note that, thanks to \cite[Lemma 15]{dubedat-duality}, if $\rho_{z}\geq\frac{\kappa}{2}-2$, then $\eta$ does not hit the right boundary of $\BB H$ at all, and if $\rho_{z}\leq\frac{\kappa}{2}-4$ then $\eta$ hits $z$ (without hitting $1$) and dies immediately after. 
		
		So we are left with the case $\frac{\kappa}{2}-4<\rho_{z}<\frac{\kappa}{2}-2$, i.e.\ the regime where  $\eta$ hits $(z,\infty)$. 
		If $-2<\rho_{z}<\frac{\kappa}{2}-2$, then the result follows immediately from \cref{lem:two-one-the-left}.
		
		We finally address the case when $\frac{\kappa}{2}-4<\rho_{z}\leq -2$. 
		We consider the conformal map $f(w)=w/z$. By scale invariance, we see that the probability that $\eta$ hits $1$ is equal to the probability that $f(\eta)$ hits $1/z$. Now, since $\rho_{z}\leq -2$ then $\eta$ dies immediately after hitting $(z,\infty)$. Therefore, $f(\eta)\cap (1,\infty)$ contains a single point. Hence, we see that the probability that $f(\eta)$ hits $1/z$ is zero for almost every $z\in(0,1)$. As a consequence, for almost all $z\in(0,1)$,  $\eta$ almost surely does not hit $1$. 
\end{proof}

We are now ready to complete the proof of \cref{lem:not-hitting}.

\begin{proof}[Proof of \cref{lem:not-hitting}]
	By \cite[Theorem 1]{ig1}, we can assume that $\eta$ is a flow line of a GFF $\hat h$ on $\BB H$, viewed modulo additive multiples of $2\pi\chi$, with boundary values equal to $\lambda(1+\overline{\rho}^R_i)$ on $(p^R_i,p^R_{i+1})$ and $-\lambda(1+\overline{\rho}^L_i)$ on $(p^L_{i+1},p^L_i)$, where $\overline{\rho}^*_i=\sum_{j=1}^{i}{\rho}^*_j$ for $*\in\{L,R\}$ (with the conventions that $p^*_0=0$, $p^R_{m+1}=\infty$ and $p^L_{n+1}=-\infty$).
	
	Let $A(r,R)$ be the closed annulus centered at $p_k^R$ and of radii $0<r<R$ such that $p_k^R+R<p_{k+1}^R$ and $p_k^R-R>p_{k-1}^R$. See the upper-left side of \cref{fig-does-not-hit}. Let $\tau$ be a stopping time such that $\eta(\tau)$ is on the internal boundary of $A(r,R)$, i.e.\ of the boundary of the disk centered at $p_k^R$ and of radius $r$. Let $f$ be the conformal mapping-out function for  $\eta$ that sends $\eta(\tau)$ to $0$ and such that $f(p_k^R)=1$. Using again \cite[Theorem 1]{ig1}, we find that the boundary values of $\hat{h}\circ f^{-1} - \chi \arg((f^{-1})')$ are equal to $\lambda(1+\overline{\rho}^R_i)$ on $(f(p^R_i),f(p^R_{i+1}))$ and $-\lambda(1+\overline{\rho}^L_i)$ on $(f(p^L_{i+1}),f(p^L_i))$; as shown on the top-right side of \cref{fig-does-not-hit}. We also set $U$ to be the image through $f$ of the closed disk centered at $p_k^R$ and of radius $R$.
	
	Let now $z\in(0,1)$ -- to be fixed later -- and consider another GFF $\widetilde h$, viewed modulo additive multiples of  $2\pi\chi$, with boundary values equal to $-\lambda$ on $(-\infty,0)$, $\lambda$ on $(0,z)$, $\lambda (1+\overline{\rho}_{k-1}^R)$ on $(z,1)$, and $\lambda (1+\overline{\rho}_{k}^R)$ on $(1,\infty)$. See the bottom part of \cref{fig-does-not-hit}. Let $U'$ be an open neighborhood of $U$ small enough so that the boundary values of $\hat{h}\circ f^{-1} - \chi \arg((f^{-1})')$ and $\widetilde h$ agree on $U'\cap\partial \BB H$ (note that this is possible because the boundary values of $\hat{h}\circ f^{-1} - \chi \arg((f^{-1})')$ and $\widetilde h$ are the same in a neighborhood of $0$ and $1$). Then, thanks to \cite[Proposition 3.4]{ig1}, $(\hat{h}\circ f^{-1} - \chi \arg((f^{-1})'))|_U$ and ${\widetilde h}|_U$ are mutually absolutely continuous (note that $U$ is independent of $\hat{h}\circ f^{-1} - \chi \arg((f^{-1})')$ and $\widetilde h$). In particular, if $\widetilde \eta^0$ is a flow line of $\widetilde h$ of angle 0 started from $0$, then the curves $f(\eta|_{[\tau, \infty)})$ and ${\widetilde \eta^0}$ stopped when they exist $U$ are also mutually absolutely continuous. Note that the law of $\widetilde \eta^0$ is that of an SLE$_\kappa(\overline{\rho}_{k-1}^R,\rho_{k}^R)$ with force points at $z$ and $1$.
	
	Let now $\tau_1$ be the first hitting time for $\eta$ of the internal boundary of $A(r,R)$, i.e.\ of the boundary of the disk centered at $p_k^R$ and of radius $r$.
	By \cref{lem:two-one-the-left} (for the Assumption 1) and \cref{lem:two-one-the-left2} (for the Assumption 2) -- applied to $\widetilde \eta^0$ with $z$ as in the statement of \cref{lem:two-one-the-left} -- and by the mutual absolute continuity discussed in the previous paragraph, we get that the probability $\eta$ hits $p^R_k$ between time $\tau$ and the first time after $\tau$ when it leaves the ball of radius $R$ centered at $p_k^R$ is zero.

	Finally, let $\sigma_1$ be the first time after $\tau_1$ at which $\eta$ exists the open disk of radius $R$ centered at $p_k^R$. Similarly, for all $\ell\geq 2$, let $\tau_\ell$ be the first hitting time after $\sigma_{\ell-1}$ when $\eta$ hits the closed disk of radius $r$ centered at $p_k^R$, and $\sigma_\ell$ be the first exit time after $\tau_{\ell}$ when $\eta$ exits the open disk of radius $R$ centered at $p_k^R$.
	Note that if $\eta$ hits $p^R_k$, then there must exist $\ell\geq 1$ such that $\eta$ hits $p^R_k$ between time $\tau_\ell$ and $\sigma_\ell$. By the conclusion of the previous paragraph, we see that the probability $\eta$ hits $p^R_k$ between time $\tau_\ell$ and $\sigma_\ell$ is also zero for all $\ell\geq 1$. This concludes the proof of the lemma.
\end{proof}

\begin{figure}[ht!]
	\begin{center}
		\includegraphics[width=0.9\textwidth]{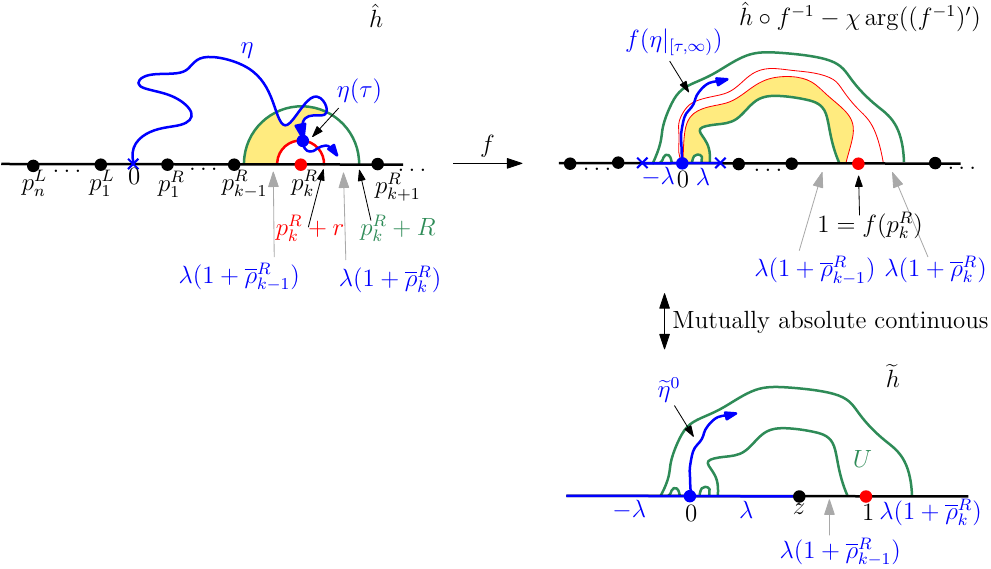}  
		\caption{\label{fig-does-not-hit} A schema for the proof of \cref{lem:not-hitting}.
		}
	\end{center}
	\vspace{-3ex}
\end{figure}

\section{Interplay between the support of the permuton and the multiple points of the SLEs}\label{sect:supp_perm}

The primary goal of this Section is to prove \cref{thm-permuton-multi-points}. We will start in \cref{sec-permuton-supp} by recalling and proving some basic results about the support of the permuton $\perm$ introduced in \eqref{eqn-permuton-def}. Then, in \cref{sec-permuton-support} we will prove \cref{thm-permuton-multi-points} in the case when  the two SLEs are independent, while in Sections~\ref{sec-skew-brownian-supp}~and~\ref{sect:proof--props}, we will prove \cref{thm-permuton-multi-points} in the case when the two SLEs are coupled as in Assumption~\ref{item-ig-gff2}.

\subsection{Basic properties of the support of the permuton}
\label{sec-permuton-supp}

We first recall the following basic results from \cite{bgs-meander}.

\begin{lem}{{\cite[Lemmas 2.8, 2.9, 2.10]{bgs-meander}}}\label{lem-permuton-defined} 
	Let $\perm$ be the permuton introduced in \eqref{eqn-permuton-def}.
	Almost surely, the following assertions are true. 
	\begin{enumerate}
		\item The permuton $\perm$ is well-defined, i.e., the definition does not depend on the choice of $\psi$. Moreover, a.s.\ for each rectangle $[a,b]\times[c,d] \subset [0,1]^2$, 
		\eqb \label{eqn-permuton-formula} 
		\perm\left([a,b]\times[c,d]\right) = \mu_h\left( \eta_1([a,b]) \cap \eta_2([c,d]) \right)  ,
		\eqe
		where we recall that $\mu_h$ is the $\gamma$-LQG area measure associated with a singly marked unit area $\gamma$-Liouville quantum sphere $(\BB C, h, \infty)$.
		\item For any choice of the function $\psi$ from~\eqref{eqn-psi-property},  
		\eqb \label{eqn-permuton-inclusion}
		\op{supp} \perm \subset \ol{\{(t,\psi(t)) : t\in [0,1]\} }\subset \{(t,s) \in [0,1] : \eta_1(t) = \eta_2(s) \}  .
		\eqe 
		
		\item  Let $(t,s) \in [0,1]^2$ such that $\eta_1(t) = \eta_2(s)$ and $\eta_2(s)$ is hit only once by $\eta_2$. Then $(t,s) $ belongs to the support of $\perm$. The same is true with the roles of $\eta_1$ and $\eta_2$ interchanged.
	\end{enumerate}
\end{lem}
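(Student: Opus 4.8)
The statement to prove is \cref{lem-permuton-defined}, which collects three basic facts about the permuton $\perm$. The plan is to treat each assertion in turn, working from the formula \eqref{eqn-permuton-formula} outward.

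\textbf{Assertion 1.} The plan is to first show the identity \eqref{eqn-permuton-formula}. Fix a rectangle $[a,b]\times[c,d]$. By the definition \eqref{eqn-permuton-def}, for any admissible $\psi$ we have $\perm([a,b]\times[c,d]) = \op{Leb}\{t\in[a,b] : \psi(t)\in[c,d]\}$. Since $\eta_1$ is parametrized by $\mu_h$-mass, $\op{Leb}$ restricted to $[a,b]$ is the pushforward of $\mu_h|_{\eta_1([a,b])}$ under $\eta_1^{-1}$ (using that $\eta_1$ hits $\mu_h$-a.e.\ point exactly once, so $\eta_1^{-1}$ is well-defined off a $\mu_h$-null set). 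Under the relation $\eta_1(t)=\eta_2(\psi(t))$, the condition $\psi(t)\in[c,d]$ corresponds, for $\mu_h$-a.e.\ point $z=\eta_1(t)$, to $z\in\eta_2([c,d])$ — here one uses that $\mu_h$-a.e.\ point is hit exactly once by $\eta_2$ as well, so the choice of $\psi$ is irrelevant on a full-measure set. Hence $\perm([a,b]\times[c,d]) = \mu_h(\eta_1([a,b])\cap\eta_2([c,d]))$, which in particular does not depend on $\psi$. Since rectangles with rational endpoints generate the Borel $\sigma$-algebra and a Borel measure on $[0,1]^2$ is determined by its values on such rectangles, $\perm$ itself is well-defined independently of $\psi$.

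\textbf{Assertion 2.} For the first inclusion, note that $\perm$ is, by construction, the pushforward of $\op{Leb}$ on $[0,1]$ under $t\mapsto(t,\psi(t))$, so $\perm$ is supported on the closure of the graph $\{(t,\psi(t)):t\in[0,1]\}$ — indeed that closure is a closed set of full $\perm$-measure, and $\op{supp}\perm$ is the intersection of all such sets. For the second inclusion, take $(t_0,s_0)$ in the closure of the graph, so there are $t_n\to t_0$ with $\psi(t_n)\to s_0$. By \eqref{eqn-psi-property}, $\eta_1(t_n)=\eta_2(\psi(t_n))$ for all $n$; letting $n\to\infty$ and using continuity of $\eta_1$ and $\eta_2$ (with respect to the $\mu_h$-mass parametrization, which is guaranteed by item $(iv)$ of \cref{sect:SLES}), we get $\eta_1(t_0)=\eta_2(s_0)$, so $(t_0,s_0)$ lies in the third set.

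\textbf{Assertion 3.} Suppose $\eta_1(t)=\eta_2(s)=:z$ and $z$ is hit only once by $\eta_2$, so $s$ is the unique time with $\eta_2(s)=z$. The plan is to show $(t,s)\in\op{supp}\perm$ by showing that every open rectangle $R=(a,b)\times(c,d)$ containing $(t,s)$ has $\perm(R)>0$; by \eqref{eqn-permuton-formula} (applied to a slightly smaller closed rectangle inside $R$) it suffices to show $\mu_h(\eta_1([a',b'])\cap\eta_2([c',d']))>0$ for some closed rectangle $[a',b']\times[c',d']\subset R$ still containing $(t,s)$ in its interior. Since $t\in(a',b')$, the set $\eta_1([a',b'])$ contains $\eta_1((a',b'))$, which is a neighborhood of $z$ in the sense that it contains $z$ and, by item $(iv)$ of \cref{sect:SLES}, has nonempty interior; more carefully, because $\eta_2$ hits $z$ only at the single time $s\in(c',d')$, the point $z$ is in the interior of $\eta_2([c',d'])$ — otherwise $z$ would be on the boundary of $\eta_2([0,d'])$ and the space-filling curve $\eta_2$ would have to return to $z$ after time $d'$, contradicting that $z$ is hit only once. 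Similarly $z$ is an interior point of $\eta_1([a',b'])$. Hence $\eta_1([a',b'])\cap\eta_2([c',d'])$ contains an open neighborhood of $z$, and since $\mu_h$ assigns positive mass to every open set, $\mu_h(\eta_1([a',b'])\cap\eta_2([c',d']))>0$. The roles of $\eta_1$ and $\eta_2$ are symmetric, giving the last sentence.

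The main obstacle is the care needed in Assertion 3 to argue that $z$ lies in the \emph{interior} of both $\eta_1([a',b'])$ and $\eta_2([c',d'])$: the key point is the ``space-filling forces a return'' argument (in the spirit of \cref{lem:SLE_hitting double points}), namely that a point hit exactly once cannot lie on the boundary of the filled region at any time strictly between its (unique) hitting time and a later time, since otherwise the curve would be forced to revisit it. Everything else is a routine unwinding of definitions, continuity, and the basic properties of $\mu_h$ and space-filling SLE recorded in \cref{sect:SLES}; for the full details we refer to \cite[Lemmas 2.8, 2.9, 2.10]{bgs-meander}.
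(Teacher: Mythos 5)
The paper does not prove this lemma; it is cited verbatim to \cite[Lemmas 2.8, 2.9, 2.10]{bgs-meander}, so there is no internal proof to compare against. Your treatments of Assertions 1 and 2 are correct and essentially routine unwindings of the definitions.

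In Assertion 3, however, there is a genuine gap. The step ``Similarly $z$ is an interior point of $\eta_1([a',b'])$'' does not follow: the hypothesis only says that $z$ is hit once by $\eta_2$, and $z$ may well be a multiple point of $\eta_1$. If $\eta_1$ hits $z$ at a second time lying outside $[a',b']$, then (by the very return argument you invoke) $z$ lies on the boundary of $\eta_1([a',b'])$, not in its interior. Consequently the claimed conclusion that $\eta_1([a',b'])\cap\eta_2([c',d'])$ ``contains an open neighborhood of $z$'' is also false in general. The desired conclusion $\mu_h\left(\eta_1([a',b'])\cap\eta_2([c',d'])\right)>0$ is still true, but needs a different endgame: having established $B_\delta(z)\subset\eta_2([c',d'])$, use continuity of $\eta_1$ to pick $\ep>0$ small enough that $[t-\ep,t+\ep]\subset[a',b']$ and $\eta_1([t-\ep,t+\ep])\subset B_\delta(z)$; then $\eta_1([t-\ep,t+\ep])\subset\eta_1([a',b'])\cap\eta_2([c',d'])$, and this set has nonempty interior (item~$(iv)$ of \cref{sect:SLES}) and hence positive $\mu_h$-mass. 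So the flaw is local and easily patched, but as written the ``similarly'' overclaims.
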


\begin{remark}\label{rem-mult-points}
	Recall from the discussion below \eqref{eq:wewfihbw9euobfwu0} that both inclusions in~\eqref{eqn-permuton-inclusion} can potentially be strict. 
	Assertion 3 tells us that multiple points of $\eta_1$ and $\eta_2$ are the \emph{only} potential reason why the inclusions in~\eqref{eqn-permuton-inclusion} can fail to be equalities.
\end{remark}

We will prove (here and in the next sections) several lemmas concerning the set $\op{supp} \perm$.
We consider a particular choice of the function $\psi$ from~\eqref{eqn-psi-property}. For each $t\in [0,1]$, let 
\eqb \label{eqn-psi-def}
\psi_-(t) := \inf\left\{ s \in  [0,1]: \eta_2(s) = \eta_1(t) \right\} ,
\eqe
that is, the first time that the curve $\eta_2$ hits the point $\eta_1(t)$.

\begin{lem} \label{lem-closure-to-intersect}
Let $\psi_-$ be as in~\eqref{eqn-psi-def}. Almost surely, for each $s,t\in [0,1]$ we have $\eta_1(s) = \eta_1(t)$ if and only if there exists $q\in [0,1]$ such that 
\eqbn
(s,q) , (t,q) \in \ol{\{ (t,\psi_-(t)) : t\in [0,1]\}}.
\eqen
In particular, the intersection set $\mcl T \mcl M_1 = \mcl T \mcl M(\eta_1)$ from~\eqref{eqn-intersect-set} is a.s.\ determined by the set 
\[
\ol{\{ (t,\psi_-(t)) : t\in [0,1]\}}.
\]
\end{lem}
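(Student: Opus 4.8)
\textbf{Proof strategy for Lemma~\ref{lem-closure-to-intersect}.}

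The plan is to prove the ``if and only if'' statement first, and then note that the final claim about $\mcl T \mcl M_1$ follows immediately, since $\mcl T\mcl M(\eta_1) = \{(s,t) : \eta_1(s) = \eta_1(t)\}$ and the displayed condition describing when $\eta_1(s)=\eta_1(t)$ is manifestly a function of the set $\ol{\{(t,\psi_-(t)):t\in[0,1]\}}$ alone. So the work is entirely in the equivalence.

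For the ``only if'' direction, suppose $\eta_1(s) = \eta_1(t)$ and set $q := \psi_-(t) = \inf\{r : \eta_2(r) = \eta_1(t)\}$. Then $q = \psi_-(s)$ as well, because $\eta_1(s)$ and $\eta_1(t)$ are the same point and $\psi_-$ depends only on that point. Hence $(s,q) = (s,\psi_-(s))$ and $(t,q) = (t,\psi_-(t))$ both lie in $\{(r,\psi_-(r)) : r\in[0,1]\}$, a fortiori in its closure, and we are done. This direction requires no randomness at all.

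For the ``if'' direction, suppose $(s,q)$ and $(t,q)$ both lie in the closure $K := \ol{\{(r,\psi_-(r)) : r\in[0,1]\}}$. By Fact~2 (the second inclusion in \cref{lem-permuton-defined}, Assertion~2, applied with the choice $\psi = \psi_-$), every point $(a,b)\in K$ satisfies $\eta_1(a) = \eta_2(b)$. Applying this to $(s,q)$ and $(t,q)$ gives $\eta_1(s) = \eta_2(q) = \eta_1(t)$, which is exactly what we want. So in fact this direction is also essentially immediate from Fact~2 and needs no further input.

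\textbf{The one subtle point} is that the application of \cref{lem-permuton-defined}, Assertion~2 requires $\psi_-$ to be an admissible choice of $\psi$ in the sense of~\eqref{eqn-psi-property}, i.e.\ a Lebesgue measurable function with $\eta_1(t) = \eta_2(\psi_-(t))$ for all $t$. The identity $\eta_1(t) = \eta_2(\psi_-(t))$ holds because $\psi_-(t)$ is defined as a hitting time of the point $\eta_1(t)$ by the continuous curve $\eta_2$, and the infimum of a closed nonempty set of hitting times is attained (the set $\{r : \eta_2(r) = \eta_1(t)\}$ is closed by continuity of $\eta_2$ and nonempty since $\eta_2$ is space-filling, so visits $\eta_1(t)$). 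Measurability of $t\mapsto\psi_-(t)$ follows since it is a hitting time of a measurable (indeed Borel) set depending measurably on $t$; this is where I would invoke a standard measurable-selection / debut-theorem argument, and it is the only place any care is needed. Thus $\psi_-$ is a legitimate choice of $\psi$, the hypotheses of \cref{lem-permuton-defined} apply to it, and the proof is complete. I expect no genuine obstacle here; the lemma is a bookkeeping consequence of \cref{lem-permuton-defined} together with the observation that $\psi_-$ depends on $t$ only through the point $\eta_1(t)$.
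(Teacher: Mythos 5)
Your proof is correct and essentially matches the paper's. The only cosmetic difference is in the converse direction: where the paper gives a short self-contained continuity argument (take sequences $s_j\to s$, $t_j\to t$ with $\psi_-(s_j),\psi_-(t_j)\to q$ and pass to the limit using continuity of $\eta_1,\eta_2$), you instead invoke the already-stated inclusion $\ol{\{(r,\psi_-(r))\}}\subset\{(a,b):\eta_1(a)=\eta_2(b)\}$ from Lemma~\ref{lem-permuton-defined}, Assertion~2 -- but that inclusion is proved by exactly the same continuity argument, so the mathematical content is identical.
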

\begin{proof}
If $\eta_1(s) = \eta_1(t)$ then by~\eqref{eqn-psi-def} we have $\psi_-(s) = \psi_-(t)$, and so the condition in the lemma statement holds with $q = \psi_-(s)$. 
Conversely, assume that there exists $q\in [0,1]$ such that $(s,q), (t,q) \in \ol{\{ (t,\psi_-(t)) : t\in [0,1]\}}$.
Then there are sequences of times $s_j \rta s$ and $t_j \rta t$ such that $ \psi_-(s_j) \rta q$ and $\psi_-(t_j) \rta q$. By the definition of $\psi_-$ and the continuity of $\eta_1$ and $\eta_2$, 
\eqbn
|\eta_1(s) - \eta_1(t)| = \lim_{j\rta\infty} |\eta_1(s_j) - \eta_1(t_j)| = \lim_{j\rta\infty} |\eta_2(\psi_-(s_j)) - \eta_2(\psi_-(t_j))|  = 0 .
\eqen
This concludes the proof.
\end{proof}

\subsection{Characterization of the support when the two SLEs are independent}
\label{sec-permuton-support}

In this section, we fix $\gamma \in (0,2)$ and $\kappa_1,\kappa_2 > 4$. 
Let $(\BB C , h , \infty)$ be a singly marked unit area $\gamma$-Liouville quantum sphere.
Let $(\eta_1,\eta_2)$ be a pair of independent whole-plane space-filling SLE processes from $\infty$ to $\infty$, with parameters $\kappa_1$ and $\kappa_2$, sampled independently from $h$ and then parametrized by $\gamma$-LQG mass with respect to $h$.  
Let $\perm$ be the permuton associated with $(\eta_1,\eta_2)$ as in~\eqref{eqn-permuton-def}.  

The following proposition is our first non-trivial result about the support of $\perm$. 

\begin{prop} \label{prop-permuton-support}
Let $\psi_-$ be as in~\eqref{eqn-psi-def}. 
If $\eta_1$ and $\eta_2$, viewed modulo time parametrization, are independent then, almost surely,
\eqbn
\op{supp} \perm = \ol{\{(t,\psi_-(t)) : t\in [0,1]\} }.
\eqen
\end{prop}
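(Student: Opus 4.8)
By Fact~2 (the second assertion of \cref{lem-permuton-defined}, applied with the choice $\psi = \psi_-$), we have the inclusion $\op{supp}\perm \subset \ol{\{(t,\psi_-(t)) : t\in [0,1]\}}$. So the content of the proposition is the reverse inclusion: every point of the form $(t,\psi_-(t))$, and more generally every limit of such points, lies in $\op{supp}\perm$. Since $\op{supp}\perm$ is closed, it suffices to show $(t,\psi_-(t)) \in \op{supp}\perm$ for each $t$, or rather for a dense set of $t$'s; and by \eqref{eqn-permuton-formula} this amounts to showing that for every rational rectangle $(a,b)\times(c,d)$ containing such a point we have $\mu_h\big(\eta_1([a,b])\cap\eta_2([c,d])\big) > 0$. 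The plan is therefore: fix $t$, set $z := \eta_1(t)$ and $s := \psi_-(t)$ (the first time $\eta_2$ hits $z$), and show that immediately after time $s$ the curve $\eta_2$ fills a set of positive $\mu_h$-mass that is contained in $\eta_1([a,b])$. Combined with $t \in (a,b)$, which gives $z \in \eta_1([a,b])$, and $s \in (c,d)$, this produces the required positive-mass intersection.

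The heart of the argument is the ``immediate filling'' statement, which is purely about a single space-filling SLE (namely $\eta_2$, with $\kappa_2 > 4$) and which I expect to be isolated as one or more lemmas in \cref{sec-permuton-support}. The key point is property $(iv)$ from \cref{sect:SLES}: for $\eta_2$ parametrized by $\mu_h$-mass, $\eta_2([s, s+\delta])$ has nonempty interior for every $\delta > 0$, hence positive $\mu_h$-mass. What must be checked is that this filled-in region can be taken to lie inside $\eta_1([a,b])$. Here I would use that $z = \eta_1(t)$ is an interior point of $\eta_1([a,b])$ for a \emph{typical} $t$ — more precisely, one should first reduce to $t$ in the (full Lebesgue measure) set of times at which $\eta_1$ hits $z$ only once and $z$ lies in the interior of $\eta_1([a,b])$ whenever $t \in (a,b)$; such times are dense, and by Assertion~3 of \cref{lem-permuton-defined} applied with the roles reversed, density of good times plus closedness of $\op{supp}\perm$ will be enough. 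Given that $z$ is interior to $\eta_1([a,b])$, there is a small ball $B_{\delta_0}(z) \subset \eta_1([a,b])$; and since $\eta_2$ is continuous and space-filling and $s$ is the \emph{first} time $\eta_2$ hits $z$, for $\delta$ small enough $\eta_2([s,s+\delta])$ contains a neighborhood of $z$ inside $B_{\delta_0}(z)$ — this is where independence of $\eta_1$ and $\eta_2$ enters, guaranteeing that $z$ is a.s.\ not a pathological point for $\eta_2$ (e.g., not a multiple point of $\eta_2$, by property $(vi)$), so that $\eta_2$ genuinely covers a 2D neighborhood of $z$ right after time $s$ rather than, say, tracing a boundary arc.

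Carrying this out carefully, the main obstacle is the topological bookkeeping around $z$: when $\kappa_2 \in (4,8)$ the set $\eta_2([s,s+\delta])$ is not simply connected and $\eta_2$ fills regions by iteratively closing bubbles (property $(ii)$), so one cannot simply say ``$\eta_2$ locally fills a disk.'' The right way to handle this, I think, is to argue at the level of $\mu_h$-mass rather than topology: show that $\mu_h\big(B_\delta(z) \setminus \eta_2([0,s])\big) > 0$ for all small $\delta$ (because $z = \eta_2(s)$ is hit for the first time at $s$, so a neighborhood of $z$ is not yet covered — using e.g.\ that $\eta_2([0,s])$ is relatively closed in its complement near the point it is about to hit, or a Lebesgue-density / local-finiteness argument for $\mu_h$), and then that $\eta_2$ covers \emph{all} of $B_{\delta}(z)$ within finite additional time, so by continuity a positive fraction of the mass of $B_\delta(z) \setminus \eta_2([0,s])$ is swallowed in $[s, s+\delta']$ for suitable $\delta'$. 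Intersecting with $B_{\delta_0}(z) \subset \eta_1([a,b])$ then gives $\mu_h\big(\eta_1([a,b]) \cap \eta_2([c,d])\big) \geq \mu_h\big(B_{\delta_0}(z) \cap \eta_2([s,s+\delta'])\big) > 0$ once $\delta'$ is small enough that $[s,s+\delta'] \subset (c,d)$, completing the reverse inclusion. Finally, to pass from a dense set of ``good'' $t$ to the full closure $\ol{\{(t,\psi_-(t))\}}$, one notes $\op{supp}\perm$ is closed and the good pairs $(t,\psi_-(t))$ are dense in $\{(t,\psi_-(t)) : t \in [0,1]\}$ — this last density claim uses right-continuity/regularity properties of $\psi_-$ along $\eta_2$, which should follow from the definition of $\psi_-$ as a first hitting time and the continuity of the curves.
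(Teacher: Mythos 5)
Your high-level plan — show that immediately after $\eta_2$ first enters $\eta_1([a,b])$, it fills a positive-mass subset of $\eta_1([a,b])$ — is the right idea, and the reduction via \eqref{eqn-permuton-formula} to positivity of $\mu_h(\eta_1([a,b])\cap\eta_2([c,d]))$ on rational rectangles is correct. But the execution has two genuine gaps, and the second is serious.

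First, the reduction to ``good'' $t$ (where $z=\eta_1(t)$ is hit once by both curves) followed by a density argument is circular. For good $t$, Assertion~3 of \cref{lem-permuton-defined} already gives $(t,\psi_-(t))\in\op{supp}\perm$, so no filling argument is needed for those $t$; the whole content of the proposition is the remaining points, i.e.\ proving density of good pairs in $\{(t,\psi_-(t)):t\in[0,1]\}$. But to show a general $(t_0,s_0)$ with $s_0=\psi_-(t_0)$ is approximated by good pairs you would need a simple point of both curves in $\eta_1((t_0-\ep,t_0+\ep))\cap\eta_2((s_0-\ep,s_0+\ep))$, and since simple points have full $\mu_h$-mass this is essentially asking for $\perm([t_0-\ep,t_0+\ep]\times[s_0-\ep,s_0+\ep])>0$ — which is the statement to be proved. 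The appeal to ``right-continuity/regularity of $\psi_-$'' does not help: $\psi_-$ is genuinely discontinuous exactly at times $t$ where $\eta_1(t)$ is a multiple point of $\eta_2$, and those multiple points are uncountable and non-negligible. Likewise, independence of $\eta_1,\eta_2$ does \emph{not} make $z=\eta_1(t)$ a.s.\ a non-multiple point of $\eta_2$ for \emph{all} $t$ simultaneously; property (vi) is a ``for fixed $z$'' statement and Fubini only gives ``for a.e.\ $t$''.

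Second, the filling step itself is not established. The claim that for $s=\psi_-(t)$ and small $\delta$, ``$\eta_2([s,s+\delta])$ contains a neighborhood of $z$'' is false even when $z$ is a simple point of $\eta_2$, because $\eta_2([0,s])$ will in general already cover part of every neighborhood of $z$. The fallback $\mu_h$-mass argument does not fix this: the real difficulty for $\kappa_2\in(4,8)$ is that $B_\delta(z)\setminus\eta_2([0,s])$ can have several connected components and $\eta_2$ immediately after $s$ fills only one of them, with no guarantee that it is the one meeting $\eta_1([a,b])$. The paper resolves this precisely by \emph{not} using a singleton target: it first uses closedness of $\eta_2([0,c])$ to replace $z$ by a non-singleton path-connected set $K=\eta_1([p,q])\subset\eta_1([a,b])$ disjoint from $\eta_2([0,c])$, lets $\tau$ be the first hitting time of $K$ (a stopping time for $\eta_2$ given $(h,\eta_1)$, by independence), and proves (\cref{lem-space-filling-fill}, via the Claim on uniqueness of the complementary component with $\eta_2(\tau)$ on its boundary, plus the strong Markov property \cref{lem-space-filling-law} and the chordal filling lemma \cref{lem-chordal-fill}) that $\eta_2([\tau,\tau+\ep])$ fills a piece of exactly the component containing $K\setminus\{\eta_2(\tau)\}$. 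That topological uniqueness argument is the missing ingredient, and it is also where independence is actually used (to make $\tau$ a stopping time), rather than to rule out multiple points.
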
 

The statement above allows us to easily complete the proof of \cref{thm-permuton-multi-points} in the case where the two SLEs are independent.

\begin{proof}[Proof of  \cref{thm-permuton-multi-points} (Assumption~\ref{item-indep2})]
		Note that Proposition~\ref{prop-permuton-support} combined with Lemma~\ref{lem-closure-to-intersect} immediately implies that $\mcl T \mcl M_1$ is determined by $\op{supp}\perm$ when $\eta_1$ and $\eta_2$ are independent viewed modulo time parametrization. This proves \cref{thm-permuton-multi-points} when the two SLEs are independent.
\end{proof}

We note that, as we discussed in Remark~\ref{rem-mult-points}, Proposition~\ref{prop-permuton-support} is not true for a general choice of coupling between $\eta_1$ and $\eta_2$. Before we discuss the proof of Proposition~\ref{prop-permuton-support}, we record the following corollary when $\kappa_1 \geq 8$ and $\kappa_2 \geq 8$.

\begin{prop} \label{prop-permuton-full}
If $\eta_1$ and $\eta_2$, viewed modulo time parametrization, are independent and at least one among $\kappa_1$ and $\kappa_2$ is greater or equal than $8$ then, almost surely,
\eqb\label{eq:equalities-supp}
\op{supp} \perm = \ol{\{(t,\psi_-(t)) : t\in [0,1]\} } = \{(t,s) \in [0,1] : \eta_1(t) = \eta_2(s) \}  .
\eqe
\end{prop}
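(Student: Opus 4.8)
\textbf{Proof proposal for Proposition~\ref{prop-permuton-full}.}

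The plan is to deduce the proposition from Proposition~\ref{prop-permuton-support} together with Lemma~\ref{lem-permuton-defined}(3), using the special structure of space-filling SLE$_\kappa$ when $\kappa \geq 8$, namely that such a curve has at most triple points (fact $(vi)$ in \cref{sect:SLES}) and, more importantly, that the regions $\eta([s,t])$ are simply connected (fact $(i)$). The first inclusion $\op{supp}\perm = \ol{\{(t,\psi_-(t)): t \in [0,1]\}}$ is already given by Proposition~\ref{prop-permuton-support}, so the only thing to prove is that
\eqbn
\ol{\{(t,\psi_-(t)) : t\in [0,1]\} } = \{(t,s) \in [0,1]^2 : \eta_1(t) = \eta_2(s) \},
\eqen
and by Lemma~\ref{lem-permuton-defined}(2) we already have ``$\subset$'', so the content is the reverse inclusion: every pair $(t,s)$ with $\eta_1(t)=\eta_2(s)$ lies in $\ol{\{(t,\psi_-(t))\}}$.

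Suppose without loss of generality that $\kappa_1 \geq 8$. Fix $(t,s)$ with $z := \eta_1(t) = \eta_2(s)$. The key step is to show that $s$ can be approximated by times of the form $\psi_-(t_j)$ with $t_j \to t$. By fact $(vi)$, $z$ is hit by $\eta_1$ at most three times; let $t = t^{(1)} < t^{(2)} < \cdots$ be these times (at most three of them), and similarly let $s^{(1)} < s^{(2)} < \cdots$ be the times $\eta_2$ hits $z$. Now I would argue that for each $i$, one can find $t_j \to t^{(i)}$ with $\psi_-(t_j) \to s^{(i)}$ for the appropriate matching $i$; combined with the fact that $(t^{(i)}, \psi_-(t^{(i)}))$ type limits capture all the $s^{(i)}$, this gives that all pairs $(t^{(i)}, s^{(i')})$ lie in the closure. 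Concretely: because $\eta_1$ is continuous and $\eta_1(t) = z$ is an interior point of $\BB C$, and because space-filling SLE fills a neighborhood of $z$ right around each of its visits, one can choose times $t_j$ slightly before or after each visit $t^{(i)}$ so that $\eta_1(t_j)$ ranges over a sequence of points converging to $z$ from within a region that $\eta_2$ fills near each of its visits $s^{(i')}$; the first hitting time $\psi_-(t_j)$ of $\eta_1(t_j)$ by $\eta_2$ can then be forced to converge to any prescribed $s^{(i')}$. The simple-connectedness of $\eta_1([s,t])$-type sets (fact $(i)$) and the deterministic combinatorics of how two space-filling curves with only double/triple points can overlap near a common point are what make this bookkeeping finite and tractable. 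Alternatively—and this is probably cleaner—one can invoke Lemma~\ref{lem-permuton-defined}(3) directly after observing that when $\kappa_1 \geq 8$ (or $\kappa_2 \geq 8$), the pushforward structure forces each such $(t,s)$ into the support; one reduces to the countably many multiple points, handles each by a local argument, and uses that a countable union of the relevant ``bad'' events has probability zero.

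The main obstacle I expect is the local analysis at multiple points: showing that for a point $z$ hit twice or three times by $\eta_1$ and twice or three times by $\eta_2$, \emph{all} of the (up to $3\times 3 = 9$) pairs $(t^{(i)}, s^{(i')})$ actually lie in $\ol{\{(t,\psi_-(t))\}}$, not just some of them. The content of the $\kappa \geq 8$ hypothesis is precisely that this works out—by contrast, the Proposition following Fact 3 in the introduction warns that in the imaginary-geometry coupling with $\kappa$ small some of these pairs are genuinely \emph{missing} from the support. So the argument must use $\kappa_1 \geq 8$ in an essential way, presumably through the simple-connectedness of $\eta_1([s,t])$ and the classification of triple points as merge points (fact $(vii)$), to rule out the pathological configurations. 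I would isolate this as a lemma about how two independent space-filling SLEs with one parameter $\geq 8$ overlap near a common multiple point, and prove it by conditioning on $\eta_1$, using that the conditional law of $\eta_2$ near $z$ is that of a space-filling SLE visiting $z$, which (by space-fillingness) must fill a punctured neighborhood of $z$ immediately after/before each visit, hence must have visited $\eta_1(t_j)$ for suitable $t_j\to t$ at a time close to each $s^{(i')}$.
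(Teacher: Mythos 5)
Your proposal does not reach a correct proof, and the gaps are structural rather than merely expository. Let me point out the two most important issues.

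\textbf{The missing ingredient is time reversal, not local geometry.} The paper's proof defines $\psi_+(t)$ as the \emph{last} time $\eta_2$ hits $\eta_1(t)$ and observes that, by time reversibility of whole-plane space-filling SLE (together with the fact that the permuton does not depend on the choice of $\psi$, Lemma~\ref{lem-permuton-defined}(1)), Proposition~\ref{prop-permuton-support} applied to the time-reversed $\eta_2$ gives $(t,\psi_+(t)) \in \op{supp}\perm$ for every $t$. Combined with $(t,\psi_-(t)) \in \op{supp}\perm$, this already handles \emph{all} pairs $(t,s)$ for which $\eta_2$ hits $\eta_1(t)=\eta_2(s)$ at most twice — in particular all double points of $\eta_2$, which form an uncountable set. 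Your proposal never uses time reversal and instead tries to show directly that $\psi_-(t_j)$ can be ``forced to converge to any prescribed $s^{(i')}$'' by approximating from one side; this is a genuinely hard geometric claim (it amounts to showing the slices filled by $\eta_1$ near $t^{(i)}$ interlace the slices filled by $\eta_2$ near each $s^{(i')}$, which is precisely the delicate local analysis the paper carries out in Sections~4.3–4.4 \emph{only} for the coupled case, where it sometimes fails). In the independent case that analysis is avoidable, and the cleanest way to avoid it is the $\psi_\pm$ trick you do not invoke.

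\textbf{Your ``countably many multiple points'' reduction is wrong.} In your alternative argument you write ``one reduces to the countably many multiple points,'' but for $\kappa\geq 8$ only the \emph{triple} points (merge points) of a space-filling SLE are countable; its double points form an uncountable set of Hausdorff dimension greater than one, namely the flow lines. So you cannot reduce to a countable union of bad events for multiple points in general. The paper's count argument is used only for the triple points: after the $\psi_\pm$ step and its symmetric counterpart (swapping the roles of $\eta_1$ and $\eta_2$), only triple points of $\eta_2$ remain; since these are countable and $\eta_1$ is independent, a.s.\ each is hit exactly once by $\eta_1$, so the symmetric version of the $\psi_\pm$ step covers them. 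Your proposal also leans on the simple-connectedness of the regions $\eta_1([s,t])$ (fact $(i)$), but the paper's argument does not need it — the proof is essentially a counting argument, not a topological one.

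In short: the right proof is to upgrade Proposition~\ref{prop-permuton-support} via time reversal to get both $\psi_-$ and $\psi_+$ in the support, apply the same with $\eta_1$ and $\eta_2$ exchanged, and then use countability of triple points together with independence to dispose of the remaining (triple) case. Your route, as written, either needs a substantial new local lemma about how two independent space-filling curves interlace near a common multiple point, or it silently assumes one.
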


\begin{proof}[Proof of Proposition~\ref{prop-permuton-full} assuming Proposition~\ref{prop-permuton-support}]
Without loss of generality, we can assume that $\kappa_2\geq 8$.
The first equality is Proposition~\ref{prop-permuton-support}.
By Assertion 2 in Lemma~\ref{lem-permuton-defined}, 
\[\op{supp} \perm \subset \{(t,s) \in [0,1] : \eta_1(t) = \eta_2(s) \}.\] 
We need to prove the reverse inclusion. 
Extending~\eqref{eqn-psi-def}, for $t\in [0,1]$, we let $\psi_-(t)$ be the first time $\eta_2$ hits $\eta_1(t)$ and let $\psi_+(t)$ be the last time that $\eta_2$ hits $\eta_1(t)$. 
By the first equality in \eqref{eq:equalities-supp}, a.s.\ the support of $\perm$ contains $(t,\psi_-(t))$ for each $t\in [0,1]$. 
By the time reversibility of $\eta_2$ (and the fact that the definition of $\perm$ does not depend on the particular choice of $\psi$, see Assertion 1 in Lemma~\ref{lem-permuton-defined}), a.s.\ the support of $\perm$ contains $(t,\psi_+(t))$ for each $t\in [0,1]$. 

Therefore, a.s.\ the support of $\perm$ contains $(t,s)$ whenever $\eta_1(t) = \eta_2(s)$ and $\eta_1(t)$ is hit at most twice by $\eta_2$. 
By the symmetry between $\eta_1$ and $\eta_2$, a.s.\ the support of $\perm$ also contains $(t,s)$ whenever $\eta_1(t) = \eta_2(s)$ and $\eta_2(s)$ is hit at most twice by $\eta_2$. 

Since when $\kappa_2\geq 8$ the maximum number of times that $\eta_2$ hits any $z\in\BB C$ is 3, it remains to look at the triple points of $\eta_2$. Almost surely, the set of triple points of $\eta_2$ is countable. 
Since $\eta_1$ and $\eta_2$ are independent modulo time parametrization, and every fixed point $x \in\BB C$ is a.s.\ hit once by $\eta_1$, we conclude that a.s.\ each triple point of $\eta_2$ is hit only once by $\eta_1$.
In particular, a.s. each point of $\BB C$ is hit at most twice by $\eta_1$ or $\eta_2$ (or both). Combining this with the conclusion of the preceding paragraph, we can conclude the proof.
\end{proof}

We now turn our attention to the proof of Proposition~\ref{prop-permuton-support}. 
In light of Assertion 1 in Lemma~\ref{lem-permuton-defined}, we need to show that a.s.\ whenever $(a,b)\times(c,d)\subset [0,1]^2$ is a rectangle which contains a point of the form $(t,\psi_-(t))$, then $\mu_h(\eta_1([a,b]) \cap \eta_2([c,d])) > 0$. 
To establish this, we want to, roughly speaking, show that when $\eta_2$ hits $\eta_1(t)$, then $\eta_2$ must immediately fill an open subset contained in $\eta_1([a,b])$.

\medskip

This will require several lemmas about space-filling SLE. 
The next three lemmas do not involve any LQG, so for these lemmas we will parametrize our space-filling SLE curves by Lebesgue measure instead of $\mu_h$-mass. We note that the time change for space-filling SLE to get from the Lebesgue measure parametrization to the $\mu_h$-mass parametrization is a homeomorphism from $\BB{R}$ to $(0,1)$.  
Our first lemma gives a strong Markov property for whole-plane space-filling SLE.

\begin{lem} \label{lem-space-filling-law}
Let $\kappa>4$ and let $\eta$ be a whole-plane space-filling SLE$_\kappa$ from $\infty$ to $\infty$, parametrized by Lebesgue measure in such a way that $\eta(0) = 0$. Let $\tau$ be a stopping time for $\eta$. 
\begin{itemize}
\item If $\kappa \geq 8$, the conditional law of $\eta|_{[\tau,\infty)}$ given $\eta|_{(-\infty,\tau]}$ is that of a chordal SLE$_\kappa$ curve from $\eta(\tau)$ to $\infty$ in $\BB C\setminus \eta((-\infty,\tau])$. 
\item If $\kappa \in (4,8)$, the conditional law of $\eta|_{[\tau,\infty)}$ given $\eta|_{(-\infty,\tau]}$ is that of a concatenation of conditionally independent chordal space-filling SLE$_\kappa$ curves in the connected components of $\BB C\setminus \eta((-\infty,\tau])$, each of which goes between the two points on the boundary of the component where the left and right boundaries of $\eta((-\infty,\tau])$ meet.\footnote{
It is possible that the boundary of one of the connected components of $\BB C\setminus \eta((-\infty,\tau])$ (necessarily the one with $\eta(\tau)$ on its boundary) is entirely part of the left boundary of $\eta((-\infty,\tau])$ or entirely part of the right boundary of $\eta((-\infty,\tau])$. In this case, the conditional law of the segment of $\eta$ in this component is instead that of a chordal space-filling SLE$_\kappa$ loop based at $\eta(\tau)$. See~\cite[Proposition A.3]{bg-lbm} for more on chordal space-filling SLE loops.
}
\end{itemize}
\end{lem}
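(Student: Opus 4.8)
The plan is to derive the strong Markov property from the imaginary geometry construction of whole-plane space-filling SLE$_\kappa$ recalled in \cref{sec:SLE_as_flow}, together with the Markov property of the whole-plane GFF. Concretely, I would realize $\eta$ as the space-filling SLE$_\kappa$ counterflow line of angle $0$ of a whole-plane GFF $\wh h$ (viewed modulo additive multiples of $2\pi\chi$), so that the left and right outer boundaries of $\eta$ stopped at any point are flow lines of $\wh h$ of angles $\pi/2$ and $-\pi/2$. Set $K_\tau := \ol{\eta((-\infty,\tau])}$, the closed region filled by $\eta$ up to the stopping time $\tau$; then $\eta(\tau)$ and $\infty$ lie on $\bdy K_\tau$, the set $K_\tau$ is determined by $\eta|_{(-\infty,\tau]}$, and by the reasoning behind property~$(i)$ in \cref{sect:SLES} the complement $\BB C\setminus K_\tau$ is a single simply connected (Jordan) domain when $\kappa\geq 8$, whereas for $\kappa\in(4,8)$ it has countably many connected components $\{D_j\}$.

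The key step is to show that $K_\tau$ is a \emph{local set} for $\wh h$ and to identify the conditional law of $\wh h|_{\BB C\setminus K_\tau}$, given $(K_\tau,\wh h|_{K_\tau})$, as a GFF whose boundary data along $\bdy K_\tau$ is the flow-line data inherited from the two outer-boundary flow lines of $K_\tau$ (angles $\pi/2$ and $-\pi/2$). For $\kappa\geq 8$, $\eta$ is itself the counterflow line, so $K_\tau$ is the hull of a counterflow line stopped at a stopping time, and both facts are instances of the local-set/strong-Markov statements for counterflow lines in \cite{ig4}. Given this, when $\kappa\geq 8$ the domain $D:=\BB C\setminus K_\tau$ is simply connected with $\eta(\tau),\infty$ on its boundary, the curve $\eta|_{[\tau,\infty)}$ is the SLE$_\kappa$ counterflow line of $\wh h|_D$ from $\eta(\tau)$ to $\infty$, and a counterflow line in a Jordan domain with exactly this boundary data is a chordal SLE$_\kappa$ by \cite[Theorem 1.1]{ig4}; this yields the first assertion. (For $\kappa\geq 8$ one could also avoid imaginary geometry altogether: using that $\eta$ is a two-sided version of chordal SLE$_\kappa$, property~$(i)$, one first identifies $\eta|_{[0,\infty)}$ given $\eta|_{(-\infty,0]}$ as a chordal SLE$_\kappa$ from $0=\eta(0)$ to $\infty$ in $\BB C\setminus\eta((-\infty,0])$ via the ``zoom in near a fixed interior point'' construction, and then applies the domain Markov property of chordal SLE$_\kappa$ at $\tau$.)

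For $\kappa\in(4,8)$ I would run the same argument, now taking $K_\tau$ to be the hull of the (non-space-filling) counterflow line stopped at a stopping time, together with all bubbles already completely filled in by $\eta$ and the partially filled current bubble; each of these pieces is a local set, hence so is $K_\tau$. Conditionally on $(K_\tau,\wh h|_{K_\tau})$, the restrictions of $\wh h$ to the distinct components $\{D_j\}$ of $\BB C\setminus K_\tau$ are then \emph{conditionally independent} GFFs with the appropriate flow-line boundary data — this is exactly the Markov property of the GFF across disjoint open sets, and it is the source of the conditional independence of the chordal pieces. The portion of $\eta|_{[\tau,\infty)}$ inside $D_j$ is the counterflow line of $\wh h|_{D_j}$ between the two prime ends where the left and right outer boundaries of $K_\tau$ meet, which is a chordal space-filling SLE$_\kappa$; in the degenerate case where $\bdy D_j$ is entirely part of one of the two outer boundaries of $K_\tau$, this is instead a space-filling SLE$_\kappa$ loop based at $\eta(\tau)$, as in \cite[Proposition A.3]{bg-lbm}. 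Since the order in which $\eta|_{[\tau,\infty)}$ visits the $D_j$ is read off from $\eta|_{(-\infty,\tau]}$ (from the counterflow line trace and the times at which bubbles were cut off), concatenating the pieces in this order gives the second assertion.

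The main obstacle I anticipate is the ``local set at a space-filling-time stopping time'' step: one must verify that stopping the space-filling curve at $\tau$ genuinely corresponds to stopping the underlying counterflow line (and freezing the current bubble-fill) at a stopping time, so that $K_\tau$ is local, and then carefully match the boundary data along $\bdy K_\tau$ so that \cite[Theorem 1.1]{ig4} produces a plain chordal SLE$_\kappa$ with no additional force points. For $\kappa\in(4,8)$ there is the extra bookkeeping of describing $\BB C\setminus K_\tau$ and the order in which its components are visited by $\eta$, all of which is implicit in the bubble-filling construction of \cite{ig4} and \cite{wedges} but needs to be spelled out.
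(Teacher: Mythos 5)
Your proposal takes a genuinely different route from the paper's. The paper avoids imaginary geometry entirely: it first verifies the statement when $\tau = \tau_z$ is the first hitting time of a fixed $z\in\BB C$, which follows directly from the construction of whole-plane space-filling SLE (Section~\ref{sec-sle} and \cite[Footnote~4]{wedges}) together with translation invariance; this immediately extends to stopping times taking values in a deterministic countable set. For a general stopping time $\tau$, the paper approximates from above by $\tau_k := \inf\{t\geq \tau : \eta(t)\in 2^{-k}\BB Z^2\}$, using that $\eta([\tau,\tau+\ep])$ has non-empty interior to see $\tau_k\downarrow\tau$, and then passes to the limit using the backward martingale convergence theorem, exploiting that finitely many chosen components $U_1,\dots,U_N$ of $\BB C\setminus\eta((-\infty,\tau])$ are also components of $\BB C\setminus\eta((-\infty,\tau_k])$ once $k$ is large. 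This is lighter on machinery and sidesteps the question of whether $K_\tau$ is a local set.

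The obstacle you flag at the end — verifying that stopping the space-filling curve at a Lebesgue-parametrization stopping time $\tau$ gives a hull $K_\tau$ that is a local set for $\wh h$ with the claimed boundary data — is a genuine gap, not just bookkeeping. A stopping time in the Lebesgue/LQG parametrization of $\eta$ is not a stopping time for the underlying (capacity-parametrized) counterflow line, nor is the ``freeze the current bubble-fill'' operation an obviously local one, so the off-the-shelf local-set statements for counterflow lines from~\cite{ig4} do not apply directly. Any honest treatment along your lines would likely have to introduce an approximation argument of essentially the same kind the paper uses to reduce to countably-valued stopping times — at which point you could just as well argue purely at the level of the curve and dispense with the GFF, as the paper does. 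If you want to keep the imaginary-geometry viewpoint, you should state and prove the local-set claim for $K_\tau$ explicitly (probably by reducing to stopping times of the form $\tau_z$ exactly as the paper does), rather than citing it as ``instances of'' the $\kappa\geq 8$ results in~\cite{ig4}.
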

\begin{proof}
For $z\in\BB C$, let $\tau_z$ be the first time that $\eta$ hits $z$. By the description of the law of whole-plane space-filling SLE$_\kappa$ in Section~\ref{sec-sle} (see also \cite[Footnote 4]{wedges}), together with the translation invariance of the law of $\eta$, the description of the conditional law of $\eta|_{[\tau,\infty)}$ in the statement of the lemma is true whenever $\tau = \tau_z$ for a fixed $z\in\BB C$. From this, we immediately deduce that this statement is also true if there is a deterministic countable set $Z\subset \BB C$ such that a.s.\ $\eta(\tau) \in Z$. To get the lemma statement for a general stopping time $\tau$, let $\tau_k$, for $k\in \BB N$,  be the smallest time $t\geq \tau$ for which $\eta(t) \in 2^{-k} \BB Z^2$. For every $\ep > 0$, the set $\eta([\tau , \tau+\ep])$ has non-empty interior, so this set contains a point of $2^{-k} \BB Z^2$ for each large enough $k \in\BB N$. Consequently, $\tau_k$ a.s.\ decreases to $\tau$.
Now, let $U_1,\dots,U_N$ be connected components of $\BB C\setminus \eta((-\infty,\tau])$, chosen in a manner which depends only on $\eta|_{(-\infty,\tau]}$. For each large enough $k$, we have that $U_1,\dots,U_N$ are also connected components of $\BB C\setminus \eta((-\infty,\tau_k])$. We know that for each $k$, the conditional law given $\eta|_{(-\infty,\tau_k]}$ of the segments of $\eta$ in $\ol U_1,\dots,\ol U_N$ is that of a collection of independent space-filling SLE$_\kappa$ curves between the marked points of the components. By the backward martingale convergence theorem (applied to an arbitrary bounded measurable function of these $N$ curve segments), the same holds if we instead condition on $\eta|_{(-\infty,\tau]}$. Therefore, we can now deduce the description of the conditional law of $\eta|_{[\tau,\infty)}$ in the general case. 
\end{proof}

Lemma~\ref{lem-space-filling-law} allows us to reduce certain problems about whole-plane space-filling SLE$_\kappa$ to problems about chordal SLE$_\kappa$. We will need the following lemma in the chordal case.

\begin{lem} \label{lem-chordal-fill}
Let $\eta$ be a chordal space-filling SLE$_\kappa$ from 0 to $\infty$ in $\BB H$ parametrized by Lebesgue measure. Almost surely, for each $\ep > 0$ the set $\eta([0,\ep])$ contains an open neighborhood of 0 in $\BB H$.
\end{lem}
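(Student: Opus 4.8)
The goal is to show that for chordal space-filling SLE$_\kappa$ in $\BB H$ from $0$ to $\infty$, the initial segment $\eta([0,\ep])$ contains a neighborhood of $0$ in $\BB H$. The plan is to treat the two regimes $\kappa\geq 8$ and $\kappa\in (4,8)$ slightly differently, but in both cases the key point is that a space-filling SLE$_\kappa$ has no boundary-touching behavior at its starting point that could prevent it from covering a neighborhood of $0$. First I would recall that for $\kappa\geq 8$, chordal space-filling SLE$_\kappa$ is just ordinary chordal SLE$_\kappa$, and (as recorded in property $(iv)$ of \cref{sect:SLES}) the set $\eta([0,\ep])$ always has non-empty interior. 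Moreover, by the target invariance / reversibility properties and the fact that SLE$_\kappa$ for $\kappa \geq 8$ is space-filling and a.s.\ hits every fixed point exactly once, one sees that $0$ cannot lie on the boundary of $\eta([0,\ep])$: if it did, then since $\eta$ is space-filling it would have to return to fill the portion of any small half-disk around $0$ not yet covered, and the trace would have to pass through $0$ again, contradicting that $0$ is hit only once (this is exactly the mechanism used in the proof of \cref{lem:SLE_hitting double points}). Hence $0$ is in the interior of $\eta([0,\ep])$, which is what we want.

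More carefully, here is the argument I would write. Fix $\ep>0$. Since $\eta$ is space-filling, $\eta([0,\ep])$ has non-empty interior and $\eta([0,\ep])\cup \eta([\ep,\infty)) = \ol{\BB H}$ (modulo a null set for the $\kappa\in(4,8)$ case, where one should work with the closure). Suppose for contradiction that $0$ is not in the interior of $\eta([0,\ep])$ relative to $\ol{\BB H}$. Then every half-disk $B_\delta(0)\cap\BB H$ contains points of $\eta((\ep,\infty))$ arbitrarily close to $0$; in particular $0 \in \ol{\eta((\ep,\infty))} = \eta([\ep,\infty))$, so $\eta$ hits $0$ at some time $t\geq \ep > 0$. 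But $\eta(0) = 0$ as well, so $0$ is hit at two distinct times $0$ and $t$. For $\kappa\geq 8$ this already contradicts the fact that a fixed interior-or-boundary point --- here the boundary point $0$, the starting point, which chordal SLE$_\kappa$ leaves immediately and never returns to --- is hit only once; more precisely, chordal SLE$_\kappa$ for $\kappa\geq 8$ a.s.\ does not return to its starting point, so we get a contradiction. For $\kappa\in(4,8)$, I would instead invoke \cref{lem-space-filling-law}'s building-block description or directly use that the space-filling SLE$_\kappa$ is obtained from ordinary SLE$_\kappa$ (which for $\kappa\in(4,8)$ a.s.\ does not hit its starting point after time $0$, and whose complementary bubbles are filled in by independent space-filling loops none of which is based at $0$) by iteratively filling bubbles; the point $0$ lies on the outer SLE$_\kappa$ trace only at time $0$, and the bubble-filling curves are based at other points, so again $0$ cannot be revisited. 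Either way we contradict the assumption, so $0\in\op{int}(\eta([0,\ep]))$.

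The one subtlety --- and the step I expect to require the most care --- is the $\kappa\in(4,8)$ case, where "$\eta([0,\ep])$" is not simply connected and the notion of "first returning to $0$" has to be handled through the bubble-filling structure rather than through the trace of a single curve. There one should argue that among the at-most-countably-many SLE$_\kappa$-type boundary arcs and the space-filling loops filling the bubbles, the point $0$ is the starting point of the outermost SLE$_\kappa$ curve only, and is hit at time $0$ only; the loops filling bubbles disconnected from $\infty$ are each based at a point where the left and right boundaries of the already-drawn region meet, and for small times none of these base points equals $0$ (since the outer SLE$_\kappa$ moves away from $0$ and does not return to it, as $0$ is a fixed boundary point and chordal SLE$_\kappa$ for $\kappa < 8$ a.s.\ does not hit any fixed boundary point other than at the start). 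Combining this with the space-filling property exactly as in the proof of \cref{lem:SLE_hitting double points} gives that any small half-disk around $0$ is covered by $\eta$ before it can "escape" past time $\ep$, provided $\eta([0,\ep])$ contains an interior point --- but actually it is cleaner to just note that if $0$ were on $\bdy \eta([0,\ep])$ then $\eta$ would hit $0$ after time $0$, which we have ruled out. This finishes the proof in both regimes.
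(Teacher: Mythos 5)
Your proposal takes a genuinely different route from the paper's: you argue by contradiction, reducing the lemma to the claim that $\eta$ never revisits $0$ after time $0$, whereas the paper argues directly, quoting the standard fact that for ordinary chordal SLE$_\kappa$ with $\kappa>4$ the filled hull $K_\delta$ at any positive capacity time $\delta$ a.s.\ contains a neighborhood of $0$ in $\BB H$, and then noting that the space-filling curve covers all of $K_\delta$ before it leaves $K_\delta$, so that $K_\delta \subset \eta([0,\ep])$ once $\operatorname{Leb}(K_\delta)\le\ep$. Your topological reduction is sound: if $0\notin\operatorname{int}(\eta([0,\ep]))$ then, since $\eta([\ep,\infty])$ is a closed subset of the sphere and $\eta(\infty)=\infty$, the curve must hit $0$ again at some time $\ge\ep$.

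The gap is in the supporting claim that $\eta$ never revisits $0$. For $\kappa\in(4,8)$ you argue that the bubble-filling loops ``are based at other points, so again $0$ cannot be revisited,'' but that inference is not valid: a chordal space-filling SLE$_\kappa$ loop filling a complementary bubble $B$ visits \emph{every} point of $\ol B$, not merely its base point, so it would hit $0$ whenever $0\in\partial B$. What you actually need is that $0$ is not on the boundary of any complementary bubble of the ordinary SLE$_\kappa$ trace $\wt\eta$, together with (for all $\kappa>4$) the fact that $\wt\eta$ itself does not return to $0$. Both of these are true, but the clean justification of each is precisely the hull fact the paper cites: since $0\in\operatorname{int}(K_\delta)$ for every $\delta>0$, the trace after time $\delta$ lies in $\ol{\BB H}\setminus\operatorname{int}(K_\delta)$ and hence never returns to $0$, and any bubble whose closure contained $0$ would, being connected, disjoint from $\wt\eta$, and meeting $\operatorname{int}(K_\delta)$, be contained in $K_\delta$ for every $\delta>0$, hence in $\bigcap_{\delta>0}K_\delta=\{0\}$, which is impossible. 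So the contradiction route can be completed, but only by importing the same hull-swallowing input the paper uses; as written, the $\kappa\in(4,8)$ step does not go through, and the $\kappa\ge 8$ step asserts ``does not return to its starting point'' without proof.
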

\begin{proof}
Standard results on SLE (see, e.g.,~\cite[Section 6.2]{lawler-book}) show that if $\wt\eta$ is an ordinary chordal SLE$_\kappa$ curve from 0 to $\infty$ in $\BB H$, parametrized by half-plane capacity, then a.s.\ for each $\delta > 0$ the hull generated by $\wt\eta([0,\delta])$ contains an open neighborhood of 0 in $\BB H$. By construction of space-filling SLE$_\kappa$ in~\cite{ig4}, the space-filling SLE$_\kappa$ curve $\eta$ visits the points of $\wt\eta$ in chronological order and fills in each region which is disconnected from $\infty$ by $\wt\eta$ immediately after it is disconnected. Since $\eta$ is parametrized by Lebesgue measure, if $\delta > 0$ is chosen to be small enough so that the hull generated by $\wt\eta([0,\delta])$ has Lebesgue measure at most $\ep$, then this hull is contained in $\eta([0,\ep])$. 
Since this hull contains an open neighborhood of the origin in $\BB H$, so does $\eta([0,\ep])$. 
\end{proof}

The following lemma will be used in the proof of Proposition~\ref{prop-permuton-support} to show that the intersection of two segments of $\eta_1$ and $\eta_2$ contains an open set (and hence has positive $\mu_h$-mass).

\begin{figure}[ht!]
\begin{center}
\includegraphics[scale=.78]{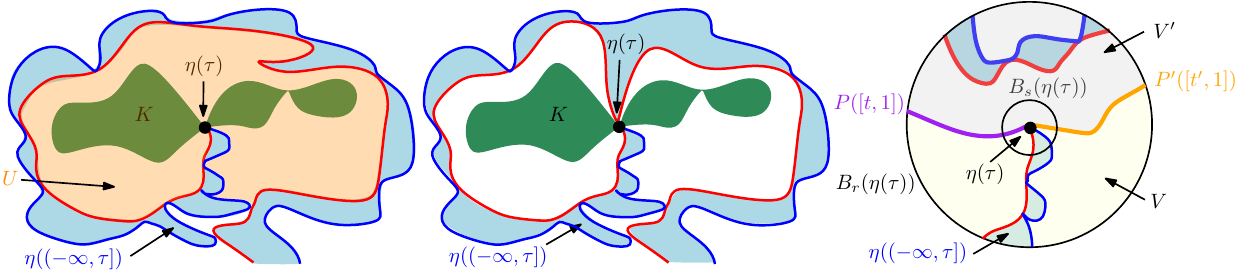}  
\caption{\label{fig-space-filling-fill} \textbf{Left:} Illustration of the statement of Lemma~\ref{lem-space-filling-fill} in the case when $\kappa \in (4,8)$. The domain $U$ is the connected component of $\BB C\setminus \eta([0,\tau])$ which contains $K\setminus \{\eta(\tau)\}$. The lemma asserts that for every $\ep > 0$, $\eta([\tau,\tau+\ep])$ contains the intersection with $U$ of an open disk centered at $\eta(\tau)$. 
\textbf{Middle:} One ``bad configuration'' that is forbidden by the statement of Lemma~\ref{lem-space-filling-fill} in the case when $\kappa \in (4,8)$.
\textbf{Right:} Illustration for the proof of the claim that there is at most one connected component of $\BB C\setminus \eta((-\infty,\tau])$ with $\eta(\tau)$ on its boundary. The intersection of $\eta((-\infty,\tau])$ with $B_r(\eta(\tau))$ is shown in light blue. The continuity of $\eta$ and the fact that $\eta$ does not hit $P$ or $P'$ before time $\tau$ allows us to find an $s>0$ such that $\eta((-\infty,\tau])$ is disjoint from the intersection of $B_s(\eta(\tau))$ with one of the two connected components of $B_r(\eta(\tau)) \setminus (P([t,1]) \cup P'([t',1]))$. This intersection contains a path from a point of $P([t,1))$ to a point of $P'([t,1))$, which contradicts the assumption that $P$ and $P'$ lie in different connected components of $\BB C\setminus \eta((-\infty,\tau])$. 
}
\end{center}
\vspace{-3ex}
\end{figure}

\begin{lem} \label{lem-space-filling-fill}
Let $\kappa>4$ and let $\eta$ be a whole-plane space-filling SLE$_\kappa$ from $\infty$ to $\infty$, parametrized by Lebesgue measure in such a way that $\eta(0) = 0$. Let $\tau$ be a stopping time for $\eta$.
\begin{itemize}
\item If $\kappa\geq 8$, then a.s.\ for each $\ep > 0$ the set $\eta([\tau,\tau+\ep])$ contains the intersection of $\BB C\setminus \eta((-\infty,\tau])$ with an open disk centered at $\eta(\tau)$. 
\item Suppose that $\kappa \in (4,8)$ and the following is true (see the left-hand side of Figure~\ref{fig-space-filling-fill} for an illustration). The stopping time $\tau$ is the first time at which $\eta$ hits $K$, where $K$ is a random closed path-connected non-singleton subset of $\BB C$ which is measurable with respect to $\sigma\left(\eta|_{(-\infty,\tau]} \right)$. Almost surely, there is a unique connected component $U$ of $\BB C\setminus \eta([0,\tau])$ which contains $K\setminus \{\eta(\tau)\}$ and for each $\ep > 0$ the set $\eta([\tau,\tau+\ep])$ contains the intersection of $U$ with an open disk centered at $\eta(\tau)$. 
\end{itemize} 
\end{lem}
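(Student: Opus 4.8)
The plan is to reduce to the chordal case via the strong Markov property (Lemma~\ref{lem-space-filling-law}) and then invoke Lemma~\ref{lem-chordal-fill}. First, consider the case $\kappa\geq 8$. By Lemma~\ref{lem-space-filling-law}, the conditional law of $\eta|_{[\tau,\infty)}$ given $\eta|_{(-\infty,\tau]}$ is that of a chordal SLE$_\kappa$ from $\eta(\tau)$ to $\infty$ in the domain $\BB C\setminus\eta((-\infty,\tau])$, which is simply connected (recall property $(i)$ in \cref{sect:SLES}). Conformally map this domain to $\BB H$ sending $\eta(\tau)$ to $0$ and $\infty$ to $\infty$; then one applies Lemma~\ref{lem-chordal-fill} (or, rather, the observation in its proof that the hull generated by the underlying ordinary SLE$_\kappa$ in time $\delta$ contains a half-plane neighborhood of $0$ and is swallowed by $\eta([\tau,\tau+\ep])$ once $\delta$ is small enough) to conclude that $\eta([\tau,\tau+\ep])$ contains a neighborhood of $0$ in $\BB H$. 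Pulling back through the conformal map and using that conformal maps are open, we get that $\eta([\tau,\tau+\ep])$ contains the intersection of $\BB C\setminus\eta((-\infty,\tau])$ with an open disk centered at $\eta(\tau)$.

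For $\kappa\in(4,8)$ the argument is the same in spirit but one must identify the correct component and show it is unique. Here $\tau$ is the first hitting time of the random closed path-connected non-singleton set $K$, measurable with respect to $\sigma(\eta|_{(-\infty,\tau]})$. The key topological claim is that there is a \emph{unique} connected component $U$ of $\BB C\setminus\eta([0,\tau])$ with $\eta(\tau)$ on its boundary such that $\eta(\tau)$ is the point where the left and right boundaries of $\eta((-\infty,\tau])$ meet on $\bdy U$; moreover $K\setminus\{\eta(\tau)\}$ lies in this component, since $K$ is path-connected, $K\setminus\{\eta(\tau)\}$ is disjoint from $\eta([0,\tau])$ (as $\tau$ is the first hitting time of $K$), and $K\setminus\{\eta(\tau)\}$ accumulates at $\eta(\tau)$. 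Granting this, Lemma~\ref{lem-space-filling-law} tells us that conditionally on $\eta|_{(-\infty,\tau]}$ the segment of $\eta$ filling $U$ (immediately after time $\tau$) is a chordal space-filling SLE$_\kappa$ in $U$ started from $\eta(\tau)$; conformally mapping $U$ to $\BB H$ with $\eta(\tau)\mapsto 0$ and applying Lemma~\ref{lem-chordal-fill} exactly as above gives that $\eta([\tau,\tau+\ep])$ contains the intersection of $U$ with an open disk around $\eta(\tau)$ for every $\ep>0$.

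The main obstacle — and the one requiring the careful argument illustrated in the right-hand side of Figure~\ref{fig-space-filling-fill} — is establishing the uniqueness of the component $U$, i.e.\ that $\eta(\tau)$ lies on the boundary of at most one connected component of $\BB C\setminus\eta((-\infty,\tau])$ at which the left and right boundaries of $\eta((-\infty,\tau])$ meet. The idea is a proof by contradiction: suppose $\eta(\tau)$ were on the boundary of two such components, containing arcs $P$ and $P'$ of the left/right outer boundary flow lines started at distinct rational points, both emanating from near $\eta(\tau)$. Pick a small radius $r$ so that $B_r(\eta(\tau))\setminus(P\cup P')$ has exactly two connected components near $\eta(\tau)$. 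Using the continuity of $\eta$ and the fact that $\eta$ does not hit $P$ or $P'$ before time $\tau$, one finds $s\in(0,r)$ such that $\eta((-\infty,\tau])$ is disjoint from the intersection of $B_s(\eta(\tau))$ with one of these two components; but that intersection contains a path from a point of $P$ to a point of $P'$ avoiding $\eta((-\infty,\tau])$, contradicting that $P$ and $P'$ lie in different connected components of the complement. The remaining routine points are: checking measurability so that the conditioning in Lemma~\ref{lem-space-filling-law} applies (the component $U$ depends only on $\eta|_{(-\infty,\tau]}$ and $K$, hence only on $\eta|_{(-\infty,\tau]}$), and noting the time-parametrization change between Lebesgue measure and $\mu_h$-mass is a homeomorphism so the ``for each $\ep>0$'' statement transfers without issue.
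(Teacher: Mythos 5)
Your approach is essentially the same as the paper's (strong Markov property via Lemma~\ref{lem-space-filling-law}, then Lemma~\ref{lem-chordal-fill}), and the $\kappa\geq 8$ case is fine. However, your uniqueness argument for $\kappa\in(4,8)$ contains a concrete error: you take $P$ and $P'$ to be ``arcs of the left/right outer boundary flow lines started at distinct rational points.'' Such arcs are subsets of $\bdy\eta((-\infty,\tau])\subset\eta((-\infty,\tau])$, so $\eta$ certainly hits them before time $\tau$ and they do not ``lie in different connected components of the complement'' --- both claims you rely on. The argument only works if $P$ and $P'$ are \emph{interior} paths (say, simple paths in $\ol U$ and $\ol{U'}$ from interior points $z\in U$ and $z'\in U'$ to $\eta(\tau)$, with $P([0,1))\subset U$ and $P'([0,1))\subset U'$): then it is genuinely true that $\eta$ does not hit them before $\tau$, and the path constructed inside $B_s(\eta(\tau))\cap V'$ connects a point of $U$ to a point of $U'$ without touching $\eta((-\infty,\tau])$, yielding the contradiction. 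This is what the paper does.

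A second, smaller gap: to invoke Lemma~\ref{lem-space-filling-law} for $\kappa\in(4,8)$ and get a chordal space-filling SLE$_\kappa$ \emph{from $\eta(\tau)$ to a second boundary point} (rather than the loop case in the footnote), you need $\bdy U$ to contain non-trivial arcs of \emph{both} the left and right boundaries of $\eta((-\infty,\tau])$, with $\eta(\tau)$ one of the meeting points. You do not justify this. The paper's reason is that $\tau$ is the first hitting time of $K$, hence the first hitting time of $\eta(\tau)$, so the first point of $\bdy U$ visited by $\eta$ is not $\eta(\tau)$ and hence $\bdy U$ has non-trivial left and right boundary arcs. Also note that the paper proves the cleaner and stronger claim that at most one component of $\BB C\setminus\eta((-\infty,\tau])$ has $\eta(\tau)$ on its boundary at all; your restricted statement (uniqueness only among components where $\eta(\tau)$ is a left-right meeting point) does not, on its own, guarantee that every connected component of $K\setminus\{\eta(\tau)\}$ falls inside that particular $U$.
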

\begin{proof}
If $\kappa \geq 8$, then by Lemma~\ref{lem-space-filling-law} the conditional law of $\eta|_{[\tau,\infty)}$ given $\eta|_{(-\infty,\tau]}$ is that of a chordal space-filling SLE$_\kappa$ from $\eta(\tau)$ to $\infty$ in $\BB C\setminus \eta((-\infty,\tau])$. Therefore, the result in this case is immediate from Lemma~\ref{lem-chordal-fill}.

\medskip

Now suppose that $\kappa \in (4,8)$ and let $K$ be as in the lemma statement. 
We first prove the following result.

\medskip

\noindent\textbf{Claim.}\ There is at most one connected component of $\BB C\setminus \eta((-\infty,\tau])$ which has $\eta(\tau)$ on its boundary.

\medskip

Suppose by way of contradiction that there are two distinct connected components $U$ and $U'$ with $\eta(\tau)$ on their boundaries. 
Since $\eta$ is a curve, each of $U$ and $U'$ is simply connected and is bounded by a curve. 
Choose points $z\in U$ and $z'\in U'$.  
By, e.g.,~\cite[Theorem 2.1]{pom-book} we can find a simple path $P : [0,1] \rta \ol U$ from $z$ to $\eta(\tau)$ with $P([0,1)) \subset U$. 
We can similarly find a path $P'$ satisfying the same properties with $(U',z')$ instead of $(U,z)$. 

Let $r > 0$ be small enough so that $z,z' \notin B_r(\eta(\tau))$ and let $t$ and $t'$ be the last times that $P$ and $P'$, respectively, enter $B_r(\eta(\tau))$. See the right-hand side of Figure~\ref{fig-space-filling-fill} for an illustration. Then $P([t,1])$ and $P'([t',1])$ are simple paths in $\ol{B_r(\eta(\tau))}$ from $\bdy B_r(\eta(\tau))$ to $\eta(\tau)$. Moreover, these paths do not intersect each other except at $\eta(\tau)$ since $U\cap U' = \emptyset$. 
Therefore, $B_r(\eta(\tau)) \setminus (P([t,1]) \cup P'([t',1])$ has exactly two connected components, both homeomorphic to the disk. Call these components $V$ and $V'$. Since $\eta$ is a continuous curve and $\eta$ does not hit $P([t,1]) \cup P'([t',1])$ before time $\tau$, there is a $\delta > 0$ such that $\eta([\tau-\delta,\tau))$ is entirely contained in one of these two connected components, say $V$. 

Hence there exists $s  \in (0,r)$ such that $\eta((-\infty,\tau])$ does not hit $B_s(\eta(\tau)) \cap V'$. However, there is a path in $B_s(\eta(\tau)) \cap V'$ from a point of $P([t,1))$ to a point of $P'([t',1))$. This path does not intersect $\eta((-\infty,\tau])$, so $P$ and $P'$ must be contained in the same connected component of $\BB C\setminus \eta((-\infty,\tau])$. This contradicts our initial choice of $U$ and $U'$, and we conclude the proof of the claim. 

\medskip

By assumption, $\eta(\tau)$ is the only point of $\eta((-\infty,\tau])$ that belongs to $K$. Since $K$ is path-connected and not a singleton, every connected component of $K\setminus \{\eta (\tau)\}$ is a connected non-singleton set with $\eta (\tau)$ on its boundary. In particular, each connected component of $K\setminus \{\eta (\tau)\}$ must be contained in a connected component of $\BB C\setminus \eta((-\infty,\tau])$ with $\eta (\tau)$ on its boundary. From the Claim above, there is at most one such connected component, so in fact there is exactly one connected component of $\BB C\setminus \eta((-\infty,\tau])$ with $\eta(\tau)$ on its boundary, and this connected component contains $K\setminus \{\eta(\tau)\}$. We call this connected component $U$. 

Since $\tau$ is the first time $\eta$ hits $K$, $\tau$ is also the first time that $\eta$ hits $\eta(\tau)$. Therefore, the first point of $\bdy U$ hit by $\eta$ is not equal to $\eta(\tau)$ and so $\bdy U$ has non-trivial arcs which are part of each of the left and right boundaries of $\eta((-\infty,\tau])$. 

Consequently, Lemma~\ref{lem-space-filling-law} implies that the conditional law given $\eta|_{(-\infty,\tau]}$ of $\eta|_{[\tau,\infty)}$ stopped at its first exit time from $\ol U$ is that of a chordal space-filling SLE$_\kappa$ in $U$ from $\eta(\tau)$ to some point in $\bdy U\setminus \{\eta(\tau)\}$. Therefore, Lemma~\ref{lem-chordal-fill} implies that a.s.\ for each $\ep > 0$, the set $\eta([\tau,\tau+\ep])$ contains the intersection of $U$ with an open disk centered at $\eta(\tau)$. 
\end{proof}

We can now prove Proposition~\ref{prop-permuton-support}.

\begin{proof}[Proof of Proposition~\ref{prop-permuton-support}]
From Assertion 2 in Lemma~\ref{lem-permuton-defined} we have that the support of $\perm$ is contained in $\ol{\{(t,\psi_-(t)) : t\in [0,1]\}}$. 
We now prove the reverse inclusion under the assumption that $\eta_1$ and $\eta_2$, viewed modulo time parametrization, are independent.

Fix an open rectangle $(a,b) \times (c,d) \subset [0,1]^2$. We have the following result.

\medskip

\noindent\textbf{Claim.}\ Almost surely, if there exists $t\in (a,b)$ such that $\psi_-(t) \in (c,d)$, then $\eta_1([a,b]) \cap \eta_2([c,d])$ contains an open set (and hence has positive $\mu_h$-mass).

\medskip

Before proving the claim, we explain why it implies the lemma statement. By Lemma~\ref{lem-permuton-defined}, Assertion 1, and a union bound over rational choices of $a,b,c,d$, we see that the claim implies that the following is true. Almost surely, for every $a,b,c,d \in [0,1] \cap \BB Q$ with $a<b$ and $c < d$ such that $ ((a,b) \times (c,d)) \cap   \ol{\{(t,\psi_-(t)) : t\in [0,1]\}} \not=\emptyset$, we have $\perm([a,b] \times [c,d])  >0$. 
Hence, almost surely, $\perm$ assigns a positive mass to arbitrarily small neighborhoods of $(t,\psi_-(t))$ for each $t\in [0,1]$.
This implies that almost surely the support of $\perm$ contains $\{(t,\psi_-(t)) : t\in [0,1]\}$ and hence its closure as well.

\medskip

It remains to prove our previous claim. Since $\psi_-(t)$ is defined to be the \emph{first} time that $\eta_2$ hits $t$, a.s.\ if there exists $t\in (a,b)$ such that $\psi_-(t) \in (c,d)$, then $\eta_1(t) \notin \eta_2([0,c])$. Since $\eta_2([0,c])$ is closed, if such a $t$ exists, then there are rational times $p,q \in (a,b)$ with $p < q$ such that $\eta_1([p,q])$ is disjoint from $\eta_2([0,c])$ and $\eta_2$ hits $\eta_1([p,q])$ during the time interval $(c,d)$. Therefore, it suffices to fix $p,q \in (a,b)$ with $p<q$ and show that a.s.\ if $\eta_2$ hits $\eta_1([p,q])$ for the first time during the time interval $(c,d)$, then $\eta_1([p,q]) \cap \eta_2([c,d])$ contains an open set.

To prove this last assertion, let $\tau$ be the first time that $\eta_2$ hits $\eta_1([p,q])$. We need to show that a.s.\ if $\tau \in (c,d)$, then $\eta_1([p,q]) \cap \eta_2([c,d])$ contains an open set.
Since $\eta_2$, viewed modulo time parameterization, is independent of $(h,\eta_1)$, the time $\tau$ is a stopping time for $\eta_2$ under the conditional law given $(h,\eta_1)$. By Lemma~\ref{lem-space-filling-fill}, applied under the conditional law given $(h,\eta_1)$ and with $K = \eta_1([p,q])$, a.s. for each $\ep>0$ the set $\eta_2([\tau,\tau+\ep])$ contains the intersection of some open disk centered at $\eta_2(\tau)$ with the connected component of $\BB C\setminus \eta_2([0,\tau])$ that contains $\eta_1([p,q])\setminus \{\eta_2(\tau)\}$. Hence, a.s.\ $\eta_2([\tau,\tau+\ep])$ contains the intersection of some open disk centered at $\eta_2(\tau)$ with the set $\eta_1([p,q])\setminus \{\eta_2(\tau)\}$. 

The set $\eta_1([p,q])$ is equal to the closure of its interior, so the last sentence of the preceding paragraph implies that $\eta_1([p,q])\cap \eta_2([\tau,\tau+\ep])$ contains an open set for each $\ep > 0$. In particular, if $\tau \in (c,d)$ then $\eta_1([p,q])\cap \eta_2([c,d])$ contains an open set, as required.
\end{proof}

\subsection{Characterization of the support when the two SLEs are coupled}
\label{sec-skew-brownian-supp}

Throughout this subsection, we fix $\kappa > 4$, $\theta\in (-\pi/2,\pi/2)$, and $\gamma\in (0,2)$. Let $(\BB C ,h , \infty)$ be a singly marked unit area $\gamma$-Liouville quantum sphere as introduced in \cref{def-sphere}. Let $\wh h$ be a whole-plane GFF, viewed modulo additive multiples of $2\pi\chi$, sampled independently of $h$. 
Let $\eta_1$ and $\eta_2$ be its space-filling counterflow lines from $\infty$ to $\infty$ with angles 0 and $\theta-\pi/2$, respectively, and both with parameter $\kappa$. We parametrize $\eta_1$ and $\eta_2$ by $\gamma$-LQG mass with respect to $h$ and let $\perm$ be the associated permuton as in~\eqref{eqn-permuton-def}. When $\gamma^2  = 16/\kappa$, the permuton $\perm$ is the skew Brownian permuton with parameters $\rho = -\cos(\pi\gamma^2/4)$ and $q = q_\gamma(\theta) \in (0,1)$.

\begin{prop} \label{prop-counterflow-dichotomy}
	Let $\kappa \geq 8$ and $\theta\in (-\pi/2,\pi/2)$. 
	\begin{enumerate}[$(i)$]
		\item \label{item-counterflow-good} If 
		\eqb \label{eqn-angle-condition}
		\theta \in \left[ - \frac{(\kappa-12)\pi}{2(\kappa-4)} ,  \frac{(\kappa-12)\pi}{2(\kappa-4)}  \right] 
		\eqe
		then almost surely
		\eqbn
		\op{supp} \perm = \ol{\{(t,\psi_{-}(t)) : t\in [0,1]\} } = \{(t,s) \in [0,1] : \eta_1(t) = \eta_2(s) \}  ,
		\eqen 
		where we recall that $\psi_-  :[0,1] \rta [0,1]$, introduced in~\eqref{eqn-psi-def}, is defined so that $\psi_-(t)$ is the first time that the curve $\eta_2$ hits the point $\eta_1(t)$.
		
		\item \label{item-counterflow-good2} If~\eqref{eqn-angle-condition} does not hold, then almost surely there are uncountably many points of 
		\eqb\label{eq:points_not_in}
		\ol{\{(t,\psi_{-}(t)) : t\in [0,1]\} }
		\eqe
		which do not belong to $\op{supp} \perm$. 
		
		But, almost surely, if $z\in \BB C$ is not simultaneously a double point of both $\eta_1$ and $\eta_2$ and $t\in[0,1]$ is a time when $\eta_1$ hits $z$, then $(t,\psi_{-}(t))\in \op{supp} \perm$. While, if $z$ is simultaneously a double point of both $\eta_1$ and $\eta_2$, then $\op{supp} \perm$ contains at least three of the four pairs $(t,s)\in[0,1]^2$ for which $\eta_1(t) = \eta_2(s) = z$ (c.f., left-hand side of \cref{fig-spiral-perm}).
\end{enumerate}
\end{prop}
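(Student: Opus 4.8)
The inclusions $\op{supp}\perm\subseteq\ol{\{(t,\psi_-(t)):t\in[0,1]\}}\subseteq\{(t,s)\in[0,1]^2:\eta_1(t)=\eta_2(s)\}$ hold a.s.\ in both cases by \cref{lem-permuton-defined} (Assertion 2, with $\psi=\psi_-$), so assertion $(i)$ reduces to the reverse inclusions, and assertion $(ii)$ to deciding which pairs $(t,s)$ with $\eta_1(t)=\eta_2(s)$ lie in $\op{supp}\perm$. Fix $z\in\BB C$ hit by both curves and classify it by how many times each of $\eta_1,\eta_2$ hits it. If $z$ is hit only once by one of the two curves, then every pair $(t,s)$ with $\eta_1(t)=\eta_2(s)=z$ lies in $\op{supp}\perm$ by \cref{lem-permuton-defined} (Assertion 3). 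If $z$ is a merge point of $\eta_1$ (a triple point, since $\kappa\ge8$), then by \cref{lem:triple_are_simple} it is hit at most once by $\eta_2$, and we are back to the previous case; the same conclusion for merge points of $\eta_2$ follows from the analogue of \cref{lem:triple_are_simple} with $\eta_1$ and $\eta_2$ interchanged, which holds by the same proof. Since for $\kappa\ge8$ every multiple point of $\eta_i$ is a double point or a merge point, the only configuration not yet settled is $z$ being simultaneously a double point of $\eta_1$ and of $\eta_2$; this also establishes the second claim of assertion $(ii)$. Using the time-reversibility of $\eta_1,\eta_2$ and that $\perm$ does not depend on the choice of $\psi$ (\cref{lem-permuton-defined}, Assertion 1), assertion $(i)$ will follow once we prove that at a double--double point $z$ all four pairs $(t_i,s_j)$ with $\eta_1(t_i)=\eta_2(s_j)=z$ lie in $\op{supp}\perm$, where $t_1<t_2$ are the two times $\eta_1$ hits $z$, $s_1<s_2$ the two times $\eta_2$ hits $z$, and $\psi_-(t_1)=\psi_-(t_2)=s_1$, $\psi_+(t_1)=\psi_+(t_2)=s_2$.

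\textbf{Local picture at a double--double point.} Fix such a $z$ and a small disk $B_\delta(z)$. Since $z$ is hit exactly twice by $\eta_1$, the subset $W_i\subset B_\delta(z)$ consisting of points filled by $\eta_1$ around the $i$-th of its two passages through $z$ is a sector-like region with $z$ on its boundary, $W_1\cup W_2=B_\delta(z)$, and $W_1\cap W_2$---the set of double points of $\eta_1$ inside $B_\delta(z)$---lies on flow lines of $\wh h$ of angle $\pm\pi/2$ and is $\mu_h$-null. The analogous regions $V_1,V_2$ for $\eta_2$ are separated by flow lines of angle $\theta$ or $\theta-\pi$. Using \cref{lem-permuton-defined} (Assertion 1)---and the fact, which follows from \cref{lem-space-filling-fill} and the flow-line description of \cref{sect:SLES}, that each $W_i$ and each $V_j$ contains an open set accumulating at $z$---one checks that $(t_i,s_j)\in\op{supp}\perm$ if and only if $W_i$ and $V_j$ overlap in a set of positive $\mu_h$-mass inside every neighborhood of $z$, i.e.\ if and only if the two sectors \emph{cross} at $z$ rather than merely sharing a boundary arc there.

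\textbf{At least three of the four pairs.} Since $W_1,W_2$ and $V_1,V_2$ are, modulo $\mu_h$-null sets, two partitions of a neighborhood of $z$ into two positive-mass sectors, each $V_j$ overlaps some $W_i$ in positive mass and each $W_i$ overlaps some $V_j$ in positive mass; in a $2\times2$ array this forces at least two of the four overlaps to be positive, and if exactly two, they are diagonal, which forces $W_1=V_1$ and $W_2=V_2$ modulo $\mu_h$-null sets. But then the arc separating $W_1$ from $W_2$ would coincide near $z$ with the arc separating $V_1$ from $V_2$, i.e.\ a flow line of $\wh h$ of angle $\pm\pi/2$ would agree along an arc with one of angle $\theta$ or $\theta-\pi$; since $\theta\in(-\pi/2,\pi/2)$, these four angles are pairwise distinct, so this cannot happen. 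Hence at least three of the four pairs lie in $\op{supp}\perm$, which is the third claim of assertion $(ii)$.

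\textbf{The dichotomy, and the main difficulty.} It remains to decide, in terms of $(\kappa,\theta)$, whether the two sector partitions cross at every double--double point $z$. This is controlled by the mutual interaction at $z$ of the flow lines $\beta^L_{z,1},\beta^R_{z,1},\beta^L_{z,2},\beta^R_{z,2}$ (of angles $\pi/2,-\pi/2,\theta,\theta-\pi$) together with the extra flow line through $z$ creating the double-point structure of each curve. When \eqref{eqn-angle-condition} holds, \cref{lem:flow_lines_crossing}---applied to the pairs $\beta^L_{z,2}$ vs.\ $\beta^L_{z,1}$ and $\beta^R_{z,2}$ vs.\ $\beta^R_{z,1}$, and to the mixed pairs $\beta^L_{z,2}$ vs.\ $\beta^R_{z,1}$ and $\beta^R_{z,2}$ vs.\ $\beta^L_{z,1}$, the two endpoints of \eqref{eqn-angle-condition} being precisely the thresholds beyond which bouncing first occurs---shows that no flow line of one curve is trapped against a flow line of the other near $z$, and a planar-topology argument then yields that the two partitions cross, so all four pairs lie in $\op{supp}\perm$; with the reduction above this proves assertion $(i)$. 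When \eqref{eqn-angle-condition} fails, \cref{lem:flow_lines_crossing} instead produces bouncing: for suitable rational $x\ne y$ an angle-$\theta$ (or angle-$(\theta-\pi)$) flow line $\beta_{x,2}^{*}$ bounces off an angle-$\pi/2$ (resp.\ $-\pi/2$) flow line $\beta_{y,1}^{*}$ along an uncountable set. Discarding the countably many merge points, each such bounce point $z$ is a double point of both $\eta_1$ and $\eta_2$ at which the $\eta_2$-separating arc is trapped on one side of the $\eta_1$-separating arc, which forces one of the overlaps $W_i\cap V_1$ to be $\mu_h$-null, i.e.\ $(t_i,\psi_-(t_i))\notin\op{supp}\perm$; this gives uncountably many points of $\ol{\{(t,\psi_-(t)):t\in[0,1]\}}$ outside $\op{supp}\perm$, which is the first claim of assertion $(ii)$. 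The heart of the proof is this local analysis---identifying which flow lines bound the sectors $W_i,V_j$, showing that they cross exactly when \eqref{eqn-angle-condition} holds, and locating the missing overlap when it fails---which requires careful bookkeeping of the imaginary-geometry boundary values along the conditioned flow lines, in the spirit of, but more involved than, the computation in the proof of \cref{lem:triple_are_simple}.
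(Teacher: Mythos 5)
Your overall structure is close to the paper's, but you take a genuinely different route in two places, and one passage has a mis-targeting worth flagging.

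\textbf{Where you agree with the paper.} The reduction is the same: by \cref{lem-permuton-defined} (Assertions 2 and 3) and \cref{lem:triple_are_simple} (together with its symmetric version with $\eta_1,\eta_2$ swapped, which the paper also uses implicitly inside \cref{lem-counterflow-intersect}), the only pairs $(t,s)$ whose membership in $\op{supp}\perm$ is in doubt are those coming from a common double point of $\eta_1$ and $\eta_2$. Your treatment of the first claim of $(ii)$ via bounce points of flow lines of different angles is the same idea as \cref{lem-counterflow-bad}.

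\textbf{Where you diverge, and what it buys.} For assertion $(i)$, the paper does \emph{not} reduce to double--double points: it works with arbitrary rational rectangles and proves \cref{lem-counterflow-intersect}, a statement that $\eta_1([\tau_z^1,\tau_w^1])\cap\eta_2([\tau_u^2,\tau_v^2])$ is open whenever nonempty. That route avoids any local analysis at $z$; the only SLE inputs are that flow lines of different angles always cross (never bounce) under~\eqref{eqn-angle-condition}, plus \cref{lem:triple_are_simple} to rule out coincidences at merge points. You instead carry the double--double reduction all the way through $(i)$ and argue a local crossing of the two separating arcs at $z$. Your route is shorter in outline but pushes the burden onto the local picture at $z$, which you then have to verify. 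For the ``at least three of four'' claim, your $2\times2$ counting argument is more abstract than the paper's explicit case analysis (Cases 1--4 with $\mu_h$-mass inclusion chains via \cref{lem:SLE_hitting double points}); it is a genuinely different, and cleaner, way to get the lower bound, though it buys less information (the paper's case analysis identifies exactly which of the four points is missing, which is used later in \cref{sect:explicit-function}).

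\textbf{Gaps to flag.} Two steps are thinner than what the proposition needs. First, in the ``dichotomy'' paragraph you invoke \cref{lem:flow_lines_crossing} for the pairs $\beta^L_{z,2}$ vs.\ $\beta^L_{z,1}$ etc.\ (flow lines started at $z$ itself). But the arcs separating $W_1$ from $W_2$ and $V_1$ from $V_2$ near $z$ are arcs of $\beta^{*}_{w,1}$ and $\beta^{*}_{u,2}$ with $w,u\neq z$; those are the flow lines that create the double-point structure. The relevant application of \cref{lem:flow_lines_crossing} (second, third and fourth bullets) is therefore to flow lines started at \emph{distinct} points, not the first bullet. The underlying fact you want --- no bouncing under~\eqref{eqn-angle-condition} --- is correct, but the current statement targets the wrong flow lines. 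Second, in the ``at least three'' argument, passing from ``$W_1=V_1$ mod $\mu_h$-null'' to ``a flow line of angle $\pm\pi/2$ coincides along an arc with one of angle $\theta$ or $\theta-\pi$'' is a nontrivial step: $\mu_h$-a.e.\ equality of two open sectors does not immediately force their bounding arcs to coincide pointwise; one needs to use that both $W_i$ and $V_j$ are (up to null sets) the intersection of $B_\delta(z)$ with one side of a flow-line arc through $z$, and that two such arcs whose ``sides'' agree $\mu_h$-a.e.\ must coincide. This is true, and follows from the fact that both arcs are the topological boundary of the corresponding side, but it should be said. Apart from these, your proof is sound and takes a recognisably different path to the same result.
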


We note that there exists $\theta \in (-\pi/2,\pi/2)$ for which~\eqref{eqn-angle-condition} holds if any only if $\kappa \geq 12$. If $\kappa =12$, the only allowable angle is $\theta=0$, which corresponds to the case of the Baxter permuton \cite{bm-baxter-permutation,bhsy-baxter-permuton}. 

\cref{prop-counterflow-dichotomy} will be proved in \cref{sect:kappa-geq-8}. Some of the points of the set \eqref{eq:points_not_in} not contained in $\op{supp} \perm$ are explicitly described in the proof of Lemma~\ref{lem-counterflow-bad}.

\begin{prop} \label{prop-counterflow-dichotomy2}
	Let $\kappa \in (4,8)$ and $\theta\in (-\pi/2,\pi/2)$, then almost surely there are uncountably many points of 
		\eqb\label{eq:points_not_in2}
		\ol{\{(t,\psi_{-}(t)) : t\in [0,1]\} }
		\eqe
	which do not belong to $\op{supp} \perm$. But, almost surely,
	\begin{itemize}
		\item if for all $m_1\geq 2$ and $m_2\geq 2$, 
		\[\text{$z\in \BB C$ is not simultaneously  a $m_1$-tuple point of $\eta_1$ and a $m_2$-tuple point of $\eta_2$,}\] 
		and $t$ is a time when $\eta_1$ hits $z$, then $(t,\psi_{-}(t))\in \op{supp} \perm$.
		\item while, if for some $m_1\geq 2$ and $m_2\geq 2$, 
		\[\text{$z\in \BB C$ is simultaneously an $m_1$-tuple point of $\eta_1$ and an $m_2$-tuple point of $\eta_2$,}\]
		then the following fact is true. Let $t^1_1,t^1_2,\dots,t^1_{m_1}$ be the $m_1$ times when $\eta_1$ hits $z$ and $t^2_1,t^2_2,\dots,t^2_{m_2}$ be the $m_2$ times when $\eta_2$ hits $z$. Then, for all $(s,t) \in [0,1]^2$ such that $\eta_1(s) =\eta_1(t)=z$, there exists a sequence of times $(t^1_{i_\ell}, t^2_{j_\ell})_{\ell=0}^M$ such that (c.f., right-hand side of \cref{fig-spiral-perm})
		\begin{itemize}
			\item $t^1_{i_0}=s$ and $t^1_{i_M}=t$;
			\item for every $\ell\in\{1,2,\dots,M\}$, $(t^1_{i_\ell}, t^2_{j_\ell}) \in \op{supp} \perm $;
			\item for every $\ell\in\{2,\dots,M\}$, either $t^1_{i_\ell}=t^1_{i_{\ell-1}}$ or $t^2_{j_\ell}=t^2_{j_{\ell-1}}$.
		\end{itemize} 
	\end{itemize}	
\end{prop}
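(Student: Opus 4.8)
\emph{Strategy and reduction.} The plan is to reduce everything to a local analysis near the special points of $\eta_1$ and $\eta_2$, using the imaginary-geometry description of these curves from \cref{sect:SLES}--\cref{sec:SLE_as_flow}. Recall that a.s.\ every multiple point of $\eta_i$ is either a merge point of $\eta_i$ (necessarily a triple point) or an $m$-tuple point of $\eta_i$ for some $m\geq 2$ hit exactly $m$ times, and that in either case it lies on finitely many flow/dual flow lines of $\wh h$ emanating from rational points, in a configuration described by \cref{lem:m-tuple-points}. By \cref{lem:triple_are_simple} (and its analogue with the roles of $\eta_1$ and $\eta_2$ interchanged), a merge point of $\eta_1$ is hit at most once by $\eta_2$ and conversely; in particular a merge point of one curve is never an $m$-tuple point of the other, and the merge points of the two respective curves never coincide. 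Thus the only points $z$ that are simultaneously an $m_1$-tuple point of $\eta_1$ and an $m_2$-tuple point of $\eta_2$ with $m_1,m_2\geq2$ are genuine (non-merge) $m$-tuple points of both, and it is precisely these configurations for which the second claim splits into its two bullets.

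\emph{The non-simultaneous bullet.} Let $z$ be a point that is not simultaneously an $m_1$-tuple point of $\eta_1$ and an $m_2$-tuple point of $\eta_2$ with $m_1,m_2\geq2$, and let $t$ be a time when $\eta_1$ hits $z$; I want $(t,\psi_-(t))\in\op{supp}\perm$. If $z$ is hit exactly once by $\eta_2$ then $\psi_-(t)$ is the unique such time and the claim is immediate from Assertion 3 of \cref{lem-permuton-defined} with the roles of the two curves swapped. If $z$ is a merge point of $\eta_1$ or of $\eta_2$, then \cref{lem:triple_are_simple} forces $z$ to be hit exactly once by the other curve, so we are back to the previous case (when $z$ is a merge point of $\eta_2$ one first uses \cref{lem:triple_are_simple} to see $z$ is hit once by $\eta_1$). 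In the remaining case $z$ is hit exactly once by $\eta_1$, at time $t$. A compactness argument shows that $\eta_1([a,b])$ then contains an open neighborhood $N$ of $z$ for every $[a,b]\ni t$: otherwise points $\eta_1(t_j)\to z$ with $t_j\notin[a,b]$ would admit a subsequential limit time $t_\infty\notin(a,b)$ with $\eta_1(t_\infty)=z$, contradicting uniqueness of $t$. On the other hand, for $\delta>0$ small the set $\eta_2([\psi_-(t)-\delta,\psi_-(t)+\delta])$ is contained in $N$ (continuity of $\eta_2$ at $\psi_-(t)$, where $\eta_2(\psi_-(t))=z$) and has positive $\mu_h$-mass, since it has non-empty interior by item $(iv)$ of \cref{sect:SLES}. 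By Assertion 1 of \cref{lem-permuton-defined}, $\perm((a,b)\times(c,d))\geq \mu_h\bigl(N\cap \eta_2([\psi_-(t)-\delta,\psi_-(t)+\delta])\bigr)>0$ for every rectangle $(a,b)\times(c,d)\ni(t,\psi_-(t))$ and every $\delta$ small enough that $[\psi_-(t)-\delta,\psi_-(t)+\delta]\subset(c,d)$, hence $(t,\psi_-(t))\in\op{supp}\perm$.

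\emph{The simultaneous bullet and the uncountably many missing points.} The core of the proof is the local picture at a point $z$ that is an $m_1$-tuple point of $\eta_1$ and an $m_2$-tuple point of $\eta_2$. The first step is to extract, from \cref{lem:m-tuple-points}, \cref{lem:flow_lines_crossing} and the boundary-data calculus for $\wh h$ (exactly as used in the proof of \cref{lem:triple_are_simple}), the combinatorial configuration of the flow/dual flow lines through $z$: these divide a small disk $B_\delta(z)$ into finitely many sectors, and one determines the order in which $\eta_1$ and $\eta_2$ fill them relative to the hitting times $t^1_1<\dots<t^1_{m_1}$ and $t^2_1<\dots<t^2_{m_2}$, using crucially that a sector touching $z$ is filled only around one of the times $t^i_\bullet$ (again by a compactness argument, since $z$ is hit at exactly those times). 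Given this, Assertion 1 of \cref{lem-permuton-defined} translates $(t^1_i,t^2_j)\in\op{supp}\perm$ into the statement that the union of the $\eta_1$-sectors adjacent to $z$ in $\eta_1([t^1_i-\ep,t^1_i+\ep])$ overlaps the corresponding union of $\eta_2$-sectors in a set of positive $\mu_h$-mass, i.e.\ (since sectors are open) that they overlap at all. The resulting incidence relation on $\{1,\dots,m_1\}\times\{1,\dots,m_2\}$ is the overlap relation of two cyclic partitions of the circle of directions at $z$; such a relation is always connected, and walking once around the circle produces a chain $(t^1_{i_\ell},t^2_{j_\ell})_{\ell}$ in $\op{supp}\perm$ of the type required in the second bullet, joining any two $\eta_1$-times $s$ and $t$. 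On the other hand, when $m_1\geq2$ and $m_2\geq2$ this relation is not complete, and for each such $z$ one locates a pair $(t^1_i,t^2_j)\notin\op{supp}\perm$ which nonetheless lies in $\ol{\{(t,\psi_-(t)):t\in[0,1]\}}$: the points $\eta_1(s)$ for $s\uparrow t^1_i$ run along the incoming flow line at $z$, and their $\eta_2$-first-hitting times $\psi_-(s)$ converge to the time $t^2_j$ indexing the $\eta_2$-sector first filled along that flow line, which need not be a sector overlapping the outgoing $\eta_1$-sector at $t^1_i$. Finally, since for $\kappa\in(4,8)$ the flow lines of the two families genuinely interact (\cref{lem:flow_lines_crossing}) and $m$-tuple points of each curve are dense along the relevant flow lines, one produces uncountably many such coincidence points $z$ for every $\theta\in(-\pi/2,\pi/2)$, which gives the first claim.

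\emph{Main obstacle.} The genuinely hard step is the first step of the previous paragraph: establishing the precise local sector picture and matching the fill order of a space-filling SLE to the cyclic order of the flow-line sectors at $m$-tuple points with $m\geq3$. This is where the regime $\kappa\in(4,8)$ departs sharply from $\kappa\geq8$ (where only double points and merge points occur), and it is carried out through the fine analysis of \cref{sect:m-tuples} and \cref{sect:eight-flow}. A secondary technical difficulty is that $\eta_1$ and $\eta_2$ are coupled rather than independent, so the ``immediately fills a region'' statements cannot be read off from \cref{lem-space-filling-fill} with a set measurable with respect to the other curve; instead they are obtained from the boundary values of $\wh h$ along the already-revealed flow lines.
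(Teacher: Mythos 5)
Your proposal is correct and follows essentially the same strategy as the paper's proof: reduce to a local sector analysis at multiple points via the flow-line structure, use \cref{lem:triple_are_simple} to rule out merge points coinciding with multiple points of the other curve, and invoke the detailed combinatorics of \cref{sect:m-tuples} and \cref{sect:eight-flow} (which the paper packages as \cref{lem:points_in_support}) for the simultaneous-$m$-tuple case. Two cosmetic remarks. For the first bullet, your compactness argument in the case where $z$ is hit once by $\eta_1$ but several times by $\eta_2$ is redundant: Assertion~3 of \cref{lem-permuton-defined}, applied with the roles of $\eta_1$ and $\eta_2$ interchanged, already gives $(t,\psi_-(t))\in\op{supp}\perm$ whenever $\eta_1(t)$ is hit once by $\eta_1$, so the case reduction "$z$ is simple for at least one of the two curves'' (which follows directly from \cref{lem:triple_are_simple} as you note) is all that is needed. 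For the second bullet, your reformulation of the chain as ``walking around the circle of directions and reading off the overlap relation of two cyclic decompositions'' is a genuinely cleaner way to state the conclusion than the explicit enumeration in \cref{lem:points_in_support}, and the argument that such an overlap relation is connected is sound (each step of the walk changes exactly one of the two indices, and the walk visits every index on each side). The one place you should be careful is that the ``arcs'' associated to hitting times of $\eta_i$ at $z$ are not a partition in the naive sense: each non-exceptional hitting time $t^i_k$ is flanked by two consecutive sectors, consecutive hitting times share a sector, and the two exceptional hitting times of \cref{lem:m-tuple-points} are attached to a single region. Once the sector picture from \cref{sect:eight-flow} is in hand, the walk-around-the-circle argument does produce precisely the zigzag path of \cref{fig-points-in-support-even-1}, so this is a matter of bookkeeping rather than a gap.
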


\begin{figure}[ht!]
	\begin{center}
		\includegraphics[scale=.6]{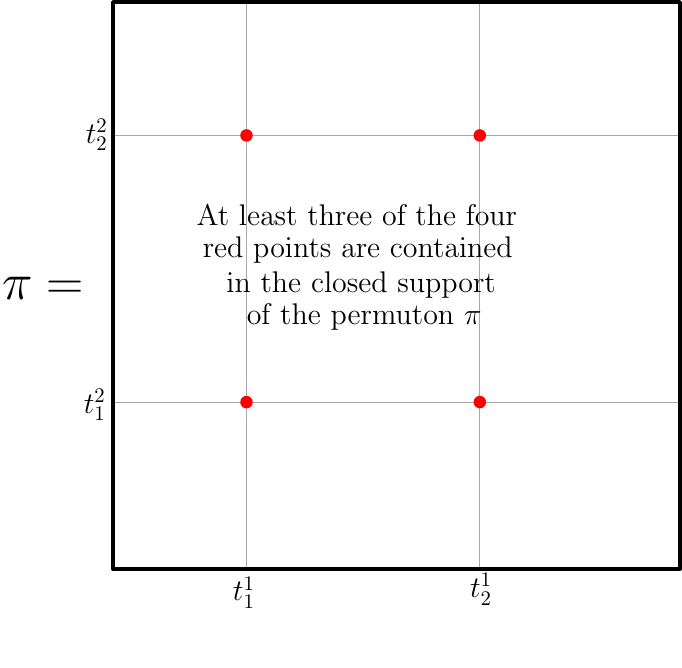}  
		\hspace{.5cm}
		\includegraphics[scale=.6]{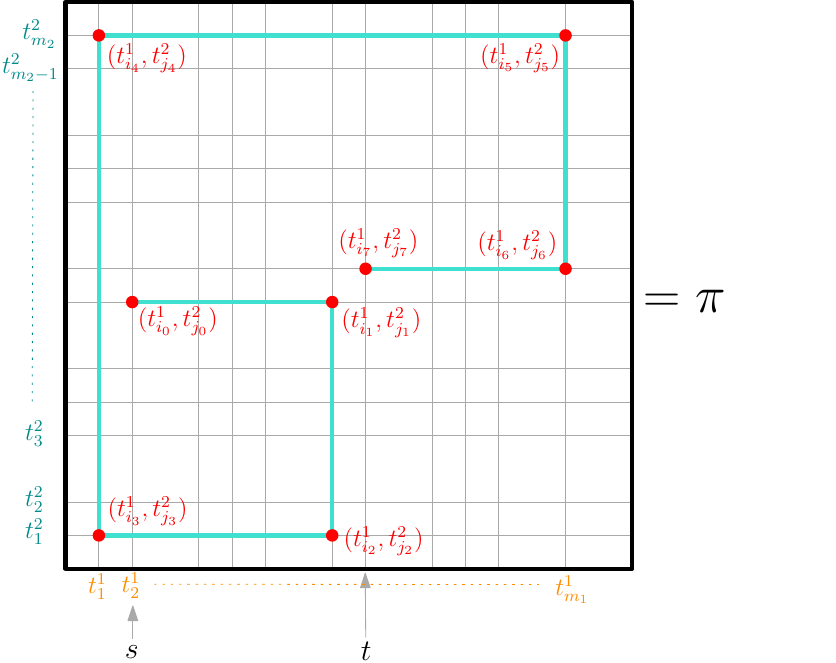}  
		\caption{\label{fig-spiral-perm} 
		\textbf{Left:} Here $z$ is simultaneously a double point of $\eta_1$ and $\eta_2$, and $t^1_1$ and $t^1_2$ (resp. $t^2_1$ and $t^2_2$) are the first and second time when $\eta_1$ (resp.\ $\eta_2$) hits $z$.  At least three of the four points $(t^1_1,t^2_1)$,  $(t^1_1,t^2_2)$,  $(t^1_2,t^2_1)$ and  $(t^1_2,t^2_2)$ are contained in $\op{supp} \perm$.
		\textbf{Right:} Here $z$ is simultaneously an $m_1$-tuple point of $\eta_1$ and an $m_2$-tuple point of $\eta_2$ for some $m_1\geq 2$ and $m_2\geq 2$ and $t^1_1,t^1_2,\dots,t^1_{m_1}$ (resp.\ $t^2_1,t^2_2,\dots,t^2_{m_2}$) are the $m_1$ (resp.\ $m_2$) times when $\eta_1$ (resp.\ $\eta_2$) hits $z$.  For all $(s,t) \in [0,1]^2$ such that $\eta_1(s) =\eta_1(t)=z$, i.e., for all $(s,t) \in [0,1]^2$ such that $s=t^1_i$ and $t=t^1_j$ for some $i,j\in[m_1]$, \cref{prop-counterflow-dichotomy2} ensures the existence of a sequence of red points $(t^1_{i_\ell}, t^2_{j_\ell})_{\ell=0}^M$ in the support of the permuton with the properties highlighted in the picture and precisely described in the proposition statement.
		}
	\end{center}
	\vspace{-3ex}
\end{figure}

\cref{prop-counterflow-dichotomy2} will be proved in \cref{sect:proof-remaining}.
 Some of the points in the set \eqref{eq:points_not_in2} not contained in $\op{supp} \perm$ are explicitly described in \cref{lem:points_in_support}.

\begin{remark}
    Note that since $\ol{\{(t,\psi_-(t)) : t\in [0,1]\} }\subset \{(t,s) \in [0,1] : \eta_1(t) = \eta_2(s) \}$ by \cref{lem-permuton-defined}, then we can immediately deduce from Propositions~\ref{prop-counterflow-dichotomy}~and~\ref{prop-counterflow-dichotomy2} that, when $\kappa\in (4,8)$ or $\kappa \geq 8$ and~\eqref{eqn-angle-condition} does not hold, then there are uncountably many points of $\{(t,s) \in [0,1] : \eta_1(t) = \eta_2(s) \}$ that do not belong to $\op{supp} \perm$.
\end{remark}

We now explain how Theorem~\ref{thm-permuton-multi-points} in the case in Assumption~\ref{item-ig-gff2} follows from Propositions~\ref{prop-counterflow-dichotomy}~and~\ref{prop-counterflow-dichotomy2}.

\begin{proof}[Proof of \cref{thm-permuton-multi-points} (Assumption~\ref{item-ig-gff2})]
    We prove the theorem for $\mcl T \mcl M_1$. The proof for $\mcl T \mcl M_2$ is identical.
	We assume that $\eta_1$ and $\eta_2$ are the space-filling counterflow lines of the same whole-plane GFF with angles 0 and $\theta-\pi/2$ for some $\theta\in(-\pi/2,\pi/2)$ and both have parameter $\kappa >4$. We distinguish three cases:
	
	\medskip
	
	\noindent\underline{Case 1:} Assume that $\kappa \geq 8$ and $\theta \in (-\pi/2,\pi/2)$ satisfies~\eqref{eqn-angle-condition}. Then, the theorem statement follows combining Assertion~\eqref{item-counterflow-good} in  \cref{prop-counterflow-dichotomy} and Lemma~\ref{lem-closure-to-intersect}.
	
	\medskip
	
	\noindent\underline{Case 2:} Assume that $\kappa \geq 8$ and $\theta \in (-\pi/2,\pi/2)$ does not satisfy~\eqref{eqn-angle-condition}.
	We prove that the set $\mcl{T}\mcl{M}_1=\left\{ (s,t) \in [0,1]^2 : \eta_1(s) =\eta_1(t) \right\}$  is a.s.\ determined by $\op{supp} \perm$, by proving that, almost surely, for all $(s,t) \in [0,1]^2$,
	\begin{equation}\label{eq:iif-cond}
		\eta_1(s) =\eta_1(t)\iff \exists\, q\in [0,1] \text{ such that } (s,q) , (t,q) \in \op{supp} \perm.
	\end{equation}
	Note that the $\Longleftarrow$ implication immediately follows from 
	\cref{lem-closure-to-intersect} and the fact that 
	$$\op{supp} \perm \subset \ol{\{ (t,\psi_-(t)) : t\in [0,1]\}}.$$ 
	It remains to prove the $\Longrightarrow$ implication.
	Fix $(s,t) \in [0,1]^2$ such that $\eta_1(s) =\eta_1(t)$. 
	
	We first assume that $\eta_1(t)$ is not simultaneously a double point of both $\eta_1$ and $\eta_2$. Then, thanks to Assertion~\eqref{item-counterflow-good2} in \cref{prop-counterflow-dichotomy}, we have  $(t,\psi_-(t))\in \op{supp} \perm$ and $(s,\psi_-(s))\in \op{supp} \perm$. But since $\eta_1(s) =\eta_1(t)$ by assumption, then $\psi_-(t)=\psi_-(s)$, as we wanted.
	
	We now assume that $\eta_1(t)$ is simultaneously a double point of both $\eta_1$ and $\eta_2$. Then from Assertion~\eqref{item-counterflow-good2} in \cref{prop-counterflow-dichotomy}, we have that at least three of the four points $(t,\psi_-(t))$, $(t,\psi_+(t))$, $(s,\psi_-(t))$, $(s,\psi_+(t))$ are contained in $\op{supp} \perm$.
	
	Hence, in both cases, there exists $q\in [0,1]$ such that $(s,q) , (t,q) \in \op{supp} \perm$. This completes the proof of Case 2 because, thanks to \cref{lem:triple_are_simple}, there are no merge points of $\eta_1$ that are hit more than once by $\eta_2$, or vice versa; and, moreover, both $\eta_1$ and $\eta_2$ have no $m$-tuple points when $\kappa\geq 8$.
	
	\medskip
	
	\noindent\underline{Case 3:} Assume that $\kappa \in (4,8)$ and $\theta \in (-\pi/2,\pi/2)$. Also in this case, we prove that the set $\mcl{T}\mcl{M}_1=\left\{ (s,t) \in [0,1]^2 : \eta_1(s) =\eta_1(t) \right\}$  is a.s.\ determined by $\op{supp} \perm$, by proving an \emph{if and only if} condition similar to the one in \eqref{eq:iif-cond}; but with a modified (and more sophisticated) right-hand side.

	Fix $(s,t) \in [0,1]^2$ such that $\eta_1(s) =\eta_1(t)=z$. Then, from \cref{prop-counterflow-dichotomy2}, we have that almost surely,
		\begin{itemize}
			\item if for all $m_1\geq 2$ and $m_2\geq 2$, $z$ is not simultaneously a $m_1$-tuple point of $\eta_1$ and a $m_2$-tuple point of $\eta_2$ , then $(t,\psi_{-}(t))\in \op{supp} \perm$ and $(s,\psi_{-}(t))\in \op{supp} \perm$.
			\item while, if for some $m_1\geq 2$ and $m_2\geq 2$, $z$ is simultaneously an $m_1$-tuple point of $\eta_1$ and an $m_2$-tuple point of $\eta_2$, then the following fact is true. Let $t^1_1,t^1_2,\dots,t^1_{m_1}$ be the $m_1$ times when $\eta_1$ hits $z$ and $t^2_1,t^2_2,\dots,t^2_{m_2}$ be the $m_2$ times when $\eta_2$ hits $z$. Then, since  $\eta_1(s) =\eta_1(t)=z$, there exists a sequence of times $(t^1_{i_\ell}, t^2_{j_\ell})_{\ell=0}^M$ such that 
			\begin{itemize}
				\item $t^1_{i_0}=s$ and $t^1_{i_M}=t$;
				\item for every $\ell\in\{1,2,\dots,M\}$, $(t^1_{i_\ell}, t^2_{j_\ell}) \in \op{supp} \perm $;
				\item for every $\ell\in\{2,\dots,M\}$, either $t^1_{i_\ell}=t^1_{i_{\ell-1}}$ or $t^2_{j_\ell}=t^2_{j_{\ell-1}}$.
			\end{itemize} 
		\end{itemize}	
	Hence, in order to prove that  $\mcl{T}\mcl{M}_1=\left\{ (s,t) \in [0,1]^2 : \eta_1(s) =\eta_1(t) \right\}$  is a.s.\ determined by $\op{supp} \perm$, it is enough to prove that if either there exists $q\in [0,1]$  such that  $(s,q) , (t,q) \in \op{supp} \perm$, or if there exists a sequence $(t^1_{i_\ell}, t^2_{j_\ell})_{\ell=0}^M$ with the properties listed above, then $\eta_1(s) =\eta_1(t)$. The former case  follows again from \cref{lem-closure-to-intersect} and the fact that $\op{supp} \perm \subset \ol{\{ (t,\psi_-(t)) : t\in [0,1]\}}$.
	
	For the latter case, it suffices to note that if $(t^1_{i_\ell}, t^2_{j_\ell}), (t^1_{i_{\ell-1}}, t^2_{j_{\ell-1}}) \in \op{supp} \perm $ and $t^2_{j_\ell}=t^2_{j_{\ell-1}}$, then $\eta_1(t^1_{i_\ell}) =\eta_1(t^1_{i_{\ell-1}})$, and similarly if $(t^1_{i_\ell}, t^2_{j_\ell}), (t^1_{i_{\ell-1}}, t^2_{j_{\ell-1}}) \in \op{supp} \perm $ and $t^1_{i_\ell}=t^1_{i_{\ell-1}}$ then $\eta_2(t^2_{j_\ell}) =\eta_2(t^2_{j_{\ell-1}})$. (This claim follows once again from 
	\cref{lem-closure-to-intersect} and the fact that 
	$\op{supp} \perm \subset \ol{\{ (t,\psi_-(t)) : t\in [0,1]\}}.$) 
	This fact, combined with the simple observation that if $(t^1_{i_\ell}, t^2_{j_\ell}) \in \op{supp} \perm $ then $\eta_2(t^2_{j_\ell})=\eta_1(t^1_{i_\ell})$, allows us to conclude that $\eta_1(s) =\eta_1(t)$. This ends the proof of Case 3 and of the entire theorem.
\end{proof}

\subsection{Proofs of Propositions~\ref{prop-counterflow-dichotomy}~and~\ref{prop-counterflow-dichotomy2}}\label{sect:proof--props}

We now turn to the proof of Propositions~\ref{prop-counterflow-dichotomy}~and~\ref{prop-counterflow-dichotomy2}. The rest of this section is organized as follow. In \cref{Sect:prem-1}, we develop some preliminary results in the case when $\kappa\geq 8$, and then we prove  Proposition~\ref{prop-counterflow-dichotomy} in \cref{sect:kappa-geq-8}. Next, in \cref{sect:m-tuples,sect:eight-flow}  we establish preliminary results for the case when $\kappa\in (4,8)$, and then we prove Proposition~\ref{prop-counterflow-dichotomy2} in \cref{sect:proof-remaining}. In particular, \cref{sect:m-tuples,sect:eight-flow} will investigate the structure of $m$-tuple points of space-filling whole-plane SLEs when $\kappa\in (4,8)$; this is the most technical part of the paper. 


We point out that the main ideas of why the support of the permuton permuton $\perm$ determines the intersection set $\mcl{T}\mcl{M}(\eta_1)$, even though the support does not contain all of the pairs $(s,t)$ for which $\eta_1(s) = \eta_1(t)$, are already contained in \cref{sect:kappa-geq-8}. The arguments in \cref{sect:m-tuples,sect:eight-flow} are a (rather complicated) generalization of the arguments in \cref{sect:kappa-geq-8}. Therefore, we invite the reader to skip \cref{sect:m-tuples,sect:eight-flow} on a first read.

\subsubsection{Preliminary results when $\kappa\geq 8$}\label{Sect:prem-1}

Recall that $\wh h$ is the whole-plane GFF viewed modulo a global additive multiple of $2\pi\chi$ which is used to construct $\eta_1$ and $\eta_2$.  
For $i \in \{1,2\}$ and $z\in\BB C$,  recall that  $\beta_{z,i}^L$ and $\beta_{z,i}^R$ are the left and right outer boundaries of $\eta_i$ stopped when it hits $z$. 
Equivalently, $\beta_{z,1}^L$ and $\beta_{z,1}^R$ are the flow lines of $\wh h$ of angles $\pi/2$ and $-\pi/2$ started from $z$; and $\beta_{z,1}^L$ and $\beta_{z,2}^R$ are the flow lines of $\wh h$ of angles $\theta$ and $\theta-\pi$ started from $z$. 
The source of the condition~\eqref{eqn-angle-condition} is the following lemma, which comes from known results on SLEs. See Figure~\ref{fig-angle-condition} for an illustration.

\begin{figure}[ht!]
\begin{center}
\includegraphics[width=\textwidth]{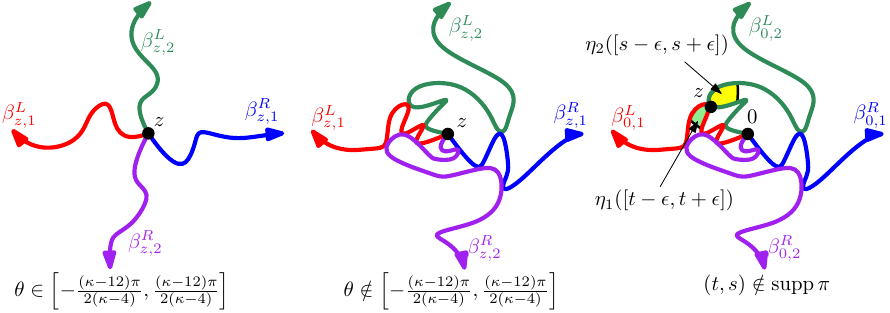}  
\caption{\label{fig-angle-condition} \textbf{Left and middle:} Illustration of the flow lines which form the left and right outer boundaries of $\eta_1$ and $\eta_2$ stopped upon hitting $z$ when~\eqref{eqn-angle-condition} holds (left) and when~\eqref{eqn-angle-condition} does not hold (middle). By Lemma~\ref{lem-angle-condition}, the four flow lines intersect only at $z$ if and only if~\eqref{eqn-angle-condition} holds. 
Note that since $\kappa \geq 8$, the flow lines $\beta^L_{z,i}$ and $\beta^R_{z,i}$ do not intersect each other.
\textbf{Right:} Illustration of the proof of Lemma~\ref{lem-counterflow-bad}. If $z \in \beta_{0,1}^L \cap \beta_{0,2}^L \setminus \{0\}$, then there are times $t$ and $s$ such that $\eta_1(t) = \eta_2(s)$ but the set $\eta_1([t-\ep,t+\ep]) \cap \eta_2([s-\ep,s+\ep])$ has zero $\mu_h$-mass for each small enough $\ep > 0$. This shows that  $(t,s) \notin \op{supp} \perm$. 
}
\end{center}
\vspace{-3ex}
\end{figure}

\begin{lem}\label{lem-angle-condition}
If~\eqref{eqn-angle-condition} holds, then for each fixed $z\in\BB C$ a.s.\ the four flow lines $\beta_{z,1}^L,\beta_{z,1}^R,\beta_{z,2}^L$, and $\beta_{z,2}^R$ intersect only at the point $z$. Conversely, if~\eqref{eqn-angle-condition} does not hold, then a.s.\ the intersection of $\beta_{z,2}^L$ with at least one of $\beta_{z,1}^L$ or $\beta_{z,1}^R$ is uncountable; and the same is true for $\beta_{z,2}^R$ in place of $\beta_{z,2}^L$.
\end{lem}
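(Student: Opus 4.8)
\emph{Proof plan.} The plan is to reduce the statement to a pairwise analysis of the four flow lines $\beta_{z,1}^L,\beta_{z,1}^R,\beta_{z,2}^L,\beta_{z,2}^R$, which (as recalled just above the lemma) are the flow lines of $\wh h$ started from $z$ of angles $\pi/2,\ -\pi/2,\ \theta,\ \theta-\pi$, respectively. Fixing $z$, there are six pairs to consider. First I would dispose of the two ``antipodal'' pairs $(\beta_{z,1}^L,\beta_{z,1}^R)$ and $(\beta_{z,2}^L,\beta_{z,2}^R)$: these are exactly the left/right outer boundaries of $\eta_1$, and of $\eta_2$, which have angles differing by $\pi$ and hence meet only at $z$ for $\kappa\ge 8$ by the facts recalled in item $(iii)$ of \cref{sect:SLES}. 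The remaining four pairs all have angle difference $\pi/2-\theta$ (namely $(\beta_{z,1}^L,\beta_{z,2}^L)$ and $(\beta_{z,1}^R,\beta_{z,2}^R)$) or $\pi/2+\theta$ (namely $(\beta_{z,1}^R,\beta_{z,2}^L)$ and $(\beta_{z,1}^L,\beta_{z,2}^R)$), both of which lie in $(0,\pi)$ since $\theta\in(-\pi/2,\pi/2)$.

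The key input for these four pairs is \cref{lem:flow_lines_crossing}: two flow lines of $\wh h$ from a common point of angles $\pi/2$ and $\theta'$, with $\theta'\in[-\pi/2,\pi/2)$, meet only at that point when $\theta'\le\frac{(\kappa-12)\pi}{2(\kappa-4)}$, and otherwise ``bounce off'' each other. To match the pairs above to this form I would use that the joint law of a finite family of flow lines of $\wh h$ started from a common point depends only on the pairwise differences of their angles -- a consequence of the fact that the law of the whole-plane GFF modulo $2\pi\chi$ is unchanged upon adding a deterministic constant, which shifts all angles by the same amount (see~\cite{ig4}). The pair $(\beta_{z,1}^L,\beta_{z,2}^L)$ is already of this form (angles $\pi/2$ and $\theta$); shifting all angles by a common constant turns $(\beta_{z,1}^R,\beta_{z,2}^R)$ into the same form, and turns each of $(\beta_{z,1}^R,\beta_{z,2}^L)$ and $(\beta_{z,1}^L,\beta_{z,2}^R)$ into the form with angles $\pi/2$ and $-\theta$. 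Hence, by \cref{lem:flow_lines_crossing}, the first two of these four pairs meet only at $z$ iff $\theta\le\frac{(\kappa-12)\pi}{2(\kappa-4)}$, and the other two iff $-\theta\le\frac{(\kappa-12)\pi}{2(\kappa-4)}$.

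Combining everything, all six pairs meet only at $z$ iff $\kappa\ge 8$ and $|\theta|\le\frac{(\kappa-12)\pi}{2(\kappa-4)}$, i.e.\ iff~\eqref{eqn-angle-condition} holds, which gives the first assertion. For the converse, if~\eqref{eqn-angle-condition} fails then $|\theta|>\frac{(\kappa-12)\pi}{2(\kappa-4)}$, so at least one of $\theta>\frac{(\kappa-12)\pi}{2(\kappa-4)}$ or $-\theta>\frac{(\kappa-12)\pi}{2(\kappa-4)}$ holds; in the former case $(\beta_{z,1}^L,\beta_{z,2}^L)$ and $(\beta_{z,1}^R,\beta_{z,2}^R)$ bounce off, and in the latter case $(\beta_{z,1}^R,\beta_{z,2}^L)$ and $(\beta_{z,1}^L,\beta_{z,2}^R)$ bounce off. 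Either way the intersection of $\beta_{z,2}^L$ with at least one of $\beta_{z,1}^L,\beta_{z,1}^R$ is a bounce-off set, and likewise for $\beta_{z,2}^R$. The last step I would still need is that a bounce-off contact set of two flow lines is a.s.\ uncountable; I expect \emph{this} -- rather than the (routine) angle bookkeeping -- to be the point requiring the most care, although it is part of the standard imaginary-geometry description of flow-line interactions and should follow from the description of such contact sets as fractal zero sets of Bessel-type processes (cf.~\cite{ig1,ig4}).
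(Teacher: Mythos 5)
Your pairwise reduction is correct and works, but it is a genuinely different route from the one taken in the paper. Your angle bookkeeping is right: after an additive shift of $\wh h$ (permissible since $\wh h$ is defined modulo a constant), each of the four ``mixed'' pairs reduces to two flow lines started from $z$ of angles $\pi/2$ and $\pm\theta$, so \cref{lem:flow_lines_crossing} gives the threshold $\pm\theta \le \frac{(\kappa-12)\pi}{2(\kappa-4)}$, and combining all four pairs yields exactly~\eqref{eqn-angle-condition}. The paper instead conditions on $\beta_{z,1}^L$ and $\beta_{z,1}^R$ jointly: by \cite[Proposition 3.28]{ig4}, the conditional law of $\beta_{z,2}^L$ in the pocket between them is an $\mathrm{SLE}_{16/\kappa}(\rho^L;\rho^R)$ with $\rho^L = \frac{(\pi/2-\theta)\chi}{\lambda}-2$ and $\rho^R = \frac{(\theta+\pi/2)\chi}{\lambda}-2$ (the two angle gaps encoded as force-point weights), and then \cite[Theorem 1.6]{miller-wu-dim} says such a curve hits the domain boundary if and only if $\min(\rho^L,\rho^R) < 8/\kappa-2$, in which case the intersection is uncountable; solving this inequality in $\theta$ gives~\eqref{eqn-angle-condition}. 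Both arguments isolate the same threshold. The advantage of the paper's ``one-shot'' conditioning is that \cite[Theorem 1.6]{miller-wu-dim} delivers the hitting dichotomy and the uncountability simultaneously, which is exactly the one gap you flag at the end: \cref{lem:flow_lines_crossing} as stated only gives ``bounces off,'' not ``intersects in an uncountable set,'' so your route needs one more citation to close it (e.g.\ the Hausdorff-dimension formula for flow-line contact sets, \cite[Theorem 1.5]{miller-wu-dim}, or \cite[Theorem 1.6]{miller-wu-dim} applied after conditioning on one of the two flow lines, both of which are standard and fill the gap without difficulty).
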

\begin{proof}
By~\cite[Proposition 3.28]{ig4}, the conditional law of $\beta_{z,2}^L$ given $\beta_{z,1}^L$ and $\beta_{z,2}^R$ is that of a SLE$_{16/\kappa}(\rho^L;\rho^R)$ curve from 0 to $\infty$ in the appropriate connected component of $\BB C\setminus (\beta_{z,1}^L \cup \beta_{z,1}^R)$, with force points immediately to the left and right of the origin, where
\eqb
\rho^L = \frac{(\pi/2-\theta) \chi}{\lambda} - 2 \quad\text{and} \quad \rho^R = \frac{(\theta + \pi/2) \chi}{\lambda} - 2 ,
\eqe
where we recall from \eqref{eq:parameters} that $\lambda=\frac{\pi\sqrt{\kappa}}{4}$ and $ \chi=\frac{\sqrt{\kappa}}{2}-\frac{2}{\sqrt{\kappa}}$.
On the other hand, by~\cite[Theorem 1.6]{miller-wu-dim}, such an SLE$_{16/\kappa}(\rho^L;\rho^R)$ curve hits the boundary of its domain if and only if at least one of $\rho^L$ or $\rho^R$ is less than $8/\kappa-2$, in which case the intersection of the curve with the boundary is uncountable. We obtain the statement of the lemma by solving for the values of $\theta$ for which this is the case.
\end{proof}

We can now prove a result useful for the first part of Assertion~\eqref{item-counterflow-good2} of Proposition~\ref{prop-counterflow-dichotomy}. 

\begin{lem} \label{lem-counterflow-bad}
Assume that $\kappa>4$ and $\theta\in(-\pi/2,\pi/2)$ are such that~\eqref{eqn-angle-condition} does not hold. Almost surely, there are uncountably many points of 
\eqbn
		\ol{\{(t,\psi_{-}(t)) : t\in [0,1]\} } \quad\text{ and }\quad \ol{\{(t,\psi_{+}(t)) : t\in [0,1]\} }
\eqen
which do not belong to $\op{supp}\perm$.
\end{lem}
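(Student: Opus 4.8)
The plan is to use \cref{lem-angle-condition} to extract, from a single fixed base point, an uncountable set of points where two of the relevant flow lines intersect, and to show that each such point contributes a point of $\ol{\{(t,\psi_\pm(t)):t\in[0,1]\}}$ lying outside $\op{supp}\perm$. By the translation invariance of the law of $(\eta_1,\eta_2)$ (recall that $(\eta_1,\eta_2)$ is independent of $h$), I would fix the base point $0$. Since \eqref{eqn-angle-condition} fails, \cref{lem-angle-condition} gives that a.s.\ $\beta_{0,2}^L$ has uncountable intersection with at least one of $\beta_{0,1}^L,\beta_{0,1}^R$, and likewise for $\beta_{0,2}^R$. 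Using \cref{lem:flow_lines_crossing} to see which option occurs: when $\theta>\tfrac{(\kappa-12)\pi}{2(\kappa-4)}$ the flow line $\beta_{0,2}^L$ bounces off $\beta_{0,1}^L$ and $\beta_{0,2}^R$ off $\beta_{0,1}^R$, so $\mathcal I:=\beta_{0,1}^L\cap\beta_{0,2}^L$ and $\mathcal I':=\beta_{0,1}^R\cap\beta_{0,2}^R$ are both uncountable; the case $\theta<-\tfrac{(\kappa-12)\pi}{2(\kappa-4)}$ is the mirror image under complex conjugation. So I may assume $\mathcal I$ (and $\mathcal I'$) uncountable, and I will use $\mathcal I$ to produce points of $\ol{\{(t,\psi_+(t))\}}$ and only indicate the analogous use of $\mathcal I'$ for $\ol{\{(t,\psi_-(t))\}}$.

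Let $\tau^i_0$ be the time $\eta_i$ hits $0$, so that $\beta_{0,i}^L$ is the left outer boundary of $\eta_i([0,\tau^i_0])$. Fix $z\in\mathcal I\setminus\{0\}$ which is not one of the (countably many) merge points of $\eta_1$. Since $z\in\beta_{0,i}^L\subset\ol{\eta_i([0,\tau^i_0])}$, each $\eta_i$ hits $z$ at some time $<\tau^i_0$; and since $\eta_i|_{[\tau^i_0,1]}$ fills the closed complementary region of $\eta_i([0,\tau^i_0])$, whose boundary contains $\beta_{0,i}^L\ni z$ (here one invokes \cref{lem-space-filling-law} together with the fact that a space-filling SLE fills its target region up to and including its boundary), $\eta_i$ also hits $z$ at some time $>\tau^i_0$. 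Let $t_1(z)<\tau^1_0$ be the first hitting time of $z$ by $\eta_1$ and let $s_2(z)>\tau^2_0$ be the last hitting time of $z$ by $\eta_2$; note $s_2(z)=\psi_+(t_1(z))$ because $\eta_1(t_1(z))=z=\eta_2(s_2(z))$. For $\ep>0$ small, continuity of $\eta_1,\eta_2$ yields, for a small disk $B_\delta(z)$,
\[
\eta_1([t_1(z)-\ep,t_1(z)+\ep])\subset B_\delta(z)\cap\eta_1([0,\tau^1_0]),\qquad
\eta_2([s_2(z)-\ep,s_2(z)+\ep])\subset B_\delta(z)\cap\eta_2([\tau^2_0,1]).
\]
Near $z$, the set $\eta_1([0,\tau^1_0])$ lies on the closed left side of $\beta_{0,1}^L$ while $\eta_2([\tau^2_0,1])$ lies on the closed right side of $\beta_{0,2}^L$; since $\theta<\pi/2$, the flow line $\beta_{0,2}^L$ lies (weakly) to the right of $\beta_{0,1}^L$, so the open left side of $\beta_{0,1}^L$ and the open right side of $\beta_{0,2}^L$ are disjoint near $z$. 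A short point-set argument then gives $\eta_1([t_1(z)-\ep,t_1(z)+\ep])\cap\eta_2([s_2(z)-\ep,s_2(z)+\ep])\subset\beta_{0,1}^L$. But $\beta_{0,1}^L$ is an SLE$_{16/\kappa}$-type curve with $16/\kappa\in(0,4)$, hence a.s.\ of Hausdorff dimension $<2$ and therefore Lebesgue-null; being independent of $h$, it is a.s.\ $\mu_h$-null. Hence $\mu_h\bigl(\eta_1([t_1(z)-\ep,t_1(z)+\ep])\cap\eta_2([s_2(z)-\ep,s_2(z)+\ep])\bigr)=0$ for all small $\ep>0$.

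By the identity $\perm([a,b]\times[c,d])=\mu_h(\eta_1([a,b])\cap\eta_2([c,d]))$ (\cref{lem-permuton-defined}, Assertion 1) and the definition of the closed support, the last sentence forces $(t_1(z),\psi_+(t_1(z)))=(t_1(z),s_2(z))\notin\op{supp}\perm$. Since $z\mapsto t_1(z)$ is injective (if $t_1(z)=t_1(z')$ then $z=\eta_1(t_1(z))=\eta_1(t_1(z'))=z'$) and $\mathcal I$ minus countably many points is uncountable, this produces uncountably many points of $\ol{\{(t,\psi_+(t)):t\in[0,1]\}}$ outside $\op{supp}\perm$. Running the same argument with $\mathcal I'=\beta_{0,1}^R\cap\beta_{0,2}^R$ and the pair $\bigl(t_2(z),s_1(z)\bigr)$ — last hit of $z$ by $\eta_1$ (which is $>\tau^1_0$) and first hit of $z$ by $\eta_2$ (which is $<\tau^2_0$), so that $s_1(z)=\psi_-(t_2(z))$ — gives uncountably many points of $\ol{\{(t,\psi_-(t)):t\in[0,1]\}}$ outside $\op{supp}\perm$; alternatively, the $\psi_-$ statement follows from the time-reversal symmetry of the law of $(\eta_1,\eta_2)$, under which first and last hitting times are exchanged, $\psi_-$ and $\psi_+$ are exchanged, and $\op{supp}\perm$ is mapped to its image under $(u,v)\mapsto(1-u,1-v)$.

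The main obstacle is the geometric bookkeeping in the second paragraph: one must keep precise track of the orientations of the four flow lines $\beta_{0,1}^L,\beta_{0,1}^R,\beta_{0,2}^L,\beta_{0,2}^R$ and of which side of $\beta_{0,i}^L$ is occupied by $\eta_i$ before and after $\tau^i_0$, and verify that for each of the two ways in which \eqref{eqn-angle-condition} can fail there is a choice of bounce set and of hitting-time pairing whose two associated pieces of $\eta_1$ and $\eta_2$ separate along a single flow line near $z$. A secondary technical point is the claim that $\eta_i$ re-hits every point of $\beta_{0,i}^L$ during $[\tau^i_0,1]$, which uses that space-filling SLE fills its target region up to the boundary. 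For $\kappa\in(4,8)$ additional care is required, since the complementary region of $\eta_i([0,\tau^i_0])$ need not be simply connected, the flow lines $\beta_{0,i}^L$ can themselves be self-intersecting for $\kappa\in(4,6)$, and points of $\mathcal I$ can be hit more than three times; but the structure of the argument is unchanged, and the same separation along a single flow line yields the conclusion.
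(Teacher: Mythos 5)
Your proposal follows the same overall strategy as the paper's proof: use \cref{lem-angle-condition} to extract an uncountable bounce set $\mathcal I=\beta_{0,1}^L\cap\beta_{0,2}^L$ (or the $R$-analogue), pick $z\in\mathcal I$, take $t$ the first $\eta_1$-hit of $z$ and $s=\psi_+(t)$ the last $\eta_2$-hit, show $\eta_1([t-\ep,t+\ep])\cap\eta_2([s-\ep,s+\ep])$ is $\mu_h$-null for small $\ep$, deduce $(t,s)\notin\op{supp}\perm$ via Assertion 1 of \cref{lem-permuton-defined}, and get $\psi_-$ by time reversal. For $\kappa\geq 8$ this is essentially identical to the paper, and correct.

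There is, however, a genuine gap for $\kappa\in(4,8)$, which you acknowledge but dismiss too quickly. Your disjointness argument relies on the claim that near $z$, the set $\eta_2([\tau_0^2,1])$ lies on the closed right side of $\beta_{0,2}^L$. This is true when $\beta_{0,2}^L$ is the only $\eta_2$-boundary flow line through $z$, but for $\kappa\in(4,8)$ the flow lines $\beta_{0,2}^L$ and $\beta_{0,2}^R$ a.s.\ intersect each other, so $z$ may lie on both. At such a pinch point, $\eta_2([\tau_0^2,1])$ has pieces on both sides of $\beta_{0,2}^L$ locally, and the claimed containment fails; your ``same separation along a single flow line'' does not automatically survive. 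The paper addresses this precisely: it restricts to $z\in(\beta_{0,1}^L\cap\beta_{0,2}^L)\setminus\beta_{0,2}^R$, and uses the conditional independence of $\beta_{0,2}^L$ and $\beta_{0,2}^R$ given $(\beta_{0,1}^L,\beta_{0,1}^R)$ to argue that this smaller set is still a.s.\ uncountable. Your exclusion of merge points of $\eta_1$ is unrelated (and in fact never used in your argument); the exclusion you actually need is $z\notin\beta_{0,2}^R$. With that substituted, the proof goes through as in the paper. Two minor remarks: your claim that the intersection of the two closed wedges is contained in $\beta_{0,1}^L$ alone should be $\beta_{0,1}^L\cup\beta_{0,2}^L$ (which is still $\mu_h$-null, so harmless); and your bullet-by-bullet treatment of $\psi_-$ via $\mathcal I'=\beta_{0,1}^R\cap\beta_{0,2}^R$ is an acceptable alternative to time reversal, though the time-reversal route is cleaner and is what the paper uses.
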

\begin{proof}
See the right-hand side of Figure~\ref{fig-angle-condition} for an illustration. 
By Lemma~\ref{lem-angle-condition}, if~\eqref{eqn-angle-condition} does not hold, then a.s.\ the flow line $\beta_{0,2}^L$ intersects at least one of $\beta_{0,1}^L$ or $\beta_{0,1}^R$ in an uncountable set. For simplicity, we work on the event that $\beta_{0,1}^L \cap \beta_{0,2}^L$ is uncountable (the other case is treated similarly). 

Since $\beta_{0,2}^L$ and $\beta_{0,2}^R$ are conditionally independent given $\beta_{0,1}^L$ and $\beta_{0,1}^R$, on the event that $\beta_{0,1}^L \cap \beta_{0,2}^L$ is uncountable a.s.\ also $(\beta_{0,1}^L \cap \beta_{0,2}^L) \setminus \beta_{0,2}^R$ is uncountable. 
Let $z\in ( \beta_{0,1}^L \cap \beta_{0,2}^L )\setminus   \beta_{0,2}^R$. Let $t$ be the first time at which $\eta_1$ hits $z$ and let $s=\psi_+(t)$ be the last time at which $\eta_2$ hits $z$. 
To prove the lemma statement it suffices to show that $(t,s) \notin \op{supp}\perm$. (Note that the claim that there are also uncountably many points of $\ol{\{(t,\psi_{-}(t)) : t\in [0,1]\} }$ which do not belong to $\op{supp}\perm$ then follows by time reversal symmetry.)

Since $z\not=0$ and $\eta_1$ is continuous, it holds for small enough $\ep  > 0$ that $\eta_1([t-\ep , t+\ep])$ is contained in the region which is hit by $\eta_1$ before it hits 0, i.e., the region lying to the left of $\beta_{0,1}^L$ and to the right of $\beta_{0,2}^L$ (recall the direction of $\beta_{0,1}^L$ and $\beta_{0,2}^L$ from the arrows in Figure~\ref{fig-angle-condition}). 
Furthermore, since $z\notin \beta_{0,2}^R$, it holds for each small enough $\ep > 0$ that $\eta_1([t-\ep,t+\ep])$ is disjoint from $\beta_{0,2}^R$. 
Hence, for each small enough $\ep > 0$, $\eta_1([t-\ep,t+\ep])$ is contained in the closure of the region bounded by the left side of $\beta_{0,1}^L$ and the right side of $\beta_{0,2}^R$. 

Similarly, for each small enough $\ep > 0$, $\eta_2([s-\ep,s+\ep])$ is contained in the closure of the region bounded by the right side of $\beta_{0,2}^L$ and the left side of $\beta_{0,1}^R$. This region and the region in the preceding paragraph intersect only along their boundaries, which have zero $\mu_h$-mass. Hence, for each small enough $\ep > 0$, $\eta_1([t-\ep,t+\ep]) \cap \eta_2([s-\ep,s+\ep])$ has zero $\mu_h$-mass. By Lemma~\ref{lem-permuton-defined}, Assertion 1, this implies that $(t,s)$ is not in the support of $\perm$.
\end{proof}

We now turn our attention to the proof of Assertion~\eqref{item-counterflow-good} of Proposition~\ref{prop-counterflow-dichotomy}. For $i\in \{1,2\}$ and $z\in\BB C$, let $\tau_z^i$ be the first time at which $\eta_i$ hits $z$.

\begin{figure}[ht!]
	\begin{center}
		\includegraphics[width=0.6\textwidth]{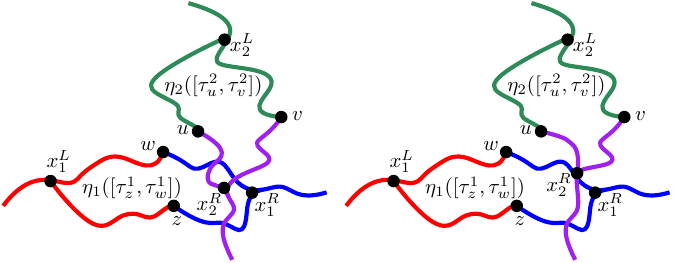}  
		\caption{\label{fig-counterflow-intersect} Illustration of two possible scenarios where $\eta_1([\tau_z^1,\tau_w^1]) \cap \eta_2([\tau_u^2,\tau_v^2]) \not=\emptyset$. It follows from~\cref{lem:flow_lines_crossing} that whenever one of the four flow lines which form the boundary of $\eta_1([\tau_z^1,\tau_w^1])$ intersects one of the four flow lines which form the boundary of $\eta_2([\tau_u^2,\tau_v^2])$, these two flow lines must cross. Hence, either $\eta_1([\tau_z^1,\tau_w^1]) \cap \eta_2([\tau_u^2,\tau_v^2])$ contains a non-empty open set (left panel); or the only intersection points of $\eta_1([\tau_z^1,\tau_w^1]) $ and $ \eta_2([\tau_u^2,\tau_v^2])$ are among the points $x_1^L,x_1^R,x_2^L,x_2^R$ where the red, blue, green, and purple (respectively) flow lines merge (right panel). These four points are merge points for $\eta_1$ or $\eta_2$, respectively, and so \cref{lem:triple_are_simple} shows that this second scenario has probability zero.
		}
	\end{center}
	\vspace{-3ex}
\end{figure}

\begin{lem} \label{lem-counterflow-intersect}
	Assume that~\eqref{eqn-angle-condition} holds. Fix distinct points $z,w,u,v\in\BB C$. Almost surely, if $\eta_1([\tau_z^1,\tau_w^1]) \cap \eta_2([\tau_u^2,\tau_v^2]) \not=\emptyset$, then $\eta_1([\tau_z^1,\tau_w^1]) \cap \eta_2([\tau_u^2,\tau_v^2])$ contains a non-empty open set.
\end{lem}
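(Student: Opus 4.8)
The plan is to analyze the boundary of each of the two curve segments $\eta_1([\tau_z^1,\tau_w^1])$ and $\eta_2([\tau_u^2,\tau_v^2])$ in terms of imaginary geometry flow lines, and then to use Lemma~\ref{lem:flow_lines_crossing} to rule out "tangential" intersections. First I would recall that, since $\kappa \geq 8$, the set $\eta_1([\tau_z^1,\tau_w^1])$ is a simply connected domain whose boundary is a union of (arcs of) the four flow lines $\beta_{z,1}^L, \beta_{z,1}^R, \beta_{w,1}^L, \beta_{w,1}^R$ — more precisely, its left boundary is contained in $\beta_{z,1}^L \cup \beta_{w,1}^L$ and its right boundary in $\beta_{z,1}^R \cup \beta_{w,1}^R$ — and similarly for $\eta_2([\tau_u^2,\tau_v^2])$ with the four flow lines of angles $\theta$ and $\theta-\pi$ emanating from $u$ and $v$. (Here I use that $\eta_2$ has angle $\theta - \pi/2$, so its left/right outer boundaries are the $\beta^L$ (angle $\theta$) and $\beta^R$ (angle $\theta - \pi$) flow lines of $\wh h$.) Thus if the two segments intersect, some boundary flow line of the first segment must either enter the interior of the second segment or touch one of the boundary flow lines of the second segment.

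Next I would invoke Lemma~\ref{lem:flow_lines_crossing} in the following way: since~\eqref{eqn-angle-condition} holds, for every pair consisting of one boundary flow line $\beta$ of the first segment (an angle-$\pi/2$ or angle-$-\pi/2$ flow line) and one boundary flow line $\beta'$ of the second segment (an angle-$\theta$ or angle-$(\theta-\pi)$ flow line, equivalently an angle-$\theta$ or $\theta+\pi$ flow line after shifting by $\pi$), the angle gap lies in the regime where the two flow lines can only \emph{cross} or \emph{merge}, but cannot bounce off each other. [Concretely: reducing to angle differences mod $\pi$, the relevant angle offsets are $\theta - \pi/2$ and $\theta + \pi/2$; when~\eqref{eqn-angle-condition} holds, $|\theta| \le \frac{(\kappa-12)\pi}{2(\kappa-4)}$, which is exactly the threshold $\frac{(\kappa-12)\pi}{2(\kappa-4)}$ appearing in Lemma~\ref{lem:flow_lines_crossing} for no bouncing.] Therefore, if $\eta_1([\tau_z^1,\tau_w^1]) \cap \eta_2([\tau_u^2,\tau_v^2]) \neq \emptyset$, then either (a) a boundary flow line of one segment genuinely crosses into the interior of the other segment — in which case $\eta_1([\tau_z^1,\tau_w^1]) \cap \eta_2([\tau_u^2,\tau_v^2])$ contains a non-empty open subset of that interior, and we are done; or (b) the only intersections of the boundary flow lines are merge points, so the intersection of the two segments consists only of points where one of the four flow lines bounding the first segment merges into one of the four bounding the second. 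As illustrated in Figure~\ref{fig-counterflow-intersect}, in case (b) each such intersection point is a merge point of $\eta_1$ (where $\beta_{z,1}^L$ and $\beta_{w,1}^L$, or $\beta_{z,1}^R$ and $\beta_{w,1}^R$, merge) or a merge point of $\eta_2$ (analogously), and hence is hit at least twice by $\eta_2$ or by $\eta_1$ respectively.

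To finish, I would observe that case (b) is precluded by Lemma~\ref{lem:triple_are_simple}: a merge point of $\eta_1$ is a.s.\ never hit more than once by $\eta_2$, and vice versa, so a.s.\ no intersection point of the two segments can be of the merge type. Hence, a.s., whenever the two segments intersect, case (a) occurs and $\eta_1([\tau_z^1,\tau_w^1]) \cap \eta_2([\tau_u^2,\tau_v^2])$ contains a non-empty open set. The only subtlety — and the step I expect to require the most care — is the bookkeeping in identifying the boundary flow lines of each curve segment precisely and verifying that every relevant pair of flow lines falls into the non-bouncing regime of Lemma~\ref{lem:flow_lines_crossing} under hypothesis~\eqref{eqn-angle-condition}; one must also handle the possibility that a boundary flow line of the first segment agrees with (rather than merely crosses) a boundary flow line of the second, but since $z,w,u,v$ are distinct rational-independent points this is a null event, or alternatively such coincident portions are themselves handled by the merge-point argument. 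A minor additional point is that "crosses into the interior" must be upgraded from a single crossing point to an open set: this follows because a crossing of two flow lines at a point $p$ forces one curve to spend a positive-length time interval strictly inside the Jordan domain bounded by the other segment near $p$, and $\eta_1$ (being space-filling) fills a neighborhood of that sub-arc, yielding an open set in the intersection.
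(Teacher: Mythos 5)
Your proposal is correct and follows essentially the same route as the paper's proof: you identify the boundaries of the two curve segments with the appropriate flow lines, invoke Lemma~\ref{lem:flow_lines_crossing} together with~\eqref{eqn-angle-condition} to force crossings rather than bounces, invoke Lemma~\ref{lem:triple_are_simple} to rule out the corner/merge points lying on the other boundary, and then conclude by a Jordan-curve topological argument. The only presentational difference is that the paper first reduces to the case where none of $z,w,u,v$ lies in the other segment's interior (so that the boundaries must genuinely intersect at a non-endpoint $y$) and then treats $y$ directly, whereas you phrase things as a case dichotomy; also your phrase about boundary flow lines of the two segments ``merging into'' each other is loose, since flow lines of distinct angles cannot merge, though you correctly resolve this when you identify the relevant points as merge points of $\eta_1$ or $\eta_2$ separately.
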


\begin{proof}
	See Figure~\ref{fig-counterflow-intersect} for an illustration of the statement and proof. 
	Throughout the proof, we work on the event $\eta_1([\tau_z^1,\tau_w^1]) \cap \eta_1([\tau_u^2,\tau_v^2]) \not=\emptyset$. Note that this event implies, in particular, that $\tau_z^1 < \tau_w^1$ and $\tau_u^2 < \tau_v^2$. 
	We need to show that a.s.\ under these assumptions,
	\eqb \label{eqn-counterflow-intersect-show}
	\text{$\eta_1([\tau_z^1,\tau_w^1]) \cap \eta_2([\tau_u^2,\tau_v^2])$ contains a non-empty open set.}
	\eqe
	
	From the definition of space-filling SLE, we see that a.s.\ $\eta_1([\tau_z^1-\ep,\tau_z^1+\ep])$ contains a neighborhood of $z$ for each $\ep > 0$. Similar statements hold with $w,u,v$ in place of $z$ and/or with $\eta_2$ in place of $\eta_1$. Therefore, the lemma statement is true automatically if either of $\tau_u^1$ or $\tau_v^1$ belongs to $[\tau_z^1,\tau_w^1]$; or if either of $\tau_z^2$ or $\tau_w^2$ belongs to $[\tau_u^2,\tau_v^2]$. Hence, we can assume without loss of generality that $u,v \notin \eta_1([\tau_z^1,\tau_w^1])$ and $z,w \notin \eta_2([\tau_u^2,\tau_v^2])$. This implies in particular that neither $\eta_1([\tau_z^1,\tau_w^1]) $ nor $ \eta_2([\tau_u^2,\tau_v^2])$ is a subset of the other. Hence, our assumption that  $\eta_1([\tau_z^1,\tau_w^1]) \cap \eta_2([\tau_u^2,\tau_v^2]) \not=\emptyset$ implies that in fact
	\eqb \label{eqn-counterflow-intersect-bdy}
	\bdy \eta_1([\tau_z^1,\tau_w^1]) \cap \bdy \eta_2([\tau_u^2,\tau_v^2]) \setminus \{z,w,u,v\} \not=\emptyset .
	\eqe
	
	Let $x_1^L$ be the point where the flow lines $\beta_{z,1}^L$ and $\beta_{w,1}^L$ merge. 
	Define the point $x_1^R \in \bdy \eta_1([\tau_z^1,\tau_w^1])$ and the points $x_2^L,x_2^R \in \bdy \eta_2([\tau_u^2,\tau_v^2])$ analogously to the point $x_1^L$ above. Then $\bdy\eta_1([\tau_z^1,\tau_w^1])$ is the union of the segment of $\beta_{z,1}^L$ from $z$ to $x_1^L$, the segment of $\beta_{w,1}^L$ from $w$ to $x_1^L$, the segment of $\beta_{z,1}^R$ from $z$ to $x_1^R$, the segment of $\beta_{w,1}^R$ from $w$ to $x_1^R$. Moreover, an analogous statement holds for $ \bdy \eta_2([\tau_u^2,\tau_v^2])$. 
	
	Since by \cref{lem:triple_are_simple}, the points $x_1^L$ and $x_1^R$ cannot be on the boundary $\bdy \eta_2([\tau_u^2,\tau_v^2])$, and the points $x_2^L$ and $x_2^R$ cannot be on the boundary $\bdy \eta_1([\tau_z^1,\tau_w^1])$, it follows from~\eqref{eqn-counterflow-intersect-bdy} that there is a point 
	\eqb
	y \in  \bdy \eta_1([\tau_z^1,\tau_w^1]) \cap \bdy \eta_2([\tau_u^2,\tau_v^2])  \setminus \{ z,w,u,v,x_1^L,x_1^R,x_2^L,x_2^R\}  .
	\eqe
	Assume without loss of generality that $y \in \beta_{w,1}^R \cap \beta_{u,2}^R$ (the other possibilities are treated similarly). 
	By the preceding paragraph, $y$ must be in the segment of $\beta_{w,1}^R$ from $w$ to $x_1^R$ and in the segment of $\beta_{u,2}^R$ from $u$ to $x_2^R$. 
	By~\cref{lem:flow_lines_crossing}, a.s. the flow lines $\beta_{w,1}^R$ and $\beta_{u,2}^R$ cross when they meet, so, in particular, they cross at the point $y$. Since $y\notin \{ z,w,u,v, x_1^L,x_1^R,x_2^L,x_2^R\}$, this implies that the segments of $\beta_{w,1}^R$ and $\beta_{u,2}^R$ which are part of the boundaries of $\eta_1([\tau_z^1,\tau_w^1])$ and $ \eta_2([\tau_u^2,\tau_v^2])$ cross each other. 
	
	By~\eqref{eqn-angle-condition}, we have $\kappa\geq 12$, so a.s.\ $\eta_1([\tau_z^1,\tau_w^1])$ and $ \eta_2([\tau_u^2,\tau_v^2])$ are the closures of Jordan domains. By the previous paragraph, under our assumptions a.s.\ the boundary curves of these Jordan domains cross each other. Hence, topological considerations show that a.s.~\eqref{eqn-counterflow-intersect-bdy} holds, as required.
\end{proof}

\subsubsection{Proof of Proposition~\ref{prop-counterflow-dichotomy}}\label{sect:kappa-geq-8}

\begin{proof}[Proof of Proposition~\ref{prop-counterflow-dichotomy}]
		We first prove Assertion~\eqref{item-counterflow-good}. Recall that from Assertion 2 in Lemma~\ref{lem-permuton-defined} we have that  
		\eqbn
		\op{supp} \perm \subset \ol{\{(t,\psi_{-}(t)) : t\in [0,1]\} }\subset \{(t,s) \in [0,1] : \eta_1(t) = \eta_2(s) \}  ,
		\eqen 
		and so, it is enough to prove that  
		\eqbn
		\{(t,s) \in [0,1] : \eta_1(t) = \eta_2(s) \}
		\subset
		\op{supp} \perm .
		\eqen
		Fix $(t,s) \in [0,1]$ such that $\eta_1(t) = \eta_2(s)$.	A.s.\ the preimage of $\BB Q^2$ under $\eta_1$ is dense in $[0,1]^2$, and the same is true for $\eta_2$ (see, e.g.,~\cite[Caption to Figure 4.10]{ig4}).
		Hence, a.s.\ there exists distinct points $z,w,u,v\in\BB Q^2$ such that $t \in [\tau_z^1,\tau_w^1]$, $s\in [\tau_u^2,\tau_v^2]$, and the rectangle $[\tau_z^1,\tau_w^1] \times [\tau_u^2,\tau_v^2]$ is contained in an arbitrarily small neighborhood of $(t,s)$. Since $\eta_1(t) =\eta_2(s)$, we have $\eta_1([\tau_z^1,\tau_w^1]) \cap \eta_2([\tau_u^2,\tau_v^2]) \not=\emptyset$.
		
		
		By Lemma~\ref{lem-counterflow-intersect}, 
		a.s.\ the region $\eta_1([\tau_z^1,\tau_w^1]) \cap \eta_2([\tau_u^2,\tau_v^2])$ contains a non-empty open set,  and hence has positive $\mu_h$-mass.
		By Lemma~\ref{lem-permuton-defined}, Assertion 1, this implies that $\perm\left( [\tau_z^1,\tau_w^1] \times [\tau_u^2,\tau_v^2]  \right) > 0$.  Hence $(t,s) \in \op{supp} \perm$.
		
%
		
		\medskip
		
		We now prove Assertion~\eqref{item-counterflow-good2}. The first part of the statement follows from Lemma~\ref{lem-counterflow-bad}.
		We now prove that if $z\in \BB C$ is not simultaneously a double point of both $\eta_1$ and $\eta_2$ and $t\in[0,1]$ is a time when $\eta_1$ hits $z$, then $(t,\psi_{-}(t))\in \op{supp} \perm$.
		Note that if $\eta_1(t)$ is a simple point of $\eta_1$ or a simple point of $\eta_2$, then $(t,\psi_{-}(t))\in \op{supp} \perm$ by Assertion 3 in Lemma~\ref{lem-permuton-defined}. Similarly, if $\eta_1(t)$ is a triple point of $\eta_1$ (and so it must be a merge point because $\kappa\geq 8$), then $\eta_1(t)$ is a simple point of $\eta_2$ thanks to Lemma~\ref{lem:triple_are_simple}, and so we conclude as before. The same argument works if $\eta_1(t)$ is a triple point of $\eta_2$. 
		
		It remains to prove that if $z$ is simultaneously a double point of both $\eta_1$ and $\eta_2$, then $\op{supp} \perm$ contains at least three of the four pairs $(t,s)\in[0,1]^2$ for which $\eta_1(t) = \eta_2(s) = z$.
		Every double point of $\eta_1$ (resp.\ $\eta_2$) lies on the flow line  $\beta_{u,1}^L$ or $\beta_{u,1}^R$ (resp.\ $\beta_{v,2}^L$ or $\beta_{v,2}^R$) for some $u\in\BB Q$ (resp.\ $v\in\BB Q$). Let $\eta_1(t)=z$ be a double point of both $\eta_1$ and $\eta_2$. Let $u\in\BB C$ and $v\in\BB C$ be such that $z\neq u$, $z\neq v$, and assume that
		\begin{itemize}
			\item (Case 1) $z\in\beta_{u,1}^L$ and $z\in\beta_{v,2}^L$;
			\item (Case 2) $z\in\beta_{u,1}^L$ and $z\in\beta_{v,2}^R$;
			\item (Case 3) $z\in\beta_{u,1}^R$ and $z\in\beta_{v,2}^L$;
			\item (Case 4) $z\in\beta_{u,1}^R$ and $z\in\beta_{v,2}^R$.
		\end{itemize}  
		Let $t_1$ (resp.\ $t_2$) be the first (resp.\ second) time when $\eta_1$ hits $z$. Similarly, let $s_1$ (resp.\ $s_2$) be the first (resp.\ second) time when $\eta_2$ hits $z$.

		\begin{figure}[ht!]
			\begin{center}
				\includegraphics[width=1\textwidth]{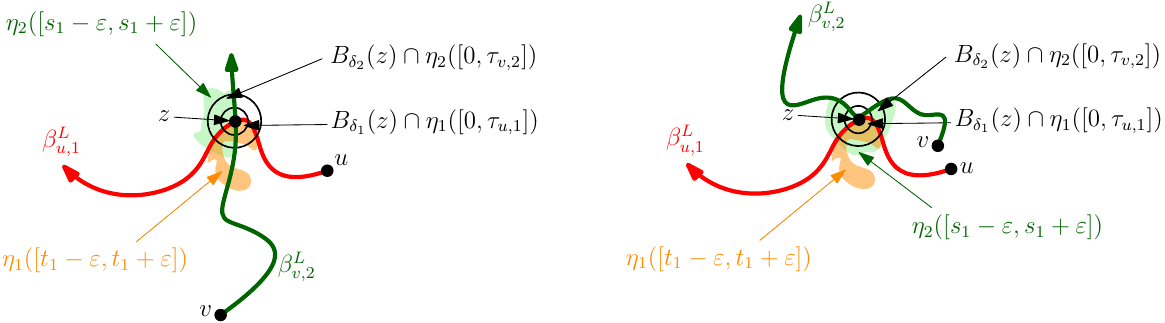}  
				\caption{\label{fig-int-double-points} The two subcases in the proof of Assertion~\eqref{item-counterflow-good2} of Proposition~\ref{prop-counterflow-dichotomy} when $z$ is a double point for both $\eta_1$ and $\eta_2$. \textbf{Left:} Here $u\neq v$ and the curve $\beta_{v,2}^L$ hits $\beta_{u,1}^L$  on its left-hand side and so the two flow lines $\beta_{u,1}^L$ and $\beta_{v,2}^L$ cross each other at $z$. \textbf{Right:} Here $u\neq v$ and the curve $\beta_{v,2}^L$ hits $\beta_{u,1}^L$  on its right-hand side and so the two flow lines $\beta_{u,1}^L$ and $\beta_{v,2}^L$ do not cross each other at $z$.
				}
			\end{center}
			\vspace{-3ex}
		\end{figure} 
		
		\medskip
	
		\underline{Case 1.} We  begin with the analysis of Case 1. There are two subcases (recall \cref{lem:flow_lines_crossing}):
		\begin{itemize}
			\item If $u\neq v$ and the curve $\beta_{v,2}^L$ hits $\beta_{u,1}^L$  on its left-hand side, then the curve $\beta_{v,2}^L$ hits $\beta_{u,1}^L$ exactly once at $z$, at which point it crosses $\beta_{u,1}^L$ (\cref{lem:flow_lines_crossing}). See the left-hand side of \cref{fig-int-double-points}. Fix $\ep>0$ such that $t_1+\ep<\tau_{u,1}$ and $s_1+\ep<\tau_{v,2}$. By \cref{lem:SLE_hitting double points} there exists $\delta_1>0$ and $\delta_2>0$ such that 
			\begin{equation*}
				B_{\delta_1}(z) \cap \eta_1([0,\tau_{u,1}])\subset \eta_1([t_1-\ep,t_1+\ep]) \quad\text{and}\quad B_{\delta_2}(z) \cap \eta_2([0,\tau_{v,2}])\subset \eta_2([s_1-\ep,s_1+\ep]).
			\end{equation*}
			Since the curve $\beta_{u,1}^L$ crosses $\beta_{v,2}^L$ when it hits $z$, we can conclude that 
			$$B_{\delta_1}(z) \cap \eta_1([0,\tau_{u,1}]) \cap B_{\delta_2}(z) \cap \eta_2([0,\tau_{v,2}])$$ 
			contains an open non-empty set, and so also $\eta_1([t_1-\ep,t_1+\ep])\cap \eta_2([s_1-\ep,s_1+\ep])$ contains an open non-empty set. Hence $\perm([t_1-\ep,t_1+\ep]\times[s_1-\ep,s_1])>0$. Since $\ep>0$ is arbitrary, we conclude that $(t_1,s_1)\in\op{supp} \perm$. Similar arguments also show  that $(t_1,s_2)\in\op{supp} \perm$, $(t_2,s_1)\in\op{supp} \perm$, and $(t_2,s_2)\in\op{supp} \perm$.
			
			\item If $u\neq v$ and the curve $\beta_{v,2}^L$ hits $\beta_{u,1}^L$  on its right-hand side, or if $u=v$, then  $\beta_{u,1}^L$ and $\beta_{v,2}^L$ do not cross each other (\cref{lem:flow_lines_crossing}). We focus on the case when $u\neq v$, the other case is identical. See the right-hand side of \cref{fig-int-double-points}. Fix $\ep>0$ such that $t_1+\ep<\tau_{u,1}$ and $s_1+\ep<\tau_{v,2}$. As before, by \cref{lem:SLE_hitting double points} there exists $\delta_1>0$ and $\delta_2>0$ such that 
			\begin{equation}\label{eq:first_incl}
				B_{\delta_1}(z) \cap \eta_1([0,\tau_{u,1}])\subset \eta_1([t_1-\ep,t_1+\ep])
			\end{equation} 
			and 
			\begin{equation}\label{eq:second_incl}
				B_{\delta_2}(z) \cap \eta_2([0,\tau_{v,2}])\subset \eta_2([s_1-\ep,s_1+\ep]).
			\end{equation}
			Note that if $\delta_1\leq\delta_2$ then 
			\begin{multline}\label{eq:third_incl}
				B_{\delta_1}(z) \cap \eta_1([0,\tau_{u,1}])
				\stackrel{\eqref{eq:first_incl}}{\subset}
				B_{\delta_1}(z) \cap \eta_1([t_1-\ep,t_1+\ep])\\
				\stackrel{\delta_1\leq\delta_2}{\subset}
				B_{\delta_2}(z) \cap \eta_1([t_1-\ep,t_1+\ep])
				\subset
				B_{\delta_2}(z) \cap \eta_2([0,\tau_{v,2}])
				\stackrel{\eqref{eq:second_incl}}{\subset}
				\eta_2([s_1-\ep,s_1+\ep])
			\end{multline}
			where for the second-to-last inclusion we used the fact that $	B_{\delta_2}(z)\cap\eta_1([t_1-\ep,t_1+\ep])\subset \eta_2([0,\tau_{v,2}])$ since $\beta_{v,2}^L$ hits $\beta_{u,1}^L$ on its right-hand side.
			Since $B_{\delta_1}(z) \cap \eta_1([0,\tau_{u,1}])$ contains an open non-empty set, also $\eta_1([t_1-\ep,t_1+\ep])\cap \eta_2([s_1-\ep,s_1+\ep])$ contains an open non-empty set thanks to \eqref{eq:first_incl} and \eqref{eq:third_incl}. Hence $\perm([t-\ep,t+\ep]\times[s_1-\ep,s_1])>0$. Since $\ep>0$ is arbitrary, we conclude that $(t_1,s_1)\in \op{supp} \perm$.
			
			Otherwise, if $\delta_2<\delta_1$, with similar arguments we get that 
			\begin{align*}
				B_{\delta_2}(z) \cap \eta_1([0,\tau_{u,1}])&\subset B_{\delta_1}(z) \cap \eta_1([0,\tau_{u,1}])\stackrel{\eqref{eq:first_incl}}{\subset} \eta_1([t-\ep,t+\ep])\\
				B_{\delta_2}(z) \cap \eta_1([0,\tau_{u,1}])&\subset B_{\delta_2}(z) \cap \eta_2([0,\tau_{v,2}])\stackrel{\eqref{eq:second_incl}}{\subset}
				\eta_2([\psi_-(t)-\ep,\psi_-(t)+\ep]).
			\end{align*}
			Since $B_{\delta_2}(z) \cap \eta_1([0,\tau_{u,1}])$ contains an open non-empty set, we can deduce from the last displayed equation that also $\eta_1([t-\ep,t+\ep])\cap \eta_2([\psi_-(t)-\ep,\psi_-(t)+\ep])$ contains an open non-empty set. As before, we conclude that $(t,\psi_-(t))\in \op{supp} \perm$. Similar arguments show also that  $(t_2,s_1)\in\op{supp} \perm$, and $(t_2,s_2)\in\op{supp} \perm$ (but note that $(t_1,s_2)\notin\op{supp} \perm$ as proved in \cref{lem-counterflow-bad}).
		\end{itemize}
	
		The proof of the remaining three cases is quite similar (and skipped for brevity), the only difference being in which point among the four points $(t_1,s_1)$, $(t_1,s_2)$, $(t_2,s_1)$, $(t_2,s_2)$ is excluded from the support in the second subcase above.
		This completes the proof of Assertion~\eqref{item-counterflow-good2}.
\end{proof}


\subsubsection{Preliminary results when $\kappa\in (4,8)$: the structure of $m$-tuple points for one space-filling SLE}\label{sect:m-tuples}

In this section and the next, we will develop a series of preliminary results that are essential for the proof of \cref{prop-counterflow-dichotomy2}. As a reminder, \cref{prop-counterflow-dichotomy2} addresses the case when $\kappa\in (4,8)$, which is the parameter range where space-filling SLEs can exhibit $m$-tuple points for $m\geq 3$.\footnote{For the reader's convenience, we recall that by definition, we do not consider a merge point to be a 3-tuple point, and vice versa.}

The proof of \cref{prop-counterflow-dichotomy2} is similar in spirit to the one of \cref{prop-counterflow-dichotomy}, but there is one additional complication: we need to understand how $m$-tuple points for one space-filling SLE $\eta_1$ are visited by the second space-filling SLE $\eta_2$ of a different angle. Therefore, in this section, we will start by describing the behavior of $\eta_1$ around one of its $m$-tuple points. Then, in  \cref{sect:eight-flow}, we will describe the behavior of $\eta_2$ around the same $m$-tuple point, and finally, in \cref{sect:proof-remaining}, we will complete the proof of \cref{prop-counterflow-dichotomy2}.

\medskip

The next lemma gives a precise description of the structure of $m$-tuple points of a whole-plane space-filling SLE$_\kappa$ when $\kappa\in (4,8)$. It is a rephrasing of the results stated and proved in~\cite[Lemma 8.13]{wedges} (see also ~\cite[Figure 8.1]{wedges}). We invite the reader to compare the statement of the next lemma with \cref{fig-m-tuple-points}. 

\begin{lem}\label{lem:m-tuple-points}
	Fix $\kappa\in (4,8)$ and let $\eta$ be a whole-plane space-filling SLE$_\kappa$. Fix $m\geq 2$ and let $z$ be a $m$-tuple point of $\eta$. Then the following results are almost surely true. 
	
	There exist $u,v \in \BB C$, $*,\circ\in\{L,R\}$ with $\{*,\circ\}=\{L,R\}$, and $m_u^*, m_v^*\in\BB Z_{\geq 1}$ such that
	\begin{enumerate}
		\item The flow line $\beta_{u}^*$ hits the point $z$ $m_u^*$ times. 
		\item The flow line $\beta_{u}^{\circ}$ hits $z$ $m_u^\circ$ times, with $m_u^\circ\in \{m_u^*,m_u^*-1\}$.
		\item The flow line $\beta_{v}^*$ hits the point $z$ $m_v^*$ times.
		\item The flow line $\beta_{v}^{\circ}$ hits $z$ $m_v^\circ$ times, with $m_v^\circ\in \{m_v^*,m_v^*-1\}$.
		\item $m_u^*+m_u^\circ+m_v^*+m_v^\circ=m$.
		\item The flow lines $\beta_{u}^*$ and $\beta_{v}^*$ merge before hitting $z$ for the last time, but after hitting $z$ for the second-to-last time. The curve $\beta_u^*\cap \beta_v^*$ after this merge point is called \textbf{king strand}.
		\item The flow lines $\beta_{u}^{\circ}$ and $\beta_{v}^{\circ}$ merge together after hitting $z$ for the last time.
	\end{enumerate}
	We now restrict ourselves to the case\footnote{Note that a similar statement holds for all the other cases that we omitted for brevity; see for instance the right-hand side of \cref{fig-m-tuple-points}.} that $*=R$ and $m_u^\circ=m_u^*-1$ and $m_v^\circ=m_v^*-1$, as on the left-hand side of \cref{fig-m-tuple-points}. We describe how $\eta$ fills the $m+1$ regions determined by $\beta_{u}^{*},\beta_{u}^{\circ},\beta_{v}^{*},\beta_{v}^{\circ}$.
	The curve $\eta$ fills each region before moving to the next one (and so never returns to the interior of an already filled region). Moreover, $\eta$ fills the $m+1$ regions in the order indicated\footnote{Note that \cref{fig-m-tuple-points} considers the case $*=R$. If $*=L$ then $\eta$ fills the $m+1$ regions in exact opposite order with respect to the one indicated in \cref{fig-m-tuple-points}.} in \cref{fig-m-tuple-points}, that is, $\eta$ first fills the region to the right of the king strand (with respect to the natural orientation of the king strand), then --- after hitting $z$ for the first time --- $\eta$ fills the only other region that contains the continuation of the two flow lines on the boundary of the previously filled region. The curve $\eta$ continues to fill the subsequent  regions iterating the same rule, until it reaches a region containing $u$ on its boundary. After filling this region\footnote{Note that $\eta$ hits $z$ one more time while filling this region, because $z$ is on the boundary of this region.}, $\eta$ hits $u$ and then fills the only other region containing $u$ on the boundary. Then $\eta$ continues to fill the 	subsequent regions iterating the same rule (with the same rule used for $u$ applied to the two regions containing $v$ on their boundaries). The last region filled by $\eta$ is the one on the left of the king strand.

	We denote by $U(x)$ the $x$th region filled by $\eta$, for all $x\in[m+1]:=[1,m+1]\cap\BB Z$.  As a consequence of the description in the last paragraph, $\eta$ hits $z$ exactly one time while traversing two consecutive regions, except when $\eta$ is moving between the regions $U(m^*_u)$ and $U(m^*_u+1)$ --- in this case $\eta$ hits $u$ --- and between the regions $ U(m^*_u+m^\circ_u+m^\circ_v+1)$ and $U(m^*_u+m^\circ_u+m^\circ_v+2)$ --- in this case $\eta$ hits $v$. Moreover, $\eta$ hits $z$ exactly two more times, called the \textbf{exceptional hitting times}: once when it is filling the region $U(m^*_u)$ and once when is filling the region $U(m_u^*+m_u^\circ+m^\circ_v+2)$.
	\begin{figure}[ht!]
		\begin{center}
			\includegraphics[scale=.55]{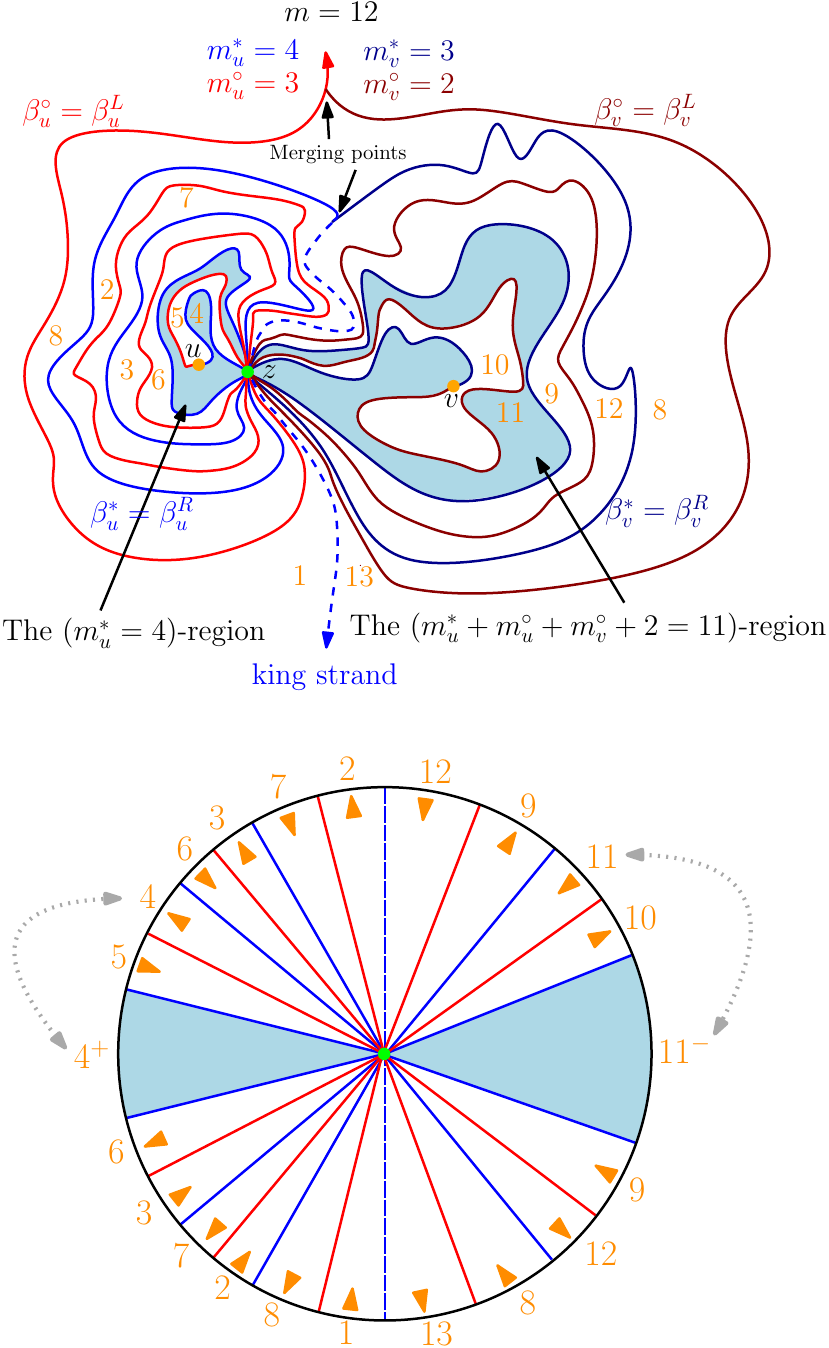}  
			\hspace{0.7cm}
			\includegraphics[scale=.55]{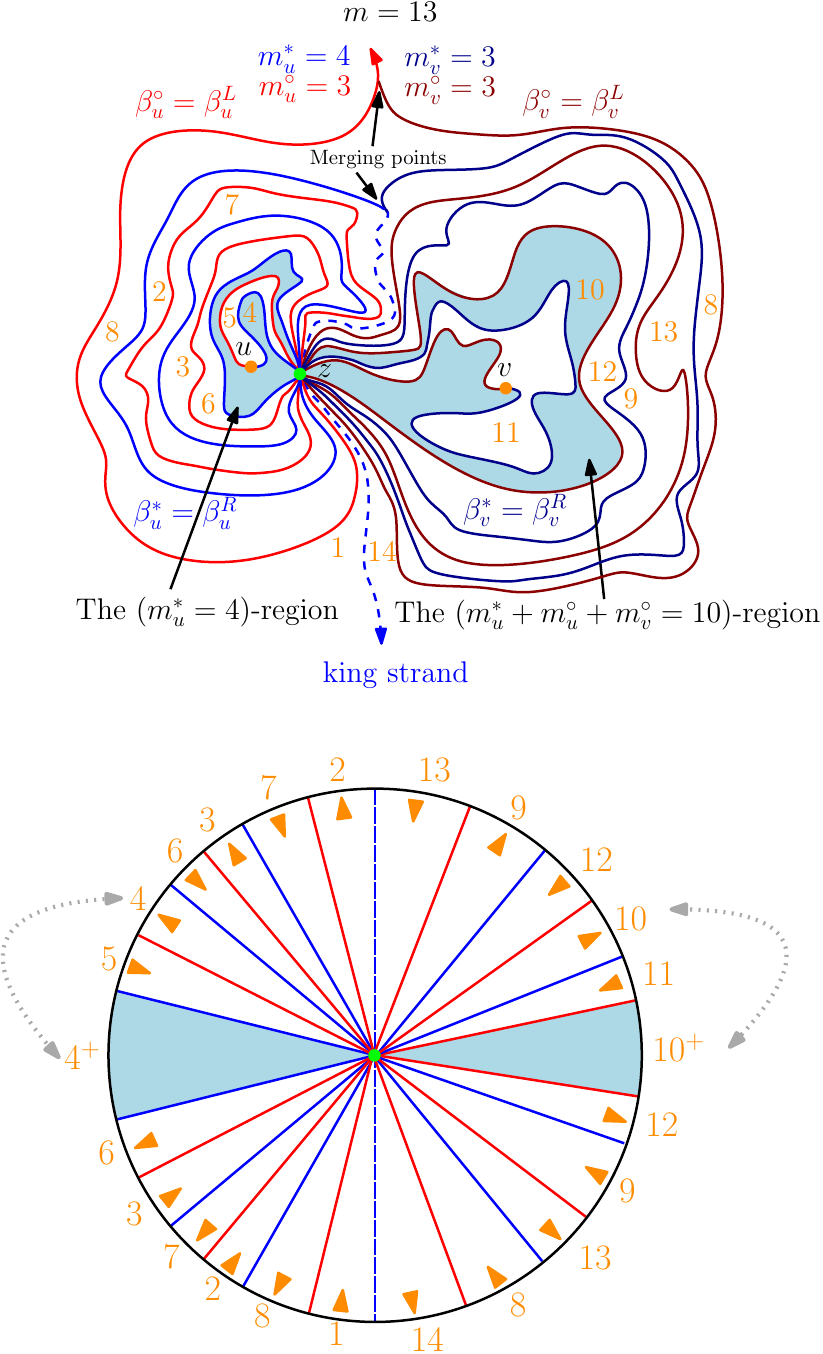}
			\caption{\label{fig-m-tuple-points} 
				\textbf{Top-Left:} The case of a $RL$-$m$-tuple point when $m^\circ_u=m^*_u-1$ and $m^\circ_v=m^*_v-1$. \textbf{Top-Right:} The case of a $RL$-$m$-tuple point when $m^\circ_u=m^*_u-1$ and $m^\circ_v=m^*_v$. \newline
				\textbf{Bottom left and right:} The local diagrams of $z$. Each diagram corresponds to the above picture. Each radius in the diagram records a piece of the flow lines locally around $z$, with respect to the order of appearance. The two radii corresponding to the king strand are dashed. The orange numbers keep track of the region numbers. The orange arrows record in which direction the SLE $\eta$ fills that region. In particular, we say that a segment is an inwards segment, if the arrow in the segment points in the direction of the middle green point, otherwise we will say that it is an outwards segment.\newline
				In each diagram, the two blue segments without arrows correspond to the instances when $\eta$ hits $z$ from the blue region without transitioning to a different region. These represent the two exceptional hitting times. Additionally, note that each blue segment has a corresponding partner segment in the diagram as indicated by the dotted double arrows. This partner segment corresponds to the moment when $\eta$ hits $z$ again from the blue region but immediately after switches to another region.
				We label the blue segments with an additional plus (resp.\ minus) if the partner segment is an outwards segment (resp.\ inwards segment). This will help in recalling which segment is hit first. \newline
				We remark that in both pictures at the top the various flow lines hit each other and themselves at many other points different than $z$, but we did not add this additional hitting points in the picture for simplicity. 
			}
		\end{center}
		\vspace{-3ex}
	\end{figure}
\end{lem}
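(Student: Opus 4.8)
The plan is to deduce the lemma from \cite[Lemma 8.13]{wedges} (see also \cite[Figure 8.1]{wedges}), which already contains essentially all of the content; what remains is to translate that result into the precise combinatorial form stated here and to read off the order in which $\eta$ fills the $m+1$ regions from the construction of $\eta$ as a space-filling counterflow line recalled in \cref{sec:SLE_as_flow}. By \cite[Lemma 8.13]{wedges}, almost surely every $m$-tuple point $z$ of $\eta$ arises with two rational points $u,v$ and a choice of flow line or dual flow line from each, so that the four resulting flow lines $\beta^L_u,\beta^R_u,\beta^L_v,\beta^R_v$ hit $z$ with the multiplicities in items (1)--(5) and merge as in items (6)--(7), the king strand being the common continuation of $\beta^*_u$ and $\beta^*_v$ past their merge point; this establishes items (1)--(7) and identifies the $m+1$ regions as the $m+1$ sectors around $z$ cut out by segments of these four flow lines. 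That $\eta$ fills each such sector as a single block and never re-enters the interior of a sector it has already filled is a general feature of the construction in \cref{sec:SLE_as_flow}; and since $\eta$ visits the points of $\BB Q^2$ in the order in which $a$ precedes $b$ iff $b$ lies in the component of $\BB C\setminus(\beta^L_a\cup\beta^R_a)$ to the right of $\beta^L_a$ and to the left of $\beta^R_a$, and since for $w$ in one of the $m+1$ sectors sufficiently close to $z$ the flow lines $\beta^L_w,\beta^R_w$ merge into the two flow-line segments bounding that sector, all such $w$ are consecutive in this order.

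Next I would pin down the filling order itself. Taking $*=R$ (the case $*=L$ is symmetric, with the order reversed), the king strand is a right boundary, so the sector immediately to its right is filled before the one immediately to its left; these are $U(1)$ and $U(m+1)$. For the remaining transitions I would argue that, upon finishing a region $U(i)$, the curve $\eta$ leaves $U(i)$ through the corner of $\partial U(i)$ that is last in the flow-line order --- this corner is $z$, unless the exit corner of $U(i)$ is $u$ (resp.\ $v$), which happens exactly once each --- and that $\eta$ then enters the unique not-yet-filled region sharing with $U(i)$ its two bounding flow lines continued past that corner. Iterating this rule, starting from $U(1)$, reproduces exactly the order claimed in the lemma, with the transition at $u$ occurring between $U(m^*_u)$ and $U(m^*_u+1)$ and the transition at $v$ between $U(m^*_u+m^\circ_u+m^\circ_v+1)$ and $U(m^*_u+m^\circ_u+m^\circ_v+2)$.

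Finally I would locate the two exceptional hitting times. The only regions whose boundary meets $z$ at two distinct corners --- the corner through which $\eta$ enters and a further corner that $\eta$ must reach while filling the region --- are $U(m^*_u)$ and $U(m^*_u+m^\circ_u+m^\circ_v+2)$, as one reads off from the local diagram at $z$ drawn in \cite[Figure 8.1]{wedges} and reproduced in \cref{fig-m-tuple-points}; so $\eta$ hits $z$ one extra time while filling each of these two regions and nowhere else, which together with the $m-2$ hits at the transitions between consecutive regions (every transition except the two at $u$ and $v$) accounts for all $m$ visits of $\eta$ to $z$. I expect the main obstacle to be precisely this bookkeeping, together with the verification of the ``last-corner'' transition rule of the previous paragraph at the points $u$ and $v$; but since the geometric picture is exactly the one analyzed in \cite[Lemma 8.13 and Figure 8.1]{wedges}, this amounts to a careful translation of known facts rather than to new input.
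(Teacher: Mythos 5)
Your proposal takes essentially the same route as the paper: the paper treats this lemma as a direct rephrasing of~\cite[Lemma 8.13]{wedges} (and its accompanying Figure 8.1), and does not give a separate proof beyond saying so in the sentence immediately preceding the lemma statement. Your write-up supplies more detail about how the filling order and the two exceptional hitting times are read off from that source together with the flow-line construction of space-filling SLE from \cref{sec:SLE_as_flow}, but the underlying input and logical structure are the same, and your bookkeeping ($m-2$ transition hits at $z$ plus $2$ exceptional hits giving $m$ total) checks out.
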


\begin{remark} \label{remark:RL}
	Note that when $*=R$ (resp. $*=L$), the curve $\eta$ first fills the regions to the right (resp. left) of the king strand (with respect to the natural orientation of the king strand) and then the ones to the left (resp.\ right). If this is the case, we say that $z$ is a \textbf{$RL$-$m$-tuple point} (resp.\ \textbf{$LR$-$m$-tuple point}).
\end{remark}

We will often record the local behavior of the flow lines around a $m$-tuple point $z$ using some schematic diagrams, called the \textbf{local diagram of $z$}, as explained at the bottom of \cref{fig-m-tuple-points}.

\begin{remark}
	We highlight an important difference between the case where $m^\circ_x=m^*_x$ and the case when $m^\circ_x=m^*_x-1$ in \cref{lem:m-tuple-points} (here $x=u$ or $x=v$). 
	
	Locally around a $m$-tuple point $z$, the flow lines $\beta^{*}$ and $\beta^{\circ}$ always alternate with exactly two exceptions (in correspondence to the two exceptional hitting times). These two exceptions are different in the $m^\circ_x=m^*_x$ and the $m^\circ_x=m^*_x-1$ case:
	\begin{itemize}
		\item when $m^\circ_x=m^*_x$, the exception consists of two consecutive $\beta^{*}_{x}$; 
		
		\item when $m^\circ_x=m^*_x-1$, the exception consists of two consecutive $\beta^{\circ}_{x}$. 
	\end{itemize}
	We invite the reader to compare the two situations in \cref{fig-m-tuple-points}. In each case, the two exceptions are in correspondence to the regions highlighted in blue.
\end{remark}

We finally record the following useful fact (compare the statement of the lemma with \cref{fig-hitting-multiple-points}) which is again a rephrasing of the results stated and proved in~\cite[Lemma 8.13]{wedges}.

\begin{lem}\label{lem:how_hits_points}
	Let $\tau^x_z$ be the time when $\eta$ hits $z$ while traversing the two consecutive regions $U(x)$ and $U(x+1)$, for some $x\in[m]$. Then, for every (small) $\ep>0$, 
	\begin{equation*}
		z \notin \overline{U(x)\setminus\eta([\tau^x_z-\ep,\tau^x_z])}
		\quad\text{and}\quad
		z \notin \overline{U(x+1)\setminus\eta([\tau^x_z,\tau^x_z+\ep])}. 
	\end{equation*}
\end{lem}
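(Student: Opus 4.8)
\emph{Overall approach.} The plan is to combine the description of how $\eta$ fills the regions $U(1),\dots,U(m+1)$ around the $m$-tuple point $z$ (\cref{lem:m-tuple-points}) with the strong Markov property of whole-plane space-filling SLE (\cref{lem-space-filling-law}) and the fact that a chordal space-filling SLE fills an open neighborhood of its starting point within any initial segment of positive length (\cref{lem-chordal-fill}). One of the two displayed inclusions is proved directly from these ingredients, and the other follows by the time-reversal symmetry of whole-plane space-filling SLE.

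\emph{Set-up.} By \cref{lem:m-tuple-points}, $\eta$ fills $\overline{U(y)}$ during a time interval $[s_y,t_y]$ with $t_y=s_{y+1}$, it never re-enters the interior of $U(y)$ after time $t_y$, and — for the value of $x$ in the statement, i.e.\ one of the indices for which the transition between $U(x)$ and $U(x+1)$ occurs at $z$ rather than at $u$ or $v$ — we have $\tau_z^x=t_x=s_{x+1}$ and $\eta(\tau_z^x)=z$. (For $x\in\{m_u^*,\,m_u^*+m_u^\circ+m_v^\circ+1\}$ the argument below applies verbatim with $u$ or $v$ in place of $z$.) Since $z$ is an $m$-tuple point, a.s.\ $\eta$ hits $z$ at finitely many times, each of which is a stopping time; in particular $\tau_z^x$ is a stopping time for $\eta$, so \cref{lem-space-filling-law} applies at $\tau_z^x$.

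\emph{Proof of the second inclusion.} I would show that there is $\delta>0$ with $B_\delta(z)\cap U(x+1)\subset\eta([\tau_z^x,\tau_z^x+\ep])$; this immediately yields $z\notin\overline{U(x+1)\setminus\eta([\tau_z^x,\tau_z^x+\ep])}$, since then $U(x+1)\setminus\eta([\tau_z^x,\tau_z^x+\ep])$ is disjoint from $B_\delta(z)$. To produce such a $\delta$: because $U(x+1)$ is filled only during $[\tau_z^x,t_{x+1}]$ and $\eta$ does not touch the interior of $U(x+1)$ before time $\tau_z^x$, the open connected set $U(x+1)$ lies inside a single connected component $W$ of $\BB C\setminus\eta((-\infty,\tau_z^x])$; and since $\eta$ moves into $U(x+1)$ immediately after time $\tau_z^x$, this $W$ is exactly the component that $\eta$ enters at time $\tau_z^x$, with $z=\eta(\tau_z^x)\in\partial W$. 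By \cref{lem-space-filling-law}, the portion of $\eta$ filling $\overline W$ is, conditionally on $\eta|_{(-\infty,\tau_z^x]}$, a chordal space-filling SLE$_\kappa$ in $W$ started from $z$ (or, in the one-sided case of the footnote of \cref{lem-space-filling-law}, a chordal space-filling SLE loop based at $z$, cf.\ \cite[Proposition A.3]{bg-lbm}). By \cref{lem-chordal-fill}, transported from $\BB H$ to $W$ via a conformal map and using the homeomorphic time change between the Lebesgue and $\mu_h$ parametrizations, such a curve fills an open neighborhood of its starting point $z$ within any initial segment of positive length; hence for the given $\ep$ there is $\delta>0$ with $B_\delta(z)\cap W\subset\eta([\tau_z^x,\tau_z^x+\ep])$ (if $\ep$ exceeds the total time needed to fill $W$, any $\delta$ works, since then $\eta([\tau_z^x,\tau_z^x+\ep])\supset\overline W$). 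As $U(x+1)\subset W$, this gives the claimed inclusion $B_\delta(z)\cap U(x+1)\subset\eta([\tau_z^x,\tau_z^x+\ep])$.

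\emph{The first inclusion, and the main obstacle.} The first inclusion $z\notin\overline{U(x)\setminus\eta([\tau_z^x-\ep,\tau_z^x])}$ would follow from the second by the time-reversal symmetry of whole-plane space-filling SLE (property $(v)$ of \cref{sect:SLES}): running $\eta$ backwards produces another whole-plane space-filling SLE$_\kappa$ for which $z$ is still an $m$-tuple point with the same regions $U(1),\dots,U(m+1)$, now filled in the opposite order, so that $\tau_z^x$ becomes the time of the ($z$-)transition from $U(x+1)$ to $U(x)$; applying the second inclusion to the time-reversed curve at that transition and changing the time variable back yields the first inclusion. The step I expect to require the most care is the identification used above between the ``static'' regions $U(y)$ of \cref{lem:m-tuple-points} and the ``dynamic'' complementary components of $\eta((-\infty,\tau_z^x])$ appearing in the strong Markov property — in particular, checking that $\eta$ has not entered the interior of $U(x+1)$ before time $\tau_z^x$, that $\tau_z^x$ is precisely the time $\eta$ first enters the component $W\supset U(x+1)$, and that $z=\eta(\tau_z^x)$ is the \emph{starting} point (not the terminal point) of the chordal SLE filling $W$ — together with the bookkeeping needed to cover the one-sided/loop case of \cref{lem-space-filling-law}. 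None of this is conceptually deep, but it is exactly where the structural input from \cite[Lemma 8.13]{wedges} (as packaged in \cref{lem:m-tuple-points}) is genuinely used.
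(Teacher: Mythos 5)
The paper does not prove this lemma; it records it as a rephrasing of~\cite[Lemma 8.13]{wedges}, the same source that underlies \cref{lem:m-tuple-points}, and moves on. Your argument is therefore a genuinely different route: a derivation from the strong Markov property of whole-plane space-filling SLE (\cref{lem-space-filling-law}) and the chordal filling lemma (\cref{lem-chordal-fill}), on top of the structural input of \cref{lem:m-tuple-points}.

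The genuine gap is the assertion that $\tau_z^x$ is a stopping time for $\eta$. For a \emph{fixed} $z\in\BB C$, each hitting time of $z$ is indeed a stopping time for the filtration $(\sigma(\eta|_{(-\infty,t]}))_{t}$, but for a fixed $z$ the event that $z$ is an $m$-tuple point has probability zero. The lemma must hold a.s.\ simultaneously for the (uncountable, when $\kappa\in(4,8)$) collection of $m$-tuple points, and such a point $z$ together with the ordinal $x$ is identified only via the flow lines emanating from the rational points $u,v$ (items $(vi)$--$(viii)$ of \cref{sect:SLES}), which are not measurable with respect to $\eta|_{(-\infty,\tau_z^x]}$. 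So $\tau_z^x$ is not a stopping time in the sense needed by \cref{lem-space-filling-law}, whose proof hinges on approximating a stopping time by stopping times taking countably many values; without this, the strong Markov step, and the chordal filling step it enables, are unjustified. It is instructive that the paper's own argument for the analogous double-point statement, \cref{lem:SLE_hitting double points}, uses no Markov property at all, instead deriving a contradiction with the multiplicity count via a purely topological argument --- a style of reasoning that survives quantification over uncountably many random multiple points, and presumably the flavour of the cited~\cite[Lemma 8.13]{wedges}. To repair your proof you would either need to replace the Markov step with a topological argument of this type, or to establish a Markov-type statement conditional on the full flow-line geometry from $u,v$ (rather than conditional on the stopped curve $\eta|_{(-\infty,\tau_z^x]}$) and then take a countable union over the rational pairs $(u,v)$.
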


\begin{figure}[ht!]
	\begin{center}
		\includegraphics[width=0.8\textwidth]{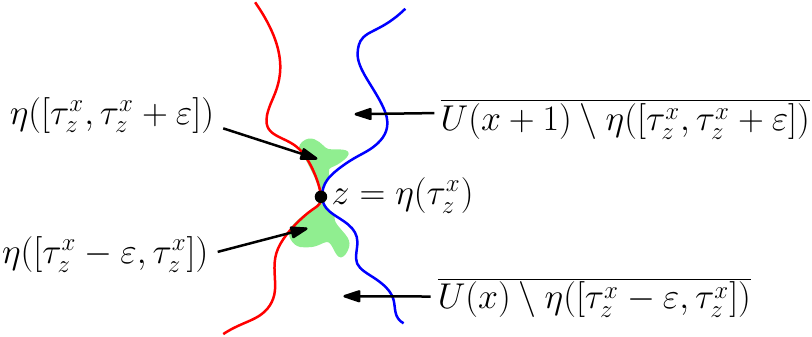}  
		\caption{\label{fig-hitting-multiple-points} A schema for the statement of \cref{lem:how_hits_points}. The point $z$ is a $m$-tuple point of $\eta$. The curve $\eta$ first fills the region $U(x)$ below $z$ included between the two flow lines and then the region $U(x+1)$ above $z$ included between the two flow lines. The green region $\eta([\tau^x_z-\ep,\tau^x_z+\ep])$ filled by $\eta$ immediately before and after hitting $z$ disconnects $z$ from $\overline{U(x)\setminus\eta([\tau^x_z-\ep,\tau^x_z])}$ and $\overline{U(x+1)\setminus\eta([\tau^x_z,\tau^x_z+\ep])}$.
		}
	\end{center}
	\vspace{-3ex}
\end{figure}

\subsubsection{Preliminary results when $\kappa\in (4,8)$: the structure of $m$-tuple points for a pair of space-filling SLEs of different angle}\label{sect:eight-flow}

We fix $z$ to be a $m_1$-tuple point of $\eta_1$. We will see that $z$ must then be a $m_2$-tuple point of $\eta_2$ for some $m_2\in\{m_1-1,m_1,m_1+1\}$.  

Recall that \cref{lem:m-tuple-points} precisely describes the behavior of $\eta_1$ around $z$. 
Let $u,v \in \BB C$ be as in the statement of \cref{lem:m-tuple-points}. We now want to describe the behavior of $\eta_2$ around $z$. 
To do that, we describe the behavior of the four flow lines $\beta^L_{u,2}$, $\beta^R_{u,2}$, $\beta^L_{v,2}$, and $\beta^R_{v,2}$ locally around $z$.

We restrict to the case when $z$ is a $RL$-$m_1$-tuple point of $\eta_1$ (recall Remark~\ref{remark:RL}) and we also assume that\footnote{Note that here we write $m_{u,1}^L$ instead of $m_{u}^L$ (as in the statement of \cref{lem:m-tuple-points}) because we are now dealing with two SLEs and so we need to distinguish the notation for the two SLEs. Similar adaptations to the notation are made for the other quantities.} $m_{u,1}^L=m_{u,1}^R-1$ and $m_{v,1}^L=m_{v,1}^R-1$. All remaining cases are treated in a similar manner and have been skipped for brevity.

\begin{figure}[ht!]
	\begin{center}
		\includegraphics[width=.9\textwidth]{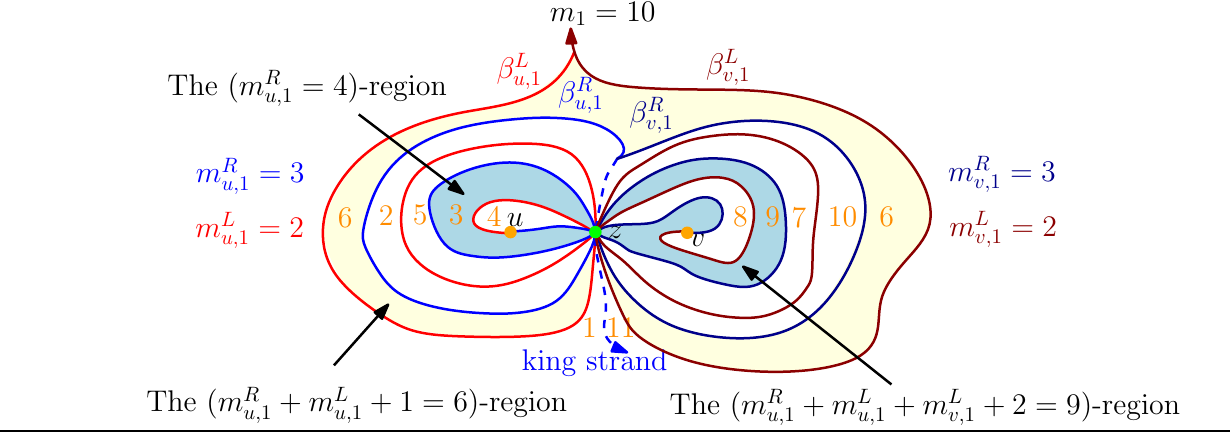}
		\includegraphics[scale=.6]{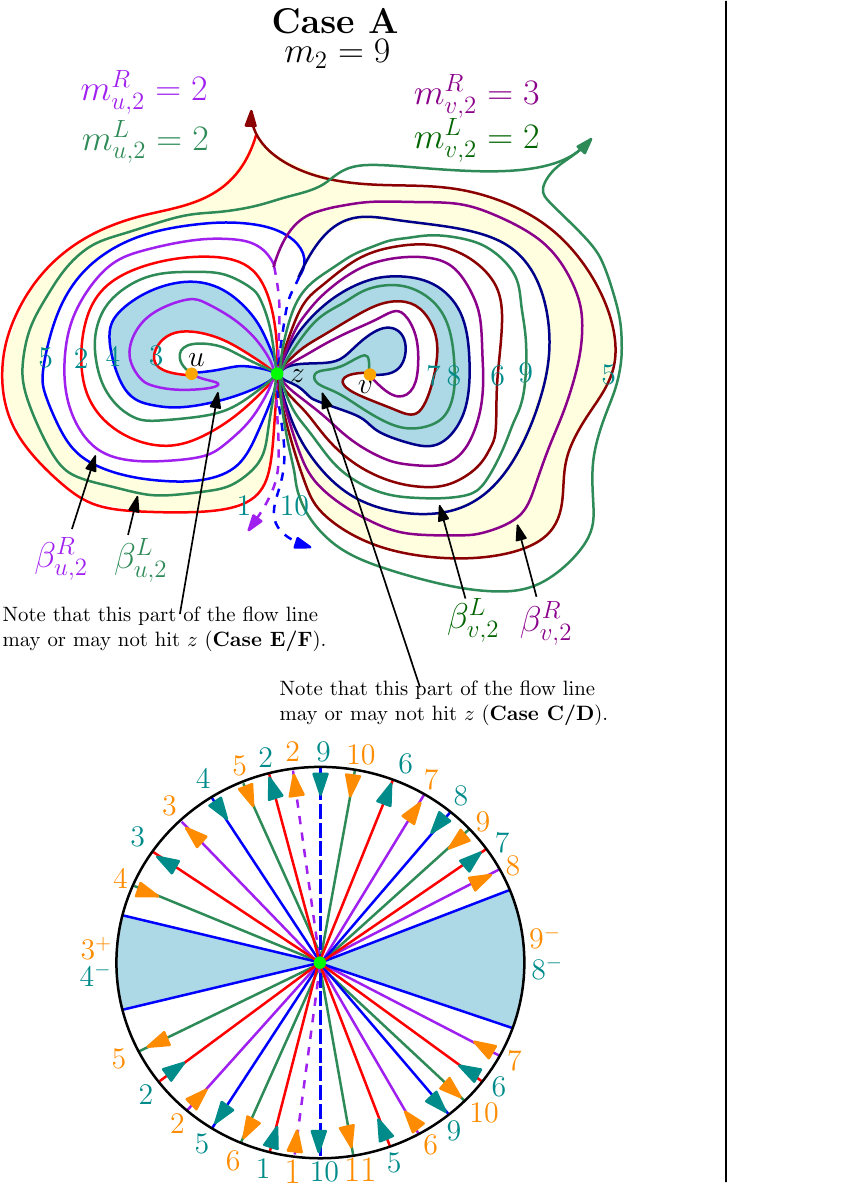}   
		\includegraphics[scale=.6]{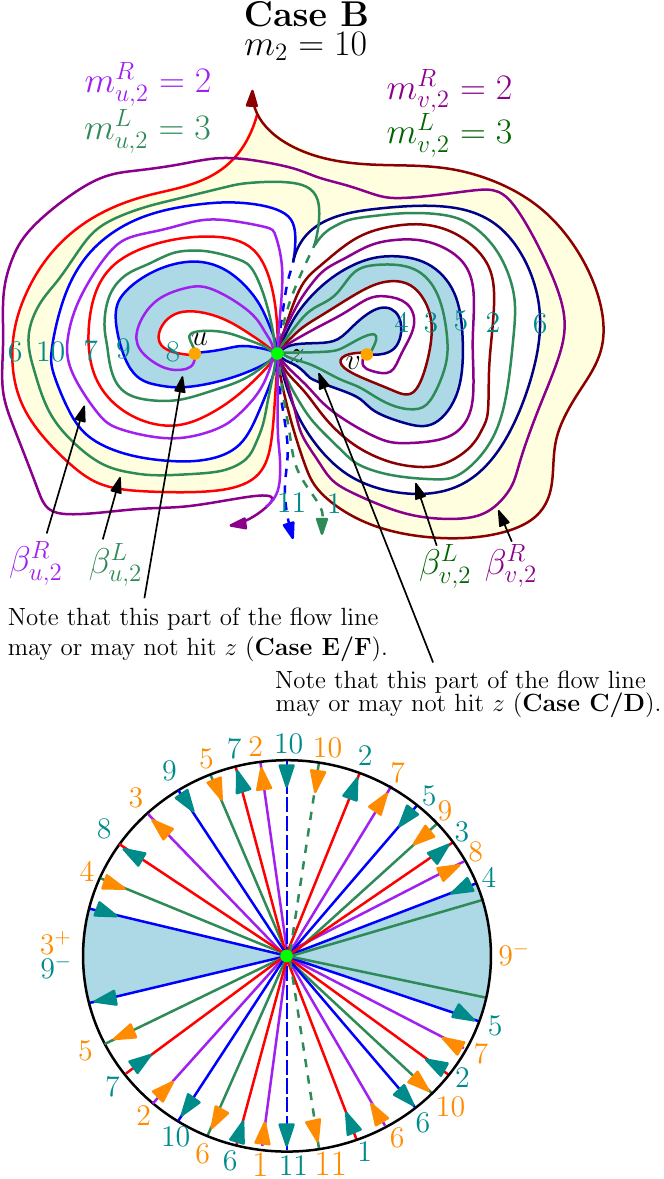} 
		\caption{\label{fig-m-tuple-points-double-flow} 
		A schema for the four steps at the beginning of \cref{sect:eight-flow}.
		\textbf{Top:} We show the local behavior of the flow lines $\beta^L_{u,1}$, $\beta^R_{u,1}$, $\beta^L_{v,1}$, and $\beta^R_{v,1}$ around $z$ when $z$ is a $RL$-$10$-tuple point of $\eta_1$. The 11 regions determined by the four flow lines are numbered in orange according to the order in which they are filled by the space-filling whole-plane SLE $\eta_1$. Some specific regions that play an important role in the four steps are highlighted in colors. \textbf{Middle:} The same scheme as on top with the addition of the four flow lines $\beta^L_{u,2}$, $\beta^R_{u,2}$, $\beta^L_{v,2}$ and $\beta^R_{v,2}$; on the left we drawn the behavior of Case A while on the right we drawn the behavior of Case B. The cyan numbers record the order in which $\eta_2$ fills the regions determined by the four flow lines $\beta^L_{u,2}$, $\beta^R_{u,2}$, $\beta^L_{v,2}$, and $\beta^R_{v,2}$. \textbf{Bottom:} The corresponding local diagrams including all eight flow lines. Colors are used in a consistent way.
		}
	\end{center}
	\vspace{-3ex}
\end{figure}


Under these assumptions, due to \cref{lem:m-tuple-points}, we know that the local behavior of the flow lines $\beta^L_{u,1}$, $\beta^R_{u,1}$, $\beta^L_{v,1}$, and $\beta^R_{v,1}$ around $z$ is as shown at the top of \cref{fig-m-tuple-points-double-flow}. We now describe, one by one, the behavior of the four flow lines $\beta^L_{u,2}$, $\beta^R_{u,2}$, $\beta^L_{v,2}$, and $\beta^R_{v,2}$ locally around $z$:
	

\medskip
	
	\noindent\underline{Step 1: The flow line $\beta^L_{u,2}$.} We start by describing the behavior of the flow line $\beta^L_{u,2}$ (this is the green curve started at $u$ in the middle of \cref{fig-m-tuple-points-double-flow}; both in Case A and Case B). $\beta^L_{u,2}$ starts at $u$ and grows inside the region $U(m_{u,1}^R+1)$, i.e.\ inside the region filled by $\eta_1$ immediately after the blue region $U(m_{u,1}^R)$. Recalling that $\beta^L_{u,2}$ cannot cross the flow lines $\beta^L_{u,1}$ and $\beta^R_{u,1}$ (\cref{lem:flow_lines_crossing}), we deduce that $\beta^L_{u,2}$ is forced to travel inside this region until when it is forced to leave forever the region through $z$ (this is the only point where $\beta^L_{u,2}$ can escape the region $U(m_{u,1}^R+1)$). Note that this is the first time that $\beta^L_{u,2}$ hits $z$. Indeed, $\beta^L_{u,2}$ cannot hit $z$ twice without spiraling around its starting point in between the two hitting times. If it did, $z$ would become an $m$-tuple point for $\eta_2$ (since it cannot be a merge point thanks to \cref{lem:triple_are_simple}) and such behavior of $\eta_2$ around an $m$-tuple point is prohibited by \cref{lem:m-tuple-points}.
	The same argument is then iterated for all the subsequent regions until when $\beta^L_{u,2}$ enters the region $U(m^R_u+m^L_u+1)$, i.e.\ the only region having all the four flow lines $\beta^R_{u,1}$, $\beta^L_{u,1}$, $\beta^L_{v,1}$, and $\beta^R_{v,1}$ appearing on the boundary (this region is highlighted in yellow in \cref{fig-m-tuple-points-double-flow}). Inside this region the flow line $\beta^L_{u,2}$ has two options:
	
	\begin{itemize}
		\item \textbf{Case A:} (See the green curve $\beta^L_{u,2}$ on the left-hand side of \cref{fig-m-tuple-points-double-flow}.) 
		The flow line $\beta^L_{u,2}$ leaves the region $U(m^R_u+m^L_u+1)$ by crossing $\beta_{v,1}^L$ (this is possible since $\beta_{u,2}^L$ hits  $\beta_{v,1}^L$ on its left-hand side, \emph{c.f.}\ \cref{lem:flow_lines_crossing}). Since $\beta_{u,2}^L$ can cross $\beta_{v,1}^L$ exactly once (and $\beta_{u,2}^L$ cannot cross $\beta^L_{u,1}$), the flow line $\beta_{u,2}^L$, after crossing $\beta_{v,1}^L$, goes to $\infty$ and never comes back to $z$. 
		
		\item \textbf{Case B:} (See the green curve $\beta^L_{u,2}$ on the right-hand side of \cref{fig-m-tuple-points-double-flow}.) The flow line $\beta^L_{u,2}$ leaves the region $U(m^R_u+m^L_u+1)$ by crossing $\beta_{v,1}^R$ (this is possible since $\beta_{v,1}^R$ hits $\beta_{u,2}^L$ on its left-hand side) and so enters the (second-to-last) region $U(m_1)=U(m^R_{u,1}+m^L_{u,1}+m^L_{v,1}+m^R_{v,1})$. Since $\beta_{u,2}^L$ can cross $\beta_{v,1}^L$ exactly once (and $\beta_{u,2}^L$ cannot cross $\beta^L_{v,1}$), the flow line $\beta_{u,2}^L$, after crossing $\beta_{v,1}^R$, is forced to stay inside the region $U(m_1)$ before leaving it through $z$. After that, $\beta_{u,2}^L$ goes to $\infty$ and never comes back to $z$.
	\end{itemize}

	\noindent\underline{Step 2: The flow line $\beta^L_{v,2}$.} We now focus our attention on the flow line $\beta^L_{v,2}$ (this is the green curve started at $v$ in the middle of \cref{fig-m-tuple-points-double-flow}; both in Case A and Case B). $\beta^L_{v,2}$ starts at $v$ and it grows inside the region $U(m^R_{u,1}+m^L_{u,1}+m^L_{v,1}+2)$ (this is the right blue region in the pictures in \cref{fig-m-tuple-points-double-flow}). Inside this region, the flow line $\beta^L_{v,2}$ has two options (note that $z$ is contained twice on the boundary of this region, once along the flow line $\beta_{v,1}^R$ and once at the merge point of the flow lines $\beta_{v,1}^R$ and $\beta^L_{v,1}$):
	\begin{itemize}
		\item \textbf{Case C:} 
		The flow line $\beta^L_{v,2}$ hits $z$ twice. 
		
		In this case, $\beta^L_{v,2}$ first hits $z$ (along the flow line $\beta_{v,1}^R$) and then hits $z$ a second time when leaving the region $U(m^R_{u,1}+m^L_{u,1}+m^L_{v,1}+2)$.
		
		\item \textbf{Case D:} 
		The flow line $\beta^L_{v,2}$ hits $z$ only once, exactly before leaving the region $U(m^R_{u,1}+m^L_{u,1}+m^L_{v,1}+2)$. 
	\end{itemize}
	In both cases, after leaving the region $U(m^R_{u,1}+m^L_{u,1}+m^L_{v,1}+2)$, the flow line $\beta^L_{v,2}$ continues to spiral around all the regions between the region $U(m^R_{u,1}+m^L_{u,1}+m^L_{v,1}+2)$ and the region $U(m_1)=U(m^R_{u,1}+m^L_{u,1}+m^L_{v,1}+m^R_{v,1})$ (because the flow line $\beta^L_{v,2}$ is forced to stay between the flow lines $\beta^R_{v,1}$ and $\beta^L_{v,1}$), hitting $z$ exactly once each time it changes region. Once $\beta^L_{v,2}$ reaches the region $U(m_1)$:
	\begin{itemize}
		\item if Case A holds, $\beta^L_{v,2}$ leaves the region $U(m_1)$ through $z$ and then merges with $\beta^L_{u,2}$ (see the left-hand side of \cref{fig-m-tuple-points-double-flow}).
		
		\item if Case B holds, $\beta^L_{v,2}$ merges with $\beta^L_{u,2}$ forming the king strand for $\eta_2$ (see the right-hand side of \cref{fig-m-tuple-points-double-flow}).
	\end{itemize} 

	\noindent\underline{Step 3: The flow line $\beta^R_{u,2}$.} We now analyze the behavior of the flow line $\beta^R_{u,2}$ (this is the purple curve started at $u$ in the middle of \cref{fig-m-tuple-points-double-flow}; both in Case A and Case B). $\beta^R_{u,2}$ starts at $u$ and it grows inside the region $U(m^R_{u,1})$ (this is the left blue region on the top of \cref{fig-m-tuple-points-double-flow}). Similarly as before, inside this region the flow line $\beta^R_{u,2}$ has two options:
	
	\begin{itemize}
		\item \textbf{Case E:} 
		The flow line $\beta^R_{u,2}$ hits $z$ twice. 
		
		\item \textbf{Case F:} 
		The flow line $\beta^R_{u,2}$ hits $z$ only once, exactly when it leaves the region $U(m^R_{u,1})$. 
	\end{itemize}
	After leaving the region $U(m^R_{u,1})$, as usual, the flow line $\beta^R_{u,2}$ spirals around (in reverse order) between the subsequent regions until it reaches the region $U(1)$. Then it goes to $\infty$ and never returns to $z$.
	
	\medskip
	
	\noindent\underline{Step 4: The flow line $\beta^R_{v,2}$.} It only remains to determine the behavior of the flow line $\beta^R_{v,2}$ (this is the purple curve started at $v$ in the middle of \cref{fig-m-tuple-points-double-flow}; both in Case A and Case B). $\beta^R_{v,2}$ starts at $v$ and spirals around (in reversed order) between the region $U(m^R_{u,1}+m^L_{u,1}+m^L_{v,1}+1)$ and the region $U(m^R_{u,1}+m^L_{u,1}+1)$, once it reaches this final region (this region is highlighted in yellow in the top of \cref{fig-m-tuple-points-double-flow}),
	
	\begin{itemize}
		\item if we are in Case A, $\beta^R_{v,2}$ first crosses $\beta^R_{u,1}$ and then merges with $\beta^R_{u,2}$ forming the king strand diagram (see the left-hand side of \cref{fig-m-tuple-points-double-flow}).
		
		\item if we are in Case B, $\beta^R_{v,2}$ first crosses $\beta^L_{u,1}$ and then merges with $\beta^R_{u,2}$ (see the right-hand side of \cref{fig-m-tuple-points-double-flow}).
	\end{itemize} 
	This completes the description of the behavior of the four flow lines $\beta^L_{u,2}$, $\beta^R_{u,2}$, $\beta^L_{v,2}$, and $\beta^R_{v,2}$.

\bigskip

We now restrict our attention to the situation in which the three cases A, D, and F hold (as in the left-hand side of \cref{fig-m-tuple-points-double-flow}). A consequence of the above discussion is that, including the new regions determined by the four flow lines $\beta^L_{u,2}$, $\beta^R_{u,2}$, $\beta^L_{v,2}$ and $\beta^R_{v,2}$ (labeled in the same order as $\eta_2$ fills them) to the local diagram of $z$ introduced in \cref{fig-m-tuple-points}, we obtain a new local diagram of $z$ as the one at the top of \cref{fig-points-in-support-even-1} (see also the bottom part of \cref{fig-m-tuple-points-double-flow}).


Let now $t^1_1,t^1_2,\dots,t^1_{m_1}$ be the $m_1$ times when $\eta_1$ hits $z$ and $t^2_1,t^2_2,\dots,t^2_{m_2}$ be the $m_2$ times when $\eta_2$ hits $z$. The next lemma clarifies which points $(t^1_i,t^2_j)$ are contained in the support of the permuton $\perm$. (\emph{Alert:} In the next lemma we write for completeness exactly what points $(t^1_i,t^2_j)$ are contained in the support of the permuton $\perm$, but what will be more important to us is the relative position between these points; see, in particular, the blue curve connecting these points at the bottom of \cref{fig-points-in-support-even-1}.)

\begin{figure}[ht!]
	\begin{center}
		\includegraphics[width=0.6\textwidth]{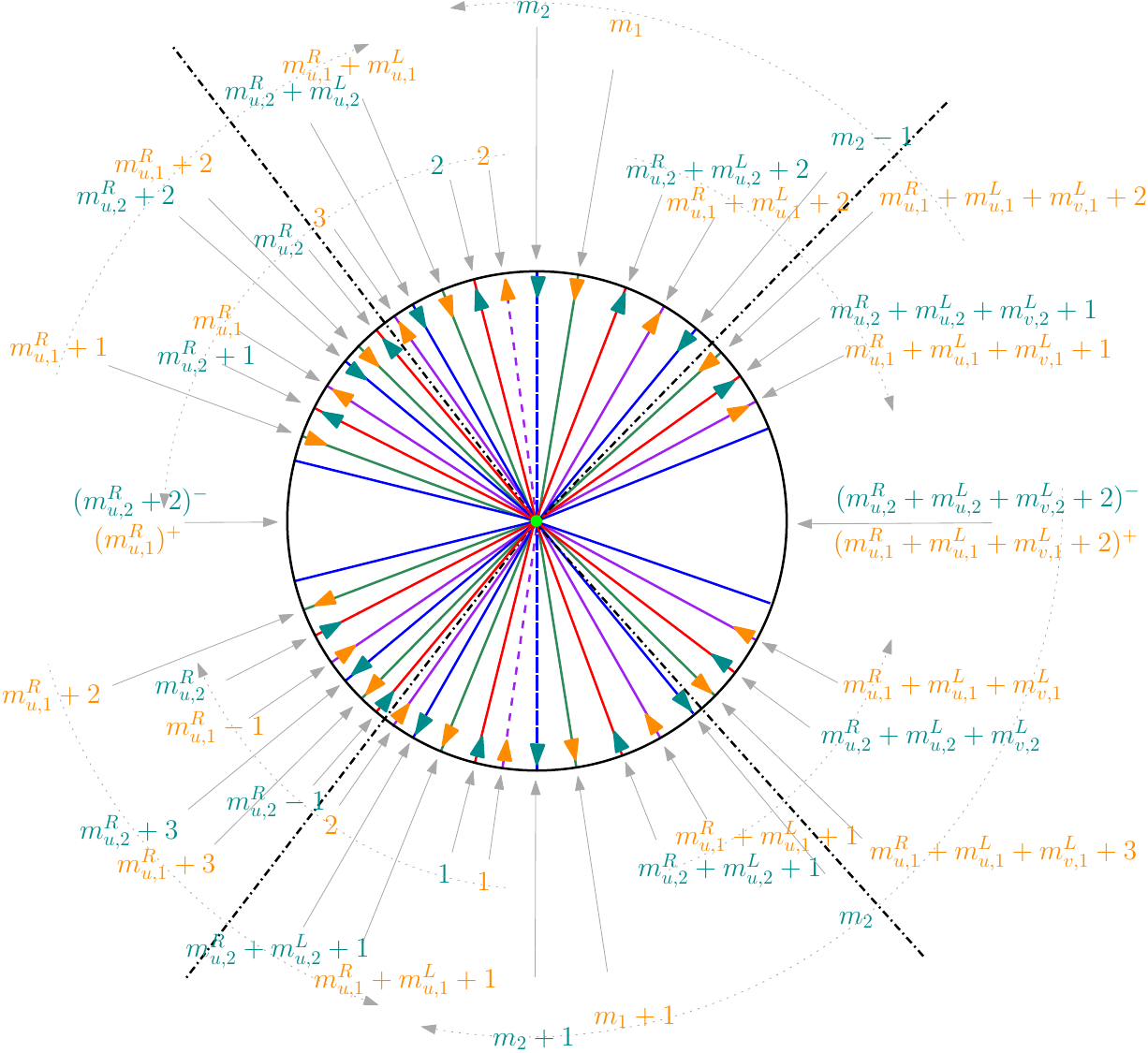}
		
		\vspace{0.1cm} 
		
		\includegraphics[width=0.6\textwidth]{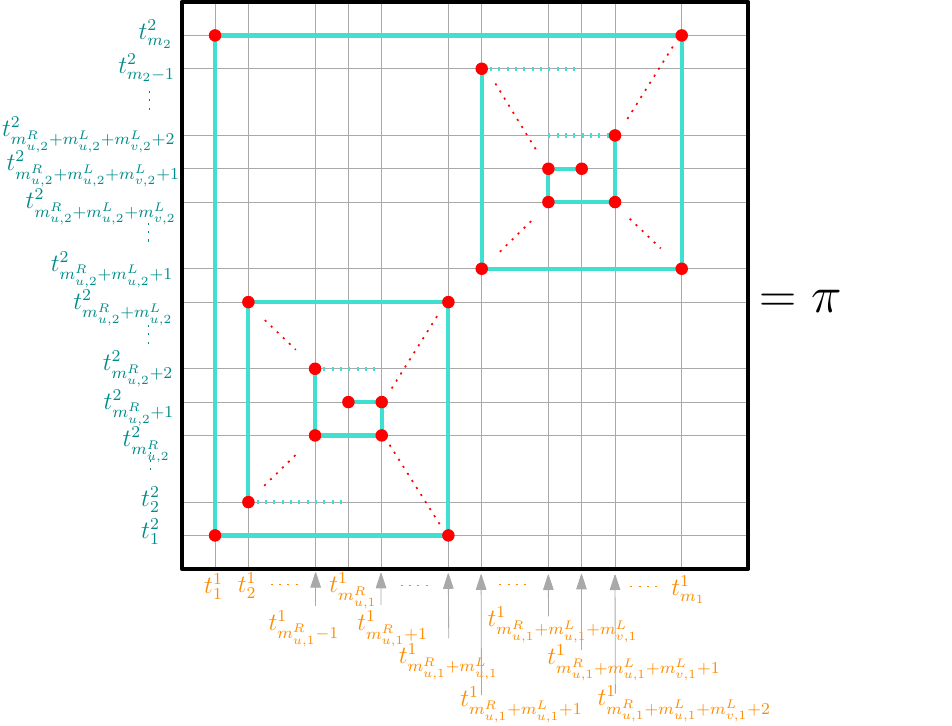}  
		\caption{\label{fig-points-in-support-even-1} 
			\textbf{Top:} The local diagram of a point $z$ that satisfies the assumptions of \cref{lem:points_in_support} (Cases A, D, and F hold). It includes the eight flow lines of $\eta_1$ and $\eta_2$ starting at the two points $u$ and $v$ (four for each point). This diagram is obtained starting from the diagram introduce in \cref{fig-m-tuple-points} and following the four steps at the beginning of \cref{sect:eight-flow} describing the behavior of the four flow lines $\beta^L_{u,2}$, $\beta^R_{u,2}$, $\beta^L_{v,2}$, and $\beta^R_{v,2}$ locally around $z$. The four dashed-dotted black rays indicate that a piece of the diagram has been removed at that location (which can be easily reconstructed by following the natural alternation of colored rays locally around that area). \textbf{Bottom:} The corresponding points $(t^1_i,t^2_j)$ contained in the support of the permuton $\perm$, as described in \cref{lem:points_in_support}. These points are highlighted in red. The blue curve is the only curve connecting them using only vertical/horizontal segments. This blue curve will play an important role in the proof of \cref{prop-counterflow-dichotomy2} in \cref{sect:proof-remaining}.
		}
	\end{center}
	\vspace{-3ex}
\end{figure}

\begin{lem}\label{lem:points_in_support}
	Fix $m_1=m_2+1\geq 2$ and let $z\in\BB C$ be (recall the definitions form \cref{remark:RL}) 
	\begin{itemize}
		\item a $RL$-$m_1$-tuple point of $\eta_1$ such that $m_{u,1}^L=m_{u,1}^R-1$ and $m_{v,1}^L=m_{v,1}^R-1$;
		\item a $RL$-$m_2$-tuple point of $\eta_1$ such that $m_{u,2}^L=m_{u,2}^R=m_{u,1}^L$ and $m_{v,2}^L=m_{v,2}^R-1=m_{v,1}^L$ (i.e., Cases A, D, F above holds).
	\end{itemize}
	Then the following points $(t^1_i,t^2_j)$ are contained in the support of the permuton $\perm$ (see the bottom of \cref{fig-points-in-support-even-1}):
	
	\begin{itemize}
		\item $(t^1_1,t^2_1)$ and $(t^1_1,t^2_{m_2})$;
		\item $(t^1_{1+i},t^2_{1+i})$ and $(t^1_{1+i},t^2_{m^R_{u,2}+m^L_{u,2}+1-i})$ for all $1\leq i\leq m^R_{u,1}-2$;
		\item $(t^1_{m^R_{u,1}},t^2_{m^R_{u,2}+1})$;
		\item $(t^1_{m^R_{u,1}+i},t^2_{m^R_{u,2}+i})$ and $(t^1_{m^R_{u,1}+i},t^2_{m^R_{u,2}+1-i})$ for all $1\leq i\leq m^L_{u,1}$;
		\item $(t^1_{m^R_{u,1}+m^L_{u,1}+i},t^2_{m^R_{u,2}+m^L_{u,2}+i})$ and $(t^1_{m^R_{u,1}+m^L_{u,1}+i},t^2_{m_2-i})$ for all $1\leq i\leq m^L_{v,1}$;
		\item  $(t^1_{m^R_{u,1}+m^L_{u,1}+m^L_{v,1}+1},t^2_{m^R_{u,2}+m^L_{u,2}+m^L_{v,2}+1})$ 
		\item $(t^1_{m^R_{u,1}+m^L_{u,1}+m^L_{v,1}+1+i},t^2_{m^R_{u,2}+m^L_{u,2}+m^L_{v,2}+1+i})$ and $(t^1_{m^R_{u,1}+m^L_{u,1}+m^L_{v,1}+1+i},t^2_{m^R_{u,2}+m^L_{u,2}+m^L_{v,2}+1-i})$ for all $1\leq i\leq m^R_{v,1}-1$.
	\end{itemize}
	Moreover, all the points $(t^1_i,t^2_j)$ not mentioned above are not in the support of the permuton $\perm$.
\end{lem}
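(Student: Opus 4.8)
The plan is to read off which pairs $(t^1_i,t^2_j)$ lie in $\op{supp}\perm$ directly from the combinatorial structure of the local diagram of $z$, using the basic dictionary developed earlier: by Assertion 1 of \cref{lem-permuton-defined}, $(t^1_i,t^2_j)\in\op{supp}\perm$ if and only if for every small $\ep>0$ the intersection $\eta_1([t^1_i-\ep,t^1_i+\ep])\cap\eta_2([t^2_j-\ep,t^2_j+\ep])$ has positive $\mu_h$-mass, equivalently (since space-filling SLE segments equal the closure of their interiors) contains a non-empty open set. First I would set up the picture precisely: the eight flow lines $\beta^*_{u,i},\beta^*_{v,i}$ ($*\in\{L,R\}$, $i\in\{1,2\}$) subdivide a small disk $B_\delta(z)$ into finitely many ``wedges'', and each wedge is filled by $\eta_1$ (resp.\ $\eta_2$) during exactly one of the visits to $z$; this is exactly the content of \cref{lem:m-tuple-points} together with the Step 1--4 analysis preceding the lemma (Cases A, D, F). The time $t^1_i$ is the $i$th hitting time of $z$ by $\eta_1$, and locally around $t^1_i$ the curve $\eta_1$ fills the two $\eta_1$-wedges adjacent to the radius of the local diagram labelled by the $i$th $\eta_1$-segment; similarly for $t^2_j$ and $\eta_2$. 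Hence $\eta_1([t^1_i-\ep,t^1_i+\ep])\cap\eta_2([t^2_j-\ep,t^2_j+\ep])$ has non-empty interior precisely when the pair of $\eta_1$-wedges at radius $i$ and the pair of $\eta_2$-wedges at radius $j$ share a common wedge of the \emph{combined} eight-line diagram.

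Next I would make this ``shared wedge'' criterion effective. The key auxiliary observations are: (a) by \cref{lem:how_hits_points} (applied to $\eta_1$ and, in the non-exceptional cases, to $\eta_2$), the segment $\eta_1([\tau^x_z-\ep,\tau^x_z+\ep])$ disconnects $z$ from the parts of the two adjacent regions lying far from $z$, so only the wedges \emph{immediately} adjacent to $z$ along the relevant radius matter; (b) the exceptional hitting times (where $\eta_1$ or $\eta_2$ returns to $z$ from a single region without transitioning) are the only ones where the local picture is not a clean ``two adjacent wedges'' configuration, and these are precisely the blue-labelled segments in \cref{fig-m-tuple-points} and \cref{fig-points-in-support-even-1}, which I would treat by hand; (c) by \cref{lem:SLE_hitting double points}-type reasoning (or directly from the $\eta_1$-description) the combined diagram at the top of \cref{fig-points-in-support-even-1} records exactly the cyclic order in which all eight flow lines, and hence all the wedges, appear around $z$. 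With the combined diagram in hand, checking whether the $\eta_1$-wedge-pair at radius $i$ meets the $\eta_2$-wedge-pair at radius $j$ is a finite (if tedious) inspection, and tracing it through yields exactly the list of pairs in the statement; the ``moreover'' clause is the complementary statement that every other $(i,j)$ corresponds to disjoint wedge-pairs, for which, by Assertion 1 of \cref{lem-permuton-defined} again, $\perm$ assigns zero mass to all sufficiently small rectangles around $(t^1_i,t^2_j)$.

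Concretely I would organize the verification by walking $\eta_2$ through its $m_2$ regions in order, and for each $\eta_2$-hitting time $t^2_j$ determining which $\eta_1$-regions the two $\eta_2$-wedges at radius $j$ overlap. In the ``spiralling'' phases (Step 1 for $\beta^L_{u,2}$, Step 2 for $\beta^L_{v,2}$, Step 3 for $\beta^R_{u,2}$, Step 4 for $\beta^R_{v,2}$) the flow line $\beta^*_{\cdot,2}$ is trapped between two consecutive $\eta_1$-flow-lines $\beta^*_{\cdot,1}$ and the corresponding wedge lies inside a single $\eta_1$-region, so there it produces a single pair $(t^1_i,t^2_j)$; at the crossing points (where $\beta^L_{u,2}$ crosses $\beta^L_{v,1}$ in Case A, and the analogous crossings in the other steps) and at $u$ and $v$ themselves the wedge straddles two $\eta_1$-regions, producing two pairs — this is the source of the doubled entries like $(t^1_{1+i},t^2_{1+i})$ and $(t^1_{1+i},t^2_{m^R_{u,2}+m^L_{u,2}+1-i})$. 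Matching the index arithmetic (the region-counting from \cref{lem:m-tuple-points}: $\eta_1$ fills $U(1),\dots,U(m_1+1)$ and hits $z$ while transiting $U(x)$--$U(x+1)$ at time $\tau^x_z$, with the two exceptional times sitting inside $U(m^R_{u,1})$ and $U(m^R_{u,1}+m^L_{u,1}+m^L_{v,1}+2)$, and similarly for $\eta_2$ with its own region count) to the displayed list is then a bookkeeping exercise, and I would present it as such, with \cref{fig-points-in-support-even-1} doing most of the work.

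The main obstacle is purely one of careful bookkeeping rather than new ideas: one must correctly align the two independent region-numbering schemes for $\eta_1$ and $\eta_2$ on the combined eight-line diagram, keep track of the orientation (inwards vs.\ outwards segments, and the $RL$ vs.\ $LR$ convention) so that ``the $i$th $\eta_1$-hit'' lands on the right radius, and handle the four exceptional hitting times (two for each of $\eta_1$ and $\eta_2$, encoded by the blue segments with $\pm$ labels in \cref{fig-m-tuple-points}) where a single region contributes two hitting times to $z$ rather than one — these are exactly the places where an entry in the list can be present or absent in a way that is invisible to the naive ``count regions'' heuristic, and they are what make the index formulas in the statement look irregular. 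I would therefore devote the bulk of the written proof to a clean statement of the shared-wedge criterion and to an explicit identification of all wedges and exceptional times on the combined diagram, after which the enumeration in the lemma follows by inspection.
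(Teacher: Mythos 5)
Your proposal is correct and takes essentially the same approach as the paper: both arguments read off membership in $\op{supp}\perm$ from the combined eight-flow-line local diagram at $z$ via the criterion that $\eta_1([t^1_i-\ep,t^1_i+\ep])\cap\eta_2([t^2_j-\ep,t^2_j+\ep])$ contain an open set, invoke \cref{lem:how_hits_points} to localize the curves to the wedges adjacent to $z$, treat the exceptional hitting times (blue-labelled segments) separately, and reduce the ``moreover'' clause to disjointness of the corresponding wedges as in \cref{lem-counterflow-bad}. The only difference is presentational: you propose a systematic sweep over the $\eta_2$-regions with a uniform ``shared-wedge'' criterion, whereas the paper works out two representative cases in detail and then appeals to the diagram for the rest.
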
 

\begin{proof}
	We first show that $(t^1_1,t^2_1)$ and $(t^1_1,t^2_{m_2})$ are in the support of the permuton $\perm$. Note that in the local diagram of $z$ at the top of \cref{fig-points-in-support-even-1}, we have that the inward segment labeled by an orange $1$ (recall from the definition of local diagram in \cref{fig-m-tuple-points} that this segment includes all the area between the adjacent red and blue radii) intersects on the left the inward segment labeled by a blue $1$ (this segment includes all the area between the adjacent green and purple radii) and on the right the outward segment labeled by a blue $m_2+1$ (this segment includes all the area between the adjacent purple and green radii). We now focus on the inward segment labeled by an orange $1$ intersecting the inward segment labeled by a blue $1$, and deduce that $(t^1_1,t^2_1)\in \op{supp} \perm$. The proof for $(t^1_1,t^2_{m_2})$ is very similar.
	
	By construction of the local diagram of $z$ and the discussion in the previous paragraph, we have that locally around $z$, $\eta_1$ and $\eta_2$ behave as shown in \cref{fig-hitting-multiple-points2} when they hit $z$ at time $t^1_1$ and $t^2_1$, respectively. In particular, this local picture combined with  \cref{lem:how_hits_points}, has the following consequence: for every $\ep>0$, 
	\begin{equation*}
		\eta_1\left([t^1_1-\ep,t^1_1+\ep]\right)\cap \eta_2\left([t^2_1-\ep,t^2_1+\ep]\right)
	\end{equation*}
	contains an open non-empty set. Indeed, if this is false then one can immediately deduce that either $z\in \overline{U_1(1)\setminus \eta_1([t^1_1-\ep,t^1_1])}$ or $z\in \overline{U_2(1)\setminus \eta_2([t^2_1-\ep,t^2_1])}$, but both cases would contradict \cref{lem:how_hits_points}. Hence $(t^1_1,t^2_1)\in \op{supp} \perm$ and, similarly, $(t^1_1,t^2_{m_2})\in \op{supp} \perm$.

	\begin{figure}[ht!]
		\begin{center}
			\includegraphics[width=1\textwidth]{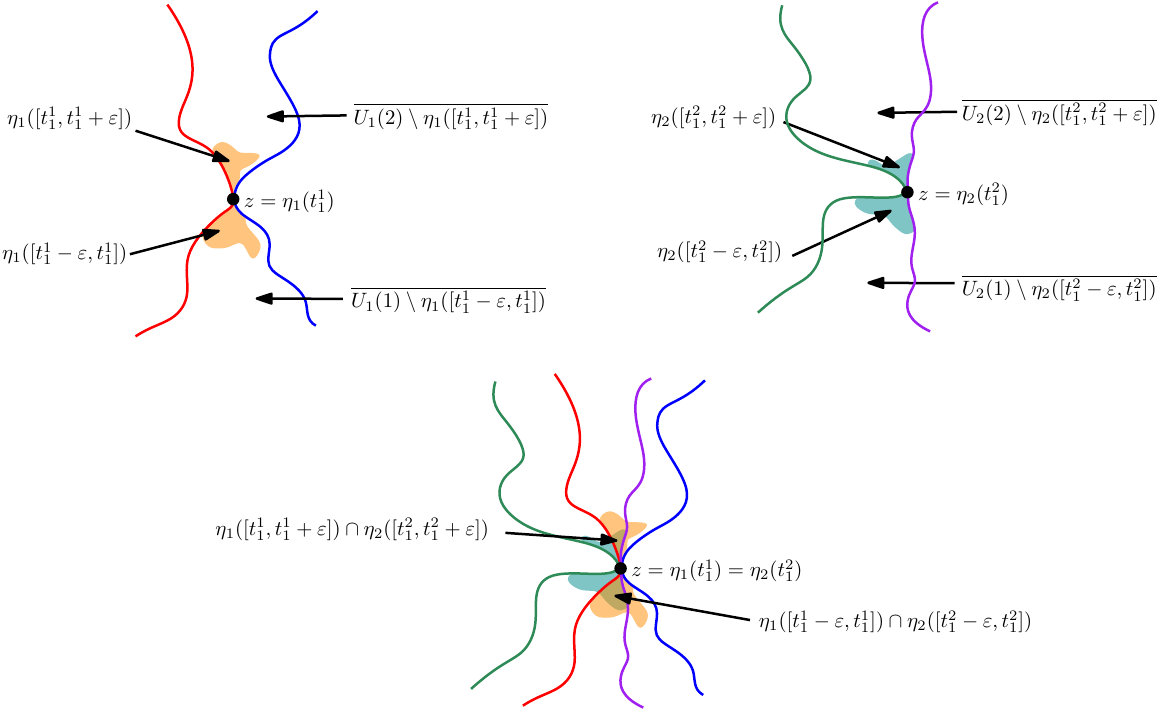}  
			\caption{\label{fig-hitting-multiple-points2} \textbf{Top-left:} The local behavior of $\eta_1$ around $z$ when $\eta_1$ hits $z$ at time $t^1_1$. (Recall  \cref{lem:how_hits_points}.) \textbf{Top-right:} The local behavior of $\eta_2$ around $z$ when $\eta_2$ hits $z$ at time $t^2_1$. (Recall  \cref{lem:how_hits_points}.) \textbf{Bottom:} The local diagram of $z$ at the top of \cref{fig-points-in-support-even-1} guarantees that the boundary flow lines of $\eta_1$ and $\eta_2$ alternates as displayed.
			}
		\end{center}
		\vspace{-3ex}
	\end{figure} 
	
	\medskip
	
	We now show that $(t^1_{m^R_{u,1}},t^2_{m^R_{u,2}+1})$ is in the support of the permuton $\perm$. Note that in the local diagram of $z$ at the top of \cref{fig-points-in-support-even-1}, the segment labeled by the orange $(m^R_{u,1})^+$ is contained inside the segment labeled by the blue $(m^R_{u,2}+2)^-$. Since these segments correspond to the two exceptional hitting times $t^1_{m^R_{u,1}}$ and $t^2_{m^R_{u,2}+1}$, respectively, we conclude, combining the above observation with similar arguments as before, that for every $\ep>0$, 
	\[\eta_1\left([t^1_{m^R_{u,1}}-\ep,t^1_{m^R_{u,1}}+\ep]\right)\cap \eta_2\left([t^2_{m^R_{u,2}+1}-\ep,t^2_{m^R_{u,2}+1}+\ep]\right)\]
	contains an open nonempty set. Hence $(t^1_{m^R_{u,1}},t^2_{m^R_{u,2}+1})\in \op{supp} \perm$. 
	
	\medskip
	
	One can prove that all the remaining points in the list of the  lemma statement are in the support of the permuton by using one of the two arguments above. Finally, noting that for every pair  $(t^1_i,t^2_j)$ not in the list, the corresponding segments in the local diagram of $z$ are disjoint, one can conclude (with a proof similar to the one used for \cref{lem-counterflow-bad}) that these points are not in the support of the permuton.
\end{proof}

\subsubsection{Proof of Proposition~\ref{prop-counterflow-dichotomy2}}\label{sect:proof-remaining}

We can conclude this section proving Proposition~\ref{prop-counterflow-dichotomy2}.

\begin{proof}[Proof of Proposition~\ref{prop-counterflow-dichotomy2}]	
	The first part of the Proposition statement follows from the final claim in \cref{lem:points_in_support}. We prove the second part of the statement.
	
	\medskip
	
	If for all $m_1\geq 2$ and $m_2\geq 2$, $z\in \BB C$ is not simultaneously a $m_1$-tuple point of $\eta_1$ and a $m_2$-tuple point of $\eta_2$, then thanks to \cref{lem:triple_are_simple}, $z\in \BB C$ is a simple point for at least one among $\eta_1$ and $\eta_2$. Therefore, $(t,\psi_{-}(t))\in \op{supp} \perm$ by Assertion 3 in Lemma~\ref{lem-permuton-defined}.
	
	\medskip
	
	We finally address the case when $z\in \BB C$ is simultaneously an $m_1$-tuple point of $\eta_1$ and an $m_2$-tuple point of $\eta_2$ for some $m_1\geq 2$ and $m_2\geq 2$. We restrict ourselves to the case where $z$ is as in the statement of \cref{lem:points_in_support}; the other cases can be treated similarly. Fix $(s,t) \in [0,1]^2$ such that $\eta_1(s) =\eta_1(t)=z$.
	Recall the definitions of $t^1_1,t^1_2,\dots,t^1_{m_1}$ and $t^2_1,t^2_2,\dots,t^2_{m_2}$ from the discussion above the statement of \cref{lem:points_in_support}.
	We need to prove that there exists a sequence of times $(t^1_{i_\ell}, t^2_{j_\ell})_{\ell=0}^M$ such that  
	\begin{itemize}
		\item $t^1_{i_0}=s$ and $t^1_{i_M}=t$;
		\item for every $\ell\in\{1,2,\dots,M\}$, $(t^1_{i_\ell}, t^2_{j_\ell}) \in \op{supp} \perm $;
		\item for every $\ell\in\{2,\dots,M\}$, either $t^1_{i_\ell}=t^1_{i_{\ell-1}}$ or $t^2_{j_\ell}=t^2_{j_{\ell-1}}$.
	\end{itemize} 
	To find this sequence, it is enough to note that \cref{lem:points_in_support} guarantees that there is always at least one point $(t^1_{i_0},t^2_{j_0})\in \op{supp} \perm $ such that $t^1_{i_0}=s$ and one point $(t^1_{i_M},t^2_{j_M})\in \op{supp} \perm $ such that $t^1_{i_M}=t$. Then, the remaining points are exactly the red points along the blue path at the bottom of \cref{fig-points-in-support-even-1} between $(t^1_{i_0},t^2_{j_0})$ and $(t^1_{i_M},t^2_{j_M})$. 
\end{proof}

\section{The permuton determines the two SLEs and the LQG}
\label{sect:perm_det_LQGSLE}

The goal of this section is to prove Theorems~\ref{thm-permuton-determ-lqg-sles}~and~\ref{thm-sle-msrble}. In \cref{sect:deduction}, we will easily deduce \cref{thm-permuton-determ-lqg-sles} from \cref{thm-permuton-multi-points}, assuming \cref{thm-sle-msrble}. Then, the remaining sections will be devoted to the proof of \cref{thm-sle-msrble}.

\subsection{Proof of \cref{thm-permuton-determ-lqg-sles} from \cref{thm-permuton-multi-points}, assuming \cref{thm-sle-msrble}}\label{sect:deduction}

Theorem~\ref{thm-sle-msrble} immediately implies the following corollary in the case of an LQG sphere decorated by two space-filling SLEs. 

\begin{cor} \label{cor-permuton-msrble0}
Let $\gamma \in (0,2)$ and $\kappa > 4$. 
Let $(\BB C , h , \infty)$ be a singly marked unit area $\gamma$-Liouville quantum sphere.
Let $(\eta_1,\eta_2)$ be a pair of whole-plane space-filling SLE$_\kappa$ processes from $\infty$ to $\infty$, coupled together in an arbitrary manner, which are sampled independently from $h$ and then parametrized by $\gamma$-LQG mass with respect to $h$.  
Let $\mcl T \mcl M_1= \mcl T \mcl M(\eta_1)$ be the associated intersection set for $\eta_1$ as in~\eqref{eqn-intersect-set} and let $\perm$ be the permuton associated with $(\eta_1,\eta_2)$ as in~\eqref{eqn-permuton-def}.  

Almost surely, $\mcl T \mcl M_1$ and the support of $\perm$ together determine the curve-decorated quantum surface $(\BB C , h ,  \infty , \eta_1 , \eta_2 )$ up to complex conjugation.
\end{cor}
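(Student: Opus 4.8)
The plan is to deduce \cref{cor-permuton-msrble0} essentially formally from \cref{thm-sle-msrble}, applied to the single curve $\eta_1$, together with the basic properties of $\op{supp}\perm$ recorded in \cref{lem-permuton-defined}. The point is that $\mcl T \mcl M_1$ already recovers the curve-decorated sphere $(\BB C,h,\infty,\eta_1)$ up to complex conjugation, and that once a representative of this surface is fixed, the support $\op{supp}\perm$ lets one read off the second curve in the \emph{same} coordinates, because every $(t,s)\in\op{supp}\perm$ satisfies $\eta_1(t)=\eta_2(s)$. Concretely, first I would apply \cref{thm-sle-msrble} to $(\BB C,h,\infty,\eta_1)$ — legitimate since $(\eta_1,\eta_2)$, hence $\eta_1$ alone, is sampled independently from $h$ and then parametrized by $\gamma$-LQG mass — obtaining a measurable function $F_1$ with $F_1(\mcl T \mcl M_1)=(\BB C,h,\infty,\eta_1)_{\mathrm{cc}}$ almost surely. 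I would then fix, in a measurable manner (e.g.\ via a standard normalization of the embedding), a representative $(\BB C,\wt h,\infty,\wt\eta_1)$ of this class; concretely there is a complex affine map $\phi:\BB C\to\BB C$, possibly composed with a complex conjugation, with $\phi(\infty)=\infty$, $\wt h=h\circ\phi+Q\log|\phi'|$, and $\phi\circ\wt\eta_1=\eta_1$, where $\wt\eta_1:[0,1]\to\BB C$ is parametrized by $\mu_{\wt h}$-mass.

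The key step is to recover the second curve from $\op{supp}\perm$. Set $S:=\pi_2(\op{supp}\perm)\subset[0,1]$, the projection of the compact set $\op{supp}\perm$ onto its second coordinate; then $S$ is compact, and since $\perm$ is a permuton with uniform second marginal supported on $\op{supp}\perm\subset[0,1]\times S$, we get $\op{Leb}(S)=\perm([0,1]\times S)=1$, so $S$ is dense in $[0,1]$. For each $s\in S$ I would pick any $t$ with $(t,s)\in\op{supp}\perm$; by the inclusion $\op{supp}\perm\subset\{(t,s):\eta_1(t)=\eta_2(s)\}$ from Assertion 2 of \cref{lem-permuton-defined} we have $\eta_1(t)=\eta_2(s)$, hence $\wt\eta_1(t)=\phi^{-1}(\eta_1(t))=\phi^{-1}(\eta_2(s))$; this value is independent of the choice of $t$, so one may set $\wt\eta_2(s):=\wt\eta_1(t)$, and then $\wt\eta_2=\phi^{-1}\circ\eta_2$ on $S$. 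Since $\phi^{-1}\circ\eta_2$ is continuous and $S$ is dense, $\wt\eta_2$ admits a unique continuous extension to $[0,1]$, necessarily equal to $\phi^{-1}\circ\eta_2$. Then $\phi$ (together with the possible conjugation) witnesses $(\BB C,\wt h,\infty,\wt\eta_1,\wt\eta_2)_{\mathrm{cc}}=(\BB C,h,\infty,\eta_1,\eta_2)_{\mathrm{cc}}$, and since the whole construction used only $\mcl T \mcl M_1$ (through $F_1$) and $\op{supp}\perm$, with every step measurable, this yields the asserted measurable function $F$.

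I expect the main obstacle to be entirely bookkeeping rather than probabilistic, since all the analytic content sits in \cref{thm-sle-msrble}: one must keep the embedding $\phi$ fixed throughout so that $\wt\eta_1$ and $\wt\eta_2$ live in a common coordinate system, and one must check that knowing $\wt\eta_2$ only on the dense set $S$ — rather than on all of $[0,1]$ — already determines it, which is exactly where the uniform marginal of $\perm$ and the continuity of $\eta_2$ enter. A secondary point is to make the choice of representative of $(\BB C,h,\infty,\eta_1)_{\mathrm{cc}}$ measurably; alternatively one can bypass representatives altogether, arguing that whenever two instances agree on both $(\BB C,h,\infty,\eta_1)_{\mathrm{cc}}$ and $\op{supp}\perm$, the identity $\eta_2(s)=\phi(\eta_2'(s))$ on the common dense set $S$ forces them to agree on $(\BB C,h,\infty,\eta_1,\eta_2)_{\mathrm{cc}}$, and then invoking a standard measurable-selection argument. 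I also anticipate that the function $F$ produced this way matches the explicit one described in \cref{sect:explicit-function}.
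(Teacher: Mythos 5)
Your proof is correct and follows the same overall skeleton as the paper's: first invoke Theorem~\ref{thm-sle-msrble} to recover $(\BB C, h, \infty, \eta_1)$ up to complex conjugation from $\mcl T \mcl M_1$, then read off $\eta_2$ on a dense set of times from $\op{supp}\perm$ via the inclusion $\op{supp}\perm\subset\{(t,s): \eta_1(t)=\eta_2(s)\}$, and finally extend by continuity. Where you genuinely diverge is in how you locate the dense set of times. The paper works \emph{columnwise}: using Assertions~2 and~3 of Lemma~\ref{lem-permuton-defined}, it shows that for each $t$ with $\eta_1(t)$ a non-multiple point of $\eta_2$, the column $\{t\}\times[0,1]$ meets $\op{supp}\perm$ in exactly one point $(t,\psi(t))$, then appeals to the external result \cite{ghm-kpz} that non-multiple points have full $\mu_h$-mass to get density of these times. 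You work \emph{rowwise}: taking $S := \pi_2(\op{supp}\perm)$, you get density for free from compactness plus the uniform second marginal of the permuton ($\op{Leb}(S) = \perm([0,1]\times S) \geq \perm(\op{supp}\perm) = 1$), and well-definedness of $\wt\eta_2$ on $S$ needs only Assertion~2. Your route is thus slightly more self-contained -- it bypasses both the external full-measure result and Assertion~3 -- while reaching the same conclusion, and your treatment of the measurability issue (either via a measurable choice of representative or via the observation that equality of the data forces equality of the output classes) is adequate.
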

\begin{proof}
By Theorem~\ref{thm-sle-msrble}, a.s.\ $\mcl T \mcl M_1$ determines the curve-decorated quantum surface $(\BB C , h ,  \infty , \eta_1 )$ up to complex conjugation. 

By Assertions 2 and 3 in Lemma~\ref{lem-permuton-defined}, a.s.\ for each $t \in [0,1]$ such that $\eta_1(t)$ is hit only once by $\eta_2$, the support $\op{supp} \perm$ intersects the line $\{t\} \times [0,1]$ only at the point $(t,s)$, where $s$ is the unique time such that $\eta_2(s) =\eta_1(t)$.\footnote{We are not claiming that if $t \in [0,1]$  is such that $\op{supp} \perm$ intersects the line $\{t\} \times [0,1]$ only at the point $(t,s)$, then $\eta_1(t)$ is non-multiple points of $\eta_2$. Indeed, in general, the latter claim would be false by \cref{lem-counterflow-bad}.}
Therefore, a.s., the curve-decorated quantum surface $(\BB C , h ,  \infty , \eta_1 )$, viewed modulo complex conjugation, and $\op{supp} \perm$ determine the order in which the non-multiple points of $\eta_2$ are hit by $\eta_2$.

The set of times corresponding to non-multiple points of $\eta_2$ is a.s.\ dense in $[0,1]$. Indeed, the set of non-multiple points of $\eta_2$ has full $\mu_h$-mass \cite{ghm-kpz}, and so its preimage has full Lebesgue measure. Since $\eta_2$ is continuous, it follows that $\op{supp} \perm$ together with the curve-decorated quantum surface $(\BB C , h ,  \infty , \eta_1 )$, viewed modulo complex conjugation, a.s.\ determine the curve-decorated quantum surface $(\BB C , h ,  \infty , \eta_1 , \eta_2 )$, viewed modulo complex conjugation.  
\end{proof}

\begin{proof}[Proof of \cref{thm-permuton-determ-lqg-sles}]
In light of Corollary~\ref{cor-permuton-msrble0}, it suffices to show that the support of $\perm$ a.s.\ determines the intersection set $\mcl T \mcl M_1 = \mcl T \mcl M(\eta_1)$ from~\eqref{eqn-intersect-set}. But this is exactly the statement of \cref{thm-permuton-multi-points}.
\end{proof}

%
%
%

\subsection{The set of hitting-times of multiple points of an SLE determines the curve-decorated quantum sphere} \label{sec-multi-det-lqg-andsles}

In this section, we fix $\gamma\in (0,2)$ and $\kappa >4$ and consider  a singly marked unit area $\gamma$-Liouville quantum sphere $(\BB C ,h, \infty)$ and a whole-plane space-filling SLE$_\kappa$ $\eta$ from $\infty$ to $\infty$ sampled independently from $h$ and then parametrized by $\gamma$-LQG mass with respect to $h$.
We will show that the set $\mcl T \mcl M (\eta)\subset [0,1]^2$ defined in~\eqref{eqn-intersect-set} determines the curve-decorated quantum surface $(\BB C , h ,  \infty , \eta )$ up to complex conjugation, proving Theorem~\ref{thm-sle-msrble}.

\subsubsection{The adjacency graph of SLE cells}
\label{sec-cells}

For $n\in\BB N$, let $\mcl G^n$ be the graph with vertex set $\mcl V\mcl G^n := (0,1]\cap \frac{1}{n} \BB Z$, with two distinct vertices $x,y\in (0,1]\cap \frac{1}{n}\BB Z$ joined by an edge if and only if $\eta ([x-1/n,x]) \cap \eta ([y-1/n,y])$ contains a non-singleton connected set. We refer to $\eta ([x-1/n,x])$ as the \textbf{cell} associated with $x\in\mcl V\mcl G^n$.

In the special case when $\kappa = 16/\gamma^2$, the graph $\mcl G^n$ is precisely the mated-CRT map with the sphere topology~\cite{ghs-dist-exponent,gms-tutte}, but viewed as a graph rather than a planar map. We view $\mcl G^n$ as a graph rather than a planar map because $\mcl T \mcl M$ determines $\mcl G^n$ only as a graph, not as a planar map (this is related to the fact that the curve-decorated LQG sphere is determined by $\mcl T \mcl M$ only up to complex conjugation; recall the discussion below the statement of Theorem~\ref{thm-sle-msrble}). Nevertheless, $\mcl G^n$ viewed as a graph is sufficient to define the random walk on $\mcl G^n$, and hence to determine the Tutte embedding of $\mcl G^n$, as explained below \eqref{eq:tutte}.

When $\kappa \geq 8$, a.s.\ two cells intersect if and only if their intersection contains a non-singleton connected set, so we get the same graph if we instead require only that $\eta ([x-1/n,x])\cap \eta ([y-1/n,y]) \not=\emptyset$. When $\kappa \in (4,8)$, it is possible for the intersection of two cells to be a totally disconnected cantor-like set. The vertices of $\mcl G^n$ corresponding to cells which intersect in this way are not joined by an edge of $\mcl G^n$. See Figure~\ref{fig-weird-cell} for an illustration.

\begin{figure}[t!]
 \begin{center}
\includegraphics[scale=.95]{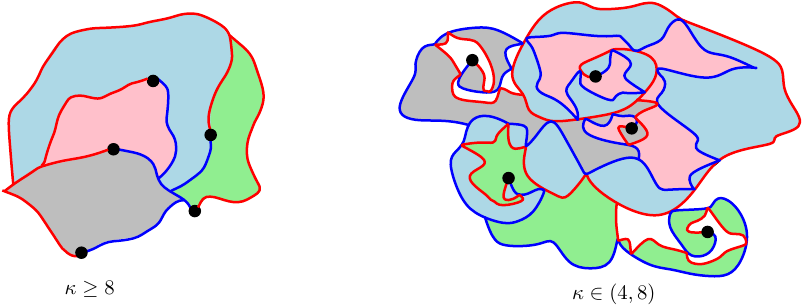}
\vspace{-0.01\textheight}
\caption{Four typical space-filling SLE$_\kappa$ cells for $\kappa\geq 8$ (left) and for $\kappa\in (4,8)$ (right). The points where $\eta $ starts and finishes filling in each cell are shown with black dots. $\eta$ fills the cells in the order of gray, pink, blue, and green.  The right picture is slightly misleading since the set of ``pinch points'' where the left (red) and right (blue) boundaries of each cell meet is actually uncountable and totally disconnected, with no isolated points, and has Hausdorff dimension greater than 1. Note that in the right figure, the grey and green cells intersect at several points, but do not share a connected boundary arc so the corresponding vertices of $\mcl G^n$ are \emph{not} joined by an edge. This is natural since one can think of the blue cell as lying in between the grey and green cells. In fact, two cells which intersect, but do not share a connected boundary arc, will always be separated by one or more other cells in this manner. 
}\label{fig-weird-cell}
\end{center}
\vspace{-1em}
\end{figure}

The relevance of the graph $\mcl G^n$ for the proof of Theorem~\ref{thm-sle-msrble} is that it is a measurable function of the intersection set $\mcl T \mcl M=\mcl T \mcl M (\eta)$ of~\eqref{eqn-intersect-set}. 
  
\begin{lem} \label{lem-graph-permuton}
Almost surely, two distinct vertices $x,y\in \mcl V\mcl G^n$ are joined by an edge if and only if 
\begin{enumerate}
	\item there exists $u \in [x-1/n,x]$ and $v \in [y-1/n,y]$ such that $(u,v) \in \mcl T \mcl M$;
	\item and there does not exist $q \in [0,1] \setminus [(x-1/n,x) \cup (y-1/n,y)]$ such that $(u,q) \in \mcl T \mcl M$.
\end{enumerate}
In particular, the graph $\mcl G^n$ is a.s.\ determined by $\mcl T \mcl M$. 
\end{lem}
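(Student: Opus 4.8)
The statement characterizes adjacency in $\mcl G^n$ purely in terms of the set $\mcl T\mcl M$. The plan is to verify the two directions of the ``if and only if.'' The key geometric input is that the cells $\eta([x-1/n,x])$, $x\in\mcl V\mcl G^n$, are closed sets that are equal to the closure of their interiors (item $(iv)$ in \cref{sect:SLES}), that they have disjoint interiors, that they cover $\BB C$, and that $\eta$ hits $\mu_h$-a.e.\ point exactly once but may have multiple points lying on the relevant flow lines (item $(vi)$). We also use that for $\kappa\in(4,8)$ the intersection of two cells can be a totally disconnected set even when the cells ``touch.''

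First I would prove that if $x,y$ are joined by an edge of $\mcl G^n$, then conditions (1) and (2) hold. By definition of $\mcl G^n$, $\eta([x-1/n,x])\cap\eta([y-1/n,y])$ contains a non-singleton connected set; in particular it contains a non-isolated point, hence (by the structure of space-filling SLE boundaries) a point $z$ that is hit by $\eta$ both during $[x-1/n,x]$ and during $[y-1/n,y]$. This gives times $u\in[x-1/n,x]$ and $v\in[y-1/n,y]$ with $\eta(u)=\eta(v)=z$, i.e.\ $(u,v)\in\mcl T\mcl M$, establishing (1). For (2): here one should choose $z$ (equivalently $u$) more carefully. Along a shared boundary arc $A$ of the two cells, $\mu_h$-a.e.\ point is hit by $\eta$ exactly once; since $A$ is a non-trivial arc it has positive $\mu_h$-mass, so we may pick $z\in A$ hit exactly once by $\eta$. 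Since $z$ is on the common boundary of the two cells and is hit only once, the unique hitting time $u$ must lie in the closed interval $[x-1/n,x]$ and also (by continuity and the fact that $z$ is a limit of points of $\eta([y-1/n,y])$) satisfy $u\in\{v:\eta(v)=z\}\subseteq[y-1/n,y]$. But $z$ is hit only once, so $u\in[x-1/n,x]\cap[y-1/n,y]$, which (for $x\ne y$ on the $\frac1n\BB Z$ lattice, adjacent or not) forces $u$ to be an endpoint shared by the two intervals, hence in $[(x-1/n,x)\cup(y-1/n,y)]$; and there is no other time $q$ with $\eta(q)=\eta(u)$ at all. This is condition (2). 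One caveat: the intervals may not overlap (the cells are adjacent in the graph but the time-intervals are disjoint); then the correct statement is that $z$ is hit exactly once, so there is simply no $q$ outside $[x-1/n,x]$ with $(u,q)\in\mcl T\mcl M$ other than $u$ itself and points of $[y-1/n,y]$—so (2) holds vacuously for that choice of $u$ once we are careful that the quantifier in (2) allows $u$ itself and allows $q\in[y-1/n,y]$. I would state (2) precisely as: no $q$ in $[0,1]\setminus([x-1/n,x)\cup[y-1/n,y))$ hits $\eta(u)$, and pick $u$ to be the unique hitting time of a point $z$ on the shared boundary arc that $\eta$ hits only once.

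Next, the converse: assuming (1) and (2), I would show $x,y$ are joined by an edge. By (1), the cells $C_x:=\eta([x-1/n,x])$ and $C_y:=\eta([y-1/n,y])$ intersect. If they are not joined by an edge, then by definition $C_x\cap C_y$ contains no non-singleton connected set; as explained in the discussion around \cref{fig-weird-cell}, this can only happen when $\kappa\in(4,8)$ and $C_x$, $C_y$ are separated by one or more intervening cells—more precisely, $z:=\eta(u)=\eta(v)$ is a multiple point of $\eta$, hit by $\eta$ at some time strictly between the blocks $[x-1/n,x]$ and $[y-1/n,y]$ in the traversal order (the ``blue cell in between'' picture). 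This contradicts (2): that intermediate hitting time $q$ lies in some cell $[\,\cdot\,]$ disjoint from both $[x-1/n,x)$ and $[y-1/n,y)$, and $(u,q)\in\mcl T\mcl M$. Hence $C_x\cap C_y$ does contain a non-singleton connected set and $x\sim y$ in $\mcl G^n$. For $\kappa\ge8$ the argument is even simpler: two cells intersect iff they share a connected boundary arc, so (1) alone essentially gives the edge, and (2) is automatic. Finally, since $\mcl V\mcl G^n$ is the fixed deterministic set $(0,1]\cap\frac1n\BB Z$ and the edge set is, by the displayed equivalence, a Borel function of $\mcl T\mcl M$, the graph $\mcl G^n$ is a.s.\ determined by $\mcl T\mcl M$.

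\textbf{Expected main obstacle.} The delicate point is the $\kappa\in(4,8)$ case of the converse, i.e.\ ruling out the ``totally disconnected intersection'' configuration using only condition (2). One must argue carefully that whenever two cells intersect only in a disconnected set, there is necessarily a third cell ``wedged between them'' whose interior $\eta$ fills at a time strictly separating the two blocks—so that the common point $z=\eta(u)=\eta(v)$ is revisited by $\eta$ at a time $q$ lying in that third cell's index interval, which is disjoint from both $[x-1/n,x)$ and $[y-1/n,y)$. Making this ``wedged between'' claim precise requires the fine description of how space-filling SLE fills bubbles for $\kappa\in(4,8)$ (the same circle of ideas as \cref{lem:m-tuple-points} and the discussion in \cref{sec-cells}); I would isolate it as the one nontrivial lemma and phrase the rest as bookkeeping with the lattice intervals and the a.s.-properties of $\eta$ and $\mu_h$.
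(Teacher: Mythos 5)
Your forward direction contains two errors. First, you write ``since $A$ is a non-trivial arc it has positive $\mu_h$-mass'': this is false. The boundary arc $A$ of a space-filling SLE$_\kappa$ cell is an SLE$_{16/\kappa}$-type curve of Hausdorff dimension strictly below $2$, and since $\eta$ is sampled independently of $h$, almost surely $\mu_h(A) = 0$. Second, and more fundamentally, every point $z$ on a boundary arc shared by two distinct cells $C_x$ and $C_y$ (apart from possibly the single endpoint $\eta(x)$ when $y=x+1/n$) is hit by $\eta$ \emph{at least twice} --- once during $[x-1/n,x]$ and once during $[y-1/n,y]$, two essentially disjoint intervals --- so there is no $z\in A$ ``hit exactly once.'' What condition (2) actually requires is a $z\in A$ \emph{all} of whose hitting times lie in $(x-1/n,x) \cup (y-1/n,y)$: a point hit at least twice, but only within those two open blocks. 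Your subsequent bookkeeping around ``$u\in[x-1/n,x]\cap[y-1/n,y]$'' inherits this confusion (for non-adjacent $x,y$ the two intervals share no endpoint and your dichotomy collapses), and you also defer the key step of the converse to an unproven lemma.

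The paper's proof takes a shorter route that sidesteps both errors and also absorbs the ``wedged between'' obstacle you flag, without any case analysis on $\kappa$ or appeal to $m$-tuple structure. It observes that (a) a.s.\ two cells share a non-trivial connected boundary arc if and only if some point of $C_x\cap C_y$ lies in the \emph{interior} of $C_x\cup C_y$, and (b) the boundary of $C_x \cup C_y$ is exactly the set of points of $C_x\cup C_y$ hit by $\eta$ at some time outside $(x-1/n,x)\cup(y-1/n,y)$ (a consequence of the continuity of $\eta$ and the fact that each cell has non-empty interior). Conditions (1) and (2) of the lemma are then read off directly from (a) and (b). You should drop the $\mu_h$-mass and ``hit exactly once'' arguments entirely and replace your deferred lemma with the single interior-of-union characterization in (a).
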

\begin{proof}
Almost surely, the intersection $\eta ([x-1/n,x]) \cap \eta ([y-1/n,y])$ contains a non-trivial connected set (i.e., $x$ and $y$ are joined by an edge in $\mcl G^n$) if and only if there is a point in this intersection which belongs to the interior of $\eta ([x-1/n,x]) \cup \eta ([y-1/n,y])$. The boundary of $\eta ([x-1/n,x]) \cup \eta ([y-1/n,y])$ is exactly the set of points of $\eta ([x-1/n,x]) \cup \eta ([y-1/n,y])$ which are hit by $\eta$ at a time which is not in $(x-1/n,x) \cup (y-1/n,y)$.
Hence, $x$ and $y$ are joined by an edge in $\mcl G^n$ if and only if there exists $u \in [x-1/n,x] $ and $v\in [y-1/n,y]$ such that $\eta(u) = \eta(v)$ and 
\eqbn
\eta(u) \notin \eta \left( [0,1]\setminus [(x-1/n,x)\cup (y-1/n,y)] \right) .
\eqen  
By recalling the definition~\eqref{eqn-intersect-set} of $\mcl T \mcl M$, we see that this is equivalent to the existence of $u,v$ as in the lemma statement. 
\end{proof}

We will have occasion to consider various special subgraphs of $\mcl G^n$.

\begin{defn} \label{def-graph-subset} 
For a set $A\subset \BB C$, we define $\mcl G^n(A)$ to be the subgraph of $\mcl G^n$ induced by the set of vertices $x\in\mcl V\mcl G^n$ such that the interior of the cell $\eta ([x-1/n,x])$ intersects $A$. For $0\leq a < b \leq 1$, we also define 
\eqb \label{eqn-graph-interval}
\mcl G^n_{[a,b]}  
:= \mcl G^n(\eta ([a,b]))
= \left(\text{subgraph of $\mcl G^n$ induced by $\left\{x\in (0,1] \cap \frac{1}{n}\BB Z : [x-1/n,x] \cap (a,b) \not=\emptyset   \right\}$}\right) .
\eqe
\end{defn}

We note that Lemma~\ref{lem-graph-permuton} implies that if $a$ and $b$ are chosen in a $\sigma(\mcl T \mcl M)$-measurable manner, then $\mcl G^n_{[a,b]}$ is a.s.\ determined by $\mcl T \mcl M$. 

\medskip
  
The following proposition tells us that random walk on $\mcl G^n$, under the a priori embedding $x\mapsto \eta (x)$, converges to Brownian motion modulo time parametrization. It has, effectively, already been proven in~\cite{gms-tutte}.

\begin{prop} \label{prop-rw-conv}
Let $K\subset \BB C$ be a deterministic compact set. The the Prokhorov distance between the laws of following two random paths converges to zero in probability with respect to the topology on curves viewed modulo time parametrization, uniformly over all $x \in \mcl V\mcl G^n(K)$ (notation as in Definition~\ref{def-graph-subset}):
\begin{itemize}
\item Let $X^x$ denote random walk on $\mcl G^n$ started from $x$ and stopped at the first time it hits a vertex of $\mcl V\mcl G^n\setminus \mcl V\mcl G^n(K)$. Consider the path $j \mapsto \eta(X^x_j)$, extended from $\BB N_0$ to $[0,\infty)$ by piecewise linear interpolation.
\item Let $\mcl B^{\eta(x)}$ denote Brownian motion started from $\eta(x)$ and stopped when it exits $K$. 
\end{itemize} 
\end{prop}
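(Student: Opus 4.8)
\textbf{Proof proposal for Proposition~\ref{prop-rw-conv}.}

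The plan is to reduce the statement to the main convergence result of \cite{gms-tutte}, which establishes exactly this kind of statement for the mated-CRT map (the case $\kappa = 16/\gamma^2$), and to observe that the proof there only uses properties of the pair $(h, \eta)$ that continue to hold for arbitrary $\kappa > 4$. The key structural inputs are: (1) the cells $\eta([x-1/n,x])$ tile $\BB C$ and have $\mu_h$-mass exactly $1/n$; (2) a priori estimates on the diameters and ``shapes'' of the cells, ensuring that with high probability every cell meeting a fixed compact set $K$ has small Euclidean diameter and is not too distorted; and (3) the fact that $\mcl G^n$ is the adjacency graph of these cells. None of these inputs require $\gamma^2 = 16/\kappa$: the mating-of-trees identity is not used in the proof of the invariance principle for the walk — it is only used to \emph{identify} the resulting graph with a mated-CRT map. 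So the first step is to carefully state the cell-size and regularity estimates for space-filling SLE$_\kappa$ cells (these follow from the moment bounds on $\mu_h$-masses of SLE segments and from the Hölder continuity of $\eta$ under the $\mu_h$-parametrization, as in \cite{gms-tutte} and \cite{ghs-dist-exponent}), checking that the proofs given there are insensitive to the value of $\kappa$.

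Next I would recall the architecture of the argument in \cite{gms-tutte}. One writes the walk $j \mapsto \eta(X^x_j)$ and shows that, after the appropriate (random) time change, its increments form a martingale with respect to the filtration generated by the walk (this uses only that simple random walk increments are conditionally mean-zero, together with a symmetry of the local picture — here one must be slightly careful, since for $\kappa \in (4,8)$ the cells can meet in totally disconnected sets and the planar structure is subtler, but by Lemma~\ref{lem-graph-permuton} and the discussion around Figure~\ref{fig-weird-cell} the \emph{graph} $\mcl G^n$ is still well-defined and the relevant conditional expectations can still be controlled). Then one verifies the hypotheses of the Morera/martingale invariance principle (e.g.\ the one in \cite{gms-tutte} or its abstract versions): the conditional variance of a single step is comparable to the squared diameter of the current cell, the steps are uniformly small on $K$ with high probability, and the number of steps needed to exit $K$ is tight. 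This gives convergence of $j \mapsto \eta(X^x_j)$ to a time-changed Brownian motion, i.e.\ to Brownian motion modulo time parametrization, stopped upon exiting $K$; the convergence is in the Prokhorov distance on the space of curves modulo time parametrization, and is uniform in the starting vertex $x \in \mcl V\mcl G^n(K)$ because all the quantitative estimates above are uniform over such $x$.

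The main obstacle, and the only place where genuine care beyond citation is needed, is the regime $\kappa \in (4,8)$, where the cells are not Jordan domains and two cells can intersect in a complicated Cantor-like set without being adjacent in $\mcl G^n$. Here one needs to check two things: first, that the a priori estimates on cell diameters and on the ``crossing'' probabilities (the probability that the walk from a cell meeting a small ball exits a slightly larger ball) still hold — this follows from the same $\mu_h$-mass moment bounds, which are valid for all $\kappa>4$ since they only concern the LQG measure of SLE segments and the Hölder regularity of the space-filling curve; and second, that the martingale/symmetry argument driving the invariance principle does not secretly use that $\mcl G^n$ is a planar map. In \cite{gms-tutte} this is handled via the observation that the relevant conditional expectation of the walk increment depends only on the ``harmonic'' structure of $\mcl G^n$ as a weighted graph, together with the fact that the a priori embedding $x \mapsto \eta(x)$ is approximately harmonic at the relevant scales. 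I would reproduce that observation, emphasizing that the proof in \cite{gms-tutte} is written for the mated-CRT map purely for concreteness and that the LQG/SLE inputs it invokes (cell-size bounds, continuity of $\eta$, the coupling of $\eta$ with $h$ being independent of $h$ modulo parametrization) are all available in the present more general setting. Once this is in hand, Proposition~\ref{prop-rw-conv} follows with no further work.
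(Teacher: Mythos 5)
Your proposal takes essentially the same route as the paper: cite \cite[Theorem 3.4]{gms-tutte} (the mated-CRT case $\kappa = 16/\gamma^2$), built on the invariance principle of \cite{gms-random-walk}, and observe that the proof there uses only $\kappa$-insensitive LQG/SLE inputs and therefore carries over to general $\kappa > 4$; the paper is simply more specific in enumerating which pieces transfer (the moment bounds of \cite[Section 4.1]{gms-tutte}, the verification of the hypotheses of \cite{gms-random-walk} for a $0$-quantum cone in \cite[Section 3.1]{gms-tutte}, and the passage to the $\gamma$-quantum cone and quantum sphere in \cite[Sections 3.2--3.3]{gms-tutte}). One small caveat: your description of the internal architecture as a direct martingale functional CLT, and your worry that the argument must avoid using planarity of $\mcl G^n$, are both slight mischaracterizations — the actual machinery is the ergodic-environment invariance principle of \cite{gms-random-walk}, which does use the planar-map structure of $\mcl G^n$, and that structure is freely available for this proposition (it is only the $\sigma(\mcl T \mcl M)$-measurability of the planar embedding that is subtle, a separate issue the paper handles later in Section 4.2.3) — but this does not affect the validity of your reduction.
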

\begin{proof}
The analog of the proposition statement for the mated-CRT map with the sphere topology under its SLE / LQG embedding (i.e., the case when $\kappa = 16/\gamma^2$) is established in~\cite[Theorem 3.4]{gms-tutte}, building on a general invariance principle for random walk in random planar environments from~\cite{gms-random-walk}. The proof of~\cite[Theorem 3.4]{gms-tutte} in the case of the quantum sphere does not use any special features of the case when $\kappa = 16/\gamma^2$. In particular, as alluded to at the beginning of~\cite[Section 5]{gms-tutte}, the following parts of the proof all carry over verbatim to the case when $\kappa \in (4,\infty)\setminus \{ 16/\gamma^2 \}$:
\begin{itemize}
\item the proofs of the moment bounds in~\cite[Section 4.1]{gms-tutte} (one still gets moments up to order $4/\gamma^2$ for the degree);
\item the verification of the hypotheses of the invariance principle from~\cite{gms-random-walk} when $h$ is replaced by an embedding of a 0-quantum cone in~\cite[Section 3.1]{gms-tutte};
\item  the arguments to transfer from the 0-quantum cone to the $\gamma$-quantum cone and then to the quantum sphere in~\cite[Sections 3.2 and 3.3]{gms-tutte}. 
\end{itemize} 
Hence, one gets the proposition statement from the proofs in~\cite{gms-tutte}.
\end{proof}

To prove Theorem~\ref{thm-sle-msrble}, we want to define an embedding function $\Phi^n : \mcl V\mcl G^n \rta \BB C$ in terms of observables related to the random walk on $\mcl G^n$ (in particular, we will consider a version of the Tutte embedding of $\mcl G^n$ as in~\cite{gms-tutte}). We will then use Proposition~\ref{prop-rw-conv} to show that $\Phi^n$ is in some sense close to the a priori embedding $x\mapsto \eta (x)$ when $n$ is large. Since $\mcl G^n$ is determined by $\mcl T \mcl M$, this will lead to a proof of Theorem~\ref{thm-sle-msrble}. 
This idea has previously been used in~\cite{gms-tutte} to give an explicit way to recover an SLE-decorated quantum disk from its associated mating of trees Brownian motion and in~\cite{gms-poisson-voronoi,afs-metric-ball} to give an explicit way to recover an LQG surface from its metric measure space structure. 

In our setting, the proof that Proposition~\ref{prop-rw-conv} implies Theorem~\ref{thm-sle-msrble} is more involved than the analogous proofs in~\cite{gms-tutte,gms-poisson-voronoi,afs-metric-ball}. The reason is that we are only able to show that $\mcl T \mcl M$ determines $\mcl G^n$ as a \emph{graph}, not as a planar map. In particular, $\mcl T \mcl M$ does not determine the cyclic ordering of the vertices on the boundary of a connected subgraph of $\mcl G^n$. The usual definition of the Tutte embedding uses this cyclic ordering, so we will need to do some work to determine exactly to what extent $\mcl T \mcl M$ fails to determine this cyclic ordering.

\subsubsection{Recovering the geometry of $\eta $ from $\mcl T \mcl M$}\label{sect:rec-geom}

In this subsection, we will explain how to recover various geometric features of the curve $\eta $ from $\mcl T \mcl M$. 
For each $a,b\in [0,1]$ with $a<b$, the boundary $\bdy \eta ([a,b])$ is the union of two curves from $\eta (a)$ to $\eta (b)$. We refer to these curves as the \textbf{left boundary} and the \textbf{right boundary} of $\eta ([a,b])$, where the left (resp.\ right) boundary is the curve which lies to the left (resp.\ right) of $\eta$; recall also the explanations in \cref{sec:SLE_as_flow}.

It is easy to see from the definition of $\mcl G^n$ that a.s.\ any two consecutive vertices of $\mcl G^n|_{[a,b]}$ whose corresponding cells intersect the left (resp.\ right) boundary of $\mcl G^n$ are joined by an edge in $\mcl G^n$. We will use this fact without comment in what follows. 

\begin{lem} \label{lem-bdy-determined}
Almost surely, for each $a,b\in [0,1]$ with $a<b$ and each $t\in (a,b)$, we have $\eta (t) \in \bdy\eta ([a,b])$ if and only if there exists $s \in [0,1]\setminus [a,b]$ such that $(t,s) \in \mcl T \mcl M$. 

In particular, if $a$ and $b$ are chosen in a $\sigma(\mcl T \mcl M)$-measurable manner, then the set 
\begin{equation*}
	\{t\in [a,b] : \eta (t) \in \bdy\eta ([a,b])\}
\end{equation*}
is a.s.\ determined by $\mcl T \mcl M$.
\end{lem}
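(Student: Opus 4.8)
The plan is to prove the ``if and only if'' characterization of boundary times, and then deduce the measurability statement from it together with \cref{lem-graph-permuton}.

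First I would prove the equivalence $\eta(t) \in \bdy\eta([a,b]) \iff \exists\, s \in [0,1]\setminus[a,b]$ with $(t,s) \in \mcl T\mcl M$. For the forward direction, suppose $t \in (a,b)$ and $\eta(t) \in \bdy\eta([a,b])$. Since $\eta$ is space-filling and $\eta([a,b])$ is a proper closed subset of $\BB C$ (as $[a,b] \subsetneq [0,1]$ and $\eta$ assigns positive $\mu_h$-mass to its image on any nontrivial interval), every boundary point of $\eta([a,b])$ is hit by $\eta$ both during $[a,b]$ and during $[0,1]\setminus[a,b]$: indeed $\bdy\eta([a,b]) = \bdy\eta([0,1]\setminus[a,b])$ since these two sets cover $\BB C$, have disjoint interiors, and $\eta$ is space-filling. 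Hence there is some $s \in [0,1]\setminus(a,b)$ with $\eta(s) = \eta(t)$; after possibly replacing $s$ by the appropriate endpoint argument (noting $\eta(a),\eta(b)$ are themselves on the boundary), one gets $s \in [0,1]\setminus[a,b]$, so $(t,s) \in \mcl T\mcl M$. For the converse, suppose $t \in (a,b)$ and $\eta(t) = \eta(s)$ for some $s \notin [a,b]$. If $\eta(t)$ were in the interior of $\eta([a,b])$, then a whole neighborhood of $\eta(t)$ would be contained in $\eta([a,b])$. But $\eta$ restricted to $[0,1]\setminus(a,b)$ is continuous and its image is closed with nonempty interior (it too has positive $\mu_h$-mass, being the complement of $\eta((a,b))$ whose closure is $\eta([a,b])$, and neither set is all of $\BB C$ when $a<b$); more directly, $\eta(s) = \eta(t)$ with $s \notin [a,b]$ means $\eta(t)$ lies in $\eta([0,1]\setminus(a,b))$, and one argues via the space-filling property that the interior of $\eta([a,b])$ is disjoint from $\eta([0,1]\setminus(a,b))$ except on the common boundary. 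Hence $\eta(t) \in \bdy\eta([a,b])$.

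For the measurability conclusion: if $a,b$ are $\sigma(\mcl T\mcl M)$-measurable, the set $\{t \in [a,b] : \exists\, s \in [0,1]\setminus[a,b],\ (t,s) \in \mcl T\mcl M\}$ is the projection onto the first coordinate of $\mcl T\mcl M \cap \big([a,b] \times ([0,1]\setminus[a,b])\big)$, which is a measurable function of $\mcl T\mcl M$ (and $a,b$); by the equivalence just proved, this set equals $\{t \in [a,b] : \eta(t) \in \bdy\eta([a,b])\}$ a.s.

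The main obstacle I expect is the careful handling of the endpoint cases and the $\kappa \in (4,8)$ subtleties: when $\kappa \in (4,8)$ the region $\eta([a,b])$ need not be simply connected, so ``boundary'' must be understood as topological boundary in $\BB C$, and one must be sure that the argument ``$\eta(t)$ in the interior of $\eta([a,b])$ forces all preimages of $\eta(t)$ to lie in $[a,b]$'' genuinely holds. This is essentially the same mechanism already used in \cref{lem-graph-permuton} — the boundary of $\eta([a,b])$ is exactly the set of points hit by $\eta$ at some time outside $(a,b)$ — so I would lean on that identification, being careful that ``outside $(a,b)$'' versus ``outside $[a,b]$'' only differs at the two endpoint times, which contribute a measure-zero / negligible set of points and in any case $\eta(a),\eta(b) \in \bdy\eta([a,b])$ so the statement with closed interval $[a,b]$ is consistent.
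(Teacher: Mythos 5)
Your proof takes essentially the same approach as the paper's, which simply asserts the topological equivalence ``$\eta(t) \in \bdy\eta([a,b])$ iff there is a time $s \in [0,1]\setminus(a,b)$ with $\eta(t)=\eta(s)$'' and then notes the lemma follows from the definition of $\mcl T\mcl M$; your version unpacks the compactness/continuity argument for the forward direction and the $\mu_h$-mass disjointness argument for the converse. The endpoint subtlety you flag (ruling out $s=a$ or $s=b$ to pass from $[0,1]\setminus(a,b)$ to $[0,1]\setminus[a,b]$) is actually present in the paper as well -- the proof there characterizes boundary times via $[0,1]\setminus(a,b)$ while the lemma statement uses $[0,1]\setminus[a,b]$ -- and is harmless for the measurability conclusion, which is what the lemma is ultimately used for; you correctly identify but do not fully resolve this, matching the level of detail in the paper itself.
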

\begin{proof}
We have $\eta (t) \in \bdy\eta ([a,b])$ if and only if there is a time $s\in [0,1]\setminus (a,b)$ such that $\eta (t) = \eta (s)$. The lemma statement is immediate from this and the definition~\eqref{eqn-intersect-set} of $\mcl T \mcl M$. 
\end{proof}

We note that $\mcl T \mcl M$ does \emph{not} determine the set of $t\in [a,b]$ such that $\eta (t)$ belongs to the left boundary of $\eta ([a,b])$, and similarly with ``left'' and ``right'' interchanged. Indeed, if we were to replace $\eta $ by its complex conjugate $\ol\eta$, we would swap ``left'' and ``right'' but we would not change $\mcl T \mcl M$. See, however, Lemma~\ref{lem-bdy-arc-determined} below. 

When $\kappa\geq 8$, it holds for each $0\leq a < b \leq 1$ that the left and right boundaries of $\eta ([a,b])$ meet only at their endpoints and $\eta ([a,b])$ is simply connected. When $\kappa \in (4,8)$, the two boundary curves can intersect (but not cross) each other. In this case, $\eta ([a,b])$ looks like an ordered string of ``beads'', each of which is simply connected, which meet at the intersection points of the left and right boundaries. See Figure~\ref{fig-cell-bdy}, left, for an illustration.

\begin{figure}[t!]
 \begin{center}
\includegraphics[scale=.85]{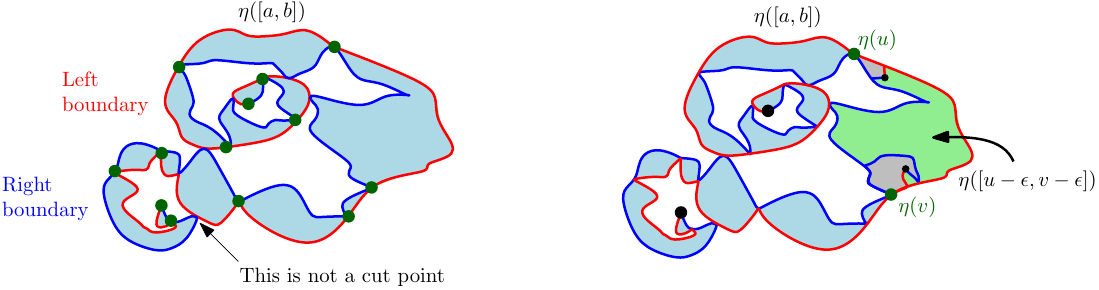}
\vspace{-0.01\textheight}
\caption{ \textbf{Left:} The segment $\eta ([a,b])$ with its left and right boundaries shown in red and blue, respectively, and the images under $\eta$ of the cut times (Definition~\ref{def-cut}) shown as dark green dotes. \textbf{Right:} Illustration of the statement and proof of Lemma~\ref{lem-bdy-arc-determined}. 
}\label{fig-cell-bdy}
\end{center}
\vspace{-1em}
\end{figure} 

\begin{defn} \label{def-cut}
We say that $t\in [a,b]$ is a \textbf{cut time} for $\eta |_{[a,b]}$ if either $t \in \{a,b\}$ or $\eta ([a,t]) \cap \eta ([t,b])$ does not contain a non-singleton connected set. If $t$ is a cut time for $\eta |_{[a,b]}$, then $\eta(t)$ is a \textbf{cut point} of $\eta |_{[a,b]}$.
\end{defn}

Cut times for $\eta |_{[a,b]}$ correspond to points where the right side of the left boundary meets the left side of the right boundary. Points where the left side of the left boundary meets the right side of the right boundary do not give rise to cut points. However, such points can give rise to a totally disconnected set of intersection points of $\eta ([a,t]) $ and $\eta ([t,b])$ even when $t$ is a cut time for $\eta|_{[a,b]}$. As noted above, for $\kappa\geq 8$ the only cut times for $\eta|_{[a,b]}$ are $a$ and $b$. 

\begin{lem} \label{lem-cut-determined}
Almost surely, for each $a,b\in [0,1]$ with $a<b$ and each $t\in (a,b)$, the time $t$ is a cut time for $\eta |_{[a,b]}$ if and only if the following is true. 
For each $\ep > 0$, it holds for each large enough $n\in\BB N$ that the vertex sets $\mcl V\mcl G^n_{[a,t-\ep]}$ and $\mcl V\mcl G^n_{[t+\ep,b]}$ are not joined by an edge of $\mcl G^n$. 

In particular, if $a$ and $b$ are chosen in a $\sigma(\mcl T \mcl M)$-measurable manner then the set 
\begin{equation*}
	\{t\in [a,b] : \text{$t$ is a cut time for $\eta |_{[a,b]}$}\}
\end{equation*} 
is a.s.\ determined by $\mcl T \mcl M$. 
\end{lem}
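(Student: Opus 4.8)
\textbf{Proof plan for Lemma~\ref{lem-cut-determined}.}

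The plan is to prove the ``if and only if'' characterization of cut times directly from the geometry of space-filling SLE, and then note that the right-hand side of the equivalence is manifestly $\sigma(\mcl T \mcl M)$-measurable by \cref{lem-graph-permuton} (which says $\mcl G^n$ is determined by $\mcl T \mcl M$) together with the fact that $\mcl G^n_{[a,t-\ep]}$ and $\mcl G^n_{[t+\ep,b]}$ are determined as subgraphs of $\mcl G^n$ once $a,t,b,\ep$ are specified. For the final ``in particular'' clause one then takes a countable union over rational $\ep$, quantifies ``for all large enough $n$'' using a countable intersection/union over $n$, and uses the fact that a countable collection of $\sigma(\mcl T \mcl M)$-measurable sets determines a $\sigma(\mcl T \mcl M)$-measurable set.

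First I would prove the forward implication: suppose $t$ is a cut time for $\eta|_{[a,b]}$, i.e.\ $\eta([a,t]) \cap \eta([t,b])$ contains no non-singleton connected set. Fix $\ep > 0$. I claim that for $n$ large enough, no cell of $\mcl G^n_{[a,t-\ep]}$ shares a non-trivial boundary arc with a cell of $\mcl G^n_{[t+\ep,b]}$. The key point is that $\eta([a,t-\ep])$ and $\eta([t+\ep,b])$ are disjoint compact sets (if they intersected at a point $z$, then $z \in \eta([a,t])\cap\eta([t,b])$ and, since both restrictions are space-filling curve segments, this intersection point would lie in the closure of an open set filled by both — actually more carefully: $\eta([a,t-\ep])$ has nonempty interior near any of its points that are hit only in $(a,t-\ep)$, and I would argue that $\eta([a,t-\ep]) \cap \eta([t+\ep,b]) = \emptyset$ because a common point would have to lie in $\eta([a,t])\cap\eta([t,b])$, and the no-non-singleton-connected-set hypothesis forces this to be at most a singleton, but in fact one shows it is empty using continuity: if $z$ were such a point then $z\in\eta([a,t])\cap\eta([t,b])$, and since $\eta$ is continuous and $t-\ep < t < t+\ep$, a neighborhood of $z$ inside $\eta([a,t-\ep])$ would be disconnected from... — I would spell this out, but the cleanest route is: the sets $\eta([a,t-\ep])$ and $\eta([t+\ep,b])$ are each closed, their union is contained in $\eta([a,b])$, and any common point $z$ has the property that $z$ is hit by $\eta$ at a time $\le t-\ep$ and at a time $\ge t+\ep$, hence $z \in \eta([a,t])\cap\eta([t,b])$; now the segment of $\eta$ that fills a neighborhood of $z$ within $\eta([a,t-\ep])$ and the corresponding segment within $\eta([t+\ep,b])$ would, by taking closures, force $\eta([a,t])\cap\eta([t,b])$ to contain a non-singleton connected set — contradiction). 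Given that $\eta([a,t-\ep])$ and $\eta([t+\ep,b])$ are disjoint compact sets, they are at positive distance $d > 0$ apart. Since the diameters of the cells $\eta([x-1/n,x])$ tend to $0$ uniformly as $n \to \infty$ (this follows from continuity of $\eta$ under the $\mu_h$-parametrization together with the standard fact that the maximal cell diameter goes to $0$ — this can be cited from the mating-of-trees / SLE literature or deduced from uniform continuity of $\eta$ on $[0,1]$), for $n$ large every cell in $\mcl G^n_{[a,t-\ep]}$ lies within distance, say, $d/3$ of $\eta([a,t-\ep])$ and every cell in $\mcl G^n_{[t+\ep,b]}$ lies within $d/3$ of $\eta([t+\ep,b])$, so no two such cells can intersect, let alone share a boundary arc. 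This gives the forward direction.

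For the reverse implication, suppose $t$ is \emph{not} a cut time, so $\eta([a,t]) \cap \eta([t,b])$ contains a non-singleton connected set $C$. I would argue that then, for \emph{every} $\ep > 0$ small enough and \emph{arbitrarily large} $n$, the vertex sets $\mcl V\mcl G^n_{[a,t-\ep]}$ and $\mcl V\mcl G^n_{[t+\ep,b]}$ \emph{are} joined by an edge — or more precisely, that the negation of the right-hand side holds, i.e.\ it is not the case that ``for all large $n$ they are not joined.'' The connected set $C$ has positive diameter, and $C \subset \eta([a,t])\cap\eta([t,b])$; I claim $C$ actually meets the interiors of cells arbitrarily close (in time) to $t$ on both sides. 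More carefully, $C$ being a non-singleton connected subset of $\eta([a,b])$ contains a subcontinuum, which by the structure of space-filling SLE has points in the interior of $\eta([a,b])$; near such an interior point $z_0 \in C$, for every $\ep>0$ there is a cell $\eta([x-1/n,x])$ with $x - 1/n \ge a$, $x \le t$ whose interior meets a neighborhood of $z_0$, provided $z_0 \in \overline{\eta((a,t))}$, and symmetrically on the other side. The one subtlety is the $\ep$-buffer: a priori $C$ might only be approached by $\eta$ near times exactly at $t$. To handle this I would use that $C$ is a non-singleton \emph{connected} set, so it contains points other than $z_0 := \eta(t)$ (if $\eta(t)\in C$ at all); any point $z_1 \in C$ with $z_1 \ne \eta(t)$ is hit by $\eta$ at some time in $[a,t)$ and at some time in $(t,b]$, and by continuity these times are bounded away from $t$, so for small $\ep$, $z_1$ is hit within $[a,t-\ep]$ and within $[t+\ep,b]$; then cells containing $z_1$ in their interior (for large $n$) belong to both $\mcl G^n_{[a,t-\ep]}$ and $\mcl G^n_{[t+\ep,b]}$, and since $z_1$ is a common interior point, the overlap contains a non-singleton connected set, so \emph{either} a single cell lies in both families (making the statement ``not joined by an edge'' vacuously fail in spirit — I'd phrase this as: the two vertex sets are not disjoint, hence certainly ``joined'' in the relevant sense, or I'd adjust the cell-neighborhood argument to produce genuinely distinct adjacent cells) \emph{or} two distinct adjacent cells, one in each family, arise. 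Either way the right-hand condition fails for infinitely many $n$. I would be slightly careful here and choose $z_1$ to be a point of $C$ lying in the interior of $\eta([a,b])$ and distinct from $\eta(t)$; such a point exists because $C$ is a non-singleton connected set and the boundary $\bdy\eta([a,b])$ together with $\{\eta(t)\}$ cannot contain a non-singleton connected subset of $C$ unless... — actually here I'd just take $z_1$ to be any point of $C$ other than $\eta(t)$, note it is hit at times $s_1 < t < s_2$, and run the cell argument; whether $z_1$ is a boundary or interior point of $\eta([a,b])$, a small cell containing $z_1$ or adjacent to $z_1$ will be in both families for large $n$.

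The main obstacle I anticipate is the reverse direction, specifically making the $\ep$-buffer argument fully rigorous: I need to extract from ``$\eta([a,t])\cap\eta([t,b])$ is not a singleton'' a point $z_1 \ne \eta(t)$ in that intersection, hit strictly before $t-\ep$ and strictly after $t+\ep$ for all small $\ep$, and then connect this to adjacency of cells in $\mcl G^n$ (as opposed to mere intersection of cells — recall from \cref{fig-weird-cell} that for $\kappa\in(4,8)$ intersecting cells need not be adjacent). The cleanest fix is: since $z_1$ is a common point of $\eta([a,t-\ep])$ and $\eta([t+\ep,b])$ for small $\ep$, and since these are closures of the interiors they contain arbitrarily near $z_1$ (using item $(iv)$ in \cref{sect:SLES}, or the fact that cells have nonempty interior), for large $n$ there is a cell $\eta([x-1/n,x]) \subset \eta([a,t-\ep])$ and a cell $\eta([y-1/n,y]) \subset \eta([t+\ep,b])$ whose interiors both come within, say, $\delta$ of $z_1$ for any prescribed $\delta$; if these happen to be the same cell then that cell is in both vertex sets and we are done, and if distinct then by choosing them to both have $z_1$ in their closure and using that $z_1$ lies in $\overline{\mathrm{int}\,\eta([x-1/n,x])} \cap \overline{\mathrm{int}\,\eta([y-1/n,y])}$, their intersection contains a non-singleton connected set so they are adjacent in $\mcl G^n$. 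I would present the forward direction first (it is shorter and cleaner) and then the reverse, and close with the measurability remark via countable operations and \cref{lem-graph-permuton}.
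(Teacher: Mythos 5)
Your proposal has genuine gaps in both directions, both stemming from the Cantor-like boundary structure of space-filling SLE for $\kappa\in(4,8)$; the paper's proof sidesteps exactly these issues by working throughout with total disconnectedness rather than disjointness or single-point touching.

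In the forward direction you assert that if $t$ is a cut time then $\eta([a,t-\ep])\cap\eta([t+\ep,b])=\emptyset$, and you justify this by arguing that a common point $z$ would force an open-set overlap. This is false for $\kappa\in(4,8)$. As the paper notes just before \cref{def-cut}, even when $t$ is a cut time, the left side of the left boundary of $\eta([a,b])$ can meet the right side of the right boundary in a totally disconnected, uncountable set; such a point $z\neq\eta(t)$ is hit at a time $s<t$ and a time $s'>t$, and since $z$ is at positive distance from $\eta(t)$, continuity of $\eta$ forces $s<t-\ep_0$ and $s'>t+\ep_0$ for some $\ep_0>0$, so $z\in\eta([a,t-\ep])\cap\eta([t+\ep,b])$ for $\ep<\ep_0$. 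Your open-set-overlap argument does not save you: $\eta([a,t-\ep])$ and $\eta([t+\ep,b])$ each have nonempty interior near $z$, but these interiors lie on \emph{opposite} sides of $z$ and do not overlap. What is true (and is what the paper uses) is merely that $\eta([a,t-\ep])\cap\eta([t+\ep,b])\subset\eta([a,t])\cap\eta([t,b])$ is totally disconnected; combined with the definition of adjacency in $\mcl G^n$ (which requires a \emph{non-singleton connected} shared set) this already rules out a joining edge for large $n$, with no need for positive distance.

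In the reverse direction you propose to deduce adjacency of two cells $A,B$ from the fact that $z_1\in\ol{\mathrm{int}\,A}\cap\ol{\mathrm{int}\,B}$. This is again false: since each cell is the closure of its interior, the hypothesis is just $z_1\in A\cap B$, and two cells sharing a point (or even a totally disconnected Cantor set of points) need not be joined by an edge — this is precisely the phenomenon depicted in \cref{fig-weird-cell}, which you cite but do not actually overcome. The step that would salvage the argument is to use that $C$ is a \emph{non-singleton connected} subset of $\eta([a,t])\cap\eta([t,b])$, pick a non-trivial sub-arc $\gamma\subset C$ avoiding $\eta(t)$, observe (by continuity of $\eta$) that $\gamma\subset\eta([a,t-\ep])\cap\eta([t+\ep,b])$ for $\ep$ small, note that $\gamma$ is a common boundary arc of $\eta([a,t-\ep])$ and $\eta([t+\ep,b])$ (their interiors are disjoint, so $\gamma$ lies in both boundaries), and then argue that for large $n$ some pair of cells on opposite sides of $\gamma$ must share a non-trivial sub-arc of $\gamma$ and hence be adjacent. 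In short, what produces adjacency is not the common point but the common arc. The paper's proof is organized around exactly this: the intermediate invariant ``$\eta([a,t-\ep])\cap\eta([t+\ep,b])$ is totally disconnected'' is equivalent, via the definition of $\mcl G^n$, to the graph condition for large $n$, and passing $\ep\to0$ recovers the cut-time property. Your measurability remarks at the end are fine and match the paper.
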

\begin{proof}
By Definition~\ref{def-cut}, if $t$ is a cut time for $\eta |_{[a,b]}$, then for each $\ep > 0$ the set $\eta ([a,t-\ep]) \cap \eta ([t+\ep,b])$ is totally disconnected. The definition of $\mcl G^n$ therefore implies that if $n$ is large enough, then $\mcl V\mcl G^n_{[a,t-\ep]}$ and $\mcl V\mcl G^n_{[t+\ep,b]}$ are not joined by an edge of $\mcl G^n$. Conversely, if the condition in the lemma statement holds for each $\ep > 0$ and each large enough $n$, then the definition of $\mcl G^n$ shows that for each $\ep > 0$, the set $\eta ([a,t-\ep]) \cap \eta ([t+\ep,b])$ is totally disconnected. Sending $\ep \rta 0$ then gives that $t$ is a cut time for $\eta |_{[a,b]}$. 
The last assertion is immediate from our above characterization of cut times and Lemma~\ref{lem-graph-permuton}.
\end{proof}

Although $\mcl T \mcl M$ does not distinguish between the left and right boundaries of $\eta ([a,b])$, the following lemma tells us that for certain special segments $\eta ([u,v])$, we can see from $\mcl T \mcl M$ the division of $\bdy\eta ([u,v])$ into two distinguished subsets (we just can't see which one is ``left'' and which one is ``right''). 

\begin{lem} \label{lem-bdy-arc-determined}
Let $a,b\in [0,1]$ with $a<b$ and let $u,v\in [a,b]$ be cut times for $\eta |_{[a,b]}$ such that $u <v$ and there are no cut times for $\eta |_{[a,b]}$ in $(u,v)$ (note that for $\kappa \geq 8$, we can take $u =a$ and $v = b$). Assume that $a,b,u,v$ are chosen in a $\sigma(\mcl T \mcl M)$-measurable manner. Then $\mcl T \mcl M$ determines the \emph{unordered} pair of sets
\allb \label{eqn-bdy-arc-determined} 
&\left\{t \in [u,v] : \text{$\eta (t)$ is in the left boundary of $\eta ([u,v])$}\right\} \quad \text{and} \notag\\
&\left\{t \in [u,v] : \text{$\eta (t)$ is in the right boundary of $\eta ([u,v])$}\right\}
\alle
but not which of these sets corresponds to ``left'' and which corresponds to ``right''. 
\end{lem}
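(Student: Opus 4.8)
The idea is to characterize the two boundary arcs purely in terms of the adjacency graph $\mcl G^n$ and the intersection structure along $[u,v]$, both of which are determined by $\mcl T \mcl M$ by Lemmas~\ref{lem-graph-permuton} and~\ref{lem-cut-determined}. Since $u$ and $v$ are consecutive cut times for $\eta|_{[a,b]}$, the segment $\eta([u,v])$ is a single ``bead'': its left and right boundary curves meet only at $\eta(u)$ and $\eta(v)$ and its interior is simply connected (this is the key reason for restricting to a segment between consecutive cut times; for $\kappa \geq 8$ the whole of $\eta([a,b])$ already has this property). By Lemma~\ref{lem-bdy-determined} applied with $[a,b]$ replaced by $[u,v]$ — note $[u,v]$ is $\sigma(\mcl T \mcl M)$-measurable — the set $\{t \in (u,v) : \eta(t) \in \bdy\eta([u,v])\}$ is determined by $\mcl T \mcl M$. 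So $\mcl T \mcl M$ determines the full boundary-time set; the task is to split it into the two arcs.

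First I would pass to the graph level. Consider the subgraph $\mcl G^n_{[u,v]}$ and the vertices whose cells meet $\bdy\eta([u,v])$ (which we can detect by the previous paragraph, since a cell $\eta([x-1/n,x])$ meets $\bdy\eta([u,v])$ iff its time-interval $[x-1/n,x]$ contains a boundary time of $\eta|_{[u,v]}$, up to the usual error of one cell). These ``boundary vertices'' form, in the planar map, a cycle bounding the disk $\eta([u,v])$, broken at the two cells containing $\eta(u)$ and $\eta(v)$ into two arcs. The combinatorial claim I would prove is: the \emph{unordered} pair of these two vertex-arcs is recoverable from the abstract graph $\mcl G^n_{[u,v]}$ together with the distinguished endpoint-cells at $u$ and $v$. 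To see this, note that in a planar triangulation-like map of a disk, the boundary cycle is a well-defined notion, and a cycle is determined as a subgraph up to its two possible orientations — but here we don't even need an orientation, just the bipartition of the boundary cycle minus two vertices into two paths, which is intrinsic to the graph once one knows it is the boundary cycle of a disk with two marked boundary vertices. More carefully: the boundary vertices, with the edges of $\mcl G^n$ among them that come from consecutive boundary cells, form a path-like structure; removing the two endpoint-cells disconnects it into exactly two connected pieces, and these are the two arcs. I would make this precise using the ``beads are simply connected'' structure plus the fact, noted right before Lemma~\ref{lem-bdy-arc-determined}, that consecutive boundary cells on the same side are joined by an edge of $\mcl G^n$.

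Then I would take the limit $n \to \infty$. For each $n$ we get an unordered pair of vertex-subsets of $\mcl V\mcl G^n_{[u,v]}$, hence (pulling back cells to their time-intervals) an unordered pair of subsets of $[u,v]$; I would show these converge, as $n\to\infty$, to the unordered pair in~\eqref{eqn-bdy-arc-determined}. The point is that a time $t\in(u,v)$ with $\eta(t)$ in the open left boundary arc lies, for all large $n$, in cells which are on the ``left'' vertex-arc of $\mcl G^n_{[u,v]}$, by continuity of $\eta$ and the local structure of the boundary; the (countably many) times mapping to $\eta(u)$ or $\eta(v)$ or to the self-intersection points of a single boundary arc form a measure-zero exceptional set that can be absorbed into the closure. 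Since each finite-$n$ object is $\sigma(\mcl T \mcl M)$-measurable and the limit is taken along a deterministic sequence, the limiting unordered pair is $\sigma(\mcl T \mcl M)$-measurable, which is exactly the assertion. Finally, the impossibility of distinguishing ``left'' from ``right'' is immediate: replacing $\eta$ by $\ol\eta$ swaps the two arcs but leaves $\mcl T \mcl M$ unchanged, as already observed in the text.

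The main obstacle I anticipate is the combinatorial step: proving rigorously that the abstract graph $\mcl G^n_{[u,v]}$ (not viewed as a planar map) together with the two endpoint-cells determines the unordered bipartition of the boundary cycle. One has to rule out that some ``shortcut'' edge of $\mcl G^n$ among non-consecutive boundary cells, or some ambiguity in which vertices count as boundary vertices, destroys the bipartition. I expect this to be handled by carefully using (i) that $\eta([u,v])$ between consecutive cut times is a topological disk, so its boundary is a genuine Jordan curve and the boundary cells really do form a cycle with well-understood adjacencies, and (ii) that the cells filled consecutively along each boundary arc are linearly ordered by $\eta$, so the ``boundary-vertex induced subgraph'' has exactly the structure of two paths glued at the two endpoint-cells — a structure from which the unordered pair of paths is trivially recovered. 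Everything else (the measurability bookkeeping and the $n\to\infty$ limit) is routine given the lemmas already established.
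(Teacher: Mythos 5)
Your overall idea is the same as the paper's: pass to the adjacency graph $\mcl G^n$, identify the two connected ``arcs'' of boundary cells, pull back to time-sets, and send $n\to\infty$, using the fact that $\eta([u,v])$ is a Jordan domain between consecutive cut times. But the combinatorial step you flag as your ``main obstacle'' is a genuine gap, and your proposed resolution (``removing the two endpoint-cells disconnects the boundary cycle into exactly two pieces'') is not correct as stated. The problem is that when $n$ is large there can be \emph{several} cells near each of $\eta(u)$ and $\eta(v)$ -- not just the two cells whose time-intervals contain $u$ and $v$ -- that share non-trivial boundary arcs with \emph{both} the left and right boundary of $\eta([u,v])$, since the two boundary arcs come together at $\eta(u)$ and $\eta(v)$. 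Any such cell is adjacent to cells on both arcs, so deleting only the two ``endpoint'' cells need not disconnect the boundary subgraph, and the abstract graph alone does not tell you which additional cells to delete. Your appeal to ``two paths glued at the endpoint-cells'' also implicitly assumes there are no extra (``shortcut'') edges among boundary cells, which you never justify.

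The paper's proof avoids this by an $\ep$-shrinkage rather than cell deletion: for fixed $\ep>0$ it considers only the cells whose interiors meet $\bdy\eta([u,v])\cap\bdy\eta([u+\ep,v-\ep])$. The left and right portions of this set are two disjoint compact arcs, so once $n$ is large enough that every cell has diameter below the positive distance between them, no cell can touch both, and the induced subgraph automatically splits into exactly two connected components $V^n$, $\wt V^n$ -- no combinatorial argument about cycles or shortcut edges is needed. These components are $\sigma(\mcl T\mcl M)$-measurable by Lemmas~\ref{lem-graph-permuton} and~\ref{lem-bdy-determined}, and intersecting their cell-intervals with the boundary-time set recovers the unordered pair in~\eqref{eqn-bdy-arc-determined} restricted to $[u+\ep,v-\ep]$; sending $\ep\to 0$ finishes. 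Your $n\to\infty$ limit and the final reflection-symmetry remark match the paper, but to make the argument rigorous you should replace the ``delete two cells'' step by the $\ep$-truncation, which is what actually makes the two arcs combinatorially separable from $\mcl T\mcl M$.
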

\begin{proof}
See Figure~\ref{fig-cell-bdy}, right, for an illustration.
Let $a,b,u,v$ be as in the lemma statement. Let us first note that since there are no cut times for $\eta |_{[a,b]}$ in $(u,v)$, the left and right boundaries of $\eta([u,v])$ are subsets of the left and right boundaries of $\eta ([a,b])$ which meet only at their endpoints. In particular, $\eta ([u,v])$ is bounded by a Jordan curve and so is simply connected. 

For each $\ep > 0$, the set $\bdy \eta ([u,v]) \cap \bdy \eta ([u+\ep ,v - \ep])$ has two (possibly empty) connected components, one of which is a subset of the left boundary of $\eta ([u,v])$ and the other of which is a subset of the right boundary. By the continuity of $\eta $, a.s.\ as $n\rta\infty$ the maximal size of the cells $\eta ([x-1/n,x])$ for $x\in \mcl V\mcl G^n$ tends to zero. From this and the definition of $\mcl G^n$, we see that for each large enough $n\in\BB N$ (depending on $\ep$), none of the cells $\eta ([x-1/n,x])$ for $x\in \mcl V\mcl G^n_{[u+\ep ,v-\ep]}$ shares non-trivial boundary arcs with both the left and right boundaries of $\eta ([u,v])$. Therefore, the graph
\eqb \label{eqn-bdy-arc-split}
\mcl G^n\left( \bdy \eta ([u,v]) \cap \bdy \eta ([u+\ep ,v - \ep]) \right) ,
\eqe 
defined as in Definition~\ref{def-graph-subset} has two connected components, one of which consists of the vertices whose corresponding cells intersect the left boundary of $\eta ([u,v])$ and the other of which consists of the vertices whose corresponding cells intersect the right boundary. Call the vertex sets of these two connected components $V^n$ and $\wt V^n$. We note that $\mcl G^n$ (and hence also $\mcl T \mcl M$, by Lemma~\ref{lem-graph-permuton}) determines $V^n$ and $\wt V^n$, but not which of $V^n$ and $\wt V^n$ corresponds to the left boundary and which corresponds to the right boundary. 

Each time $t \in [u+\ep , v-\ep]$ such that $\eta (t) \in \bdy \eta ([u,v])$ is contained in $[x-1/n,x]$ for one of the vertices of the graph~\eqref{eqn-bdy-arc-split}. From this and the preceding paragraph, we get that for each large enough $n\in\BB N$, the unordered pair of sets
\allb
&[u+\ep , v-\ep] \cap \bigcup_{x\in V^n} [x-1/n,x] \cap \eta ^{-1}(\bdy \eta ([u,v]) ) \quad \text{and} \notag\\
&\qquad\qquad [u+\ep , v-\ep] \cap \bigcup_{x\in \wt V^n} [x-1/n,x] \cap \eta ^{-1}(\bdy \eta ([u,v]) )
\alle
coincides with the two sets in~\eqref{eqn-bdy-arc-determined} intersected with $[u+\ep,v-\ep]$. By Lemmas~\ref{lem-graph-permuton} and~\ref{lem-bdy-determined}, we get that for each $\ep > 0$, the unordered pair of sets consisting of the intersections of the sets in~\eqref{eqn-bdy-arc-determined} with $[u+\ep , v-\ep]$ is determined by $\mcl T \mcl M$. Sending $\ep \rta 0$ concludes the proof. 
\end{proof}

\subsubsection{Convergence of the Tutte embedding} \label{sec-tutte}

\begin{figure}[t!]
 \begin{center}
\includegraphics[scale=.85]{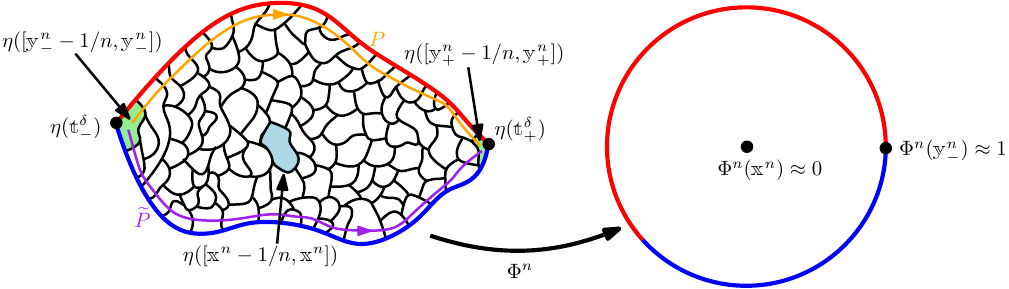}
\vspace{-0.01\textheight}
\caption{ Illustration of the cells corresponding to vertices of $\mcl G^n|_{[\BB t_-^\delta,\BB t_+^\delta]}$ and the associated Tutte embedding map $\Phi^n$. The figure correspond to the case when $\kappa\geq 8$ since we have drawn the cells as simply connected regions, but the set $\eta([\BB t_-^\delta,\BB t_+^\delta])$ is simply connected even when $\kappa \in (4,8)$. The paths $P$ and $\wt P$ visit, in numerical order, the vertices of $\mcl G^n_{[\BB t_-^\delta,\BB t_+^\delta]}$ whose corresponding cells intersect the left and right boundaries of $\eta([\BB t_-^\delta,\BB t_+^\delta])$, respectively. Note that some vertices can be hit by both $P$ and $\wt P$. The intersection set $\mcl T \mcl M$ determines the unordered pair $\{P,\wt P\}$, but not which path corresponds to ``left'' and which corresponds to ``right''. For this reason, the Tutte embedding function $\Phi^n$ can potentially approximate either a conformal map or an anticonformal map from $\eta([\BB t_-^\delta,\BB t_+^\delta])$ to $\BB D$. 
}\label{fig-tutte-domain}
\end{center}
\vspace{-1em}
\end{figure} 

We will now define a version of the Tutte embedding of $\mcl G^n$. 
More precisely, the Tutte embedding is most easily defined for graphs with boundary, so we will consider the Tutte embedding of a large subgraph of $\mcl G^n$ and eventually take a limit as this subgraph increases to all of $\mcl G^n$. See Figure~\ref{fig-tutte-domain} for an illustration of the objects defined in this section. 

Let $\delta  \in (0,1/2)$, which we will eventually send to zero. In the terminology of Definition~\ref{def-cut}, let
\allb
\BB t_-^\delta &:= \left(\text{last cut time for $\eta |_{[\delta,1-\delta]}$ before time $1/2$}\right) \quad \text{and} \notag\\
\BB t_+^\delta &:= \left(\text{first cut time for $\eta |_{[\delta,1-\delta]}$ after time $1/2$}\right) .
\alle
Then $1/2 \in [\BB t_-^\delta , \BB t_+^\delta]$ and $\eta ([\BB t_-^\delta ,\BB t_+^\delta])$ has no cut times, so is bounded by a Jordan curve and hence is simply connected.
Note that if $\kappa\geq 8$, then $\BB t_-^\delta  =\delta$ and $\BB t_+^\delta =1-\delta$, but this is typically not the case for $\kappa \in (4,8)$. As we will see in \cref{lem-endpoints}, it almost surely holds that $\BB t_-^\delta \to 0$ and $\BB t_+^\delta \to 1$ as $\delta \to 0$.
 
For $n\in\BB N$, let $\BB y_-^n , \BB y_+^n\in \mcl V\mcl G^n$ be chosen so that
\eqb \label{eqn-endpt-vertices}
\BB t_-^\delta \in [\BB y_-^n -1/n, \BB y_-^n] \quad \text{and} \quad \BB t_+^\delta \in [\BB y_+^n -1/n, \BB y_+^n] .
\eqe
By Lemma~\ref{lem-cut-determined}, the times $\BB t_-^\delta$ and $\BB t_+^\delta$ are measurable functions of $\mcl T \mcl M$. By Lemma~\ref{lem-bdy-arc-determined}, the same is true of the unordered pair of sets
\allb \label{eqn-delta-sets}
&\left\{t \in [u,v] : \text{$\eta (t)$ is in the left boundary of $\eta ([\BB t_-^\delta , \BB t_+^\delta ])$}\right\} \quad \text{and} \notag\\
&\left\{t \in [u,v] : \text{$\eta (t)$ is in the right boundary of $\eta ([\BB t_-^\delta , \BB t_+^\delta])$}\right\} .
\alle 
 
Let $P : [0,K]_{\BB Z} \rta\mcl V\mcl G^n_{[\BB t_-^\delta , \BB t_+^\delta ]}$ be the function defined so that $P(k)$ is the $k$th smallest vertex $x\in \mcl V\mcl G^n_{[\BB t_-^\delta , \BB t_+^\delta ]}$ (in numerical order) with the property that $[x-1/n,x]$ intersects the first set in~\eqref{eqn-delta-sets}. Similarly define $\wt P : [0,\wt K]_{\BB Z} \rta \mcl V\mcl G^n_{[\BB t_-^\delta , \BB t_+^\delta ]}$, but with the second set in place of the first set. Then each of $P$ and $\wt P$ is a path in $\mcl G^n$ from $\BB y_-^n$ to $\BB y_+^n$ (notation as in~\eqref{eqn-endpt-vertices}). 

\begin{defn} \label{def-bdy-path}
For each $n\in\BB N$, we define the \textbf{boundary path} $P_* = P_*^n : [0,K+\wt K]_{\BB Z} \rta   \mcl V \mcl G^n_{[\BB t_-^\delta ,\BB t_+^\delta]}  $ to be a path chosen in a $\sigma(\mcl T \mcl M)$-measurable manner which is equal to either the concatenation of $P$ followed by the time reversal of $\wt P$ or the concatenation of $\wt P$ followed by the time reversal of $P$.
\end{defn}

An example of a $\sigma(\mcl T \mcl M)$-measurable choice for $P_*$ is to declare that $P_*$ traces $P$ first if and only if the first vertex $y\in\mcl V\mcl G^n|_{[\BB t_-^\delta,t_+^\delta]}$ hit by $P$ but not $\wt P$ is numerically smaller than the first vertex $\wt y \in\mcl V\mcl G^n|_{[\BB t_-^\delta,t_+^\delta]}$ which is hit by $\wt P$ but not $P$. An example of a choice of $P_*$ which is \emph{not} $\sigma(\mcl T \mcl M)$-measurable is to declare that $P_*$ traces $P$ before $\wt P$. The reason for this is that we cannot tell from $\mcl T \mcl M$ which of the sets in~\eqref{eqn-delta-sets} corresponds to the left boundary and which corresponds to the right boundary.

The path $P_*$ visits all of the vertices of $\mcl G^n|_{[\BB t_-^\delta, \BB t_+^\delta]}$ whose corresponding cells intersect $\bdy \eta ([u,v])$. It visits these vertices in either counterclockwise order or clockwise order, but we cannot tell which just from seeing $\mcl T \mcl M$. 

We are now ready to define the \textbf{Tutte embedding} of $\mcl G^n_{[\BB t_-^\delta ,\BB t_+^\delta]}$, which will be a function 
\eqb\label{eq:tutte}
\Phi^n : \mcl V \mcl G^n_{[\BB t_-^\delta ,\BB t_+^\delta]} \rta \ol{\BB D} .
\eqe
The definition is essentially the same as in~\cite[Section 1.1.2]{gms-tutte}. We will need to consider the marked interior vertex
\eqb
\BB x^n := \begin{cases}
1/2 \quad &\text{$n$ is even} \\
(n+1)/2n \quad &\text{$n$ is odd}   
\end{cases} \in\mcl V\mcl G^n_{[\BB t_-^\delta ,\BB t_+^\delta]} .
\eqe
Our map $\Phi^n$ will take $\BB x^n$ to (approximately) 0 and $\BB y_-^n$ to (approximately) 1. 

We first define $\Phi^n$ for the vertices hit by $P_*$. For $k \in [0,K+\wt K]_{\BB Z}$, let $\frk p(k)$ be the conditional probability given $\mcl G^n_{[\BB t_-^\delta ,\BB t_+^\delta]}$ that the simple random walk on $\mcl G^n_{[\BB t_-^\delta ,\BB t_+^\delta]}$ started from $\BB x^n$ first hits the set of boundary vertices $P_*([0,K+\wt K]_{\BB Z})$ at a vertex in $P_*([0,k]_{\BB Z})$. We then set\footnote{
If $y \in\mcl V\mcl G^n_{[\BB t_-^\delta,\BB t_+^\delta]}$ and $\eta ([y-1/n,y])$ has a non-trivial intersection with both the left and right boundaries of $\eta ([\BB t_-^\delta,\BB t_+^\delta])$, then $P_*$ will hit $y$ twice, say at times $k_1$ and $k_2$. An example of a cell $\eta ([y-1/n,y])$ of this type is shown in the left panel of Figure~\ref{fig-tutte-domain}. This does not cause a problem for the definition of $\Phi^n(y)$ in~\eqref{eqn-tutte-bdy}, however, for the following reason. By looking that the cell $\eta ([y-1/n,y])$, one sees that the single vertex $y$ disconnects either $P_*([k_1+1,k_2-1]_{\BB Z})$ or $P_*([0,K+\wt K]_{\BB Z}\setminus [k_1+1,k_2-1]_{\BB Z})$ from $\BB x^n$ in $\mcl G^n_{[\BB t_-^\delta,\BB t_+^\delta]}$. In particular, the harmonic measure from $\BB x^n$ of one of these two boundary arcs is zero, which implies that $\Phi^n(k_1) = \Phi^n(k_2)$.
}
\eqb \label{eqn-tutte-bdy}
\Phi^n(P_*(k)) := e^{2\pi i \frk p(k)} .
\eqe
This makes it so that $\Phi^n$ pushes forward the discrete harmonic measure from $\BB x^n$ on the boundary path $P_*([0,K+\wt K]_{\BB Z})$ to (approximately) one-dimensional Lebesgue measure on $\bdy\BB D$. 

We then extend $\Phi^n$ to all of $\mcl V \mcl G^n_{[\BB t_-^\delta ,\BB t_+^\delta]}$ in such a way that $\Phi^n$ is discrete harmonic on $\mcl V \mcl G^n_{[\BB t_-^\delta ,\BB t_+^\delta]} \setminus P_*([0,K+\wt K]_{\BB Z})$. 

\begin{lem} \label{lem-tutte-determined}
The function $\Phi^n$ is a.s.\ determined by $\mcl T \mcl M$. 
\end{lem}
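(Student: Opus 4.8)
\textbf{Proof plan for Lemma~\ref{lem-tutte-determined}.}

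The plan is to trace through every piece of data entering the construction of $\Phi^n$ and check that each is a measurable function of $\mcl T \mcl M$. First I would recall that, by Lemma~\ref{lem-graph-permuton}, the graph $\mcl G^n$ is a.s.\ determined by $\mcl T \mcl M$, and that by the remark following Definition~\ref{def-graph-subset} the same is true of the induced subgraphs $\mcl G^n_{[a,b]}$ whenever $a,b$ are chosen in a $\sigma(\mcl T \mcl M)$-measurable way. Then, by Lemma~\ref{lem-cut-determined}, the cut times $\BB t_-^\delta$ and $\BB t_+^\delta$ are measurable functions of $\mcl T \mcl M$; consequently the subgraph $\mcl G^n_{[\BB t_-^\delta ,\BB t_+^\delta]}$, the distinguished boundary vertices $\BB y_-^n,\BB y_+^n$ from~\eqref{eqn-endpt-vertices}, and the marked interior vertex $\BB x^n$ (which depends only on $n$) are all $\sigma(\mcl T \mcl M)$-measurable.

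Next I would handle the boundary path. By Lemma~\ref{lem-bdy-arc-determined}, the \emph{unordered} pair of sets in~\eqref{eqn-delta-sets} is determined by $\mcl T \mcl M$; combining this with Lemma~\ref{lem-graph-permuton} and the definition of the cells, the unordered pair $\{P,\wt P\}$ of boundary paths is determined by $\mcl T \mcl M$. The point of Definition~\ref{def-bdy-path} is precisely that $P_*$ was built from this unordered pair by a rule (e.g.\ the one spelled out after Definition~\ref{def-bdy-path}) that does not require knowing which path is ``left'' and which is ``right''; hence $P_*$ is $\sigma(\mcl T \mcl M)$-measurable. This is the one delicate point, and it is exactly the reason the lemma only asserts measurability with respect to $\mcl T \mcl M$ and why the embedding is only pinned down up to complex conjugation — I would emphasize in the write-up that orienting $P_*$ by any $\sigma(\mcl T \mcl M)$-measurable rule is legitimate precisely because the harmonic-measure boundary values~\eqref{eqn-tutte-bdy} transform consistently, and that picking the non-measurable rule ``trace $P$ first'' would spoil the argument.

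Finally, the boundary values $\frk p(k)$ in~\eqref{eqn-tutte-bdy} are hitting probabilities for simple random walk on $\mcl G^n_{[\BB t_-^\delta ,\BB t_+^\delta]}$, which is a purely graph-theoretic quantity: simple random walk on a finite graph, and the harmonic extension to the interior, depend only on the graph structure. Since $\mcl G^n_{[\BB t_-^\delta ,\BB t_+^\delta]}$, the vertex $\BB x^n$, and the path $P_*$ are all determined by $\mcl T \mcl M$, so are the values $\frk p(k)$, hence $\Phi^n$ on $P_*([0,K+\wt K]_{\BB Z})$ via~\eqref{eqn-tutte-bdy}, and hence $\Phi^n$ on all of $\mcl V\mcl G^n_{[\BB t_-^\delta ,\BB t_+^\delta]}$ by discrete harmonicity (the solution of the Dirichlet problem on a finite connected graph with given boundary data is unique and is a deterministic function of the graph and the data). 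I do not expect a genuine obstacle here; the only thing to be careful about is the bookkeeping around the $\sigma(\mcl T \mcl M)$-measurability of $P_*$, which is why I would devote the bulk of the written proof to that step and dispatch the rest with the observation that everything else is an explicit function of $(\mcl G^n_{[\BB t_-^\delta,\BB t_+^\delta]}, \BB x^n, P_*)$.
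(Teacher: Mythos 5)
Your proposal is correct and takes essentially the same approach as the paper: the paper's proof is a condensed version of exactly this argument, reducing to the $\sigma(\mcl T \mcl M)$-measurability of $\mcl G^n_{[\BB t_-^\delta,\BB t_+^\delta]}$ (via Lemma~\ref{lem-graph-permuton} and the measurability statements from Lemmas~\ref{lem-cut-determined} and~\ref{lem-bdy-arc-determined}) and of $P_*$ (via Definition~\ref{def-bdy-path}), and then observing that the Tutte embedding is an explicit function of this graph-theoretic data. Your added emphasis on the delicacy of choosing $P_*$ by a rule that does not distinguish left from right is a good reading of why Definition~\ref{def-bdy-path} is phrased as it is.
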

\begin{proof}
By definition, $\Phi^n$ is determined by $\mcl G^n_{[\BB t_-^\delta,\BB t_+^\delta]}$ and $P_*$. The graph $\mcl G^n_{[\BB t_-^\delta,\BB t_+^\delta]}$ is a.s.\ determined by $\mcl T \mcl M$ by Lemma~\ref{lem-graph-permuton} and the measurability statements just before~\eqref{eqn-delta-sets}. The path $P_*$ is a.s.\ determined by $\mcl T \mcl M$ by Definition~\ref{def-bdy-path}. 
\end{proof}

Recall that $\mu_h$ denotes the $\gamma$-LQG measure associated with a singly marked unit area $\gamma$-Liouville quantum sphere $(\BB C, h, \infty)$.

\begin{prop} \label{prop-tutte-conv}
Let $\Phi^n$ be the Tutte embedding function as above. Let $\Psi_\delta$ be the unique conformal map from the simply connected domain $ \eta ([\BB t_-^\delta , \BB t_+^\delta])$ to $ \ol{\BB D}$ which takes $\eta (1/2)$ to 0 and $\eta (\BB t_-^\delta)$ to 1. There is a sequence of maps $\{\Psi_\delta^n\}_{n\in\BB N}$, each of which is equal to either $\Psi_\delta$ or its complex conjugate $\ol\Psi_\delta$, such that as $n\rta\infty$, 
\eqb \label{eqn-tutte-conv}
\max_{x\in\mcl V\mcl G^n_{[\BB t_-^\delta,\BB t_+^\delta]}} \left| \Phi^n(x) - \Psi_\delta^n(\eta (x)) \right| \rta 0 , \quad \text{in probability}. 
\eqe 
In particular, 
\begin{itemize}
\item The Prokhorov distance between $(\Psi_\delta^n)_* \mu_h |_{\eta ([\BB t_-^\delta,\BB t_+^\delta])}$ and the measure which assigns mass $1/n$ to each point of $\Phi^n\left(\mcl V\mcl G^n_{[\BB t_-^\delta,\BB t_+^\delta]}\right)$ converges in probability to zero. 
\item The uniform distance between $\Psi_\delta^n \circ \eta |_{[\BB t_-^\delta,\BB t_+^\delta]}$ and the function $[\BB t_-^\delta,\BB t_+^\delta] \ni t \mapsto \Phi^n( \lfloor t n \rfloor /n )$ converges in probability to zero. 
\end{itemize} 
\end{prop}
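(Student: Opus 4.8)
The plan is to deduce the convergence statement \eqref{eqn-tutte-conv} from the random-walk invariance principle in Proposition~\ref{prop-rw-conv}, using the standard fact that the Tutte (harmonic) embedding of a planar graph is the a priori embedding composed with the conformal map whenever the random walk converges to Brownian motion. The wrinkle, as emphasized in the text and in Figure~\ref{fig-tutte-domain}, is that $\mcl T \mcl M$ does not distinguish the left and right boundary arcs, so the boundary path $P_*$ may trace $\bdy\eta([\BB t_-^\delta,\BB t_+^\delta])$ either clockwise or counterclockwise; this is exactly why the limiting map is $\Psi_\delta$ or $\ol\Psi_\delta$ rather than always $\Psi_\delta$. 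I would organize the proof in four steps.

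First, I would set up the continuum side: since $\eta([\BB t_-^\delta,\BB t_+^\delta])$ has no cut times, it is the closure of a Jordan domain, so $\Psi_\delta$ is a well-defined conformal map onto $\ol{\BB D}$ with the prescribed normalization $\eta(1/2)\mapsto 0$, $\eta(\BB t_-^\delta)\mapsto 1$; by Carathéodory's theorem $\Psi_\delta$ extends to a homeomorphism of closures. Next, the discrete side: $\Phi^n$ is, by construction, the discrete harmonic extension to $\mcl V\mcl G^n_{[\BB t_-^\delta,\BB t_+^\delta]}$ of the boundary values \eqref{eqn-tutte-bdy}, which push forward the discrete harmonic measure from $\BB x^n$ on the boundary vertices to (approximately) uniform measure on $\partial\BB D$. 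Then, combining the optional stopping characterization of discrete harmonic functions with Proposition~\ref{prop-rw-conv}: for a fixed interior vertex $x$, the value $\Phi^n(x)$ equals the expectation, under the law of the random walk $X^x$ on $\mcl G^n_{[\BB t_-^\delta,\BB t_+^\delta]}$ started from $x$ and stopped when it hits $P_*$, of $\exp(2\pi i \frk p(\text{hitting index}))$. Proposition~\ref{prop-rw-conv} says $j\mapsto\eta(X^x_j)$ converges, modulo time parametrization, to Brownian motion started from $\eta(x)$ stopped on exiting $\eta([\BB t_-^\delta,\BB t_+^\delta])$, uniformly in the starting point; and the exit position of this Brownian motion is distributed according to harmonic measure, whose $\Psi_\delta$-pushforward is uniform on $\partial\BB D$. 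Hence the hitting index of the walk on the boundary path, composed with the map $k\mapsto e^{2\pi i\frk p(k)}$, converges to the $\Psi_\delta$-image of the Brownian exit point — up to the orientation ambiguity of $P_*$. This gives \eqref{eqn-tutte-conv} with $\Psi_\delta^n$ equal to $\Psi_\delta$ if $P_*$ happens to trace the boundary counterclockwise and $\ol\Psi_\delta$ otherwise (the orientation of $P_*$ is a function of $\mcl T \mcl M$, hence a well-defined random choice for each $n$).

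To make the third step rigorous I would follow the argument of~\cite[proof of Theorem 3.4 / Section 1.1.2]{gms-tutte} essentially verbatim: one must argue that (i) the boundary-vertex harmonic measures $\frk p(k)$ converge to the continuum boundary arclength parametrization induced by $\Psi_\delta$ (this again uses Proposition~\ref{prop-rw-conv}, now applied to walks started near the boundary, together with the fact that the cell diameters go to zero uniformly by continuity of $\eta$, so that discrete and continuum boundary points are uniformly close), and (ii) the $\max$ over all interior vertices $x$, not just a fixed one, can be controlled, which is exactly the content of the "uniformly over all $x\in\mcl V\mcl G^n(K)$" in Proposition~\ref{prop-rw-conv} combined with an equicontinuity/modulus-of-continuity estimate for $\Psi_\delta$ near $\partial\BB D$ and for the discrete harmonic functions $\Phi^n$ (the latter coming from the moment bounds on vertex degrees quoted in the proof of Proposition~\ref{prop-rw-conv}). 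I expect step (ii) — the uniformity of the approximation up to and including the boundary, and in particular ruling out that $\Phi^n$ assigns anomalous mass near $\partial\BB D$ — to be the main technical obstacle, though it is handled in~\cite{gms-tutte} and the arguments there transfer since, as noted in the proof of Proposition~\ref{prop-rw-conv}, nothing in that part of~\cite{gms-tutte} uses $\gamma^2=16/\kappa$.

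Finally, the two bulleted consequences follow formally from \eqref{eqn-tutte-conv}. For the first bullet: the empirical measure assigning mass $1/n$ to each point of $\Phi^n(\mcl V\mcl G^n_{[\BB t_-^\delta,\BB t_+^\delta]})$ is the $\Phi^n$-pushforward of the measure assigning mass $1/n$ to each vertex, which is the pushforward under $x\mapsto\eta(x)$ of $(1/n)$ times counting measure on $\mcl V\mcl G^n$, i.e.\ a discretization of $\mu_h|_{\eta([\BB t_-^\delta,\BB t_+^\delta])}$ in the $\mu_h$-mass parametrization; by \eqref{eqn-tutte-conv} replacing $\Phi^n$ by $\Psi_\delta^n\circ\eta$ changes this measure by a vanishing amount in the Prokhorov metric, and $(\Psi_\delta^n)_*\mu_h|_{\eta([\BB t_-^\delta,\BB t_+^\delta])}$ is precisely the limit since the cell diameters tend to zero. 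For the second bullet: $t\mapsto\Phi^n(\lfloor tn\rfloor/n)$ and $t\mapsto\Psi_\delta^n(\eta(\lfloor tn\rfloor/n))$ are uniformly close by \eqref{eqn-tutte-conv}, and the latter is uniformly close to $\Psi_\delta^n\circ\eta|_{[\BB t_-^\delta,\BB t_+^\delta]}$ by uniform continuity of $\Psi_\delta\circ\eta$ (a composition of continuous functions on a compact interval) together with $|\lfloor tn\rfloor/n - t|\le 1/n$.
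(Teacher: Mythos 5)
Your proposal is correct and follows essentially the same route as the paper: choose $\Psi_\delta^n$ according to whether $P_*^n$ traces the boundary counterclockwise or clockwise, deduce \eqref{eqn-tutte-conv} from the random-walk invariance principle (Proposition~\ref{prop-rw-conv}) via the standard Tutte-embedding argument of~\cite{gms-tutte}, and obtain the two bulleted statements from \eqref{eqn-tutte-conv} together with the fact that the cell diameters tend to zero. One small imprecision worth noting: the \emph{orientation} of $P_*$ (clockwise vs.\ counterclockwise) is \emph{not} a function of $\mcl T \mcl M$ --- that is exactly the ambiguity the proposition is designed to accommodate --- but this does no harm, since $\Psi_\delta^n$ need only be a well-defined random map on the full probability space, not a $\sigma(\mcl T \mcl M)$-measurable one.
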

\begin{proof}
We set $\Psi_\delta^n = \Psi_\delta$ (resp.\ $\Psi_\delta^n = \ol\Psi_\delta$) if $P_*^n$ visits the vertices of $\mcl V\mcl G^n_{[\BB t_-^\delta,\BB t_+^\delta]}$ which intersect $\bdy\eta ([\BB t_-^\delta,\BB t_+^\delta])$ in counterclockwise (resp.\ clockwise) order. 
With this choice, the convergence~\eqref{eqn-tutte-conv} is immediate from the definition of $\Phi^n$ and the convergence of random walk on $\mcl G^n$ to Brownian motion (Proposition~\ref{prop-rw-conv}). The other convergence statements follow from~\eqref{eqn-tutte-conv} and the fact that (by the continuity of $\eta $) the maximal Euclidean diameter of the cells $\eta ([x-1/n,x])$ tends to zero in probability as $n\rta\infty$. 
\end{proof}

In order to recover the whole curve-decorated quantum surface $(\BB C , h , \infty,\eta )$ from $\mcl T \mcl M$ via Proposition~\ref{prop-tutte-conv}, we need to know that $\eta ([\BB t_-^\delta,\BB t_+^\delta])$ covers most of $\BB C$ when $\delta$ is small. This is the purpose of the following lemma.

\begin{lem} \label{lem-endpoints}
Almost surely, $\BB t_-^\delta \rta 0$ and $\BB t_+^\delta \rta 1$ as $\delta \rta 0$.
\end{lem}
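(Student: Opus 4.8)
\textbf{Proof sketch for Lemma~\ref{lem-endpoints}.}

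The plan is to show that, almost surely, for every $\ep > 0$ there exists $\delta_0 > 0$ such that $\BB t_-^\delta < \ep$ and $\BB t_+^\delta > 1-\ep$ for all $\delta < \delta_0$; together with the monotonicity of $\BB t_-^\delta$ in $\delta$ (which is immediate from Definition~\ref{def-cut}, since $[\delta,1-\delta]$ shrinks as $\delta$ grows, so there are more cut times for $\eta|_{[\delta,1-\delta]}$) this gives the claim. By symmetry between the two endpoints it suffices to treat $\BB t_-^\delta$. Recall that $\BB t_-^\delta$ is the last cut time for $\eta|_{[\delta,1-\delta]}$ strictly before time $1/2$; hence $\BB t_-^\delta < \ep$ would follow if we could exhibit, for all small enough $\delta$, a cut time for $\eta|_{[\delta,1-\delta]}$ lying in the interval $[\ep/2, \ep)$, say. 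So the goal reduces to: \emph{almost surely, for every $\ep\in(0,1/2)$ there is a cut time for $\eta|_{[0,1]}$ in $(0,\ep)$ which remains a cut time for $\eta|_{[\delta,1-\delta]}$ once $\delta$ is small enough.}

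First I would observe that a time $t\in(0,1)$ is a cut time for $\eta|_{[0,1]}$ precisely when $\eta(t)$ is a point where the left and right outer boundaries of $\eta$ stopped at time $t$ meet the corresponding boundaries of $\eta$ "from $t$ to $\infty$" in a degenerate way — more concretely, $t$ is a cut time iff $\eta([0,t])\cap\eta([t,1])$ is totally disconnected (Definition~\ref{def-cut}). In the imaginary-geometry picture from \cref{sec:SLE_as_flow}, these are exactly the times when $\eta$ hits a point lying on \emph{both} the left and right boundaries of $\eta([0,t])$ such that the continuation does not immediately thicken the intersection; for $\kappa\geq 8$ there are only the two trivial cut times $0$ and $1$, but then $\BB t_-^\delta=\delta\to 0$ trivially, so we may assume $\kappa\in(4,8)$. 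In this regime the set of cut times of $\eta|_{[0,1]}$ is a.s.\ an uncountable closed set with no isolated points (this is the statement, recalled in item $(vii)$--$(viii)$ of \cref{sect:SLES} and in the discussion around Figure~\ref{fig-cell-bdy}, that $\eta([a,b])$ is a string of beads for generic $a<b$); in particular $0$ is a.s.\ an accumulation point of cut times of $\eta|_{[0,1]}$. This is essentially the content of~\cite[Lemma 8.13 / Eq.\ (8.9)]{wedges}: the left and right boundaries of $\eta$ intersect each other densely near their common endpoint.

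Given a cut time $t_0\in(0,\ep)$ for $\eta|_{[0,1]}$, the remaining point is to check that $t_0$ is also a cut time for $\eta|_{[\delta,1-\delta]}$ once $\delta<t_0$ and $1-\delta>t_0$: this is clear, because $\eta([\delta,t_0])\subset\eta([0,t_0])$ and $\eta([t_0,1-\delta])\subset\eta([t_0,1])$, so $\eta([\delta,t_0])\cap\eta([t_0,1-\delta])\subset\eta([0,t_0])\cap\eta([t_0,1])$, and a subset of a totally disconnected set is totally disconnected. Hence for every $\ep>0$, taking $\delta_0=\min(t_0,1-t_0)$ we get $\BB t_-^\delta\leq t_0<\ep$ for all $\delta<\delta_0$, as desired; and likewise $\BB t_+^\delta\to 1$. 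The main obstacle is the input that $0$ is a.s.\ an accumulation point of cut times of $\eta|_{[0,1]}$ when $\kappa\in(4,8)$ — i.e.\ that near the starting point the two outer boundary curves meet in a set accumulating at the start — but this follows from the bead decomposition of space-filling SLE segments established in~\cite[Section 8]{wedges}, so I would simply cite that.
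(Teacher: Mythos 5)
Your reduction runs in the wrong logical direction. By definition, $\BB t_-^\delta$ is the \emph{last} cut time for $\eta|_{[\delta,1-\delta]}$ before time $1/2$, so to prove $\BB t_-^\delta \to 0$ you need the \emph{absence} of cut times for $\eta|_{[\delta,1-\delta]}$ in $[\ep, 1/2)$ once $\delta$ is small, not the \emph{presence} of a cut time near $0$. Exhibiting a cut time in $[\ep/2,\ep)$ tells you nothing about $\BB t_-^\delta$: if there were also a cut time at $0.4$, you would have $\BB t_-^\delta \geq 0.4$ no matter what happens near $0$. So the claim you reduce to is insufficient, and the remainder of the argument (including the correct but unhelpful observation that a subset of a totally disconnected set is totally disconnected) cannot close the gap. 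Worse, the input you wish to cite is itself false: since $\eta$ fills the whole Riemann sphere, for every $t\in(0,1)$ one has $\eta([0,t])\cup\eta([t,1])=\wh{\BB C}$ with disjoint interiors, so $\eta([0,t])\cap\eta([t,1])=\bdy\eta([0,t])$, which is the union of the two flow-line curves from $\eta(t)$ to $\infty$ and in particular is not totally disconnected. Thus $\eta|_{[0,1]}$ has \emph{no} nontrivial cut times; the bead decomposition of~\cite[Section 8]{wedges} describes segments $\eta([a,b])$ with $0<a<b<1$, not the full curve.

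What the lemma really requires---and what the paper proves---is that cut times of $\eta|_{[\delta,1-\delta]}$ bounded away from the endpoints disappear as $\delta\to 0$. The paper's argument is a short topological one: since $\eta$ is a continuous space-filling path on $\wh{\BB C}$ from $\infty$ to $\infty$, for each $R>0$ the ball $B_R(0)$ is contained in $\eta([\delta,1-\delta])$ once $\delta$ is small enough. As $B_R(0)$ is open and simply connected, it cannot straddle a cut point of $\eta([\delta,1-\delta])$, so it lies in a single bead; since it contains $\eta(1/2)$ for $R$ large, that bead is $\eta([\BB t_-^\delta,\BB t_+^\delta])$. Sending $R\to\infty$ gives $\mu_h(B_R(0))\to 1$, hence $\BB t_+^\delta-\BB t_-^\delta\to 1$. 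I would redo the proof along these lines rather than via cut-time accumulation.
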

\begin{proof}
Almost surely, $\eta $ is a continuous path on the Riemann sphere which starts and ends at $\infty$, and each point of $\BB C$ is hit by $\eta $ at some time. Hence, a.s.\ for each $R>0$, it holds for each small enough $\delta \in (0,1/2)$ that $B_R(0) \subset \eta ([\delta,1-\delta])$. Since $\eta ([\BB t_-^\delta,\BB t_+^\delta])$ is the largest simply connected subset of $\eta ([\delta,1-\delta])$ which contains $\eta (1/2)$, it follows that a.s.\ $B_R(0) \subset \eta ([\BB t_-^\delta,\BB t_+^\delta])$ for each small enough $\delta>0$. As $R\rta\infty$, we have $\mu_h(B_R(0)) \rta 1$. Therefore, $\BB t_+^\delta - \BB t_-^\delta$ must converge to 1 as $\delta \rta 0$, which gives the lemma statement. 
\end{proof}

\begin{proof}[Proof of Theorem~\ref{thm-sle-msrble}]
By Lemma~\ref{lem-tutte-determined} and Proposition~\ref{prop-tutte-conv}, if $\Psi_\delta$ is the conformal map as in Proposition~\ref{prop-tutte-conv}, then a.s.\ for each $\delta \in (0,1/2)$ the set $\mcl T \mcl M$ determines the pair 
\begin{equation*}
	\left((\Psi_\delta)_*  \mu_h |_{\eta ([\BB t_-^\delta,\BB t_+^\delta])} , \Psi_\delta\circ \eta |_{[\BB t_-^\delta,\BB t_+^\delta]} \right)
\end{equation*}
 modulo complex conjugation. By~\cite[Theorem 1.1]{bss-lqg-gff}, a GFF-type distribution is a.s.\ determined by its associated LQG area measure. Consequently, a.s.\ $\mcl T \mcl M$ determines the pair
\eqb \label{eqn-determined-gff-sle}
 \left( h\circ \Psi_\delta^{-1} + Q\log |(\Psi_\delta^{-1})'| , \Psi_\delta\circ \eta |_{[\BB t_-^\delta,\BB t_+^\delta]} \right) 
\eqe
modulo complex conjugation. 

By Lemma~\ref{lem-endpoints}, a.s.\ $\eta ([\BB t_-^\delta , \BB t_+^\delta])$ increases to all of $\BB C$ as $\delta \rta 0$. By basic distortion estimates for conformal maps, for each $R > 0$ it holds for each small enough $\delta > 0$ that $\Psi_\delta$ is approximately complex affine on $B_R(0)$. Consequently, there are random complex affine maps $f_\delta$ such that $f_\delta \circ \Psi_\delta$ converges to the identity map uniformly on compact subsets of $\BB C$. Therefore, a.s.\ 
\eqb
 \left( h\circ (f_\delta \circ \Psi_\delta)^{-1} + Q\log |( (f_\delta \circ \Psi_\delta )^{-1})'| , f_\delta\circ \Psi_\delta\circ \eta |_{[\BB t_-^\delta,\BB t_+^\delta]} \right) 
\eqe
converges to $(h,\eta )$ with respect to the distributional topology on the first coordinate and the appropriate localized uniform topology on the second coordinate. Since $\mcl T \mcl M$ a.s.\ determines the pair~\eqref{eqn-determined-gff-sle} modulo complex conjugation, we get that $\mcl T \mcl M$ a.s.\ determines the curve-decorated quantum surface $(\BB C , h, \infty,\eta )$ modulo complex affine maps and complex conjugation.  
\end{proof}

\subsection{Summary on how the permuton determines the curve-decorated quantum sphere and proofs of the corollaries}\label{sect:explicit-function}

We first detail the function $F$ in \cref{eq:function-det}, that is, the function from \cref{thm-permuton-determ-lqg-sles} that, given the support of $\perm$, reconstructs the curve-decorated quantum sphere $(\BB C , h ,  \infty , \eta_1 , \eta_2 )_{\mathrm{cc}}$ (recall the notation $\mathrm{cc}$ from \eqref{eq:cc}). Then we give the proofs of \cref{cor:mutual-sing1} and \cref{cor:meandric-perm}. 

\medskip

We fix $\gamma \in (0,2)$, a singly marked unit area $\gamma$-Liouville quantum sphere $(\BB C , h , \infty)$, and a pair $(\eta_1,\eta_2)$ of whole-plane space-filling SLEs from $\infty$ to $\infty$, sampled independently from $h$ and then parametrized by $\gamma$-LQG mass with respect to $h$.  
We let $\perm$ be the permuton associated with $(\eta_1,\eta_2)$ as in~\eqref{eqn-permuton-def} and assume that either Assumption~\ref{item-indep} or Assumption~\ref{item-ig-gff} holds. Recall also the definition of $\mcl T \mcl M (\eta_1)$ from \eqref{eqn-intersect-set}. We describe $F$ in four steps:

\begin{enumerate}
	\item \label{ybwvwfei} \textbf{Reconstruction of $\mcl T \mcl M (\eta_1)$ from the support of $\perm$}: Given the support of $\perm$, we first create the \textbf{augmented support} of $\perm$. First of all, the augmented support of $\perm$ contains all the points in  the  support of $\perm$. Then, for every pair of points $(s,q),(t,q)\in \op{supp} \perm$, run the following operations (see \cref{fig-reconstr}): 
	\begin{enumerate}
		\item Draw an horizontal line on $[0,1]^2$ passing through $(s,q)$ and $(t,q)$, a vertical line on $[0,1]^2$ passing through $(s,q)$, and a vertical line on $[0,1]^2$ passing through $(t,q)$.
		\item For each added vertical line, check if there is a point in $\op{supp} \perm$ belonging to the line other than $(t,q)$ or $(s,q)$. If there is, add an horizontal line on $[0,1]^2$ passing through that point and iterate $(1.b)$ on this line (with ``vertical'' and ``horizontal'' swapped). If there isn't, do nothing.
		\item Look at all the lines added in $(1.a)$ and $(1.b)$ and add to the augmented support of $\perm$ all the points that lie at the intersection of two of these lines.\footnote{If $z$ is the point $\eta_1(s)=\eta_1(t)=\eta_2(q)$, then all the points that lie at the intersection of two of these lines are exactly the points  $(t^1_i,t^2_j)$, where $t^1_1,t^1_2,\dots,t^1_{m_1}$ are  the $m_1\geq 2$ times  when $\eta_1$ hits $z$ and   $t^2_1,t^2_2,\dots,t^2_{m_2}$ are the $m_2\geq 1$ times when $\eta_2$ hits $z$.}
	\end{enumerate}
	After running the analogue operation for every pair of points $(q,s),(q,t)\in \op{supp} \perm$, add to $\mcl T \mcl M (\eta_1)$ all the points $(s,t)\in[0,1]^2$ for which there exists $q\in[0,1]$ such that $(s,q),(t,q)$ are both in the augmented closed support of $\perm$. This operation works thanks to Propositions~\ref{prop-permuton-full},~\ref{prop-counterflow-dichotomy}~and~\ref{prop-counterflow-dichotomy2}.
	
	\item \textbf{Reconstruction of $\mu_h$ and $\eta_1$ from $\mcl T \mcl M (\eta_1)$}: To reconstruct $\mu_h$ and $\eta_1$ from $\mcl T \mcl M (\eta_1)$ it is sufficient to run the following operations:
	\begin{enumerate}
		\item Construct the graph $\mcl G^n$ from $\mcl T \mcl M (\eta_1)$ as explained in \cref{lem-graph-permuton};
		\item Define its Tutte embedding $\Phi^n : \mcl V \mcl G^n_{[\BB t_-^\delta ,\BB t_+^\delta]} \rta \ol{\BB D}$ as explained in the text around \eqref{eqn-tutte-bdy}; 
		\item Send $n \to \infty$ to recover $\mu_h$ and $\eta_1$ parametrized by $\mu_h$-mass (both modulo scaling, translation, rotation and reflection) as explained in the proof of Theorem~\ref{thm-sle-msrble}.
	\end{enumerate}
	
	\item \textbf{Reconstruction of $\eta_2$ from $\mcl T \mcl M (\eta_1)$ and the support of $\perm$}: Run the operations described in the proof of \cref{cor-permuton-msrble0}.
	
	\item \textbf{Reconstruction of $h$ from $\mu_h$}: Run the strategy explained in \cite{bss-lqg-gff}: For $\varepsilon > 0$ and a point $z$ in the plane, let $h^\varepsilon(z) := (1/\gamma) \log (\mu_h(B_\varepsilon(z)))$, where $B_\varepsilon(z)$ is the open ball of radius $\varepsilon$ centered at $z$. In \cite{bss-lqg-gff}, the authors show that $h^\varepsilon - \mathbb E[ h^\varepsilon ]$ converges to $h$ in probability as $\varepsilon \to 0$. 
\end{enumerate}

Note that the first three steps of the above procedure do not depend either on whether Assumption~\ref{item-indep}~or~\ref{item-ig-gff} is adopted, or on the choice of the parameters $(\gamma,\kappa_1,\kappa_2)$ in Assumption~\ref{item-indep} or $(\gamma,\kappa,\theta)$ in Assumption~\ref{item-ig-gff}. In other words, the function that, given the support of $\perm$, determines $(\mu_h,\eta_1,\eta_2)$ up to scaling, translation, rotation and reflection, is always the same. We point out that the procedure in Step 4 depends on the parameter $\gamma$ of the Liouville quantum gravity measure $\mu_h$.

\begin{figure}[ht!]
	\begin{center}
		\includegraphics[width=.99\textwidth]{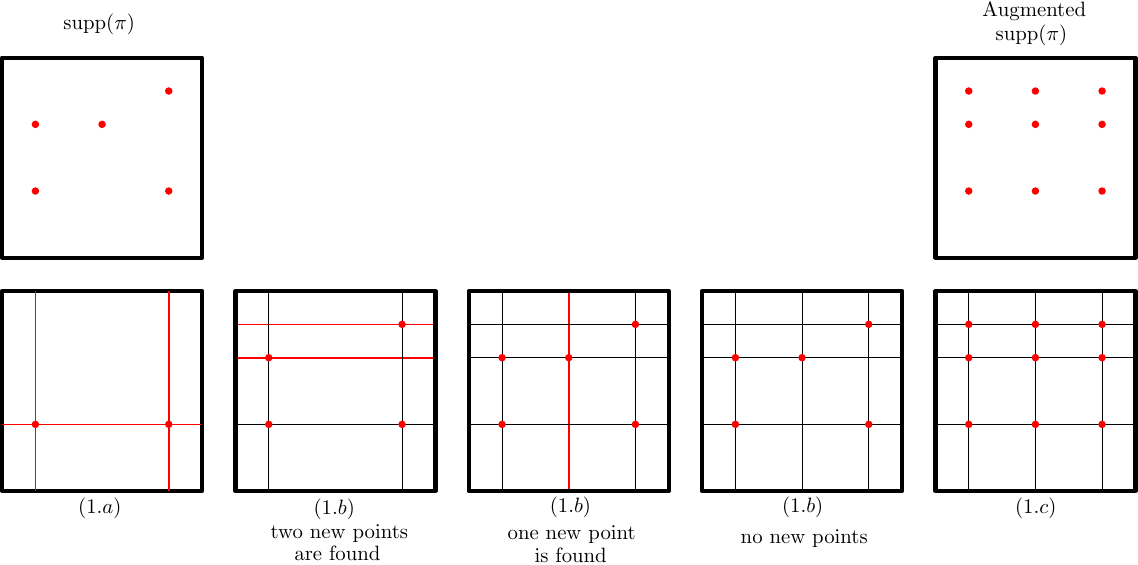}  
		\caption{\label{fig-reconstr}
			Let $z$ be a multiple point both for $\eta_1$ and $\eta_2$.
			Let  $t^1_1,t^1_2,\dots,t^1_{m_1}$ be the $m_1\geq 2$ times when $\eta_1$ hits $z$ and $t^2_1,t^2_2,\dots,t^2_{m_2}$ be the $m_2\geq 2$ times when $\eta_2$ hits $z$. In general, as shown on the top-left unit square, the support of the permuton $\perm$ contains only some of the points $(t^1_i,t^1_j)$. By running the steps (1.a-b-c) described in Item~\ref{ybwvwfei} and shown in the second line of the picture above (the new lines added at each step are shown in red), we can recover all these points $(t^1_i,t^1_j)$ obtaining the augmented support of the permuton $\perm$ shown on the top-right unit square.}
	\end{center}
	\vspace{-3ex}
\end{figure}

\medskip

We finally give the proofs of \cref{cor:mutual-sing1} and \cref{cor:meandric-perm}. 
	
	\begin{proof}[Proof of \cref{cor:mutual-sing1}]
		Given the support of the permuton $\perm$, we can recover $(\mu_h, \eta_1, \eta_2)$ modulo scaling, translation, rotation and reflection via the first three steps described above, which we recall do not depend either on whether Assumption~\ref{item-indep}~or~\ref{item-ig-gff} is adopted, or on the choice of the parameters $(\gamma,\kappa_1,\kappa_2)$ in Assumption~\ref{item-indep} or $(\gamma,\kappa,\theta)$ in Assumption~\ref{item-ig-gff}.
		
		In the case of Assumption~\ref{item-indep}, the law of the equivalence classes $(\mu_h, \eta_1 , \eta_2 )_{\mathrm{cc}}$ for different vales of $(\gamma,\kappa_1,\kappa_2)$ are mutually singular, for instance for the following two reasons:
		\begin{itemize}
			\item Recall from \cref{sec:SLE_as_flow} that the boundary of a space-filling SLE$_\kappa$ curve stopped at any given time is an SLE$_{16/\kappa}$-type curve, which has Hausdorff dimension $1+\frac{16/\kappa}{8}$ by \cite{beffara-dim}. This gives the mutual singularity for different values of $\kappa_1$ and $\kappa_2$.
			
			\item The $\gamma$-LQG measure is supported on the $\gamma$-thick points of $h$, which is a set of Hausdorff dimension $2-\frac{\gamma^2}{2}$ (see, e.g., \cite[Theorem 2.4]{bp-lqg-notes}). This gives the mutual singularity for different values of $\gamma$.
		\end{itemize}
		
		In the case of Assumption~\ref{item-ig-gff}, the law of the equivalence classes $(\mu_h, \eta_1 , \eta_2 )_{\mathrm{cc}}$ for different vales of $(\gamma,\kappa, \theta)$ are mutually singular, for instance for the following reason (note that we only need to argue for $\theta$, since the mutual singularity for different values of $\gamma,\kappa$ is already proved above).

		Let $\beta^L_1$ and $\beta^L_2$ be the $\pi/2$-angle and $\theta$-angle flow lines of the same whole-plane GFF $\hat h$ started at a point $Z$ sampled uniformly from the LQG-measure $\mu_h$. In this way,  $\beta^L_1$ and $\beta^L_2$ are the left outer boundaries of $\eta_1$ and $\eta_2$, respectively, stopped upon hitting $Z$.
		
		If $\theta$ is such that $\beta^L_1$ and $\beta^L_2$ intersect somewhere besides at their starting point, then the Hausdorff dimension of the intersection depends on $\theta$ by \cite[Theorem 1.5]{miller-wu-dim}. This gives the mutual singularity in this case.

		If $\theta$ is such that $\beta^L_1$ and $\beta^L_2$ do not intersect, let $I$ be the set of angles $\phi$ for which the $\phi$-angle flow lines started from $Z$ intersects $\beta^L_1$ but not $\beta^L_2$, and let $J$ be the set of angles $\phi$ for which the $\phi$-angle flow line started from $Z$ intersects neither $\beta^L_1$ nor $\beta^L_2$ (note that the $\phi$ angle flow line is determined by $\hat h$, which is determined by $\eta_1$ by \cite[Theorem 1.16]{ig1}). Then the pair of sets $(I,J)$ depends on on $\theta$ (e.g., by the same argument as in \cref{lem-angle-condition}). This gives the mutual singularity in this case.
		
		\medskip
		
		Finally, for the statement on the skew Brownian permutons $\mu_{\rho,q}$, it is enough to note that the permutons $\mu_{\rho,q}$ when $(\rho,q) \in (-1,1) \times (0,1)$ are exactly the permutons in Assumption~\ref{item-ig-gff} when $\gamma^2=16/\kappa$, as shown in \cite[Theorem 1.17]{borga-skew-permuton}. Therefore in this regime, $\mu_{\rho,q}$ are mutually singular thanks to the previous paragraphs. 
	\end{proof}

\begin{remark}
	When $\rho=1$, $\mu_{1,q}$ coincides -- by definition -- with the Brownian separable permuton of parameter $1-q$ introduced in~\cite{bbfgp-universal}. Combining \cite[Theorem 1.10]{bhsy-baxter-permuton} and \cite[Equation (20)]{bbfgp-universal}, we have that $\mu_{1,q}$ and $\mu_{\rho,q}$ for $\rho\neq 1$ are also mutually singular. Finally, $\mu_{1,q}$ and $\mu_{1,q'}$ are also mutually singular thanks to \cite[Proposition 1.3]{frl-recursiveperm}.
\end{remark}

We conclude with the proof of \cref{cor:meandric-perm}.

\begin{proof}[Proof of \cref{cor:meandric-perm}]
Suppose that we are in the setting of Theorem~\ref{thm-permuton-determ-lqg-sles} and that the permuton $\perm$ satisfies the re-rooting invariance $\perm_T \eqD \perm$ when $T$ is sampled uniformly from $[0,1]$, independently from $\perm$. We need to show that $\kappa_1=\kappa_2=8$.

For $i\in \{1,2\}$, we define the re-rooting of $\eta_i$ at $z\in \BB C$ to be the curve $\eta_i^z : [0,1]\rta \BB C$ obtained by concatenating $\eta_i|_{[\tau_i^z , 1]}$ followed by $\eta_i|_{[0,\tau_i^z]}$ where $\tau_i^z$ is the first hitting time of $z$. That is,
			\begin{equation*}
				\eta_i^z(t) := \begin{cases}
					\eta_i(t + \tau_i^z) ,\quad &\text{if $t\in [0, 1-\tau_i^z]$} ,\\
					\eta_i(t - (1-\tau_i^z) ) ,\quad &\text{if $t \in [ 1-\tau_i^z ,1]$}  .
				\end{cases}
			\end{equation*}
It is shown in~\cite[Proposition 5.2]{bgs-meander} that if $\kappa_i = 8$, then for any $z\in\BB C$ and any fractional linear transformation $\phi_z : \BB C \cup\{\infty\} \to \BB C \cup\{\infty\}$ taking $z$ to $\infty$, we have $\phi_z\circ \eta_i \eqD \eta_i$ modulo time parametrization; and that this property is not true for any $z\in\BB C$ if $\kappa_i \not= 8$. The proof will follow easily from this fact together with Theorem~\ref{thm-permuton-determ-lqg-sles}. 
		
Let us now give the details. To avoid complications with choices of conformal maps, it is convenient to consider a triply marked quantum sphere $(\BB C , h , 0, 1 , \infty)$ (recall Definition~\ref{def-sphere}). Note that the single marked quantum sphere $(\BB C ,  h , \infty)$ used to construct $\perm$ can be recovered from the triply marked quantum sphere $(\BB C , h , 0, 1 , \infty)$ by forgetting two of the three marked points. 

Conditional on $(h,\eta_1,\eta_2)$, let $Z\in \BB C$ be sampled from the probability measure $\mu_h$. Consider the curve-decorated quantum surface $(\BB C , h ,  0, 1,   Z , \eta_1^Z , \eta_2^Z )$ obtained from $(\BB C , h ,  0, 1,  \infty , \eta_1 , \eta_2 )$ by re-rooting at $Z$. Let $\widetilde\perm$ be the permuton associated with $(\eta_1^Z, \eta_2^Z)$ as in~\eqref{eqn-permuton-def}.

Since $\eta_1$ is parametrized by $\mu_h$-mass, the time $\tau_1^Z$ is uniformly distributed on $[0,1]$ and is independent from $(h,\eta_1,\eta_2)$, and hence also from $\perm$. 
By definition of $\perm_t $ (recall \eqref{eq:rerooting}) and Assertion 1 in \cref{lem-permuton-defined} we have that, almost surely, for each rectangle $[a,b]\times[c,d] \subset [0,1]^2$, 
\begin{equation*}
	\perm_{\tau_1^Z}\left([a,b]\times[c,d]\right) = \mu_h\left( \eta_1^Z([a,b]) \cap \eta_2^Z([c,d]) \right),
\end{equation*}
where we used that $\psi(\tau_Z^i) = \tau_Z^2$ is the a.s.\ unique time at which $\eta_2$ hits $Z$ (\cite[Lemma 2.10]{bgs-meander}). By construction of $\widetilde \perm$, this implies that almost surely $\wt\perm = \perm_{\tau_1^Z}$. 
Since $\tau_1^Z$ is independent from $\perm$, our re-rooting invariance assumption implies that 
$$\wt\perm \eqD \perm.$$ 
Now, by \cref{thm-permuton-determ-lqg-sles} and \cref{eq:function-det}, we have that
\begin{equation*}
	(\BB C , h ,  \infty , \eta_1 , \eta_2 )_{\mathrm{cc}}=F(\op{supp} \perm)\quad\text{ and }\quad(\BB C , h ,  Z , \eta_1^Z , \eta_2^Z )_{\mathrm{cc}} = F(\op{supp} \widetilde\perm).
\end{equation*} 
Note that we can use the same function $F$ for both permutons thanks to the description of $F$ at the beginning of \cref{sect:explicit-function}. Hence $(\BB C , h ,  \infty , \eta_1 , \eta_2 )_{\mathrm{cc}} \eqD (\BB C , h ,  Z , \eta_1^Z , \eta_2^Z )$, and so also
\begin{equation*}
	(\BB C , h , 0, 1, \infty , \eta_1 , \eta_2 )_{\mathrm{cc}} \eqD (\BB C , h , 0 ,1,  Z , \eta_1^Z , \eta_2^Z )_{\mathrm{cc}}.
\end{equation*} 
To convert this to a statement about $\eta_1$ and $\eta_2$, let $\phi_Z(w) = w(1-Z)/(w-Z)$ be the conformal which takes $Z$ to $\infty$ and fixes 0 and 1. Let $\xi , \wt \xi$ be random functions which are each equal to the identity with probability $1/2$, and equal to $w\mapsto \ol w$ with probability $1/2$, chosen independently from each other and everything else. The previous paragraph implies that for each $i\in \{1,2\}$,
\begin{equation*}
	\wt\xi \circ \phi_Z \circ \eta_i^Z \eqD \xi\circ \eta_i.
\end{equation*} 
Since the law of SLE$_\kappa$ is invariant under complex conjugation, this implies that $\phi_Z\circ \eta_i^Z \eqD \eta_i$. Since $Z$ is sampled independently from $\eta_i$, viewed modulo time parametrization, we get from~\cite[Proposition 5.2]{bgs-meander} that $\kappa_1=\kappa_2=8$. 
\end{proof}

\bibliography{cibib,cibib2}
\bibliographystyle{hmralphaabbrv}

\end{document}